\newtheorem{lemma}{Lemma}
\newtheorem{theorem}{Theorem}
\newtheorem{proposition}{Proposition}
\newtheorem{definition}{Definition}
\newtheorem{corollary}{Corollary}
\theoremstyle{remark}
\newtheorem*{ex}{Example}
\newtheorem*{rem}{Remark}
\newtheorem*{notation}{Notation}
\newcommand{\toco}{\xymatrix@C=1.5pc{  \ar[r]^-{\cong} & }}
\title{Deformations of generalized complex branes}
\author{Braxton L. Collier}
\begin{document}
\maketitle
\begin{abstract} We investigate the formal deformation theory of (rank 1) branes on generalized complex (GC) manifolds.  This generalizes, for example, the deformation theory of a complex submanifold in a fixed complex manifold.  For each GC brane $\B$ on a GC manifold $(X,\mathbb{J})$, we construct a formal (pointed) groupoid $\textbf{Def}^{\mathcal{B}}(X,\mathbb{J})$ (defined over a certain category of real Artin algebras) that encodes the formal deformations of $\mathcal{B}$. We study the geometric content of this groupoid in a number of different situations.  Using the theory of (bi)semicosimplicial differential graded Lie algebras (DGLAs), we construct for each brane $\mathcal{B}$ a DGLA $L_{\mathcal{B}}$ that governs the ``locally trivializable" deformations of $\mathcal{B}$.  As a concrete application of this construction, we prove an unobstructedness result.  \end{abstract}
\section{Introduction}
Let $X$ be a complex manifold, and $Z\subset X$ a complex submanifold.  A classical problem in complex geometry is to understand the possible deformations of $Z$ in $X$, i.e. the collection of complex submanifolds of $X$ that are ``close to" $Z$, in an appropriate sense.  In the most well-behaved situation, there exists a \emph{universal family} of such deformation; this is described, for example, in \cite{K} (where Kodaira calls it a ``maximal family"). Such a family consists of an auxiliary complex manifold $\mathcal{M}$ with basepoint $0\in\mathcal{M}$, and a complex submanifold $\hat{Z}\subset X\times \mathcal{M}$, such that, for each $m\in\mathcal{M}$, the fiber $\hat{Z}_m:=\hat{Z}\cap (X\times\{m\})$ is a complex submanifold of $X\times\{m\}\cong X$; the fiber over $0\in\mathcal{M}$ corresponds to $Z$ itself.  For every other family $(\mathcal{M}',\hat{Z}')$, there exists a neighborhood $U\subset \mathcal{M}'$ of the basepoint, and a unique (pointed) holomorphic map $\varphi:U\to \mathcal{M}$ such that, for each $m'\in U$ we have $\hat{Z}'_{m'}=\hat{Z}_{\varphi(m')}$ (regarded as complex submanifolds of $Z$).  We may regard $\mathcal{M}$ (or more precisely the universal family) as a (local) \emph{moduli space} for the deformations of $Z$. For a given $X$ and $Z$, such a moduli space may or may not exist.  

At a formal level, the possible deformations of $Z$ in $X$ may be encoded as a functor \[\textit{Def}_Z:\textrm{Art}_{\C}\to \Set,\] where $\textrm{Art}_{\C}$ is the category of local Artin algebras over $\C$ (with residue field $\C$) \cite{Kol}\cite{M}. Heuristically, given an Artin algebra $A\in\textrm{Art}_{\C}$, we may view an element of  $\textit{Def}_Z(A)$ as a family of complex submanifolds of $X$ deforming $Z$, which is parameterized by  $\textrm{Spec}(A)$ (with basepoint the unique geometric point of $\textrm{Spec}(A)$, i.e. the maximal ideal).  In the situation that there is a well-defined moduli space $\mathcal{M}$ as above, we may identify $\textit{Def}_Z(A)$ with the set of (pointed) maps  $\textrm{Spec}(A)\to \mathcal{M}$;  for example, the value of $\textit{Def}_Z$ on the so-called ``dual numbers" $A=\C[\epsilon]/(\epsilon^2)$ gives the (geometric) tangent space of $\mathcal{M}$ at $0$.   Appealing to the functor of points philosophy, we may view $\textit{Def}_Z$ itself as a formal stand-in for the moduli space that can always be defined.  Even when $\mathcal{M}$ itself does not exist, the ``tangent space" $\textit{Def}_Z(\C[\epsilon]/(\epsilon^2))$ still has an interesting geometric interpretation.  We summarize this as a theorem (although the result is a straightforward consequence of the definition of $\df_Z$).
\begin{theorem}\label{Z tangent}  There is a one-to-one correspondence between elements of  \\* $\textit{Def}_Z(\C[\epsilon]/(\epsilon^2))$ and holomorphic sections of the normal bundle $NZ$.  In other words, there is a natural bijection
\[\textit{Def}_Z(\C[\epsilon]/(\epsilon^2))\cong H^0(Z;\mathcal{O}_{NZ}).\] 
\end{theorem}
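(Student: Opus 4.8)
The plan is to reduce the claim to a purely local computation together with a gluing argument. First I would spell out the definition of $\textit{Def}_Z$ on $A=\C[\epsilon]/(\epsilon^2)$: an element is an isomorphism class of closed complex subspaces $\mathcal{Z}\subseteq X\times\textrm{Spec}(A)$ that are flat over $\textrm{Spec}(A)$ and whose reduction modulo $\epsilon$ is $Z\subseteq X$. Since deformations of a \emph{closed} subspace carry no nontrivial automorphisms over the base, this set is nothing but the set of sheaves of ideals $\mathcal{I}\subseteq\mathcal{O}_X\otimes_{\C}A$ that are flat over $A$ and reduce to $I_Z$ modulo $\epsilon$. The one structural input I would isolate at the outset is the standard consequence of flatness over the dual numbers: every such $\mathcal{I}$ sits in a short exact sequence $0\to\epsilon I_Z\to\mathcal{I}\xrightarrow{\ \mathrm{mod}\ \epsilon\ }I_Z\to 0$ of $\mathcal{O}_X$-modules.

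Then comes the heart of the argument: converting such an $\mathcal{I}$ into a section of $NZ$. Working over a sufficiently small open set, lift every section $f$ of $I_Z$ to $f+\epsilon\,\phi(f)\in\mathcal{I}$. The exactness above shows that, modulo $I_Z$, the value $\phi(f)$ is independent of the chosen lift, that $\phi(af)\equiv a\,\phi(f)$ and $\phi(f+f')\equiv\phi(f)+\phi(f')$ modulo $I_Z$ for $a\in\mathcal{O}_X$, and that $\phi(I_Z^2)\subseteq I_Z$; hence $\phi$ descends to an $\mathcal{O}_Z$-linear map $\bar\phi\colon I_Z/I_Z^2\to\mathcal{O}_X/I_Z=\mathcal{O}_Z$. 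Because $Z\subset X$ is a complex submanifold, $I_Z/I_Z^2$ is locally free — it is the conormal sheaf $N^\vee Z$ — so $\bar\phi$ defines a section of $\mathcal{H}\!om_{\mathcal{O}_Z}(N^\vee Z,\mathcal{O}_Z)=\mathcal{O}_{NZ}$. In holomorphic coordinates adapted to $Z$, say $Z=\{z^{p+1}=\cdots=z^n=0\}$, a first-order deformation is cut out by equations $z^k=\epsilon\,g_k$ for $k>p$ with $g_k$ holomorphic, and the resulting section is $\sum_{k>p}(g_k|_Z)\,\partial/\partial z^k$; in this picture the construction is visibly a bijection onto the local holomorphic sections of $NZ$, with inverse sending a section to the ideal it cuts out (one lifts it arbitrarily to a map $I_Z\to\mathcal{O}_X$ and checks the resulting $\mathcal{I}$ is independent of that lift and flat over $\C[\epsilon]/(\epsilon^2)$).

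Finally, since the assignment $\mathcal{I}\mapsto\bar\phi$ is manifestly local and canonical, the locally defined sections agree on overlaps and patch to a single global section in $H^0(Z;\mathcal{O}_{NZ})$, and the same applies to the inverse; naturality (in the mild sense available for a single object of $\textrm{Art}_{\C}$) is immediate. I expect the only real obstacle to be the flatness bookkeeping: verifying both directions — that flatness of $\mathcal{I}$ produces the short exact sequence used above, and that the ideal reconstructed from a section of $NZ$ is again flat over $\C[\epsilon]/(\epsilon^2)$ — together with the Leibniz-type congruences needed to see that $\bar\phi$ is well defined and $\mathcal{O}_Z$-linear. Once $I_Z/I_Z^2$ is known to be locally free, so that $\mathcal{H}\!om_{\mathcal{O}_Z}(I_Z/I_Z^2,\mathcal{O}_Z)$ genuinely computes the sheaf of holomorphic sections of $NZ$, the remaining steps are formal.
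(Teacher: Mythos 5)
The paper does not actually prove this theorem: it is stated as a classical fact, described as ``a straightforward consequence of the definition of $\df_Z$,'' with the proof deferred to the cited references of Koll\'ar and Manetti. Your argument is precisely the standard one from those sources --- identify a first-order deformation with an ideal sheaf $\mathcal{I}\subseteq\mathcal{O}_X\otimes_{\C}\C[\epsilon]/(\epsilon^2)$ flat over the dual numbers, use flatness to get the extension $0\to\epsilon I_Z\to\mathcal{I}\to I_Z\to 0$, and read off an $\mathcal{O}_Z$-linear map $I_Z/I_Z^2\to\mathcal{O}_Z$, i.e.\ a holomorphic section of $NZ$ --- and it is correct, including the Leibniz congruences showing $\bar\phi$ kills $I_Z^2$ and the coordinate check that the correspondence is a local bijection with the inverse you describe. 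The only point worth making explicit in a final write-up is the flatness verification in the reverse direction (that the ideal generated by $z^k-\epsilon g_k$ is flat over $\C[\epsilon]/(\epsilon^2)$), which you already flag; with that done the gluing step is formal, exactly as you say.
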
  In fact, by a general argument (independent of Theorem \ref{Z tangent}), $\textit{Def}_Z(\C[\epsilon]/(\epsilon^2))$ inherits the structure of a $\C$-vector space; Theorem \ref{Z tangent} is then better formulated as an isomorphism of vector spaces. 

Another way in which $\textit{Def}_Z$ encodes the geometry of the putative moduli space involves the \emph{obstruction} properties of $\textit{Def}_Z$.  Given an Artin algebra $A\in\textrm{Art}_{\C}$, a deformation $\hat{Z}\in\textit{Def}_Z(A)$, and a surjective map $\mu: A'\to A$  in $\textrm{Art}_{\C}$, consider the problem of finding an extension  of $\hat{Z}$ to $\textit{Def}_Z(A')$, i.e. an element $\hat{Z}'\in\textit{Def}_Z(A')$ satisfying $\textit{Def}_Z(\mu)(\hat{Z}')=\hat{Z}$.  In general, there will be an obstruction to the existence of such an extension.  If a smooth moduli space $\mathcal{M}$ exists, however, then it is always possible to solve this extension problem and we say $\textit{Def}_Z$ is \emph{unobstructed}.  From a different perspective, we might allow $\mathcal{M}$ to be a more general (possible singular) type of space than a complex manifold; the obstruction problem for $\textit{Def}_Z$  then relates to the smoothness of $\mathcal{M}$ at $0$.
This explains the importance of the following well-known result \cite{Kol}\cite{M}.\begin{theorem}\label{Z obstructions} Let $Z\subset X$ be a complex submanifold satisfying the condition
\[H^1(Z;\mathcal{O}_{NZ})=0.\]  Then the deformation functor $\textit{Def}_{Z}$ is unobstructed.
\end{theorem}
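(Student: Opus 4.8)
The plan is to verify the standard infinitesimal lifting criterion for unobstructedness: $\textit{Def}_Z$ is unobstructed if and only if, for every surjection $\mu\colon A'\to A$ in $\textrm{Art}_\C$ and every $\hat Z\in\textit{Def}_Z(A)$, there exists $\hat Z'\in\textit{Def}_Z(A')$ with $\textit{Def}_Z(\mu)(\hat Z')=\hat Z$. Since any surjection in $\textrm{Art}_\C$ factors into a finite chain of \emph{small extensions} --- short exact sequences $0\to I\to A'\xrightarrow{\mu}A\to 0$ with $I\cdot\mathfrak m_{A'}=0$ --- it suffices to treat the case of a single small extension. So I fix such a $\mu$ and a deformation $\hat Z\in\textit{Def}_Z(A)$, and I will produce the lift directly by a \v{C}ech computation on $Z$, showing that the only obstruction is a class in $H^1(Z;\mathcal{O}_{NZ})\otimes_\C I$.

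First I would fix geometric data. Choose a tubular neighborhood $W$ of $Z$ in $X$, identified with a neighborhood of the zero section in the total space of $NZ$, together with an open cover $\{W_\alpha\}$ of $Z$ trivializing $NZ$, with transverse fiber coordinates $w_\alpha=(w_\alpha^1,\dots,w_\alpha^c)$, $c=\operatorname{codim}_X Z$, and holomorphic transition functions $w_\alpha=g_{\alpha\beta}(z,w_\beta)$ describing the complex structure of $X$ near $Z$, where $g_{\alpha\beta}(z,0)=0$ and $\partial_w g_{\alpha\beta}(z,0)$ are the transition matrices of $NZ$. Because $A$ is Artinian --- so the deformation is infinitesimal and set-theoretically supported on $Z$ --- the deformation $\hat Z$ is described, chart by chart, as a graph $w_\alpha=f_\alpha$ whose components are holomorphic in $z$ with coefficients in $\mathfrak m_A$, subject to the gluing relations $f_\alpha=g_{\alpha\beta}(z,f_\beta)$ on overlaps (interpreted with $A$-coefficients).

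Now the lifting step: choose arbitrary lifts $\tilde f_\alpha$ of $f_\alpha$ with coefficients in $\mathfrak m_{A'}$, which is possible since $\mathfrak m_{A'}\to\mathfrak m_A$ is surjective. Set $o_{\alpha\beta}:=\tilde f_\alpha-g_{\alpha\beta}(z,\tilde f_\beta)$; its coefficients lie in $I$ because $o_{\alpha\beta}$ reduces to $0$ over $A$. The crucial point is linearization: as the entries of $\tilde f_\beta$ and of $o_{\bullet\bullet}$ lie in $\mathfrak m_{A'}$ while $I\cdot\mathfrak m_{A'}=0$, every Taylor term of order $\ge 2$ in $g_{\alpha\beta}$ is annihilated, so $\{o_{\alpha\beta}\}$ transforms exactly like a section of $\mathcal{O}_{NZ}\otimes_\C I$ on overlaps; differentiating the cocycle relation $g_{\alpha\gamma}=g_{\alpha\beta}\circ g_{\beta\gamma}$ and again using $I\cdot\mathfrak m_{A'}=0$ shows $\{o_{\alpha\beta}\}$ is a \v{C}ech $1$-cocycle valued in $\mathcal{O}_{NZ}\otimes_\C I$, whose class in $H^1(Z;\mathcal{O}_{NZ})\otimes_\C I$ is independent of the chosen lifts. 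By hypothesis this group vanishes, so $o_{\alpha\beta}=h_\alpha-g^{NZ}_{\alpha\beta}h_\beta$ for local sections $h_\alpha$ of $\mathcal{O}_{NZ}\otimes_\C I$, and replacing $\tilde f_\alpha$ by $\tilde f_\alpha-h_\alpha$ --- a correction that again linearizes because $I\cdot\mathfrak m_{A'}=0$ --- yields local graphs that now glue, i.e. a genuine $\hat Z'\in\textit{Def}_Z(A')$ lifting $\hat Z$. (One could instead package this via the DGLA governing embedded deformations of $Z$, whose cohomology recovers $H^\bullet(Z;\mathcal{O}_{NZ})$ placed so that the obstruction space is $H^1$, but for this classical statement the direct \v{C}ech argument is more transparent.) I expect the real work to be bookkeeping rather than conceptual: justifying rigorously that over an Artin base the deformation is a graph in each chart, and checking the cocycle identity together with the independence of the obstruction class on the lifts --- at each of these points the hypothesis $I\cdot\mathfrak m_{A'}=0$ is precisely what kills the nonlinear correction terms that would otherwise spoil the normal-bundle description and hence the argument.
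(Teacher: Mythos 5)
The paper does not actually prove this statement: Theorem \ref{Z obstructions} is quoted in the introduction as a known classical result (with references to Koll\'ar and Manetti) and serves only as motivation for the generalized analogue, Theorem \ref{brane obstructions}, which the paper establishes by an entirely different mechanism --- building a DGLA via semicosimplicial/Thom--Whitney totalization and invoking the abstract fact that vanishing of $H^2$ of the governing DGLA kills obstructions. Your direct \v{C}ech argument is the standard proof of the classical statement and is essentially correct: reduction to small extensions, the graph description of an infinitesimal deformation in adapted charts, the linearization forced by $I\cdot\mathfrak m_{A'}=0$, and the identification of the lifting obstruction with a class in $H^1(Z;\mathcal{O}_{NZ})\otimes_{\C}I$ all go through as in Kodaira's and Sernesi's treatments. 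Two points deserve care. First, drop or weaken the opening appeal to a tubular neighborhood ``identified with a neighborhood of the zero section in $NZ$'': there is no holomorphic tubular neighborhood theorem in general, and your argument never needs one --- only local adapted charts $(z_\alpha,w_\alpha)$ with $Z=\{w_\alpha=0\}$. Second, the transition functions also reparametrize the tangential coordinates, $z_\alpha=\phi_{\alpha\beta}(z_\beta,w_\beta)$, so the gluing condition for graphs is really $\tilde f_\alpha\bigl(\phi_{\alpha\beta}(z_\beta,\tilde f_\beta)\bigr)=g_{\alpha\beta}(z_\beta,\tilde f_\beta)$; the extra terms this produces are again annihilated once the defect is known to have coefficients in $I$, but the cocycle computation should be written with this dependence included. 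With those adjustments (and the flatness argument justifying the graph description, which you correctly flag as the remaining bookkeeping), the proof is complete; your parenthetical remark about repackaging the argument in a DGLA is in fact the route closest in spirit to what the paper does for branes.
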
 Compared to Theorem \ref{Z tangent}, which has a very clear geometric interpretation, Theorem \ref{Z obstructions} is perhaps harder to understand intuitively (and is correspondingly more difficult to prove).   

The goal of the present paper is to extend the above constructions and results as far as possible into the realm of \emph{generalized} complex geometry \cite{H}\cite{G}.  A generalized complex structure on a manifold $X$ is an endomorphism of the vector bundle $TX\oplus T\uv X$ (the direct sum of the tangent bundle and cotangent bundle), which squares to minus the identity, preserves the natural pairing of $TX\oplus T\uv X$ with itself, and satisfies an integrability condition defined with respect to the \emph{Dorfman bracket}
\[\ll(\xi,a),(\eta,b)\rr:=([\xi,\eta],\pounds(\xi)b-\iota(\eta)da).\]
As the name suggests, an ordinary complex structure $J:TX\to TX$ may be viewed as a particular example of a GC structure; a symplectic structure on $X$ also gives an example, however, so the subject could just as well be called ``generalized symplectic geometry".  Indeed, a fundamental appeal of GC geometry is that it provides a framework for treating complex and symplectic geometry in a unified way.  The subject also admits a wide variety of interesting examples: these range from examples that may be viewed as hybrids (or deformations) of complex and symplectic manifolds, to more exotic structures that exhibit features not present in either of the two ``classical" cases \cite{G}.

%The notion of a GC structure fits into a larger framework, sometimes referred to simply as ``generalized geometry".   For example, in addition to GC structures, there exist natural definitions of \emph{generalized Riemannian metrics}, \emph{generalized holomorphic vector bundles}, and so forth.  For the case we are interested in--complex submanifolds--there are in fact a few different ``generalized" versions that one might wish to study.  The particular structures we  investigate in this paper are known as (rank 1) \emph{generalized complex branes}.

To motivate the generalization of complex submanifolds whose deformation theory we study in this paper, as well as to explain some of the issues that arise in generalizing from the case of complex submanifolds, consider the following situation.  Let $(X,\omega)$ be a symplectic manifold, and consider the problem of constructing a well-behaved moduli space of \emph{Lagrangian} submanifolds of $X$. An issue that immediately arises--not present in the complex case--is the need to incorporate the so-called   \emph{Hamiltonian symmetries} of $(X,\omega)$.   Recall that, to any smooth function $f:X\to \R$ we may associate its \emph{Hamiltonian vector field}, which is characterized (up to a sign convention) by the equation \cite{C}
\[\iota(X_f)\omega=-df.\]   Every such vector field is an infinitesimal symmetry of the symplectic structure; in particular the flow $\{\varphi_t\}_{t\in\R}$ generated by $X_f$ (when it exists) satisfies
\[\varphi_t^*\omega=\omega\] for each $t\in \R$.  The symplectomorphisms of $(X,\omega)$ that arise in this way are known as \emph{Hamiltonian symmetries},\footnote{More generally, one can consider flows generated by time-dependent Hamiltonian vector fields corresponding to smooth functions $X\times [0,1]\to \R$.} and they form a subgroup of the group of all symplectomorphisms of $(X,\omega)$.  When forming a moduli space of Lagrangian submanifolds, it is custumary to identity two Lagrangians $L,L'\subset X$ if $L'=\varphi(L)$ for some Hamiltonian symmetry $\varphi$. This is necessary, for example, to have any hope of constructing a finite dimensional moduli space \cite{F1}. 

From the point of view of mirror symmetry (and string theory), it is actually more natural to consider a slightly different moduli space; namely, the collection of all pairs $(Z,\L)$, where $Z\subset X$ is a Lagrangian submanifold, and $\L$ is a Hermitian line bundle with a flat connection supported on $Z$ \cite{Kon}\cite{F1,F2}.   Adopting terminology from physics \cite{KO}, such an object may be called a (rank 1) \emph{Lagrangian brane}.  A nice property of this moduli space (when it can be defined), is that it carries a natural complex structure.  An added subtlety is the need to keep track of the possible equivalences between line bundles with connection.   %As we will see, the framework of GC geometry allows one to treat such equivalences on an equal footing with the Hamiltonian equivalences discussed above; this is one of the attractive features of the GC geometry approach.  

 On any GC manifold $(X,\J)$, a (rank 1) \emph{GC brane}  is similarly defined\footnote{In \cite{G3}, Gualtieri uses the term ``generalized complex brane" to refer to a different type of object compared to the ones studied in this paper; our terminology is adopted from \cite{KL}.} as a submanifold $Z\subset X$, together with a Hermitian line bundle with unitary connection $\L$ supported on $Z$; the pair $\B=(Z,\L)$ must satisfy a certain condition defined with respect to $\J$.    In the case of a complex manifold $X$, the compatibility condition implies that $Z$ must be a complex submanifold, and the curvature form $F\in\Omega^2(Z)$ of $\L$ must be of type $(1,1)$ with respect to the induced complex structure on $Z$ (i.e. the connection must induce a holomorphic structure on $\L$) \cite{G}\cite{KL}. In particular, any complex submanifold--equipped with the trivial line bundle--gives an example of a GC brane.  In the symplectic case, the Lagrangian branes described above give one type of example, but there are actually others as well.  These so-called ``coisotropic" A-branes were first introduced by Kapustin and Orlov in \cite{KO}, motivated by considerations coming from string theory and homological mirror symmetry; the fact that these more exotic objects arise naturally in the context of GC geometry is another appealing feature of this framework \cite{G}. 
 
In this paper we will study the formal deformation theory of GC branes. 
In analogy to the symplectic case described above, it will be important to incorporate the action of the \emph{generalized Hamiltonian} symmetries of $(X,\J)$ on GC branes.  On a GC manifold $(X,\J)$, one may associate to any \emph{complex}-valued function $f:X\to \C$ its \emph{generalized Hamiltonian vector field}, which is a section $\xx_f\in\Cinf(TX\oplus T\uv X)$.  As in the symplectic case, $\xx_f$ is an infinitesimal symmetry of $(X,\J)$, and may (at least locally) be integrated to a family of symmetries (a ``flow").  Such a symmetry consists of a pair $(\varphi,u)$, where $\varphi:X\toco X$ is a diffeomorphism and $u\in\Omega^1(X)$ is a 1-form.  Roughly speaking, the diffeomorphism component acts on a brane by pulling it back, and the 1-form component acts by changing the connection on the line bundle. 

%In the symplectic case, writing $f=f_R+if_I$ for $f_R,f_I$ real-valued functions, we have 
%\[\xx_f=(X_{f_R},df_I),\] where $X_{f_R}$ is the (usual) Hamiltonian vector field associated to $f_R$ as above.   
 % If the vector field component of a generalized Hamiltonian vector field is integrable, $\xx_f$ integrates to a symmetry of the GC manifold (just as a Hamiltonian vector field in symplectic geometry generates a symmetry of the symplectic manifold).  In particular, such symmetries act on the collection of GC branes.  Our definition of the deformation functor of a GC brane incorporates this notion of equivalence. 
 
Let describe the main constructions and results of the paper.  For each GC brane $\B$, we construct a functor\footnote{For simplicity, we define the functor on $\Art$ in this paper, but an extension to $\textrm{Art}_{\C}$ is also possible.} 
\[\textit{Def}_{\B}:\Art\to \textrm{Set}\] encoding the formal deformations of $\B$ (Definition \ref{brane deformation functor}).    This functor is defined as the truncation of a certain formal groupoid $\De\uB(X,\J)$ (Definition \ref{full deformation groupoid}), which encodes all the relevant notions of equivalence for deformations (including those induced by generalized Hamiltonian symmetries).   The construction of this functor is more intricate than in the case of complex submanifolds discussed above, and is done in several steps.

As a first step in studying the deformations of a GC brane $\B$,  we consider the value of the functor $\textit{Def}_{\B}$ on the dual numbers $\DN$; these are the first-order deformations of $\B$. It was argued in \cite{KM} that such first-order deformations should correspond to elements of a certain Lie algebroid cohomology group associated to $\B$; we recover their result in a rigorous framework. 

\begin{theorem}\label{brane tangent} For every GC brane $\B$, there is a natural bijection between elements of $\textit{Def}_{\B}(\R[\epsilon]/(\epsilon^2))$ and elements of the Lie algebroid cohomology group $H^1(\B)$.
\end{theorem}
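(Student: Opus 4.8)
The plan is to evaluate the groupoid $\De\uB(X,\J)$ of Definition~\ref{full deformation groupoid} on the dual numbers $\DN$ and to identify $\textit{Def}_{\B}(\DN)$ with the set of isomorphism classes of objects of the resulting groupoid $\De\uB(\DN)$ (this is precisely the truncation). Since the maximal ideal of $\DN$ squares to zero, all of the defining data linearize: I expect that the objects of $\De\uB(\DN)$ form, canonically, the space $Z^1(\B)$ of $1$-cocycles of the complex $\big(C^\bullet(\B),d_\B\big)$ computing the Lie algebroid cohomology $H^\bullet(\B)$, where $C^k(\B)=\Cinf(\wedge^k \ell_\B^*)$ for the Lie algebroid $\ell_\B$ over $Z$ attached to $\B$, and that the morphisms of $\De\uB(\DN)$ act on $Z^1(\B)$ by translation by the $1$-coboundaries $B^1(\B)$. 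Passing to $\pi_0$ then yields $\textit{Def}_{\B}(\DN)\cong Z^1(\B)/B^1(\B)=H^1(\B)$, and the bijection is natural because each identification made along the way is canonical.

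First I would write down an explicit model for an object of $\De\uB(\DN)$. With $\B=(Z,\L)$ and $F$ the curvature of the unitary connection on $\L$, a deformation over $\DN$ consists of an infinitesimal normal displacement of $Z$, recorded by a section $s\in\Cinf(NZ)$, together with an infinitesimal variation of the connection on $\L$, recorded by a $1$-form $a\in\Omega^1(Z)$ modulo the first-order gauge ambiguity. The crucial point is that $(s,a)$ is not free: the requirement that the deformed pair again satisfy the brane compatibility condition for $\J$ --- i.e.\ that $\J$ preserve the ($F$-twisted) generalized tangent bundle $\tau\uB$ of the deformed brane --- is, expanded to first order in $\epsilon$, a linear differential condition on $(s,a)$. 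I would then construct the canonical isomorphism sending $(s,a)$ to a section $\sigma\in\Cinf(\ell_\B^*)=C^1(\B)$ and verify that under it the linearized brane condition becomes exactly the cocycle equation $d_\B\sigma=0$. Matching the first-order integrability of $\J$ with the Lie algebroid differential $d_\B$ is the heart of the computation; it comes down to a coordinate-level identity using the explicit descriptions of $\ell_\B$ and of $d_\B$.

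Next I would linearize the morphisms of $\De\uB(\DN)$. Such a morphism is built from a gauge transformation of $\L$ and a generalized Hamiltonian symmetry $(\varphi,u)$ of $(X,\J)$ reducing to the identity modulo $\epsilon$; infinitesimally both are captured by a single function-type parameter $f\in C^0(\B)=\Cinf(Z)$ (suitably complexified), acting through the generalized Hamiltonian vector field $\xx_f$ together with its accompanying change of connection. I would compute the first-order effect of such a symmetry on the cocycle $\sigma$ representing a deformation and show that it is exactly $\sigma\mapsto\sigma+d_\B f$. Consequently two first-order deformations are isomorphic in $\De\uB(\DN)$ precisely when their cocycles differ by a coboundary, so $\pi_0\big(\De\uB(\DN)\big)=H^1(\B)$, which completes the argument.

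The step I expect to be the main obstacle is the bookkeeping in the last two paragraphs: one must check that \emph{every} equivalence packaged into $\De\uB$ --- in particular the coupled action on the submanifold and on the connection produced by a generalized Hamiltonian symmetry $(\varphi,u)$, and the compatibility between the normal-bundle description of the deformed submanifold and the gauge redundancy of the connection --- collapses, after linearization, exactly onto the image of $d_\B$ in degree one, with nothing left over and nothing missing. Ensuring that $C^0(\B)$ matches the infinitesimal symmetries on the nose (rather than, say, producing a spurious extra copy of functions) is where the precise definitions of $\ell_\B$ and of $\De\uB$ must be used together; once that is pinned down, the remaining verifications are routine linear algebra together with a single differential identity.
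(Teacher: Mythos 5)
Your proposal follows essentially the same route as the paper: the paper also identifies a first-order deformation with a section $\xx$ of $\TT X|_Z$ (equivalently, via $\mu q$, an element of $C^1(\B)=\Cinf(l^\vee)$), shows the linearized compatibility condition with $\J$ is exactly $\delta_l(\mu q(\xx))=0$, and shows that the equivalences — the reparametrizations tangent to $Z$ together with gauge transformations absorb the quotient by $\TT\B$, while the formal generalized Hamiltonian symmetries $e^{\xx_{\epsilon f}}$ contribute precisely the coboundaries $\delta_l(\rho(f))$ — so that $\pi_0$ gives $H^1(\B)$. The bookkeeping step you flag as the main obstacle is indeed where the paper spends most of its effort (Lemmas on coboundary isomorphisms and on $\mu q r(\xx_f)=-i\delta_l(\rho(f))$), but your plan matches its resolution.
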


 The calculation needed to prove this result is somewhat involved--and different from the one in \cite{KM}--so the fact that we reach the same conclusion may be regarded as a check on our Definition \ref{brane deformation functor}.  To explain the connection to Theorem \ref{Z tangent} stated above, suppose $Z\subset X$ is a complex submanifold, regarded as a GC brane $\B_Z$.  There is a natural isomorphism
 \[H^1(\B_Z)\cong H^0(Z;\mathcal{O}_{NZ})\oplus H^0(Z;\Omega^1(Z)),\] where $\Omega^1(Z)$ denotes the sheaf of holomorphic one-forms on $Z$.  The first component $H^0(Z;\mathcal{O}_{NZ})$ corresponds to the possible deformations of the submanifold $Z$, as discussed above.  The second component $H^0(Z;\Omega^1(Z))$ corresponds to deformations of the trivial holomorphic line bundle on $Z$. From a different perspective, if we denote by $\mathcal{E}_Z$ the coherent sheaf of $\mathcal{O}_X$-modules determined by $Z$ (the push-forward of its structure sheaf), then there are natural isomorphisms
 \[H^k(\B_Z)\cong \textrm{Ext}^k(\mathcal{E}_Z,\mathcal{E}_Z)\] for each $k$.  In particular, first-order deformations of $\B_Z$ as a GC brane correspond to first-order deformations of $\mathcal{E}_Z$ as a coherent sheaf (since these are parameterized by $\textrm{Ext}^1(\mathcal{E}_Z,\mathcal{E}_Z)$).
  
To go deeper into the geometric content of the deformation functor (and formal groupoid) associated to a GC brane, we next take a closer look at the relationship between symmetries of the GC manifold $(X,\J)$ and the deformations of $\B$.  It is here we encounter the first fundamental difference from the complex and Lagrangian cases described above. In the case of a complex submanifold $Z$ of a complex manifold $(X,J)$, for instance, every deformation of $Z$ is locally induced by a (local) symmetry of $(X,J)$.  To explain this, consider the case of 1st order deformations, which by Theorem \ref{Z tangent} correspond to holomorphic sections of the normal bundle $NZ$.  Given such a section $\xi$, for each $z\in Z$ it is always possible to extend $\xi$ to a holomorphic vector field $\tilde{\xi}$ on some neighborhood $U\subset X$ of $z$; heuristically, the corresponding deformation is then given near $z$ by ``flowing" $Z$ along $\tilde{\xi}$ for an infinitesimal time.  An analogous construction in the case of an arbitrary $GC$ brane is \emph{not} always possible, however.  We call deformations that have this property \emph{locally trivializable}.  Although an arbitrary GC brane may have deformations which are not locally trivializable, we introduce a large class of branes, which we call  \emph{leaf-wise Lagrangian} (LWL) branes, and prove that their deformations are always locally trivializable (Theorem \ref{essentially surjective}). This is a non-trivial statement about the geometry of such branes, and relies on the existence of a local normal form (which we prove in Theorem \ref{LWL normal form}).

The final part of the paper is devoted to proving a version of Theorem \ref{Z obstructions} for GC branes.  The result we prove applies only to those branes with locally trivializable deformations (in particular, by Proposition \ref{essentially surjective}, to holds for LWL branes).  We prove the theorem using the machinery of (bi)semicosimplicial differential graded Lie algebras (DGLAs), as developed in \cite{FMM}\cite{I}\cite{BM}.  Specifically, by adapting a construction given in \cite{I},  we construct, for each brane $\B$,  a DGLA $L_{\B}$ (depending on a choice).  We prove that $L_{\B}$ governs the locally trivializable deformations of $\B$.    A concrete calculation involving $L_{\B}$ then allows us to prove the following result.

\begin{theorem}\label{brane obstructions} Let $\B$ be a leaf-wise Lagrangian brane on a GC manifold (or more generally a brane with locally trivializable deformations).  If the Lie algebroid cohomology group $H^2(\B)$ vanishes, then the functor $\textit{Def}_{\B}$ is unobstructed.
\end{theorem}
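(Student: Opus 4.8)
The plan is to deduce the theorem from a vanishing statement for the cohomology of the DGLA $L_{\B}$, together with the standard obstruction theory for the deformation functor of a DGLA. The first step is to identify $\textit{Def}_{\B}$ with $\textit{Def}_{L_{\B}}$, the functor of Maurer--Cartan elements of $L_{\B}$ modulo gauge equivalence. By construction $L_{\B}$ governs the \emph{locally trivializable} deformations of $\B$; and since $\B$ is leaf-wise Lagrangian (or, in the general form of the statement, has locally trivializable deformations by hypothesis), every deformation of $\B$ is locally trivializable by Proposition~\ref{essentially surjective}. Hence $\textit{Def}_{\B}\cong\textit{Def}_{L_{\B}}$. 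Under this identification the tangent space $\textit{Def}_{\B}(\DN)\cong H^1(L_{\B})$ agrees with $H^1(\B)$ by Theorem~\ref{brane tangent}; this is the degree-one instance of a comparison between $H^{\bullet}(L_{\B})$ and the Lie algebroid cohomology of $\B$ that the argument will establish below.

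Granting this identification, recall the basic principle of DGLA deformation theory \cite{M}\cite{I}: for any DGLA $L$ and any small extension $0\to I\to A'\to A\to 0$ in $\Art$ (so that $\mathfrak{m}_{A'}\cdot I=0$), each element of $\textit{Def}_{L}(A)$ carries a canonical obstruction class in $H^2(L)\otimes I$ that vanishes precisely when the element admits a lift to $\textit{Def}_{L}(A')$. Consequently $\textit{Def}_{L_{\B}}$, and therefore $\textit{Def}_{\B}$, is unobstructed as soon as $H^2(L_{\B})=0$. It thus suffices to show that the hypothesis $H^2(\B)=0$ forces $H^2(L_{\B})=0$.

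For this I would use the construction of $L_{\B}$ as the total (Thom--Whitney) DGLA of a bisemicosimplicial DGLA attached to a sufficiently fine open cover $\mathfrak{U}=\{U_i\}$ of $X$ adapted to the support of $\B$. In one cosimplicial direction this is the \v{C}ech-type nerve $p\mapsto \prod L(U_{i_0\cdots i_p})$, where the local DGLA $L(U)$ controls the deformations of $\B$ over $U$ (which are locally symmetry-induced, and so are governed by the explicit local model furnished by the normal form of Theorem~\ref{LWL normal form}); the second cosimplicial direction supplies the resolution needed to turn the local pieces into honest DGLAs. The comparison theorem for (bi)semicosimplicial DGLAs \cite{FMM}\cite{BM} then yields a spectral sequence converging to $H^{\bullet}(L_{\B})$ whose early pages are assembled from the local cohomologies $H^{\bullet}(L(U_{i_0\cdots i_p}))$. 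Computing these from the normal form --- and arranging the cover so that the relevant higher local cohomology vanishes --- one identifies the abutment with the hypercohomology of the complex of sheaves underlying the Lie algebroid of $\B$, that is, with $H^{\bullet}(\B)$. In particular $H^2(L_{\B})\cong H^2(\B)$, which is zero by hypothesis; this completes the argument.

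The hard part will be the third paragraph: extracting the local model for $L(U)$ from Theorem~\ref{LWL normal form} and pushing it through the spectral sequence of the bisemicosimplicial DGLA. One must handle the two cosimplicial directions in the correct order, check the vanishing of the auxiliary higher cohomology on a suitable cover so that only the Lie algebroid cohomology survives in the abutment, and verify that the resulting comparison is independent of the cover and of the choice on which $L_{\B}$ depends (which holds because distinct choices yield quasi-isomorphic DGLAs, hence isomorphic deformation functors). This is the ``concrete calculation'' referred to in the introduction, and it is where essentially all of the work lies; the reduction to a DGLA and the appeal to DGLA obstruction theory are formal.
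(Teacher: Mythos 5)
Your first two paragraphs match the paper's architecture: the paper constructs $L_{\B,\U}=\Ttot(V^{\bullet\bullet}_{\B,\U})$ from a bisemicosimplicial DGLA built out of $\T$, $\KKK$ and $\H(X)$ over a cover $\U$, proves (Theorem~\ref{DGLA for LWL}) that $\Del_{L_{\B,\U}}$ is equivalent to $\De^{\B}(X,\J)$ when $\B$ has locally trivializable deformations, and then invokes the standard fact (Theorem~\ref{H2 and obstructions}) that $H^2=0$ forces unobstructedness of the associated deformation functor. The gap is in your third step, which you yourself flag as ``where essentially all of the work lies'' but do not carry out. Two problems. First, you aim at an isomorphism $H^2(L_{\B})\cong H^2(\B)$, which is both stronger than needed and stronger than what the paper establishes: the paper only constructs an \emph{injective} linear map $H^2(C)\hookrightarrow H^2(\B)$, where $C=\Tot(V^{\bullet\bullet}_{\B,\U})$ is quasi-isomorphic to $L_{\B,\U}$; injectivity already gives $H^2(\B)=0\Rightarrow H^2(L_{\B,\U})=0$, which is all the theorem requires. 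Surjectivity is neither claimed nor obviously true, and your sketched route to it (choosing the cover so that ``the relevant higher local cohomology vanishes'' and identifying the abutment of a spectral sequence with Lie algebroid cohomology) is unjustified: the columns of $V^{\bullet\bullet}_{\B,\U}$ are two-term semicosimplicial Lie algebras concentrated in degree zero, not resolutions of the Lie algebroid complex of $\B$, so the identification of the relevant page of the spectral sequence is not automatic and is precisely what would have to be proved.

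Second, the actual content of the comparison is a concrete cocycle computation that your sketch does not supply. The paper writes out $C^1,C^2,C^3$ explicitly, normalizes a $2$-cocycle $(\{x_{\alpha\beta}\},\{(\tilde y_{\alpha\beta\gamma},f_{\alpha\beta\gamma})\},z)$ so that $z=0$ and $f_{\alpha\beta\gamma}=0$ (using a partition of unity and the generalized Hamiltonian fields $(0,d\tilde g_\alpha)$), passes to the \v{C}ech cocycle $\eta_{\alpha\beta}=qr(x_{\alpha\beta})$ in the soft sheaf of sections of $\N\B$, splits it as $\sigma_\beta-\sigma_\alpha$, and defines $\Phi([c])=[\zeta]$ with $\zeta|_{Z\cap U_\alpha}=\delta_l\mu(\sigma_\alpha)$; injectivity then uses the fact that a generalized holomorphic section of $\N\B$ lifts locally to an element of $\T(U_\alpha)$, which is a first-order instance of local trivializability. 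None of this is replaced by your appeal to a spectral sequence, so as written the proposal reduces the theorem to an unproved (and possibly unprovable in the stated form) comparison statement.
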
 

This result can clearly be sharpened in various situations: for example, in the case of a Lagrangian brane $\B$, it is not hard to see that the DGLA $L_{\B}$ is (homotopy) abelian, so that in this case $\df_{\B}$ is always unobstructed.  We leave the problem of strengthening Theorem \ref{brane obstructions}--as well as extending it to arbitrary branes--for future work. 

  \subsection{Organization of the paper}  In \S\ref{Courant algebroid} through \S\ref{LWL submanifolds}, we present the definitions and results in GC geometry that will be needed for the rest of the paper.  Although much of this material is well-known, there are a few definitions and results that have not--to the best of our knowledge--appeared before.  This includes, for example, the notion of a leaf-wise Lagrangian submanifold (or brane), which we introduce in Definition \ref{definition LWL submanifold}.  Our treatment of the symmetries of the standard Courant algebroid is also somewhat non-standard.  % Also new is Definition \ref{LWL submanifold} of a leaf-wise Lagrangian (LWL) submanifold, which appears in Theorem \ref{brane obstructions} stated above.  \S\ref{LWL submanifolds} is devoted to proving the existence of a local normal form these LWL submanifolds, stated in Theorem \ref{LWL normal form}. 

In \S\ref{section formal}, we recall the basic definitions and results about Artin algebras and nilpotent Lie algebras  we will need to formulate the definition of the functor $\df_{\B}$ associated to a GC brane (Definition \ref{brane deformation functor}).  In particular, we introduce a ``formal" (infinitesimal) version of the group  of symmetries of the Courant algebroid, and establish some of its basic properties.  In \S\ref{section deformations}, we construct the formal groupoid $\De\uB(X,\J)$ of deformations of a GC brane (Definition \ref{full deformation groupoid}); the deformation functor $\df_{\B}:\Art\to \Set$ of $\B$ is formed by taking $\pi_0$ of this groupoid (Definition \ref{brane deformation functor}).    

After defining the deformation functor (and formal groupoid), we proceed in  \S\ref{first order deformations} to prove Theorem \ref{brane tangent} about first-order deformations.  In \S\ref{section functoriality} we study the behavior of the deformation groupoid under equivalences between different branes, and in particular prove the invariance of the deformation functor under such equivalences.  In \S\ref{LWL branes example} we consider the deformations of leaf-wise Lagrangian branes; the main result of this section is Theorem \ref{essentially surjective}, which implies that every deformation of a LWL brane is locally induced by a symmetry of the ambient GC manifold. 
 In \S\ref{induced deformations}-\ref{cosimplicial groupoids} we investigate the relationship between deformations and symmetries in more detail.  
 
 The remainder of the paper is devoted to applying the theory of DGLAs to the deformation theory of GC branes.  In \S\ref{DGLA theory}, we recall some material from \cite{FMM}\cite{I}\cite{BM} concerning DGLAs, the construction of the Deligne groupoid, and semicosimplicial objects.  We then use this setup to construct a DGLA $L_{\B}$ governing the locally trivializable deformations of a GC brane $\B$.  In the final section (\S\ref{cohomology}), we use this construction to prove Theorem \ref{brane obstructions}.  Finally, there is a short appendix in which we prove a technical result stated in the main body of the paper.
 
\textbf{Acknowledgements:} I would like to thank Florian Sch\"{a}tz, Michael Bailey, Andrei Caldararu, Uli Bunke, and Dan Freed for useful discussions related to the subject matter of this paper.  I would also like to acknowledge the useful questions and feedback received in response to several talks I gave at the University of Regensburg.  Special thanks are also due to Simone Diverio for pointing me towards the reference \cite{I}; this was in response to a question I posed on the mathoverflow website, and I would also like to thank Domenico Fiorenza and Urs Schreiber for  useful answers.

\section{The standard Courant algebroid and its symmetries}\label{Courant algebroid}
Let $X$ be a smooth manifold, with tangent bundle $TX$ and cotangent bundle $T\uv X$.  We denote by $\TT X$ the direct sum $TX\oplus T\uv X$. We have the natural pairing 
\begin{align*} \la \cdot,\cdot \ra: \TT  X\oplus \TT X & \to X\times \R \\  (\xi,a), (\eta,b) & \mapsto \frac{1}{2}(\iota(\xi) b+\iota(\eta)a),\end{align*} as well as the Dorfman bracket 
\begin{align}\label{Dorfman Bracket} \ll\cdot,\cdot\rr: &\Cinf(\TTX)\times\Cinf(\TTX) \to \Cinf(\TTX) \\ & (\xi,a),(\eta,b) \mapsto ([\xi,\eta],\pounds(\xi)b-\iota(\eta)da),\notag\end{align} which is closely related to the Courant bracket \cite{G}.  The vector bundle $\TTX$ equipped with these structures, together with the projection $\pi:\TTX\to TX$ have the structure of an \emph{exact Courant algebroid} \cite{G,G2}.  In this paper we work with this ``standard" Courant algebroid, leaving for future work the extension of our results to a more general setting.    %We will see, however, that even in the case of the standard Courant algebroid it is useful conceptually to view it as being associated to the (trivial) gerbe.  

Let $\Diff(X)$ denote the group of diffeomorphisms of $X$, and $G(X)$ denote the opposite group $\Diff(X)^{op}$.  Given a diffeomorphism $\varphi:X \toco X$, we denote the corresponding element of $G(X)$ by $\varphi\us$, so that multiplication in $G(X)$ is given by $\varphi\us\psi\us=(\psi\varphi)\us$.  There is a left action of $G(X)$ on $\Omb(X)\oplus \Cinf(TX)$ by pullback, where by definition the pull-back of a vector field $\xi$ by a diffeomorphism $\varphi$ is the push-forward of $\xi$  by $\varphi^{-1}$:
\[\varphi\us\xi:=(\varphi^{-1})_*\xi.\]  This action is compatible with the differential graded commutative algebra structure on $\OmbX$, the Lie bracket on $\Cinf(TX)$, and the operation of contracting a vector field with a differential form.  By the formula (\ref{Dorfman Bracket}) for the Dorman bracket, it follows the action is also compatible with the Courant algebroid structure on $\TTX$ in the following sense: given any sections $\xx,\yy\in\Cinf(\TTX)$, and any $g=\varphi\us\in G(X)$, we have 
\begin{equation}\label{sym 1}\la g\xx,g\yy\ra=g\la\xx,\yy\ra,\end{equation} \begin{equation} \label{sym 2} \ll g\xx,g\yy\rr=g\ll\xx,\yy\rr,\end{equation} and 
\begin{equation}\label{sym 3} \pi(g\xx)=g\pi\xx.\end{equation}

A key feature of the Courant algebroid $(\TTX,\la\cdot,\cdot\ra,\ll\cdot,\cdot\rr,\pi)$ is that it admits a group of symmetries which is larger than $\Diff(X)$ \cite{G}. Namely, given a closed 2-form $B\in \Omega^2(X)$, the bundle map $e^B:\TTX\to \TTX$ given by 
\[(\xi,a)\mapsto (\xi,a-\iota(\xi)B)\] is compatible with the Dorfman bracket and preserves the pairing and the projection map. In particular, any 1-form $u\in\Omega^1(X)$ determines a symmetry of the Courant algebroid by setting $B=du$.  This motivates the following definition.  \begin{definition}\label{Definition cG} Let $\cG(X)$ denote the semi-direct product $G(X)\ltimes \Om^1(X)$, i.e. as a set $\cG(X)=G(X)\times \Om^1(X)$, and the group multiplication is given by the formula
\begin{equation}\label{cGmult}(\varphi\us,u)(\psi\us,w)=((\psi\varphi)\us,u+\varphi\us w).\end{equation} 
\end{definition}
\begin{proposition}\label{leftgroupaction}  There is a left action of $\cG(X)$ on $\Cinf(\TTX)$ given as follows: for each $g=(\varphi\us,u)\in\cG(X)$, and each $\xx\in \Cinf(\TTX)$, we define 
\begin{equation}\label{group action}g\cdot\xx=e^{du}\varphi\us\xx.\end{equation}  Moreover, for each $g\in \cG(X)$, the identities (\ref{sym 1}), (\ref{sym 2}), and (\ref{sym 3}) hold.
\end{proposition}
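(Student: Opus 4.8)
The plan is to verify directly that (\ref{group action}) satisfies the two axioms of a left action, and then to read off the identities (\ref{sym 1})--(\ref{sym 3}) from the known behaviour of the two pieces out of which the formula is built: the pull-back operators $\varphi\us$, which satisfy (\ref{sym 1})--(\ref{sym 3}) by the discussion preceding Definition \ref{Definition cG}, and the $B$-field transforms $e^{B}$ with $B$ closed, which preserve $\la\cdot,\cdot\ra$, $\ll\cdot,\cdot\rr$, and $\pi$ as recorded there.

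First I would isolate three elementary facts about operators on $\Cinf(\TTX)$. (a) For any $2$-forms $B_1,B_2$ one has $e^{B_1}e^{B_2}=e^{B_1+B_2}$, which is immediate from $(\xi,a)\mapsto(\xi,a-\iota(\xi)B)$ and the linearity of contraction. (b) For a diffeomorphism $\varphi$ and a $1$-form $w$ there is the intertwining relation
\[\varphi\us\circ e^{dw}=e^{d(\varphi\us w)}\circ\varphi\us,\]
which follows from the compatibility of pull-back with $d$ and with contraction, namely $\varphi\us(\iota(\xi)dw)=\iota(\varphi\us\xi)\,d(\varphi\us w)$, together with the fact that $e^{dw}$ leaves the $TX$-component unchanged. (c) Pull-back is a left action of $G(X)=\Diff(X)^{op}$, so $\varphi\us(\psi\us\xx)=(\psi\varphi)\us\xx$.

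Granting these, the verification is short. The neutral element $(\mathrm{id}\us,0)$ acts as the identity operator, since $e^{du}=e^{0}=\mathrm{id}$ when $u=0$ and pull-back by the identity diffeomorphism is trivial. For $g_1=(\varphi\us,u)$ and $g_2=(\psi\us,w)$ I would compute
\[g_1\cdot(g_2\cdot\xx)=e^{du}\,\varphi\us\!\left(e^{dw}\,\psi\us\xx\right)=e^{du}\,e^{d(\varphi\us w)}\,\varphi\us\psi\us\xx=e^{d(u+\varphi\us w)}\,(\psi\varphi)\us\xx,\]
the successive equalities using (\ref{group action}), then (b) followed by (a), then (c); comparing with the multiplication law (\ref{cGmult}) this is precisely $(g_1g_2)\cdot\xx$, so (\ref{group action}) does define a left action. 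I do not expect a substantial obstacle here; the one point requiring care is the bookkeeping with the conventions --- the opposite-group structure of $G(X)$, the convention $\varphi\us\xi=(\varphi^{-1})_*\xi$, and matching the intertwining relation (b) with the twisted sum $u+\varphi\us w$ that appears in (\ref{cGmult}).

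Finally, for the compatibility identities one lets $g=(\varphi\us,u)$ act on $\Cinf(X)$ and on $\Cinf(TX)$ through the projection $\cG(X)\to G(X)$, so that the $\Omega^1$-part acts trivially there, in agreement with the case $g\in G(X)$ treated before Definition \ref{Definition cG}. Since $e^{du}$ preserves the pairing and the Dorfman bracket (as $du$ is closed) and leaves the $TX$-component unchanged, each of (\ref{sym 1})--(\ref{sym 3}) collapses to the already-known identity for $\varphi\us$; for instance
\[\ll g\xx,g\yy\rr=\ll e^{du}\varphi\us\xx,\,e^{du}\varphi\us\yy\rr=e^{du}\ll\varphi\us\xx,\varphi\us\yy\rr=e^{du}\varphi\us\ll\xx,\yy\rr=g\ll\xx,\yy\rr,\]
and the same sort of reduction handles $\la\cdot,\cdot\ra$ and $\pi$. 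This completes the argument.
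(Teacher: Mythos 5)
Your proposal is correct and follows essentially the same route as the paper: the paper verifies the composition law by the same component-wise computation (its middle step is exactly your intertwining relation $\varphi\us\circ e^{dw}=e^{d(\varphi\us w)}\circ\varphi\us$, applied to the one-form component), and then derives (\ref{sym 1})--(\ref{sym 3}) from the closedness of $du$ just as you do. Your packaging of the computation into the operator identities (a)--(c) is only a cosmetic reorganization of the same argument.
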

\begin{rem}\label{2 group}  Since the action of a group element $(\varphi\us,u)\in \cG(X)$ on $\Cinf(\TTX)$ depends only on the diffeomorphism $\varphi$ and the 2-form $du\in\Omega^2(X)$, one might ask why we did not define $\cG(X)$ to be the semi-direct product of $\textrm{Diff}(X)^{op}$ with the space  of closed 2-forms on $X$ (as is perhaps more standard, see for example \cite{G2}).  The reason is that this group would not act on the collection of branes on $X$.  As discussed in the introduction, such a brane consists of a pair $\B=(Z,\L)$, where $Z\subset X$ is a submanifold, and $\L$ is a Hermitian line bundle with unitary connection supported on $Z$.  As discussed below in Remark \ref{geometric brane 2}, a diffeomorphisms acts on such a brane $\B$ by pull-back, whereas an element of the form $(0,u)\in \cG(X)$ acts by altering the connection on $\L$ (by adding $-2\pi iu|_Z$).

From a more fundamental point of view, we should view $\cG(X)$ as a stand-in for a certain 2-group $\mathcal{G}$ (a group object in categories).  An object of $\mathcal{G}$ is a pair $(\varphi^*,\mathcal{E})$, where $\varphi$ is a diffeomorphism of $X$, and $\mathcal{E}$ is a Hermitian line bundle with unitary connection (or equivalently, a principal $U(1)$-bundle) over $X$.  The bundle component $\mathcal{E}$ acts on a brane $(Z,\L)$ by first restricting $\mathcal{E}\uv$ to $Z$ and then tensoring with $\L$. This 2-group $\G$ is the symmetry group of the trivial gerbe (with connective structure) on $X$; such symmetries naturally act on the category of branes on X.

Given an element $(\varphi^*,u)\in\cG(X)$, for example, we may view it as an object $(\varphi\us,\mathcal{E})$ of $\G$, where $\mathcal{E}$ is the trivial Hermitian line bundle on $X$ with connection $\nabla=d+2\pi i$.  Viewed in this way, a \emph{morphism} between two elements $(\varphi^*,u)$ and $(\varphi^*,u')$ of $\cG(X)$ (with the same diffeomorphism component) consists of a function $g:X\to U(1)$ satisfying 
\[g^{-1}dg=2\pi i(u'-u).\]  Roughly speaking, the elements of $\cG(X)$ correspond to the objects of the ``identity component" of $\mathcal{G}$; for the purposes of this paper these are all the symmetries we need. 

Because we treat only the standard Courant algebroid $TX\oplus T\uv X$ in this paper (corresponding to the trivial gerbe on $X$), the extra categorical structure of the 2-group will not be needed explicitly.  In order to extend to the case of more general Courant algebroids, however, it will almost certainly be necessary to replace $\cG(X)$ with the relevant 2-group.
\end{rem}  We now return to the main exposition, and prove Proposition \ref{leftgroupaction}.

\begin{proof}  Let $\xx=(\xi,a)$.  We have  
\begin{align*} (\varphi\us,u)\cdot((\psi\us,w)\cdot\xx) &= (\varphi\us,u)\cdot(\psi\us\xi,\psi\us a-\iota(\psi\us\xi)dw)  \\
&= (\varphi\us\psi\us\xi,\varphi\us\psi\us a-\varphi\us(\iota(\psi\us\xi)dw)-\iota(\varphi\us\psi\us\xi)du) \\
&= ((\psi\varphi)\us\xi,(\psi\varphi)\us a-\iota(\varphi\us\psi\us\xi)d(\varphi\us w)-\iota(\varphi\us\psi\us\xi)du)\\
&= ((\psi\varphi)\us\xi,(\psi\varphi)\us a-\iota((\psi\varphi)\us\xi))d(u+\varphi\us w)) \\
&= ((\psi\varphi)\us,u+\varphi\us w)\cdot \xx \\
&= ((\varphi\us,u)(\psi\us,w))\cdot\xx
\end{align*}

Since for each 1-form $u\in\Omega^1(X)$ its exterior derivative $du\in\Omega^2(X)$ is closed, it follows that equations (\ref{sym 1}), (\ref{sym 2}), and (\ref{sym 3}) hold.
\end{proof}
\begin{rem}\label{symmetry groupoid} More generally, we may define a groupoid $\cG$ whose objects are smooth manifolds, such that a morphism from a manifold $X$ to a manifold $Y$ is a pair $(\varphi^*,u)$, where $\varphi: Y\toco X$ is a diffeomorphism and $u\in\Omega^1(Y)$.  Composition of morphisms in this groupoid is given by the formula (\ref{cGmult}).  For a fixed manifold $X$, the group $\cG(X)$ is then recovered as the automorphism group of $X$.  Note also that a morphism from $X\to Y$ in $\cG$ determines an isomorphism $\Cinf(\TTX)\to \Cinf(\TT Y)$ using the same formula (\ref{group action}).
\end{rem}
\begin{rem} For each $g=(\varphi\us,u)\in\cG(X)$, the action (\ref{group action}) on $\Cinf(\TTX)$ is induced by a bundle map $\TTX\to\TTX$ covering the diffeomorphism $\varphi^{-1}:X\xymatrix@C=1.5pc{  \ar[r]^-{\cong} & } X$.  In particular, for every bundle endomorphism $F:\TTX\to \TTX$, the map $\Cinf(\TTX)\to \Cinf(\TTX)$ given by 
\[\xx\mapsto g\cdot F(g^{-1}\cdot \xx)\] is induced by a unique bundle endomorphism $\TTX\to\TTX$, which we denote by $g\cdot F$. Clearly, the map $F\mapsto g\cdot F$ defines an action of $\cG(X)$ on $\Cinf(\End(\TTX))$.  \end{rem}

We next introduce a Lie algebra that is the infinitesimal version of  $\cG(X)$, in the same way that the infinitesimal version of $G(X)=\textrm{Diff}^{op}(X)$ is the Lie algebra of vector fields on $X$. 
\begin{definition}\label{definition cg}  Let $\g(X)$ denote $\Cinf(TX)$, viewed as a real Lie algebra with respect to the Jacobi-Lie bracket.  We define $\cg(X)$ to be the semi-direct product $\g(X)\ltimes \Omega^1(X)$, where the action of $\g(X)$ on $\Omega^1(X)$ is given by the Lie derivative.  Explicitly, as a vector space we have \[\cg(X)=\g(X)\oplus \Omega^1(X)\cong\Cinf(\TTX),\] and the bracket is given by the formula
\begin{equation}\label{ghatbracket}[(\xi,a),(\eta,b)]=([\xi,\eta],\pounds(\xi)b-\pounds(\xi)a)\end{equation} for each $\xi,\eta\in\g(X)$ and $a,b\in\Omega^1(X)$.
\end{definition}  
\begin{rem} Returning briefly to the discussion in Remark \ref{2 group}, it is actually more natural to view elements of $\cg(X)$ as objects of a certain Lie 2-algebra, corresponding to the \emph{infinitesimal} symmetries of the trivial gerbe on $X$.  Such symmetries were introduced and studied in the author's Ph.D. thesis \cite{Co}.  As mentioned in Remark \ref{2 group}, the fact that we deal only with the standard Courant algebroid $TX\oplus T\uv X$ allows us to avoid  the use of the full Lie 2-algebra structure explicitly.  Even in this case, however, the reader may find the categorical perspective conceptually useful.
\end{rem}

Note that, although the underlying vector space of $\cg(X)$ is isomorphic to $\Cinf(\TTX)$,  the Lie bracket (\ref{ghatbracket})  is \emph{not} the same as the Dorfman bracket (\ref{Dorfman Bracket}); in particular, the latter is not even a Lie bracket \cite{G}.  On the other hand, the two brackets are compatible in a certain sense, as explained in the following proposition.

  \begin{proposition}\label{Courant action prop} \begin{enumerate}\item There is a left action of the Lie algebra $\cg(X)$ on $\Cinf(\TTX)$ given by the same formula as the Dorfman bracket (\ref{Dorfman Bracket}), i.e. for each $\xi=(\xi,a)\in\cg(X)$ and each $\alpha=(\tau,c)\in \Cinf(\TTX)$ we define
\[\xx\cdot\alpha = ([\xi,\tau],\pounds(\xi)c-\iota(\tau)da).\] \item Let $F:\TTX\to \TTX$ be a bundle map. For each $\xi\in \cg(X)$,  the map $\Cinf(\TTX)\to \Cinf(\TTX)$ given by 
\begin{equation}\label{end action}\alpha\mapsto \xx\cdot(F\alpha)-F(\xx\cdot\alpha)\end{equation} for all $\alpha\in\Cinf(\TTX)$ is induced by a unique bundle endomorphism $\TTX\to \TTX$, which we denote by $\xx\cdot F$.   The map $F\mapsto \xx\cdot F$ defines a Lie algebra action of $\cg(X)$ on $\Cinf(\End(\TTX))$.
\end{enumerate}
\end{proposition}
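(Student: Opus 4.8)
The plan is to verify the two parts of Proposition~\ref{Courant action prop} in turn, reducing everything to standard identities for the Lie derivative and interior product. For part (1), I must check that the map $\xx \mapsto \xx\cdot(-)$ is a Lie algebra homomorphism from $\cg(X)$ (with the bracket~(\ref{ghatbracket})) to $\End_{\R}(\Cinf(\TTX))$. Concretely, writing $\xx=(\xi,a)$, $\yy=(\eta,b)$, $\alpha=(\tau,c)$, I would compute $\xx\cdot(\yy\cdot\alpha) - \yy\cdot(\xx\cdot\alpha)$ and show it equals $[\xx,\yy]\cdot\alpha$. The $TX$-component reduces to the Jacobi identity for the Jacobi--Lie bracket of vector fields. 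The $T\uv X$-component requires the identities $\pounds(\xi)\pounds(\eta) - \pounds(\eta)\pounds(\xi) = \pounds([\xi,\eta])$, the Leibniz-type identity $\pounds(\xi)\iota(\tau) - \iota(\tau)\pounds(\xi) = \iota([\xi,\tau])$, and $\iota(\tau)d\pounds(\xi) - \pounds(\xi)\iota(\tau)d = -\iota([\xi,\tau])d$ together with $\pounds(\xi)d = d\pounds(\xi)$; collecting the four resulting terms against~(\ref{ghatbracket}) should close the computation. This is a routine but slightly lengthy bracket chase; the only subtlety is bookkeeping signs, and I would organize it so that the $da$, $db$ terms that appear are matched against $\pounds(\xi)b - \pounds(\eta)a$ in the bracket formula.

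For part (2), the first point is that the assignment $\alpha \mapsto \xx\cdot(F\alpha) - F(\xx\cdot\alpha)$ is $\Cinf(X)$-linear in $\alpha$, hence induced by a bundle endomorphism. This follows from the derivation property of $\xx\cdot(-)$: for $f\in\Cinf(X)$ one has $\xx\cdot(f\alpha) = f(\xx\cdot\alpha) + (\pi(\xx)f)\alpha$, where $\pi(\xx)=\xi$ acts on $f$ by the directional derivative --- this is immediate from the formula for $\xx\cdot\alpha$ since $[\xi,f\tau] = f[\xi,\tau] + (\xi f)\tau$ and $\pounds(\xi)(fc) = f\pounds(\xi)c + (\xi f)c$ and $\iota(f\tau)da = f\iota(\tau)da$. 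Then the Leibniz terms cancel: $\bigl(\xx\cdot(F(f\alpha)) - F(\xx\cdot(f\alpha))\bigr) = f\bigl(\xx\cdot(F\alpha) - F(\xx\cdot\alpha)\bigr) + (\xi f)F\alpha - F((\xi f)\alpha) = f\bigl(\xx\cdot(F\alpha)-F(\xx\cdot\alpha)\bigr)$, using $\Cinf(X)$-linearity of the bundle map $F$. So $\xx\cdot F$ is well-defined as a section of $\End(\TTX)$, and uniqueness is clear since a bundle endomorphism is determined by its action on sections.

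It remains to check that $F \mapsto \xx\cdot F$ is a Lie algebra action, i.e.\ $\xx\cdot(\yy\cdot F) - \yy\cdot(\xx\cdot F) = [\xx,\yy]\cdot F$ as bundle endomorphisms. I would verify this at the level of the induced operators on $\Cinf(\TTX)$: writing $D_{\xx}$ for the operator $\alpha\mapsto \xx\cdot\alpha$, the operator induced by $\xx\cdot F$ is the commutator $[D_{\xx}, F]$ (where $F$ also denotes the operator $\alpha\mapsto F\alpha$), and similarly the operator induced by $\xx\cdot(\yy\cdot F)$ is $[D_{\xx},[D_{\yy},F]]$. Then
\[
[D_{\xx},[D_{\yy},F]] - [D_{\yy},[D_{\xx},F]] = [[D_{\xx},D_{\yy}],F] = [D_{[\xx,\yy]},F],
\]
by the Jacobi identity in $\End_{\R}(\Cinf(\TTX))$ together with part~(1), which gives $[D_{\xx},D_{\yy}] = D_{[\xx,\yy]}$. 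Since two bundle endomorphisms inducing the same operator on sections are equal, this proves $\xx\cdot(\yy\cdot F) - \yy\cdot(\xx\cdot F) = [\xx,\yy]\cdot F$. The main obstacle is purely the part~(1) computation --- the $T\uv X$-component bracket chase --- since part~(2) is then formal; I would make sure to state the interior-product/Lie-derivative commutation identities explicitly before grinding through it so the sign bookkeeping is transparent.
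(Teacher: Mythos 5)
Your proposal is correct and follows essentially the same route as the paper: part (1) by the direct bracket computation using $[\pounds(\xi),\pounds(\eta)]=\pounds([\xi,\eta])$, $\iota([\xi,\eta])=[\pounds(\xi),\iota(\eta)]$, and the Cartan formula, and part (2) by the Leibniz/$\Cinf(X)$-linearity cancellation followed by the formal commutator identity. The only difference is that you spell out the step the paper leaves as "easy to see" (that $[D_{\xx},[D_{\yy},F]]-[D_{\yy},[D_{\xx},F]]=[D_{[\xx,\yy]},F]$ follows from part (1) and the Jacobi identity for operator commutators), which is a welcome clarification but not a different argument.
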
 
\begin{proof}  To prove the first part, we must show that for every $\xx,\yy\in \cg(X)$, and every $\alpha\in\Cinf(\TTX)$, we have 
\begin{equation}\label{lieaction1}\xx\cdot(\yy\cdot \alpha)-\yy\cdot(\xx\cdot\alpha)=[\xx,\yy]\cdot \alpha.\end{equation} Writing $\xx=(\xi,a),\yy=(\eta,b)\in \cg(X)$, and $\alpha=(\tau,c)\in \Cinf(\TTX)$, we calculate
\begin{align*} (\xi,a)\cdot((\eta,b)\cdot(\tau,c)) & =(\xi,a)\cdot([\eta,\tau,\pounds(\eta)c-\iota(\tau)db)\\
& =([\xi,[\eta,\tau]],\pounds(\xi)\pounds(\eta)c-\pounds(\xi)\iota(\tau)db-\iota([\eta,\tau])da.\end{align*}
Therefore
\begin{align}\label{expression} & \xx\cdot (\yy\cdot\alpha)-\yy\cdot(\xx\cdot\alpha) \nonumber \\
& = ([\xi,[\eta,\tau]]-[\eta,[\xi,\tau]],\pounds(\xi)\pounds(\eta)c-\pounds(\eta)\pounds(\xi)c-\pounds(\xi)\iota(\tau)db+\pounds(\eta)\iota(\tau)da-\iota([\eta,\tau])da+\iota([\xi,\tau])db. 
\end{align}
 By the Jacobi identity for vector fields, we have 
\begin{equation}\label{calc 1}[\xi,[\eta,\tau]]-[\eta,[\xi,\tau]]=[[\xi,\eta],\tau].\end{equation} To calculate the one-form part, recall the following identities: given vector fields $\xi,\eta$, we have $[\pounds(\xi),\pounds(\eta)]=\pounds([\xi,\eta])$,  $\iota([\xi,\eta])=[\pounds(\xi),\iota(\eta)]$, and $\pounds(\xi)=d\iota(\xi)+\iota(\xi)d$.  The one-form component of (\ref{expression}) is therefore equal to
\begin{align}\label{calc 2} & [\pounds(\xi),\pounds(\eta)]c-\pounds(\xi)\iota(\tau)db+\pounds(\eta)\iota(\tau)da-\pounds(\eta)\iota(\tau)da+\iota(\tau)\pounds(\eta)da+\pounds(\xi)\iota(\tau)db-\iota(\tau)\pounds(\xi)db \nonumber\\
& =\pounds([\xi,\eta])c+\iota(\tau)(\pounds(\eta)da-\pounds(\xi)db) \nonumber\\
&= \pounds([\xi,\eta])c+\iota(\tau)(d\iota(\eta)da-d\iota(\xi)db) \nonumber\\
&= \pounds([\xi,\eta])c-\iota(\tau)d(\pounds(\xi)b-\pounds(\eta)a).
\end{align}  Combining (\ref{calc 1}) and (\ref{calc 2}) we see that (\ref{expression}) is equal to 
\[([\xi,\eta],\tau),( \pounds([\xi,\eta])c-\iota(\tau)d(\pounds(\xi)b-\pounds(\eta)a))=[(\xi,a),(\eta,b)]\cdot (\tau,c).\]   This includes the proof of the first part of the Proposition.

As for the second part, let $\xx=(\xi,a)\in\g(X)$, $T\in\Cinf(\End(\TTX)$, and $\alpha=(\eta,b)\in \Cinf(\TTX)$.  We have
\begin{align*} \xx\cdot(f\alpha) & =([\xi,f\eta],\pounds(\xi)(fb)-\iota(f\eta)da) \\
& = (f[\xi,\eta]+\xi(f)\eta,f\pounds(\xi)b+\xi(f) b-f\iota(\eta)da) \\
& = f\xx\cdot\alpha+\xi(f)\alpha.
\end{align*}  Since $T$ is linear over $\Cinf(X)$, we see that 
\begin{align*} (\xx\cdot T)(f\alpha) & =f\xx\cdot T(\alpha)+\xx(f)T\alpha-fT\xx\cdot\alpha-\xx(f)T\alpha\\
& = f(\xx\cdot T)(\alpha)
\end{align*}  This shows that the map $\Cinf(\TTX)\to \Cinf(\TTX)$ defined by the right-hand side of equation (\ref{end action}) is linear over functions, and therefore is induced by a well-defined section of $\Cinf(\End(\TTX))$.  It is also easy to see that equation (\ref{lieaction1}) implies that 
\[\xx\cdot(\yy\cdot T)-\yy\cdot(\xx\cdot T)=[\xx,\yy]\cdot T\] holds for every $\xx,\yy\in\cg(X)$.
\end{proof} 
 
 Recall that a vector field $\xi\in\g(X)=\Cinf(TX)$ is called \emph{complete} if it generates a flow  
\begin{align*} \Phi: & X\times \R\to X \\ & (x,t) \mapsto \varphi_t(x).\end{align*}  Given such a vector field $\xi\in \g(X)$, we define
\[e^{t\xi}=\varphi\us_t\in G(X).\]  The (left) action of $\g(X)$ on $\Omb(X)\oplus \Cinf(TX)$ by Lie derivative is the infinitesimal version of the (left) action of $G(X)=\textrm{Diff}^{op}(X)$ on $\Omb(X)\oplus \Cinf(TX)$ in the sense that for every complete $\xi\in\g(X)$, and every $\alpha\in\Omb(X)\oplus \Cinf(TX)$, we have 
\[\frac{d}{ds}|_{s=t}e^{s\xi}\alpha=\pounds(\xi)e^{t\xi}\alpha,\] where the derivative with respect to $s$ is defined point-wise.

Similarly, we say that an element $\xx=(\xi,a)\in\cg(X)$ is \emph{complete} if the vector field $\xi$ is.  In this case, define 
\[a^{t\xi}=\int_0^te^{s\xi}ads,\] and 
\[e^{t\xx}=(e^{t\xi},a^{t\xi})\in \cG(X).\]  We then have the following result, similar to \cite[Prop 2.3]{G2}.
\begin{proposition}\label{exp Dorfman}  Let $\xx\in\cg(X)$ be complete.  Then \begin{enumerate} \item  for every $t,t'\in\R$, we have 
\[e^{t\xx}e^{t'\xx}=e^{(t+t')\xx}.\]
\item For every every $\alpha\in\Cinf(\TT X)$ and every $t\in \R$, we have 
\[\frac{d}{ds}|_{s=t}e^{s\xx}\cdot \alpha=\xx\cdot(e^{t\xi}\cdot \alpha),\] where the derivative with respect to $s$ is defined point-wise.
\end{enumerate}
\end{proposition}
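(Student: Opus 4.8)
The plan is to prove the two parts in order, deducing (2) from (1) together with a single computation at $t=0$. For part (1) I would simply unwind the definitions. Write $\xx=(\xi,a)$, let $\{\varphi_t\}$ be the flow of the complete vector field $\xi$, and recall $e^{t\xi}=\varphi_t\us\in G(X)$ and $e^{t\xx}=(e^{t\xi},a^{t\xi})$ with $a^{t\xi}=\int_0^t e^{s\xi}a\,ds$ — a genuine smooth one-form, by smoothness of the flow in its parameters. The multiplication rule (\ref{cGmult}) gives
\[
e^{t\xx}e^{t'\xx}=\big((\varphi_{t'}\varphi_t)\us,\;a^{t\xi}+e^{t\xi}a^{t'\xi}\big).
\]
The diffeomorphism component is $\varphi_{t'}\varphi_t=\varphi_{t+t'}$, i.e. $e^{(t+t')\xi}$, by the flow property. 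For the one-form component I would split $a^{(t+t')\xi}=\int_0^t+\int_t^{t+t'}$, perform the change of variables $s=r+t$ in the second integral, and use the group law $e^{t\xi}e^{r\xi}=e^{(t+r)\xi}$ in $G(X)=\Diff(X)^{op}$ (which is the flow property read in the opposite group) to pull $e^{t\xi}$ out of the integral and recognize $\int_t^{t+t'}e^{s\xi}a\,ds=e^{t\xi}a^{t'\xi}$; hence the one-form component equals $a^{(t+t')\xi}$. In particular $e^{0\xx}$ is the identity, so $\{e^{t\xx}\}_{t\in\R}$ is a one-parameter subgroup of $\cG(X)$.

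For part (2), the key point is that (1) reduces the claim to $t=0$: writing $e^{s\xx}=e^{(s-t)\xx}e^{t\xx}$ and using that (\ref{group action}) is a left action (Proposition \ref{leftgroupaction}), we get $e^{s\xx}\cdot\alpha=e^{(s-t)\xx}\cdot\beta$ with $\beta:=e^{t\xx}\cdot\alpha\in\Cinf(\TTX)$, so that $\tfrac{d}{ds}\big|_{s=t}e^{s\xx}\cdot\alpha=\tfrac{d}{dr}\big|_{r=0}e^{r\xx}\cdot\beta$; it therefore suffices to prove $\tfrac{d}{dr}\big|_{r=0}e^{r\xx}\cdot\beta=\xx\cdot\beta$ for an arbitrary $\beta=(\eta,b)\in\Cinf(\TTX)$. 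By (\ref{group action}) and the definition of $e^{B}$,
\[
e^{r\xx}\cdot\beta=e^{d(a^{r\xi})}e^{r\xi}\beta=\big(e^{r\xi}\eta,\;e^{r\xi}b-\iota(e^{r\xi}\eta)\,d(a^{r\xi})\big),
\]
and I would differentiate at $r=0$ componentwise. The vector component gives $\tfrac{d}{dr}\big|_{r=0}e^{r\xi}\eta=\pounds(\xi)\eta=[\xi,\eta]$, which is the vector part of $\xx\cdot\beta$. For the one-form component, $\tfrac{d}{dr}\big|_{r=0}e^{r\xi}b=\pounds(\xi)b$, while the contraction term is handled by the Leibniz rule: since $a^{0\xi}=0$ the term hitting $e^{r\xi}\eta$ drops out, leaving $\iota(\eta)\,d\big(\tfrac{d}{dr}\big|_{r=0}a^{r\xi}\big)=\iota(\eta)\,da$ by the fundamental theorem of calculus. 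Hence the one-form component is $\pounds(\xi)b-\iota(\eta)\,da$, and altogether $\tfrac{d}{dr}\big|_{r=0}e^{r\xx}\cdot\beta=([\xi,\eta],\pounds(\xi)b-\iota(\eta)\,da)=\xx\cdot\beta$ by Proposition \ref{Courant action prop}(1).

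The proof has no genuinely hard step; the main thing requiring care is bookkeeping the conventions ($G(X)=\Diff(X)^{op}$, the convention $\varphi\us\xi=(\varphi^{-1})_*\xi$, the signs in $\pounds$, and the semidirect-product law (\ref{cGmult})), together with justifying the two interchanges of limits used in part (2): that the exterior derivative $d$ commutes with $\tfrac{d}{dr}$ (immediate in local coordinates from equality of mixed partials, given smooth dependence on $r$), and that $\iota(e^{r\xi}\eta)\,d(a^{r\xi})$ may be differentiated by the product rule, both factors being smooth in $r$. The cleanest move is the $t=0$ reduction via part (1), which avoids differentiating the more cumbersome expression $e^{d(a^{t\xi})}e^{t\xi}\beta$ at an arbitrary $t$.
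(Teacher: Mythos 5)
Your proposal is correct and follows essentially the same route as the paper: part (1) by the change-of-variables computation on $a^{t\xi}$, and part (2) by computing the derivative at $t=0$ componentwise and then using the one-parameter-group property from part (1) to transport the result to arbitrary $t$. The only difference is that you spell out the $t=0$ reduction explicitly, which the paper leaves as "easily verify."
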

\begin{proof}
Given $\xx=(\xi,a)$, we have 
\begin{equation}\label{multiplication}e^{t\xx}e^{t'\xx}=(\varphi_t^*\varphi_{t'}^*,a^{t\xi}+\varphi_t^*a^{t'\xi}).\end{equation}  We have 
\[\varphi_t^*a^{t'\xi}=\int_0^{t'}\varphi^*_{t+s}ads=\int_{t}^{t+t'}\varphi^*_{s'}ads',\] where the second equality follows from the change of variables $s'=s+t$.  Therefore 
\begin{align*}a^{t\xi}+\varphi_t^*a^{t'\xi} & =\int_0^t\varphi_s^*ads+\int_{t}^{t+t'}\varphi_s^*ads \\ &= \int_0^{t+t'}\varphi_s^*ads=a^{(t+t')\xi}.\end{align*}  Substituting into equation (\ref{multiplication}) we obtian the desired result
\begin{equation}\label{exponential rule}e^{t\xx}e^{t'\xx}=e^{(t+t')\xx}.\end{equation}
Given $\yy=(\eta,b)\in\Cinf(\TT X)$, consider
\begin{equation}\label{t derivative} \frac{d}{dt}|_{t=0}(e^{t\xx}\cdot \yy)=(\frac{d}{dt}|_{t=0}(\varphi_t^*\eta), \frac{d}{dt}|_{t=0}(\varphi_t^*b)-\frac{d}{dt}|_{t=0}(\iota(\varphi_t^*\eta)d\int_0^{t}\varphi_s^*ads).\end{equation}  We have 
\begin{equation}\label{t derivative 1}\frac{d}{dt}|_{t=0}(\varphi_t^*\eta)=[\xi,\eta]\end{equation} and 
\begin{equation}\label{t derivative 2}\frac{d}{dt}|_{t=0}(\varphi_t^*b)=\pounds(\xi)b.\end{equation}  Since $a^{t\xi}$ vanishes at $t=0$, the Leibnitz rule together with the fundamental theorem of calculus implies that  implies that 
\begin{equation}\label{t derivative 3}\frac{d}{dt}|_{t=0}(\iota(\varphi_t^*\eta)d\int_0^{t}\varphi_s^*ads=\iota(\eta)da.\end{equation} Substituting equations (\ref{t derivative 1}), (\ref{t derivative 2}), and (\ref{t derivative 3}) into (\ref{t derivative}), we see that 
\begin{equation}\label{t derivative 4}  \frac{d}{dt}|_{t=0}(e^{t\xx}\cdot \yy)=\ll \xx,\yy\rr.\end{equation}  

Finally, combining (\ref{exponential rule}) with (\ref{t derivative 4}), easily verify the second part of the proposition.

\end{proof}

\section{Generalized complex structures}\label{GC structures}
\subsection{Basic definitions and examples}

A good reference for the material in this subsection is \cite{G}. As described in the introduction, a \emph{generalized complex structure} on a manifold $X$ is an endomorphism 
\[\J:\TTX\to\TTX\] that preserves the pairing $\la\cdot,\cdot\ra$, satisfies $\J^2=-id_{\TTX}$, and satisfies a certain integrability condition.  To describe this condition,  note that, since $\J$ squares to minus the identity, we may decompose 
\[\TTX\otimes\C=L\oplus\bar{L},\] where $L$ and $\bar{L}$ are the $+i$ and $-i$ eigen-bundles of (the complex-linear extension of) $\J$, respectively.  We require that $\Cinf(L)$ be involutive with respect to the Dorfman bracket, i.e. we require that
\[\ll\Cinf(L),\Cinf(L)\rr\subset\Cinf(L).\]    

Given a GC structure $\J$ on $X$, the restriction of the Dorfman bracket to $\Cinf(L)$ endows $L$ with the structure of a complex Lie algebroid over $X$, with anchor map given by  projection $\pi:L\to TX\otimes\C$.  In particular, we may associate to $\J$ its \emph{generalized Dolbeault} complex
\[\xymatrix{ \Cinf(\Lambda^0L\uv) \ar[r]^{\delta_L} & \Cinf(\Lambda^1L\uv) \ar[r]^{\delta_L} & \Cinf(\Lambda^2L\uv) \ar[r]^-{\delta_L} & \cdots}.\] %We will denote this complex by $(C^{\bullet}(L),\delta_L)$ and its cohomology groups by $H^{\bullet}(L)$.  
For future reference, let us explicitly describe the first two differentials in this complex.  Given $f:X\to \C$ (viewed as a section of $\Lambda^0L\uv$), the section $\delta_Lf\in\Cinf(\Lambda L\uv)$ is given by 
\[\delta_Lf(\xx)=\pi(\xx)\cdot f\] for every $\xx\in\Cinf(L)$, where we recall $\pi:L\to TX\otimes\C$ is the projection (anchor) map.  Given $\alpha\in\Cinf(\Lambda L\uv)$, for every $\xx,\yy\in\Cinf(L)$ we have 
\[\delta_L\alpha(\xx,\yy)=\pi(\xx)\cdot \alpha(\yy)-\pi(\yy)\cdot\alpha(\xx)-\alpha([\xx,\yy]).\]

\begin{rem}A useful observation is that, since both $L$ and $\bar{L}$ are maximally isotropic sub-bundles of $\TTX\otimes \C$ (with respect to the $\C$-linear extension of the pairing), the pairing determines an isomorphism 
\[\bar{L}\xymatrix@C=1.5pc{  \ar[r]^-{\cong} & } L^{\vee}.\]  
\end{rem}
\begin{ex}\label{complex} Any ordinary complex structure $J$ on $X$ determines a GC structure given by  
\begin{equation}\label{Jcomplex}\J_{J}:=\left(\begin{array}{cc} -J & 0 \\ 0 & J^{\vee} \end{array}\right).\end{equation}  In this case we have 
\[L=(TX)^{0,1}\oplus(T\uv X)^{1,0}\] and 
\[L\uv\cong\bar{L}=(TX)^{1,0}\oplus(T\uv X)^{0,1}.\]  The generalized Dolbeault complex  is isomorphic to $\Omega^{0,\bullet}(X;\Lambda^{\bullet}(TX)^{1,0})$ with differential the $\bar{\partial}$-operator corresponding to the standard holomorphic structure on $\Lambda^{\bullet}(TX)^{1,0}$ \cite{G}.
\end{ex}
\begin{ex}\label{symplectic}  Let $\omega$ be a symplectic structure on $X$, viewed as an isomorphism $TX\to T\uv X$.   This determines a GC structure given by 
\begin{equation}\label{Jsymplectic} \J_{\omega}:=\left(\begin{array}{cc} 0 & -\omega^{-1} \\ \omega & 0 \end{array}\right).\end{equation}
In this case, we have 
\[L=e^{i\omega}(TX\otimes\C)=\{(\xi,-i\omega\xi):\xi\in TX\}.\]  The bundle map 
\[e^{i\omega}:TX\otimes\C\to L\] is an isomorphism of complex Lie algebroids, and in particular the generalized Dolbeualt complex is isomorphic to $(\Omega^{\bullet}(X;\C),d_{dR})$, the ordinary de-Rham complex of $X$ with complex coefficients.  
\end{ex}
\begin{ex} Given a GC manifolds $(X,\J)$ and $(X',\J')$, the product $X\times X'$ inherits a natural GC structure $\J\times\J'$.
\end{ex}
Given a GC manifold $(X,\J)$ with $+i$ eigen-bundle $L\subset \TTX\otimes \C$, the \emph{type} of $\J$ at a point $x\in X$ is the complex dimension of $L\cap (T\uv X\otimes \C)$; equivalently, it is the complex codimension of the projection $\pi_{TX\otimes\C}(L)\subset TX\otimes \C$.  The GC manifold is said to be \emph{regular} at a point $x\in X$ if the type of $\J$ is constant on some neighborhood of $x$.  

An equivalent definition of the type can be given as follows. Decompose $\J$ as 
\begin{equation}\J=\left(\begin{array}{cc} \J_{11} & \J_{12} \\ \J_{21} & \J_{22}\end{array}\right),\end{equation}
 where $\J_{11}:TX\to  TX$, $\J_{21}:T X\to T\uv X$, $\J_{12}:T\uv X\to TX$, and $\J_{22}:T\uv X\to T\uv X$.  The fact that $\J$ preserves the natural pairing and squares to minus the identity implies that $P:=\J_{12}:T\uv X\to TX$ is skew-symmetric, i.e. for every $a,b\in T\uv X$ we have 
\[a(P(b))=-b(P(a)).\]  In fact, the integrability of $\J$ implies that $P$ is a Poisson structure on $X$.  Defining $R=\Ker(P)\subset T\uv$, we easily check that $\J_{22}:T\uv\to T\uv$ preserves $R$ and in fact restricts to a complex structure on $R$.  It is then easy to check that the type of $\J$ is equal $\textrm{Dim}_{\C}(R)=\frac{1}{2}\textrm{Dim}_{\R}(R)$.  In particular,  $\J$ is regular at a point $x\in X$ if and only if the Poisson structure $P$ is regular at $x$, i.e. if and only if $\textrm{Dim}_{\R}(P(T\uv X))$ is constant on a neighborhood of $x\in X$.  

As an example, given a symplectic manifold $(X,\omega)$ and a complex manifold $(Y,J)$ of complex dimension $k$,  the product $(X\times Y,\J_{\omega}\times\J_J)$ is of type $k$ at every point; in particular, it is everywhere regular.  Conversely, the \emph{generalized Darboux theorem}, due to Gualtieri \cite{G}, says if $x$ is a regular point of an arbitrary GC manifold $(X,\J)$, then there exists some neighborhood of $x$ on which $\J$ is equivalent to such a product. Let us give a precise statement of this result in a form convenient for our purposes.  Endow $\R^{2m}\cong (\R^m)\uv$ with the standard symplectic structure
\[\omega=dx^{m+1}\wedge dx^1+\cdots +dx^{2m}\wedge dx^{m},\] and $\R^{2n}\cong \C^n$ with the standard complex structure.  Let $X_0^{m,n}=(\R^{2m+2n},\J_0^{m,n}=\J_{\omega}\times \J_J)$ be the product GC manifold.   We may then state (a slight variation) of the generalized Darboux theorem \cite[Th. 4.35]{G}.\begin{theorem}\label{generalized Darboux} Let $(X,\J)$ be a GC manifold of dimension $2(m+n)$, and let $x\in X$ be a regular point such that $\J$ is of type $n$ at $x$.  Then there exists a neighborhood $U$ of $x$, a neighborhood $U_0$ of the origin in $X^{m,n}_0$, a diffeomorphism $\Phi:U\toco U_0$ taking $x$ to the origin, and a 1-form $u\in \Omega^1(U)$, such that 
\[\Phi^*(\J^{m,n}_0|_{U_0})=e^u\cdot(\J|_{U}).\]
\end{theorem}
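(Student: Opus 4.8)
The plan is to reduce the statement to the version of the generalized Darboux theorem proved by Gualtieri in \cite[Th. 4.35]{G}, keeping careful track of the role played by the $B$-field (here written as a $1$-form $u$) and of the precise pointed/local data requested in the statement. Gualtieri's theorem says, roughly, that near a regular point a GC structure is equivalent---via a diffeomorphism together with a $B$-field transformation by a \emph{closed} $2$-form---to the product model $\J_0^{m,n}$. So the first step is to invoke that result on a (possibly small) neighborhood $V$ of $x$: it produces a neighborhood $V_0$ of the origin in $X_0^{m,n}$, a diffeomorphism $\Psi\colon V\toco V_0$ sending $x$ to the origin, and a closed $2$-form $B\in\Omega^2(V)$ with $\Psi^*(\J_0^{m,n}|_{V_0})=e^B\cdot(\J|_V)$, where $e^B$ acts on endomorphisms of $\TT V$ as conjugation by the bundle map $(\xi,a)\mapsto(\xi,a-\iota(\xi)B)$.

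Second, I would shrink $V$ so that it is diffeomorphic to a ball (in particular contractible and hence has vanishing $H^2_{dR}$): then the closed $2$-form $B$ is exact, $B=du$ for some $u\in\Omega^1(V)$, by the Poincar\'e lemma. After replacing $U$ by this smaller neighborhood, $U_0$ by the corresponding image $\Psi(U)$, and $\Phi$ by $\Psi|_U$, we get exactly $\Phi^*(\J_0^{m,n}|_{U_0})=e^{du}\cdot(\J|_U)=e^u\cdot(\J|_U)$ in the notation of the paper (where, as in Definition \ref{Definition cG} and Proposition \ref{leftgroupaction}, the symbol $e^u$ denotes the action of $(0,u)\in\cG(X)$, i.e. the $B$-field transform by $du$). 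This matches the asserted identity, with $x$ mapped to the origin as required.

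Third, a small bookkeeping point: one should check that the dimension and type conventions agree, i.e. that a regular point of type $n$ in dimension $2(m+n)$ is precisely the case covered by the model $X_0^{m,n}$. This follows from the discussion immediately preceding the statement, where the type is identified with $\mathrm{Dim}_\C(R)$ for $R=\mathrm{Ker}(P)$ and regularity with regularity of the Poisson structure $P$; at a point of type $n$ the symplectic leaf through $x$ has real dimension $2m$, so the local model is the product of the standard symplectic $\R^{2m}$ with the standard complex $\C^n$, which is $X_0^{m,n}$. One then invokes Gualtieri's theorem in exactly this normalization.

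The only genuine content beyond citing \cite{G} is the passage from a closed $2$-form to an exact one, which is why the neighborhood must be shrunk; the main (very mild) obstacle is thus simply making sure that the neighborhood produced by Gualtieri's theorem can be taken contractible without losing the pointed normalization---this is automatic since one may always intersect with a coordinate ball around $x$ and restrict $\Phi$, $B$, and $u$ accordingly. No estimate or hard analysis is involved; the proof is essentially ``apply \cite[Th. 4.35]{G}, then apply the Poincar\'e lemma.''
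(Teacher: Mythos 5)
Your proposal is correct, and it matches the paper's intent: the paper gives no proof of Theorem \ref{generalized Darboux} at all, stating it only as ``a slight variation'' of Gualtieri's result \cite[Th. 4.35]{G}. The only content beyond the citation is exactly what you supply --- shrinking to a contractible neighborhood so that the closed $2$-form $B$ becomes exact, $B=du$, which converts Gualtieri's $B$-field transform into the action of $(0,u)\in\cG(U)$ in the sense of Definition \ref{Definition cG} --- and your bookkeeping of the type/dimension conventions and the pointed normalization is accurate.
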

 \subsection{Generalized Holomorphic Vector Fields}

Let $\J$ be a GC structure on $X$, and consider the projection maps
\[ \pi_L:\TTX\otimes\C\to L\] given by
\[\xx\mapsto \xx^{(1,0)}:= \frac{1}{2}(\xx-i\J\xx).\] and
 \[ \pi_{\bar{L}}:\TTX\otimes\C\to \bar{L}\] given by
\[\xx\mapsto \xx^{(0,1)}:= \frac{1}{2}(\xx+i\J\xx).\] 
  
Also define $\mu:\TTX\to L\uv$ by  
\begin{equation}\label{mudef} \mu(\xx)= 2\la\xx,\cdot\ra|_{L}.\end{equation}  We easily check that 
\[\J^{\vee}\mu(\xx)=\mu(-\J\xx),\] so that if we regard $\TTX$ as a complex vector bundle with complex structure $-\J$, $\mu$ is a map of complex vector bundles. We also note that, since $\xx=\xx^{(1,0)}+\xx^{(0,1)}$ and $L$ is isotropic, we have
\[\mu(\xx)=2\la \xx^{(0,1)},\cdot\ra|_L.\]  Since $\pi_{\bar{L}}:(\TTX,-\J)\xymatrix@C=1.5pc{  \ar[r]^-{\cong} & } \bar{L}$ is an isomorphism of complex vector bundles, and $\bar{L}$ pairs non-degenerately with $L$, it follows that $\mu$ is an isomorphism.

\begin{definition}\label{generalized holomorphic vector field} A section $\xx\in \cg(X)\cong C^{\infty}(\TTX)$ is a \emph{generalized holomorphic vector field} if it satisfies
\begin{equation}\label{ghol}\delta_{L}\mu(\xx)=0.\end{equation}  We denote the space of all generalized holomorphic vector fields by $\T(X)\subset \cg(X)$.
\begin{notation} The space of generalized holomorphic vector fields of course depends on the GC structure $\J$.  When necessary, will use the more precise notation $\T(X,\J)$ to indicate which GC structure is appearing in the condition (\ref{ghol}).
\end{notation} 
\end{definition} The next proposition shows that we may view the space of generalized holomorphic vector fields as the infinitesimal symmetries of the GC structure $\J$.
\begin{proposition}\label{holomorphic symmetries} For each $\xx\in\cg(X)$, we have 
\[\xx\cdot\J=0\] if and only if 
\[\delta_{L}\mu(\xx)=0.\] In particular, if $\xx$ is complete then 
\[e^{\xx}\J=\J.\]
 \end{proposition}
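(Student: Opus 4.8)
The plan is to prove the equivalence $\xx\cdot\J = 0 \iff \delta_L\mu(\xx) = 0$ by unwinding both sides into statements about how the Dorfman-bracket action of $\xx$ interacts with the eigenbundle decomposition $\TTX\otimes\C = L\oplus\bar L$. First I would reduce ``$\xx\cdot\J = 0$'' to a condition on $L$ alone: since $\J$ is determined by its $+i$-eigenbundle $L$, and $\xx\cdot\J$ is the endomorphism $\alpha\mapsto \xx\cdot(\J\alpha) - \J(\xx\cdot\alpha)$ (Proposition \ref{Courant action prop}(2)), the condition $\xx\cdot\J = 0$ is equivalent to requiring that the operator $\alpha\mapsto \xx\cdot\alpha$ commute with $\J$, which in turn is equivalent to $\xx\cdot\Cinf(L)\subset\Cinf(L)$ (preserving the $+i$-eigenspace automatically forces preservation of the $-i$-eigenspace, since the action is real). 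So the whole statement becomes: $\ll\xx,\Cinf(L)\rr\subset\Cinf(L)$ if and only if $\delta_L\mu(\xx)=0$. Note here I am using that on sections of $L$ the action $\xx\cdot\,\cdot\,$ by the Dorfman formula agrees with $\ll\xx,\cdot\rr$.

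Next I would translate the condition $\ll\xx,\Cinf(L)\rr\subset\Cinf(L)$ into the vanishing of $\delta_L\mu(\xx)$ using the isomorphism $\mu:\TTX\xrightarrow{\cong}L\uv$ from equation (\ref{mudef}) and the fact that $\bar L\cong L\uv$ via the pairing. The key computational identity is a ``Cartan-type'' formula expressing $\delta_L(\mu(\xx))$ in terms of the Dorfman bracket: for $\yy,\zz\in\Cinf(L)$, one should have
\[\delta_L(\mu(\xx))(\yy,\zz) = \pi(\yy)\cdot\langle\xx,\zz\rangle_{\text{suitably normalized}} - \pi(\zz)\cdot\langle\xx,\yy\rangle - \langle\xx,\ll\yy,\zz\rr\rangle,\]
and this should be rearrangeable — using compatibility of the pairing with the Dorfman bracket (the ``ad-invariance'' type identity $\pi(\yy)\langle\xx,\zz\rangle = \langle\ll\yy,\xx\rr,\zz\rangle + \langle\xx,\ll\yy,\zz\rr\rangle$, which holds for the Dorfman bracket) — into an expression proportional to $\langle \ll\xx,\yy\rr, \zz\rangle$ (or $\langle\ll\yy,\xx\rr,\zz\rangle$). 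Since $\zz$ ranges over all of $\Cinf(L)$ and $\bar L$ pairs nondegenerately with $L$ while $L$ is isotropic, $\langle\ll\xx,\yy\rr,\zz\rangle = 0$ for all such $\zz$ is exactly the statement that $\ll\xx,\yy\rr$ has no $\bar L$-component, i.e.\ $\ll\xx,\yy\rr\in\Cinf(L)$. Running this for all $\yy\in\Cinf(L)$ gives the equivalence. I would also need to check that $\delta_L\mu(\xx)$, a priori an element of $\Cinf(\Lambda^1 L\uv)$, is the right object to be hitting with $\delta_L$ — which is automatic since $\mu(\xx)\in\Cinf(L\uv) = \Cinf(\Lambda^1 L\uv)$.

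The main obstacle I expect is bookkeeping the factors of $2$ and $i$ and confirming the precise Cartan-type identity for $\delta_L$ applied to a $1$-form built from the pairing — in particular verifying that the ``anomalous'' terms (the ones involving derivatives of $\xx$ rather than of $\yy,\zz$) assemble correctly, which relies on the Dorfman bracket's Leibniz/invariance properties rather than on it being a genuine Lie bracket. One has to be careful because the Dorfman bracket is not skew-symmetric, so the symmetrization implicit in writing $\delta_L\alpha(\yy,\zz)$ needs to be handled honestly; the saving grace is that $\alpha = \mu(\xx)$ and we only test against $\yy,\zz\in\Cinf(L)$, where the relevant symmetric correction terms vanish by isotropy of $L$.

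Finally, the last sentence follows immediately: if $\xx$ is complete, then by Proposition \ref{exp Dorfman}(2) the path $t\mapsto e^{t\xx}\cdot\J$ in $\Cinf(\End(\TTX))$ satisfies $\frac{d}{dt}(e^{t\xx}\cdot\J) = \xx\cdot(e^{t\xx}\cdot\J)$; since $\xx\cdot\J = 0$ and the action is by a linear ODE, uniqueness of solutions gives $e^{t\xx}\cdot\J = \J$ for all $t$, in particular $e^{\xx}\J = \J$. (One should note that $e^{t\xx}\cdot\J$ is indeed again a GC structure for each $t$, or simply observe that we only need the flatness of the curve $e^{t\xx}\cdot\J$ as an endomorphism-valued path, which the ODE provides.)
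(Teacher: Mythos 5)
Your proposal is correct and follows essentially the same route as the paper: first reducing $\xx\cdot\J=0$ to $\ll\xx,\Cinf(L)\rr\subset\Cinf(L)$ via the eigenbundle decomposition, then using maximal isotropy of $L$ to test against the pairing and the ad-invariance and symmetrization identities of the Dorfman bracket to identify $\la\ll\xx,v\rr,w\ra$ with $\tfrac{1}{2}\delta_L(\mu(\xx))(w,v)$. The concluding ODE argument for the complete case also matches the paper's intent.
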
  
 \begin{rem} It is easy to see from this Proposition that the subspace $\T(X)\subset \cg(X)$ is closed under the Lie bracket (\ref{ghatbracket}).
 \end{rem}
\begin{proof} The following lemma will be useful for the proof of the proposition. \begin{lemma} For each $\xx\in\cg(X)$, we have $\xx\cdot\J=0$ if and only if 
\[\xx\cdot\Cinf(L)\subset\Cinf(L).\] 
\end{lemma}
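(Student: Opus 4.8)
The plan is to prove the equivalence by unwinding both conditions into statements about the action of $\xx$ on sections of $\TTX\otimes\C$ and its eigenbundle decomposition. First I would observe that the operator $\alpha\mapsto \xx\cdot\alpha$ extends $\C$-linearly to $\Cinf(\TTX\otimes\C)$, and that by the second part of Proposition \ref{Courant action prop} the endomorphism $\xx\cdot\J$ is determined by the formula $(\xx\cdot\J)(\alpha)=\xx\cdot(\J\alpha)-\J(\xx\cdot\alpha)$. So $\xx\cdot\J=0$ says precisely that the derivation $\xx\cdot(-)$ commutes with $\J$ on sections.

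The key step is then: a derivation commuting with $\J$ is equivalent to that derivation preserving the $+i$-eigenbundle $L$. The "only if" direction is immediate --- if $\xx\cdot(-)$ commutes with $\J$ and $\alpha\in\Cinf(L)$, then $\J(\xx\cdot\alpha)=\xx\cdot(\J\alpha)=\xx\cdot(i\alpha)=i(\xx\cdot\alpha)$, so $\xx\cdot\alpha\in\Cinf(L)$. For the "if" direction, suppose $\xx\cdot\Cinf(L)\subset\Cinf(L)$. I would first check that $\xx\cdot(-)$ also preserves $\Cinf(\bar L)$: this follows because the action is compatible with the pairing (identity (\ref{sym 2}) and its infinitesimal version --- concretely, $\pi(\xx)\cdot\la\alpha,\beta\ra = \la\xx\cdot\alpha,\beta\ra+\la\alpha,\xx\cdot\beta\ra$, which is the infinitesimal form of (\ref{sym 1})), together with the fact that $\bar L = L^\perp$. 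Indeed, if $\beta\in\Cinf(\bar L)$ and $\alpha\in\Cinf(L)$ is arbitrary, then $\la\xx\cdot\beta,\alpha\ra = \pi(\xx)\cdot\la\beta,\alpha\ra - \la\beta,\xx\cdot\alpha\ra = 0 - 0 = 0$ since $\xx\cdot\alpha\in\Cinf(L)$; as $\alpha$ ranges over $\Cinf(L)$ this forces $\xx\cdot\beta\in\Cinf(L^\perp)=\Cinf(\bar L)$. Once both $\Cinf(L)$ and $\Cinf(\bar L)$ are preserved, the relation $\xx\cdot(\J\alpha)=\J(\xx\cdot\alpha)$ can be verified separately on sections of $L$ (where $\J$ acts as $i$) and on sections of $\bar L$ (where $\J$ acts as $-i$), and these span $\Cinf(\TTX\otimes\C)$; since $\xx\cdot\J$ is $\Cinf(X)$-linear and vanishes on a spanning set it vanishes identically.

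The remaining work is to connect $\xx\cdot\Cinf(L)\subset\Cinf(L)$ to the cocycle condition $\delta_L\mu(\xx)=0$, which is where the bulk of the computation lies and which I expect to be the main obstacle. The idea is to use the isomorphism $\mu:\TTX\toco L^\vee$ (extended $\C$-linearly, noting $\mu$ intertwines $-\J$ with $\J^\vee$, so its complexification restricts to an iso $\bar L\toco L^\vee$ via the pairing), and to rewrite $\xx\cdot\Cinf(L)\subset\Cinf(L)$ as the vanishing of a certain $L^\vee$-valued object. Concretely, for $\alpha\in\Cinf(L)$ the component of $\xx\cdot\alpha$ lying in $\bar L$ --- i.e. the obstruction to $\xx$ preserving $L$ --- when paired against $L$ and expressed via $\mu$, should reproduce exactly $\delta_L\mu(\xx)$ evaluated on the relevant sections. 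This amounts to comparing the explicit formula for $\delta_L$ on $1$-forms (given in the excerpt, $\delta_L\alpha(\xx,\yy)=\pi(\xx)\cdot\alpha(\yy)-\pi(\yy)\cdot\alpha(\xx)-\alpha([\xx,\yy])$, where $[\cdot,\cdot]$ is the Dorfman bracket restricted to $L$) with the expression for $\la\xx\cdot\alpha,\beta\ra$ for $\alpha,\beta\in\Cinf(L)$, using the Dorfman-bracket compatibility and the definition $\mu(\xx)=2\la\xx,\cdot\ra|_L$. The final clause, that completeness of $\xx$ gives $e^{\xx}\J=\J$, then follows immediately from part (2) of Proposition \ref{exp Dorfman}: the curve $t\mapsto e^{t\xx}\J$ satisfies $\frac{d}{dt}(e^{t\xx}\J) = \xx\cdot(e^{t\xx}\J)$ (applying the derivation pointwise through $e^{t\xx}$ and using that $e^{t\xx}$ acts compatibly with all structures), and since $\xx\cdot\J=0$ the endomorphism $e^{t\xx}\J$ is constant in $t$, hence equal to its value $\J$ at $t=0$.
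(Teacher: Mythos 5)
Your ``only if'' direction and your reduction of the ``if'' direction to the statement that $\xx\cdot(-)$ preserves both $\Cinf(L)$ and $\Cinf(\bar L)$ are fine, and the latter part matches the paper's argument. But the step where you derive $\xx\cdot\Cinf(\bar L)\subset\Cinf(\bar L)$ is broken. You invoke $\bar L=L^{\perp}$, which is false: $L$ is \emph{maximal} isotropic for the $\C$-bilinear extension of $\la\cdot,\cdot\ra$, so $L^{\perp}=L$, while $\bar L$ pairs \emph{non-degenerately} with $L$ (this non-degeneracy is exactly what makes $\mu:\TTX\to L^{\vee}$ an isomorphism). Consequently both terms you set to zero are nonzero in general: for $\beta\in\Cinf(\bar L)$ and $\alpha\in\Cinf(L)$, neither $\la\beta,\alpha\ra$ nor $\la\beta,\xx\cdot\alpha\ra$ vanishes, so the computation $\la\xx\cdot\beta,\alpha\ra=0-0=0$ does not go through; and even if it did, the conclusion would place $\xx\cdot\beta$ in $L^{\perp}=L$, not in $\bar L$. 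Trying to test instead against $\gamma\in\Cinf(\bar L)$ (using $\bar L^{\perp}=\bar L$) is circular, since the term $\la\beta,\xx\cdot\gamma\ra$ then requires the very invariance of $\Cinf(\bar L)$ you are proving.

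The repair is a one-liner, and it is what the paper does: $\xx\in\cg(X)$ is a \emph{real} section, so the derivation $\alpha\mapsto\xx\cdot\alpha$ commutes with complex conjugation on $\Cinf(\TTX\otimes\C)$; hence $\xx\cdot\Cinf(\bar L)=\overline{\xx\cdot\Cinf(L)}\subset\overline{\Cinf(L)}=\Cinf(\bar L)$. With that substitution your argument closes (the paper finishes by writing $v=\pi_L(v)+\pi_{\bar L}(v)$ and checking $(\xx\cdot\J)(v)=0$ componentwise, which is equivalent to your spanning-set argument). Note also that the material in your last paragraph on $\delta_L\mu(\xx)$ and on $e^{\xx}\J=\J$ belongs to the surrounding Proposition, not to this Lemma, whose content is only the equivalence $\xx\cdot\J=0\Leftrightarrow\xx\cdot\Cinf(L)\subset\Cinf(L)$.
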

\begin{proof} Suppose $\xx\cdot\J=0$.  Then for every $v\in\Cinf(L)$ we have 
\[0=\xx\cdot(\J v)-\J(\xx\cdot v)=i\xx\cdot v-\J(\xx\cdot v)\] so that 
\[\J(\xx\cdot v)=i\xx\cdot v,\] so we see that $\xx\cdot v\in\Cinf(L)$.  Conversely, suppose that $\xx\cdot\Cinf(L)\subset\Cinf(L)$.  Since $\xx$ is real it follows that $\xx\cdot\Cinf(\bar{L})\subset\Cinf(\bar{L})$ also.  For arbitrary $v\in\Cinf(\TTX\otimes\C)$, we see that 
\[\xx\cdot\pi_L(v)=\pi_L(\xx\cdot\pi_L(v))=\pi_L(\xx\cdot v)\] and 
\[\xx\cdot\pi_{\bar{L}}(v)=\pi_{\bar{L}}(\xx\cdot\pi_{\bar{L}}(v))=\pi_{\bar{L}}(\xx\cdot v).\] Therefore 
\begin{align*} (\xx\cdot\J)(v) &= \xx\cdot(\J v)-\J(\xx\cdot v) \\
& =i\xx\cdot\pi_L(v)-i\xx\cdot\pi_{\bar{L}}(v)-i\pi_L(\xx\cdot v)+i\pi_{\bar{L}}(\xx\cdot v)\\
&= 0.
\end{align*}  
\end{proof} 
By the lemma, to prove the Proposition, we must show that $\delta_L\mu(\xx)=0$ if and only if $\xx\cdot\Cinf(L)\subset\Cinf(L)$.  Since $L$ is a maximal isotropic subspace of $\TTX\otimes \C$, we have $\xx\cdot\Cinf(L)\subset\Cinf(L)$ if and only if for every pair of sections $v,w\in\Cinf(L)$ we have 
\begin{equation}\label{preservation condition}\la\xx\cdot v,w\ra=0.\end{equation}  We claim that the left-hand side of equation (\ref{preservation condition}) is equal to $\frac{1}{2}\delta_L(\mu(\xx))(w,v)$, so that it vanishes for arbitrary $v$ and $w$ if and only if $\xx\in\T(X)$.  To see this, recall the following two identities satisfied by the Dorfman bracket (see e.g. \cite[\S 3.2]{G})
\begin{enumerate} \item For every $A,B\in \Cinf(\TTX)$ 
\begin{equation}\label{Dorfman1}\ll A,B\rr=-\ll B,A\rr +2(0,d\la A,B\ra).\end{equation}
\item For every $A,B,C\in\Cinf(\TTX)$ 
\begin{equation}\label{Dorfman2}\pi(A)\la B,C\ra=\la\ll A,B\rr,C\ra+\la B,\ll A,C\rr\ra.\end{equation}
\end{enumerate}  Using these we have
\begin{align*} \la \xx\cdot v,w\ra &= \la \ll\xx,v\rr,w\ra \\
&= -\la\ll v,\xx\rr,w\ra+2\la d\la \xx,v\ra,w\ra \\
&= -(\pi(v)\la\xx,w\ra-\la\xx,[v,w]\ra)+\pi(w)\la\xx,v\ra \\
&= \pi(w)\la\xx,v\ra-\pi(v)\la\xx,w\ra-\la\xx,[w,v]\ra \\
&=\frac{1}{2}\delta_L(\mu(\xx))(w,v).
\end{align*} 

\end{proof}

\begin{ex}  Given a complex structure $J$ on $X$, let $\J_J$ be the induced GC structure described in Example \ref{complex}.  It is straightforward to check that
a vector field $\xi\in \Cinf(TX)$ and a 1-form $u\in \Omega^1(X)$ determine a generalized holomorphic vector field $\xx=(\xi,u)$ on $(X,\J_J)$ if and only if 
\begin{equation}\label{holomorphic 1} \bar{\partial}\xi^{1,0}=0\end{equation} and 
\begin{equation}\label{holomorphic 2} \bar{\partial}u^{0,1}=0.\end{equation}

Note that condition (\ref{holomorphic 1}) is equivalent to requiring 
\[\pounds(\xi)J=0,\] whereas condition (\ref{holomorphic 2}) is equivalent to requiring that $du$ be of complex type $(1,1)$.  

\end{ex}
\begin{ex} Given a symplectic structure $\omega$ on $X$, let $\J_{\omega}$ be the induced GC structure described in Example \ref{symplectic}. In this case,
a vector field $\xi\in \Cinf(TX)$ and 1-form $u\in \Omega^1(X)$ determine a generalized holomorphic vector field $\xx=(\xi,u)$ on $(X,\J_{\omega})$ if and only if 
\[\pounds(\xi)\omega=0\] and \[du=0.\]  
Thus, an infinitesimal symmetry of $\J_{\omega}$ consists of an infinitesimal symplectomorphism $\xi$ together with a closed 1-form $u$.

\end{ex}

\subsection{Generalized Hamiltonian Vector Fields}

Given a symplectic manifold $(X,\omega)$ and a real-valued function $f:X\to \R$, recall that the \emph{Hamiltonian vector field} associated to $f$ is defined (up to a sign convention) by \begin{equation}\label{ordinary hamiltonian} X_f=-\omega^{-1}(df).\end{equation}  This vector field is an infinitesimal symmetry of the symplectic structure in the sense that 
\[\pounds_{X_f}\omega=0,\] and the collection of all Hamiltonian vector fields form a sub-algebra of $\Cinf(TX)$.  We next introduce a generalization of this construction.  Namely, given a GC manifold $(X,\J)$, for every \emph{complex}-valued function $f:X\to \C$ we will define infinitesimal symmetry $\xx_f$ of $\J$.  %In the case that $\J$ corresponds to a symplectic structure and $f$ is real-valued, we have $\xx_f=X_f$ is the ordinary Hamiltonian vector field associated to $f$. Even this case, however, we will see that extra symmetries we obtain by allowing complex-valued $f$ play an important role.
\begin{definition}\label{genhamdef}  Let $(X,\J)$ be a GC manifold. For every smooth function $f:X\to \C$, the \emph{generalized Hamiltonian vector field} $\xx_f\in\Cinf(\TTX)$ associated to $f$ is given by  
\begin{equation}\label{genham}\xx_f=-\textrm{Re}(2i(0,df)^{(0,1)})=\textrm{Re}(\J(0,df)-(0,idf)).\end{equation}
\end{definition}

The author learned of this construction (for real-valued $f$) in \cite[Prop 6]{H2}, where it is shown that $\xx_f$ is an infinitesimal symmetry of $(X,\J)$.  We give a different proof of this result (for complex-valued $f$) as part (1) of the following proposition.  To the best of our knowledge,  part (2) of Proposition \ref{hamiltonian} has not appeared before.

\begin{proposition}\label{hamiltonian} \begin{enumerate} \item For every $f:X\to \C$, the generalized Hamiltonian vector field $\xx_f$ is an element of $\T(X)$, i.e. satisfies 
\[\delta_{L}\mu(\xx_f)=0.\]
\item The collection $\H$ of generalized Hamiltonian vector fields is a Lie sub-algebra of $\T(X)$.
\end{enumerate}
\end{proposition}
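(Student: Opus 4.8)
The plan is to establish a single identity,
\[\mu(\xx_f) = -i\,\delta_L f,\]
valid for every smooth $f:X\to\C$, and to deduce both parts from it. To prove the identity I would use that $\mu(\xx) = 2\la\xx^{(0,1)},\cdot\ra|_L$ for any section, reducing matters to computing the projection $\xx_f^{(0,1)} = \pi_{\bar{L}}(\xx_f)$. By Definition \ref{genhamdef} we have $\xx_f = \textrm{Re}(w)$ with $w := -2i(0,df)^{(0,1)}\in\Cinf(\bar{L})$; since complex conjugation interchanges $L$ and $\bar{L}$, the $\bar{L}$-component of $\textrm{Re}(w) = \tfrac12(w+\bar w)$ is just $\tfrac12 w$, so $\xx_f^{(0,1)} = -i(0,df)^{(0,1)}$. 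Hence $\mu(\xx_f) = -2i\la(0,df)^{(0,1)},\cdot\ra|_L = -2i\la(0,df),\cdot\ra|_L = -i\,\mu((0,df))$, the middle step because the $(1,0)$-part is isotropic against $L$. Finally, evaluating on $v\in\Cinf(L)$, $\mu((0,df))(v) = 2\la(0,df),v\ra = \iota(\pi v)\,df = \pi(v)\cdot f = \delta_L f(v)$, so $\mu((0,df)) = \delta_L f$ and the identity follows.

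Part (1) is then immediate: $\delta_L\mu(\xx_f) = -i\,\delta_L^2 f = 0$, so $\xx_f\in\T(X)$ by Proposition \ref{holomorphic symmetries} (and $e^{\xx_f}\J=\J$ when $\xx_f$ is complete). For part (2), $\H\subseteq\T(X)$ by part (1), so it remains to show $\H$ is closed under the bracket (\ref{ghatbracket}) of $\cg(X)$. First I would record the auxiliary fact that for a real function $h$ the section $(0,dh)$ is generalized Hamiltonian: Definition \ref{genhamdef} applied to $ih$ gives $\xx_{ih} = \textrm{Re}\big(i\,\J(0,dh) + (0,dh)\big) = (0,dh)$, since $(0,dh)$ is real and $i\,\J(0,dh)$ is purely imaginary. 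Next, writing $\xx_f = (\xi,a)$ and $\xx_g = (\eta,b)$ --- both real sections, being values of $\textrm{Re}$ --- a comparison of (\ref{ghatbracket}) with the Dorfman bracket (\ref{Dorfman Bracket}) gives
\[[\xx_f,\xx_g] = \ll\xx_f,\xx_g\rr - (0,\,d(\iota(\eta)a)),\]
and $\iota(\eta)a$ is a real function, so by the auxiliary fact the correction term lies in $\H$. Thus it suffices to show $\ll\xx_f,\xx_g\rr\in\H$. Here I would use that $\mu$ is an isomorphism: evaluating on $v\in\Cinf(L)$ and applying the Courant-algebroid identity (\ref{Dorfman2}),
\[\mu(\ll\xx_f,\xx_g\rr)(v) = \pi(\xx_f)\cdot\big(\mu(\xx_g)(v)\big) - \mu(\xx_g)(\xx_f\cdot v);\]
substituting $\mu(\xx_g) = -i\,\delta_L g$ and using $\xx_f\cdot v = \ll\xx_f,v\rr\in\Cinf(L)$ (valid as $\xx_f\in\T(X)$, by the Lemma in the proof of Proposition \ref{holomorphic symmetries}) together with $\pi(\ll\xx_f,v\rr) = [\pi(\xx_f),\pi(v)]$, the right-hand side collapses to $-i\,\pi(v)\cdot\big(\pi(\xx_f)\cdot g\big) = \mu(\xx_{\pi(\xx_f)\cdot g})(v)$. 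By injectivity $\ll\xx_f,\xx_g\rr = \xx_{\pi(\xx_f)\cdot g}\in\H$, hence $[\xx_f,\xx_g]\in\H$; as $\H$ is visibly an $\R$-subspace, it is a Lie subalgebra of $\T(X)$.

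The only genuinely substantive step is the identity $\mu(\xx_f) = -i\,\delta_L f$ of the first paragraph --- in particular, getting the projection $\xx_f^{(0,1)}$ right --- since this is exactly where the precise normalization in Definition \ref{genhamdef} is used; everything afterwards is bookkeeping with the Dorfman identities and the isomorphism $\mu$. (A minor point in the write-up will be to match sign conventions so that the function $\pi(\xx_f)\cdot g$ emerging at the end is recognizably a Poisson-bracket type expression in $f$ and $g$.)
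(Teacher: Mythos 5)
Your proposal is correct. For part (1) it follows the same route as the paper: the whole content is the identity $\mu(\xx_f)=-i\delta_L f$, which the paper asserts as an easy calculation and you actually verify (the computation of the projection $\xx_f^{(0,1)}=-i(0,df)^{(0,1)}$ and the vanishing of the pairing of the $(1,0)$-part against $L$ are exactly the omitted details), after which $\delta_L^2=0$ finishes the argument.

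For part (2) you take a genuinely different route. The paper splits into cases according to the real and imaginary parts of $f$ and $g$ and, in the real--real case, carries out a long direct computation with the Nijenhuis tensor to exhibit $h$ with $[\xx_f,\xx_g]=\xx_h$. You instead prove the closed formula $\ll\xx_f,\xx_g\rr=\xx_{\pi(\xx_f)\cdot g}$ by evaluating $\mu$ against $v\in\Cinf(L)$, using the Courant-algebroid axiom (\ref{Dorfman2}), the identity $\mu(\xx_g)=-i\delta_L g$ from part (1), and the fact that $\xx_f\cdot v\in\Cinf(L)$ (which follows from part (1) together with Proposition \ref{holomorphic symmetries} and its Lemma); you then pass from the Dorfman bracket to the Lie bracket (\ref{ghatbracket}) via the correction term $(0,d(\iota(\eta)a))=\xx_{i\iota(\eta)a}$, which is itself Hamiltonian. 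All steps check out. Integrability of $\J$ still enters, but only through the Lie algebroid structure on $L$ (well-definedness of $\delta_L$ and $\delta_L^2=0$) rather than through an explicit Nijenhuis computation. Your approach buys a case-free, coordinate-free argument and a clean formula for the Hamiltonian function of the bracket; indeed, specializing to $\J=\J_\omega$ your formula gives $[\xx_f,\xx_g]=\xx_{\iota(Pdf)dg}=([X_f,X_g],0)$ for real $f,g$, which has the correct sign, whereas the paper's final expression $h=\iota(Pdg)df+i\iota(Pdf)Kdg$ appears to carry a sign error in its real part (harmless for the proposition, which only asserts closure under the bracket). What the paper's direct computation buys in exchange is independence from Proposition \ref{holomorphic symmetries} and an explicit view of where the vanishing of the Nijenhuis tensor is used.
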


\begin{proof} Given $f:X\to \C$, one easily calculates using Definition \ref{genhamdef} and and (\ref{mudef}) that 
\[\mu(\xx_f)=-i\delta_{L}f,\] where on the right-hand side $f$ is regarded as a section of $\Lambda^0 L\uv$.  The first part of the proposition therefore follows from the identity $\delta_{\J}^2=0$.
To prove the second part, we must show that given any $f,g:X\to \C$, there exists $h:X\to\C$ such that $[\xx_f,\xx_g]=\xx_h$.  First, suppose that $f$ and $g$ are purely imaginary.  Writing $f=if_I$ and $g=ig_I$, we have 
\[[\xx_f,\xx_g]=[(0,df_I),(0,dg_I)]=0,\] so the result is trivial in this case.  Next, suppose that $f$ is purely real and $g=ig_I$ is purely imaginary.  Decomposing the GC structure as
\begin{equation}\label{J decom}\J=\left(\begin{array}{cc} J & P \\ \sigma & K\end{array}\right),\end{equation} we have 
\[[\xx_f,\xx_g]=[(P df,Kdf_r),(0,dg_I)]=(0,\pounds(P df)dg_I).\]  Using the Cartan formula for the Lie derivative, we see that this is equal to 
\[(0,d\iota(P df)dg_I)=\xx_{i\iota(P df)dg_I}.\]  The last case, where both $f$ and $g$ are purely real, is more difficult, and uses the integrability of $\J$ in a crucial way.  It will be useful to rewrite the integrability condition for $\J$.  To do so, first recall the definition of the \emph{Courant Bracket} on sections of $\TT X$, which is given by a formula closely related to that for the Dorfman bracket \cite{G}:
\[[(\xi,a),(\eta,b)]_C=([\xi,\eta],\pounds(\xi)b-\pounds(\eta)a-\frac{1}{2}d(\iota(\xi)b-\iota(\eta)a)).\]  Given an almost generalized complex structure $\J$, we define the \emph{Nijenhuis tensor} $\Nij:\TT X\otimes \TT X\to \TT X$ of $\J$ by the formula 
\[\Nij(A,B)=[\J A,\J B]_C-\J[\J A,B]_C-\J[A,\J B]_C-[A,B]_C\] for every pair of sections $A,B\in\Cinf(\TT X)$.  Integrability of $\J$ is equivalent to the vanishing of $\Nij$ \cite{AB}.

Returning to the proof of Proposition \ref{hamiltonian}, the integrability of $\J$ implies that, for every pair of real-valued functions $f$ and $g$ we have 
\begin{align}\label{nij0}0 & =\Nij((0,df),(0,dg)) \notag\\
& =[\J(0,df),\J(0,dg)]_C-\J[\J(0,df),(0,dg)]_C-\J[(0,df),\J(0,dg)]_C-[(0,df),(0,dg)]_C.\end{align}  In terms of the decomposition (\ref{J decom}) of $\J$, we have $[\J(0,df),\J(0,dg)]_C$=
\begin{equation}\label{eqn11} ([P df, P dg],\pounds(P df)Kdg-\pounds(P dg)Kdf-\frac{1}{2}d\iota(P df)dg+\frac{1}{2}d\iota(P dg)df). \end{equation} It is easy to check that $P:T\uv X\to TX$ must be skew-symmetric, in the sense that for every $a,b\in \Omega^1(X)$ we have $a(P(b))=-b(P(a)$.  Therefore the expression (\ref{eqn11}) is equal to  
\begin{equation}\label{nij1}  ([P df, P dg],\pounds(P df)Kdg-\pounds(P dg)Kdf-d\iota(P df)dg).\end{equation}
%\begin{equation}\label{nij2} \J(
Similarly, we calculate 
\begin{align}\label{nij2} \J[\J(0,df),(0,dg)]_C & = \J(0,\pounds(P df)dg-\frac{1}{2}d\iota(P df)dg) \notag\\
& = (P\pounds(P df)dg-\frac{1}{2}P d\iota(P df)dg, K\pounds(P df)dg-\frac{1}{2}K d\iota(P df)dg) \notag \\
& = (P d\iota(P df)dg-\frac{1}{2}P d\iota(P df)dg,K d\iota(P df)dg-\frac{1}{2}Kd\iota(P df)dg) \notag \\
& =(\frac{1}{2}P d\iota(P df)dg-,\frac{1}{2}K d\iota(P df)dg) ,\end{align} and 
\begin{align}\label{nij3} \J[(0,df),\J(0,dg)]_C & =(-P \pounds(P dg)df+\frac{1}{2}P d\iota(P dg)df, -K \pounds(P dg)df+\frac{1}{2}Kd\iota(P dg)df)\notag \\
& = (-P d\iota(P dg)df+\frac{1}{2}P d\iota(P dg)df,-K d\iota(P dg)df+\frac{1}{2}K d\iota(P dg)df) \notag\\
& =  (-\frac{1}{2}P d\iota(P dg)df,-\frac{1}{2}K d\iota(P dg)df) .\end{align}  Also, we easily see that $[(0,df),(0,dg)]_C=0$.  Adding the 1-form components of (\ref{nij1}), (\ref{nij2}), and (\ref{nij3}) (and again using the skew symmetry of $P$), the 1-form component of equation (\ref{nij0}) yields:
\[0=\pounds(P df)Kdg-\pounds(P dg)Kdf-d(\iota(P df)Kdg)-Kd\iota(P dg)df,\] or equivalently 
\begin{equation}\label{nij4}\pounds(P df)Kdg-\pounds(P dg)Kdf=d(\iota(P df)Kdg)+Kd\iota(P dg)df.\end{equation}  On the other hand, adding the vector field components of (\ref{nij1}), (\ref{nij2}), and (\ref{nij3}) we see that
\begin{equation}\label{nij5}[P d\iota(P df)dg,P d\iota(P dg)df)=P d\iota(P dg)df.\end{equation}  With equations (\ref{nij4}) and (\ref{nij5}) in hand, we calculate
\begin{align} [\xx_f,\xx_g] & = [(P(0,df),K(0,df),(P(0,df),K(0,df))] \notag \\
& = ([P(0,df),P(0,dg)],\pounds(P (0,df))K(0,dg)-\pounds(P (0,dg))K(0,df)) \notag \\
& = (P d\iota(P dg)df, Kd\iota(P dg)df)+(0,d\iota(P df)Kdg).
\end{align}  If we define $h=\iota(P dg)df+i\iota(P df)Kdg$, we see that 
\[[\xx_f,\xx_g]=\xx_h.\]
\end{proof}

\begin{ex} In the case that $\J$ is induced by an ordinary complex structure $J$, the generalized Hamiltonian vector field associated to $f:X\to \C$ is given by 
\[(0,2\textrm{Re}(\bar{\partial}f)).\]
\end{ex}
\begin{ex}  Suppose $\J$ is induced by a symplectic structure $\omega$. Given $f:X\to \C$, write $f=f_R+if_I$ for  real-valued functions $f_R,f_I$.  The generalized Hamiltonian vector field associated to $f$ is given by 
\[\xx_f=(X_{f_R},df_I),\] where $X_{f_R}$ is the ordinary Hamiltonian vector field for the function $f_R:X\to \R$ given by formula (\ref{ordinary hamiltonian}).
\end{ex}

\section{Generalized complex submanifolds and branes}\label{GC submanifolds}
\subsection{Generalized submanifolds}

The following definition is a special case of \cite[Def. 7.4]{G}.
\begin{definition} A \emph{generalized submanifold} of a smooth manifold $X$ is a pair $(Z,F)$, where $Z\subset X$ is a submanifold, and $F\in \Omega^2(Z)$ is a closed 2-form.
\end{definition}
\begin{rem}\label{geometric brane} As discussed in the introduction, we will primarily be interested in a related structure, which we call (following \cite{KL}) a (rank 1) \emph{brane} on $X$.  Such an brane is a pair $\B=(Z,\L)$, where $Z\subset X$ is a submanifold, and $\L$ is a Hermitian line bundle with unitary connection supported on $Z$.  In particular, by setting $F\in\Omega^2(Z)$ to be the curvature form of $\L$, every such $\B$ determines a generalized submanifold (which we sometimes call the \emph{underlying} generalized submanifold of $\B$) .  When we study the deformation theory of such branes later in the paper, it will actually be conventient to use a slightly different--but essentially equivalent--definition (Definition \ref{brane definition 1})
\end{rem}

Given a submanifold $Z\subset X$, let $i:Z\hookrightarrow X$ denote the inclusion, and let $(TX)|_Z$ and $(T\uv X)|_Z$ denote the restrictions to $Z$ of the tangent and cotangent bundles of $X$.  We then have the push-forward 
\[i_*:TZ\to (TX)|_Z\] and pull-back 
\[i^*:(T\uv X)|_Z\to T\uv Z.\] 
\begin{definition}\label{gen tangent def} \cite[Def. 7.5]{G} The \emph{generalized tangent bundle} $\TT(Z,F)$  of a generalized submanifold $(Z,F)$ is the sub-bundle of $\TTX|_Z$ whose fiber at $z\in Z$ is given by 
\[\TT_z(Z,F)=\{(i_*\xi,a)\in T_zX\oplus T\uv_zX:\xi\in T_zZ, i^*a=\iota(\xi)F\}.\]
\end{definition}
\begin{rem} The generalized tangent bundle $\TT(Z,F)$ is a maximal isotropic sub-bundle of $\TTX|_Z$ with respect to the (restriction of) the pairing $\la\cdot,\cdot\ra$.  It fits into an exact sequence
\[\xymatrix{ 0 \ar[r] & \Ann(TZ) \ar[r] & \TT(Z,F) \ar[r] & TZ \ar[r] & 0}.\]  When $F=0$, there is a natural splitting of this sequence $TZ\to \TT(Z,F)$ given by 
\[\xi\mapsto (i_*\xi,0),\] which exhibits $\TT(Z,F)$ as the direct sum $TZ\oplus\Ann(TZ)$. 
\end{rem} 
\begin{definition}\label{definition K} Let \[r:\cg(X)=\Cinf(\TTX)\to \Cinf(\TTX|_Z)\] denote the restriction map.  We define $K^{(Z,F)}\subset \cg(X)$ by 
\[K^{(Z,F)}=r^{-1}(\Cinf(\TT(Z,F))).\]  In other words, $K^{(Z,F)}$ consists of those section of $\TTX$ which extend section of $\TT(Z,F)$.
\end{definition} 

\begin{lemma}\label{K closed under bracket}  The subspace $K^{(Z,F)}\subset \Cinf(\TTX)$ is closed under the Dorfman bracket.
\end{lemma}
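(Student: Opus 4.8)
The plan is to verify directly that if $\xx=(\xi,a)$ and $\yy=(\eta,b)$ are sections of $\TTX$ whose restrictions to $Z$ lie in $\Cinf(\TT(Z,F))$, then the same holds for their Dorfman bracket $\ll\xx,\yy\rr=([\xi,\eta],\pounds(\xi)b-\iota(\eta)da)$. Unwinding Definition \ref{gen tangent def}, the hypothesis says that along $Z$ the vector fields $\xi,\eta$ are tangent to $Z$ — write $\xi_Z,\eta_Z\in\Cinf(TZ)$ for the resulting (smooth) vector fields on $Z$, obtained by pushing $\xx|_Z,\yy|_Z$ through the projection $\TT(Z,F)\to TZ$ of the exact sequence in the remark following Definition \ref{gen tangent def} — and that $i^*a=\iota(\xi_Z)F$, $i^*b=\iota(\eta_Z)F$ as $1$-forms on $Z$. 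I then need to check the two conditions that place $\ll\xx,\yy\rr|_Z$ in $\TT(Z,F)$: that $[\xi,\eta]$ is tangent to $Z$, and that $i^*(\pounds(\xi)b-\iota(\eta)da)=\iota([\xi,\eta]|_Z)F$.

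First I would record the standard facts that make restriction to $Z$ compatible with the operations in question, even though the vector fields are only assumed tangent to $Z$ along $Z$ and not on a neighborhood: (i) $[\xi,\eta]$ is again tangent to $Z$ along $Z$, with $[\xi,\eta]|_Z=[\xi_Z,\eta_Z]$ (proved by testing against functions vanishing on $Z$, resp.\ against extensions of functions on $Z$); (ii) for any differential form $\alpha$ on $X$, $i^*(\iota(\xi)\alpha)=\iota(\xi_Z)(i^*\alpha)$, evaluating both sides on tangent vectors of $Z$; and (iii) combining (ii) with Cartan's formula $\pounds(\xi)=d\iota(\xi)+\iota(\xi)d$ and naturality of $d$, $i^*(\pounds(\xi)\alpha)=\pounds(\xi_Z)(i^*\alpha)$. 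Fact (i) disposes of the vector-field component; facts (ii) and (iii) reduce the $1$-form component of $\ll\xx,\yy\rr|_Z$ to $\pounds(\xi_Z)(i^*b)-\iota(\eta_Z)\,d(i^*a)$.

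Then I would substitute $i^*b=\iota(\eta_Z)F$ and $i^*a=\iota(\xi_Z)F$ and compute, using the identity $[\pounds(\xi_Z),\iota(\eta_Z)]=\iota([\xi_Z,\eta_Z])$ (already invoked in the proof of Proposition \ref{Courant action prop}):
\[\pounds(\xi_Z)\iota(\eta_Z)F=\iota([\xi_Z,\eta_Z])F+\iota(\eta_Z)\pounds(\xi_Z)F,\]
and then, since $F$ is closed, $\pounds(\xi_Z)F=d\iota(\xi_Z)F+\iota(\xi_Z)dF=d\iota(\xi_Z)F$. Hence the $1$-form component becomes
\[\iota([\xi_Z,\eta_Z])F+\iota(\eta_Z)\,d\iota(\xi_Z)F-\iota(\eta_Z)\,d\iota(\xi_Z)F=\iota([\xi_Z,\eta_Z])F=\iota\bigl([\xi,\eta]|_Z\bigr)F,\]
which is precisely the condition of Definition \ref{gen tangent def}. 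Therefore $\ll\xx,\yy\rr|_Z\in\Cinf(\TT(Z,F))$, i.e.\ $\ll\xx,\yy\rr\in K^{(Z,F)}$.

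The one point needing care — the \emph{main obstacle}, though it is bookkeeping rather than a genuine difficulty — is that $\xx|_Z\in\Cinf(\TT(Z,F))$ only forces $\xi,\eta$ to be tangent to $Z$ at points of $Z$, so one must justify that Lie bracket, interior product, and Lie derivative still descend correctly to $Z$ under $i^*$; this is exactly the content of (i)--(iii). The other essential input is the closedness of $F$, which is what makes the two $\iota(\eta_Z)\,d\iota(\xi_Z)F$ terms cancel; without it the subspace $K^{(Z,F)}$ would fail to close up under $\ll\cdot,\cdot\rr$ (and this is precisely why the definition of a generalized submanifold requires $dF=0$).
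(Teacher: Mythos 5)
Your proof is correct and follows essentially the same route as the paper's: both reduce to showing that the $1$-form component of the bracket pulls back to $\iota([\xi_Z,\eta_Z])F$, using the identity $\iota([\tau,\zeta])=[\pounds(\tau),\iota(\zeta)]$, the Cartan formula, and the closedness of $F$. The only difference is that you run the computation forward from the bracket while the paper expands $\iota([\tau,\zeta])F$ backward, and you make explicit the restriction-compatibility facts (i)--(iii) that the paper uses tacitly.
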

%\begin{rem}\label{restrictbracket} Using this lemma, it is possible to show that the Dorman bracket induces a well-defined bracket on sections of $\TT(Z,F)$, as follows.  Given sections $\xx,\yy\in \Cinf(\TT(Z,F))$, choose sections $\tilde{\xx},\tilde{\yy}\in K^{(Z,F)}\subset \Cinf(\TT X)$ that extend $\xx$ and $\yy$.  By the lemma, the Dorman bracket $\ll\tilde{\xx},\tilde{\yy}\rr$ is again an element of $K^{(Z,F)}$, so we obtain a section $r(\ll\tilde{\xx},\tilde{\yy}\rr)\in\Cinf(\TT(Z,F))$.  One easily checks that the resulting section is independent of the choice of extensions $\tilde{\xx}$ and $\tilde{\yy}$.
%\end{rem}
\begin{proof}
Let $\xx=(\xi,v)$, $\yy=(\eta,w)$ be elements of $\KK^{(Z,F)}$.  By the assumption that $\xx$ and $\yy$ are elements of $\KK^{(Z,F)}$, the vector fields $\xi$ and $\eta$ are tangent to $Z$, so there exist unique vector fields $\tau,\zeta\in C^{\infty}(TZ)$ such that the restriction of $\xi$ to $Z$ is equal to $i_*\tau$ and the restriction of $\eta$ to $Z$ is equal to $i_*\zeta$.  Furthermore, we have $i^*v=\iota(\tau)F$ and $i^*w=\iota(\zeta)F$.  Since $\xi$ and $\eta$ are both tangent to $Z$, their Lie bracket $[\xi,\eta]$ is as well; in fact its restriction to $Z$ is equal to $i_*[\tau,\zeta]$.  We have 
\[\ll\xx,\yy\rr= ([\xi,\eta],\pounds(\xi)w-\iota(\eta)dv),\] so we need to show that 
\[i^*(\pounds(\xi)w-\iota(\eta)dv)=\iota([\tau,\zeta])F.\]  Expanding the right hand side using the identity $\iota([\tau,\zeta])=[\pounds(\tau),\iota(\zeta)]$ and the Cartan formula we obtain
\begin{align*} \iota([\tau,\zeta])F & = \pounds(\tau)\iota(\zeta)F-\iota(\zeta)\pounds(\tau)F \\
& = \pounds(\tau)\iota(\zeta)F-\iota(\zeta)d\iota(\tau)F-\iota(\zeta)\iota(\tau)df \\
& = \pounds(\tau)\iota(\zeta)F-\iota(\zeta)d\iota(\tau)F \\
& = \pounds(\tau)i^*v-\iota(\zeta)di^*w\\
& = i^*(\pounds(\xi)v-\iota(\eta)dw).
\end{align*}
\end{proof}

The following definition is taken from \cite{KM}.
\begin{definition} The \emph{generalized normal bundle} of the generalized submanifold $(Z,F)$ is the quotient
\[\N(Z,F)=\TTX|_Z/\TT(Z,F).\]
\end{definition}

\begin{rem}  Let $q:\TTX|_Z\to \N(Z,F)$ denote the quotient map. Since $\TT(Z,F)\subset \TTX|_Z$ is maximal isotropic, it follows that the there is a well-define pairing of $\N(Z,F)$ with $\TT(Z,F)$ given by 
\[\la q(\xx),\yy\ra=\la\xx,\yy\ra,\] which identifies $\N(Z,F)$ with the dual of $\TT(Z,F)$.
\end{rem} 
\subsection{Compatibility with a GC structure}
\begin{definition}\label{GC submanifold} \cite[Def. 7.6]{G} Let $(X,\J)$ be a GC manifold.  A generalized submanifold $(Z,F)$ of $X$ is \emph{compatible with $\J$} if 
\begin{equation}\label{compatibility} \J|_Z(\TT(Z,F))=\TT(Z,F).\end{equation}  In the case we say that $(Z,F)$ is a \emph{generalized complex submanifold} of $(X,\J)$.
\end{definition}
\begin{rem}\label{geometric brane 1} Given a brane $\B=(Z,\L)$ on $X$, as described in Remark \ref{geometric brane}, we say that $\B$ is compatible with $\J$ if its underlying generalized submanifold $(Z,F)$ is.  In this case, we simply call $\B$ a \emph{generalized complex (GC) brane}.  As mentioned above, the definition we use later (Definition \ref{GC brane definition 1}) is actually slightly different.
\end{rem}

\begin{rem}\label{rem1}It will be convenient to recast the compatibility condition (\ref{compatibility}) in a slightly different form. Given a GC manifold $(X,\J)$, define
\[Q_{\J}:\TTX\times\TTX\to X\times\R\] by 
\begin{equation}\label{Q def}Q_{\J}(\xx,\yy)=\la\J\xx,\yy\ra.\end{equation}We easily verify that $Q_{\J}$ is skew-symmetric and non-degenerate.  Furthermore, since $\TT(Z,F)$ is a maximal isotropic sub-bundle of $\TTX|_{Z}$, it follows that $\TT(Z,F)$ is preserved by $\J$ if and only if the restriction of $Q_{\J}$ to $\TT(Z,F)$ vanishes.  If we define 
\begin{equation}\label{I def}\II^Z(X)=\{f\in \Cinf(X):f|Z=0\},\end{equation} then a generalized submanifold $(Z,F)$ is compatible with $\J$ if and only if 
\[Q_{\J}(K^{(Z,F)},K^{(Z,F)})\subset \II^Z.\]
\end{rem}

\begin{ex}\label{symplectic sub} In the case where $\J=\J_J$ comes from a complex structure $J$ on $X$, it is shown in \cite[Ex. 7.7]{G} that a generalized submanifold $(Z,F)$ is compatible with $\J$ if and only if $Z$ is a complex submanifold of $X$, and $F$ is of type $(1,1)$ with respect to the induced complex structure on $Z$.
\end{ex}
\begin{ex}\label{complex sub}Suppose $\J=\J_{\omega}$ comes from a symplectic structure on $X$.  If $Z\subset X$ is a Lagrangian submanifold, then $(Z,F)$ is compatible with $\J$ if and only if $F=0$.  Conversely, a generalized submanifold of the form $(Z,0)$ is compatible with $\J$ if and only if $Z$ is Lagrangian.  On the other hand, there exist more exotic examples where $F$ is non-zero, and $Z$ is coisotropic but of dimension greater than $\textrm{Dim}_{\R}(X)/2$ \cite[Ex. 7.8]{G}
\end{ex}

\subsection{The action symmetries on GC submanifolds }

\begin{definition}  Let $(Z,F)$ be a generalized submanifold of a manifold $X$. Let $Y$ be another manifold, $\varphi:Y\to X$ , a diffeomorphism, and $u\in \Omega^1(Y)$ a 1-form. Let $g=e^u\varphi^*:X\to Y$ be the morphism in the groupoid $\cG$ introduced in Remark \ref{symmetry groupoid}. Consider the submanifold $\varphi^{-1}(Z)\subset Y$, and let $\varphi_Z:\varphi^{-1}(Z)\to Z$ denote the diffeomorphism induced by $\varphi$.  We define $g\cdot (Z,F)$ to be the generalized submanifold of $Y$ given by $(\varphi^{-1}(Z),(\varphi_Z)^*F-di_{\varphi^{-1}(Z)}^*u).$
\end{definition} 

\begin{proposition}\label{action on GC submanifolds} \begin{enumerate} \item 
\[g\cdot K^{(Z,F)}=K^{g\cdot(Z,F)}.\]
\item Given a GC structure $\J$ on $X$, a generalized submanifold $(Z,F)$ on $X$ is compatible with $\J$ if and only if $g\cdot (Z,F)$ is compatible with the GC structure $g\cdot \J:=g\cdot\J\cdot g^{-1}$ on $Y$.
\end{enumerate}
\end{proposition}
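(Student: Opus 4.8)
The plan is to reduce both parts to statements about how the groupoid morphism $g = e^u\varphi^*$ transports the relevant sub-bundles of $\TTX|_Z$ to sub-bundles of $\TT Y|_{\varphi^{-1}(Z)}$. The key object is the bundle map underlying $g$: by the remark following Proposition \ref{leftgroupaction}, the action $\xx\mapsto g\cdot\xx = e^{du}\varphi^*\xx$ on $\Cinf(\TTX)$ is induced by a bundle isomorphism $\TTX\to\TT Y$ covering $\varphi^{-1}$. First I would establish the pointwise claim that this bundle map sends the fiber $\TT_z(Z,F)$ isomorphically onto $\TT_{y}(\varphi^{-1}(Z),(\varphi_Z)^*F - di^*_{\varphi^{-1}(Z)}u)$, where $y = \varphi^{-1}(z)$. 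This is a direct unwinding of Definition \ref{gen tangent def}: a pair $(i_*\xi,a)$ with $i^*a = \iota(\xi)F$ is sent by $\varphi^*$ to $((\varphi^{-1})_*i_*\xi, \varphi^*a)$, and then $e^{du}$ subtracts $\iota((\varphi^{-1})_*i_*\xi)\,du$; one checks the defining equation for membership in the new generalized tangent bundle holds, using naturality of pullback under $\varphi_Z$ and the Cartan-type identity $i^*(\iota(\zeta)du) = \iota(\zeta)\,d(i^*u)$ for $\zeta$ tangent to $Z$. The sign conventions in the definition of $g\cdot(Z,F)$ (the $-di^*u$ term) are exactly what make this work out.

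For part (1), once the pointwise statement is in hand, the equality $g\cdot K^{(Z,F)} = K^{g\cdot(Z,F)}$ follows by chasing the restriction maps. Explicitly, $\xx\in K^{(Z,F)}$ means $r(\xx)$ is a section of $\TT(Z,F)$ (Definition \ref{definition K}); since $g$ is an honest bundle isomorphism covering $\varphi^{-1}$, it intertwines the restriction map $r$ on $X$ along $Z$ with the restriction map on $Y$ along $\varphi^{-1}(Z)$, and by the fiberwise identification it carries sections of $\TT(Z,F)$ to sections of $\TT(g\cdot(Z,F))$. Applying the same argument to $g^{-1}$ gives the reverse inclusion, hence equality.

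For part (2), I would use the reformulation of compatibility from Remark \ref{rem1}: $(Z,F)$ is compatible with $\J$ iff $Q_{\J}(K^{(Z,F)},K^{(Z,F)})\subset \II^Z(X)$. By the identities (\ref{sym 1}) and the definition $g\cdot\J = g\J g^{-1}$, one has $Q_{g\cdot\J}(g\xx,g\yy) = \la g\J g^{-1}g\xx, g\yy\ra = \la g\J\xx,g\yy\ra = g\la\J\xx,\yy\ra = g\,Q_{\J}(\xx,\yy)$, where $g$ acts on functions through its diffeomorphism component $\varphi$, i.e. $g\,Q_{\J}(\xx,\yy) = \varphi^*Q_{\J}(\xx,\yy)$. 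Since $\varphi^*$ maps $\II^Z(X)$ bijectively onto $\II^{\varphi^{-1}(Z)}(Y)$, combining this with part (1) gives $Q_{g\cdot\J}(K^{g\cdot(Z,F)}, K^{g\cdot(Z,F)}) = Q_{g\cdot\J}(g\cdot K^{(Z,F)}, g\cdot K^{(Z,F)}) = \varphi^*\big(Q_{\J}(K^{(Z,F)},K^{(Z,F)})\big)$, which lies in $\II^{\varphi^{-1}(Z)}(Y)$ iff $Q_{\J}(K^{(Z,F)},K^{(Z,F)})$ lies in $\II^Z(X)$. This is precisely the asserted equivalence.

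The main obstacle is the fiberwise computation in the first paragraph: getting the $F$-twist and the $du$-correction to interact correctly, in particular verifying that $i^*a = \iota(\xi)F$ transforms into the defining relation for the deformed 2-form $(\varphi_Z)^*F - di^*_{\varphi^{-1}(Z)}u$ under the combined action of $\varphi^*$ and $e^{du}$. This is a bookkeeping exercise with pullbacks and contractions — entirely analogous to the computation already carried out in the proof of Lemma \ref{K closed under bracket} — but it is where all the sign conventions must be checked against Definition \ref{gen tangent def}. Once that lemma-level statement is secured, parts (1) and (2) are short formal consequences.
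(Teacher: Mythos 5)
Your proposal is correct and follows essentially the same route as the paper: a fiberwise verification that the bundle map underlying $g$ carries $\TT(Z,F)$ onto the generalized tangent bundle of $g\cdot(Z,F)$ (the paper splits this into the pure $1$-form case and the diffeomorphism case, but the content is the same), followed by the identity $Q_{g\cdot\J}(g\xx,g\yy)=\varphi^*Q_{\J}(\xx,\yy)$ together with $\varphi^*\II^Z=\II^{\varphi^{-1}(Z)}$ and Remark \ref{rem1} for part (2).
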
 
\begin{proof}  First, let consider the case where $g$ is of the form $g=(\textrm{id}_X,u)$ for some $u\in\Omega^1(X)$; we then have 
\[g\cdot (Z,F)=(Z,F+di_Z^*u).\]  Set $F'=F+di_Z^*u$.  Given $\xx=(\xi,a)\in K^{(Z,F)}$, let $\xx'=g\cdot\xx=(\xi',a')$; we then have $\xi'=\xi$ and $a'=a-\iota(\xi)du$.  Since $\xx\in K^{(Z,F)}$, $\xi$ is tangent to $Z$. Denoting the restriction of $\xi$ to $Z$ by $\tau\in\Cinf(TZ)$, we than have \[i^*a=\iota(\tau)F.\] This implies that 
\[i^*a'=\iota(\tau)F-\iota(\tau)di^*u=\iota(\tau)F',\] and we therefore have $\xx'\in K^{(Z,F')}$.

Set $Z'=\varphi^{-1}(Z)$ and $F'=\varphi_Z^*F\in\Omega^2(Z')$, so that $g\cdot (Z,F)=(Z',F')$.  Let $i_Z:Z\hookrightarrow X$ and $i_{Z'}:Z'\hookrightarrow X$ the inclusion maps, and note that by construction we have 
\begin{equation}\label{diff com1}i_{Z}\varphi_Z=\varphi i_{Z'}\end{equation} and 
\begin{equation}\label{diff com}i_{Z'}\varphi^{-1}_Z=\varphi^{-1} i_Z.\end{equation} Given $z\in Z$, and $z'=\varphi^{-1}(z)\in Z'$, let $\Psi_z:\TT_zX\to \TT_{z'}$ denote the linear isomorphism induced by the pair $(\varphi,u)$; in other words, for each section $\xx\in\Cinf(\TT X)$, we have 
\[(g\cdot \xx)_{z'}=\Psi(\xx_z).\]  Part (1) of the Proposition is equivalent to the statement that, for each $z\in Z$ we have 
\[\Psi_z(\TT_z(Z,F))=\TT_{z'}(Z',F').\]  Given $\xx=(\xi,a)\in\T_z(Z,F)$, by definition we have $\xi=(i_Z)_*\tau$ for some $\tau\in T_zZ$,  $i_Z^*a=\iota(\tau)F$.  Let us write $(\xi',a')=\Psi_z(\xi,a)\in \TT_{z'}X$.  Defining $\tau'=(\varphi_Z^{-1})_*\tau\in T_{z'}Z'$, the equality (\ref{diff com}) implies that 
\begin{align*} \xi' & = \varphi^{-1}_*(\iota_Z)_*\tau \\
& = (\varphi^{-1}\iota_Z)_*\tau \\
& = (i_{Z'}\varphi_Z^{-1})_*\tau \\
& = (i_{Z'})_*\tau'.
\end{align*}  Similarly, using (\ref{diff com1}) we see that
\begin{align*} i_{Z'}^*a' & = i_{Z'}^*\varphi^*a \\
& = (\varphi i_{Z'})^*a \\
& = (i_Z\varphi_Z)*a \\
& = \varphi_Z^*i_Z^*a\\
& = \varphi_Z^*(\iota(\tau)F) \\
& = \iota(\tau')F'.
\end{align*}  This completes the proof of part (1).

Let $\J'$ denote $g\cdot \J$.  Using part (1) as well as Proposition \ref{leftgroupaction} we have
\[\la \J'K^{(Z',F')},K^{(Z',F')}\ra = \la g\J g^{-1}gK^{(Z,F)},gK^{(Z,F)} \ra= \varphi^* \la \J K^{(Z,F)},K^{(Z,F)}\ra.\]  On the other hand, we clearly have $\varphi^*I^Z=I^{Z'}$, so it follows that $\la \J'K^{(Z',F')},K^{(Z',F')}\ra\subset I^{Z'}$ if and only if $\la \J K^{(Z,F)},K^{(Z,F)}\ra\subset I^Z$.  Recalling the discussion in Remark \ref{rem1}, this proves part (2) of the Proposition.
\end{proof}
\begin{rem}\label{geometric brane 2} Continuing with Remarks \ref{geometric brane} and \ref{geometric brane 1}, we may define a similar action of $\cG(X)$ on the set of branes on $X$.  Given a brane $\B=(Z,\L)$ and a diffeomorphism $\varphi:X\toco X$,  define
\[g\cdot\B:=(\varphi^{-1}(Z),\varphi_Z^*\L),\] where as above $\varphi_Z:\varphi^{-1}(Z)\toco Z$ denotes the diffeomorphism induced by $\varphi$.  Given a 1-form $u\in\Omega^1(X)$, the group element $(\textrm{id}_X,u)$ acts on $\B$ changing the connection $\nabla$ on $\L$ according to
\[\nabla\mapsto \nabla-2\pi i u|_Z.\] Using Proposition \ref{action on GC submanifolds}, it is then easy to check that for any $g\in G$, if the brane $\B$ is compatible with a GC structure $\J$ then $g\cdot \B$ is compatible with the GC structure $g\cdot \J$.  In particular, the symmetries of a fixed $\J$ act on the set of $GC$ branes on $(X,\J)$. \end{rem}

\subsection{The Lie algebroid complex of a GC submanifold}\label{Lie alg cohomology}  Let $(Z,F)$ be a GC submanifold of $(X,\J)$.  Since $\J$ restricts to an endomorphism of $\TT(X,F)$ which squares to minus the identity, we have the decomposition 
\[\TT(Z,F)\otimes \C=l\oplus \bar{l},\] where $l$ is the $+i$-eigenbundle of $\TT(Z;F)\otimes \C$ and $\bar{l}$  the $-i$-eigenbundle.  By construction, $l$ is a sub-bundle of $L|Z$.  Furthermore, as described in \cite{G}, the Lie algebroid bracket on $L$ (i.e. the restriction of the Dorman bracket to $L$) induces a well-defined bracket on sections of $l$, giving $l$ the structure of a complex Lie algebroid over $Z$ with anchor map the projection $\pi:l\to TZ\otimes\C$.  Explicitly, given sections $\xx,\yy\in\Cinf(l)$, choose sections $\tilde{\xx},\tilde{\yy}\in\Cinf(L)$ that extend $\xx$ and $\yy$, i.e. such that $r(\tilde{\xx})=\xx$ and $r(\tilde{\yy})=\yy$.  Then it is straightforward to check that 
$r(\ll\tilde{\xx},\tilde{\yy}\rr)\in\Cinf(L|_Z)$ lies in $\Cinf(l)$ and is independent of the choice of extensions $\tilde{\xx}$ and $\tilde{\yy}$.   We therefore have a well-defined bracket given by the formula
\begin{equation}\label{brane bracket}\ll\xx,\yy\rr_{\B}=r(\ll\tilde{\xx},\tilde{\yy}\rr).\end{equation}  Just as we did for the Lie algebroid $L$, we construct from $l$ a differential complex 
\[\xymatrix{ \Cinf(\Lambda^0l\uv) \ar[r]^{\delta_l} & \Cinf(\Lambda^1l\uv) \ar[r]^{\delta_l} & \Cinf(\Lambda^2l\uv) \ar[r]^{\delta_l} & \cdots}.\]

Recall that we defined a linear isomorphism $\mu:\TT X\to L^{\uv}$ on any GC manifold.  Similarly, for any GC submanifold $(Z,F)$, we have a linear bijection 
\[\mu:\N^{(Z,F)}\to l\uv\] characterized by the equation
\[\mu(q(\xx))(v)=\la \xx,v\ra\] for any pair of sections $\xx\in \Cinf(\TT X|_Z)$ and $v\in \Cinf(l)$, where $q:\TT X|_Z\to \N^{(Z,F)}$ is the quotient map introduced above.  

\begin{definition} A \emph{generalized holomorphic section} of the generalized normal bundle is a section $\xx\in\Cinf(\N Z)$ satisfying
\[\delta_l\mu(\xx)=0.\]
\end{definition} 
The following proposition is an easy consequence of the definitions, and we omit the proof.
\begin{proposition}\label{induced holomorphic} Let $\xx\in\Cinf(\TT X)$ be a generalized holomorphic vector field on $X$.  Then $r(\xx)\in\Cinf(\N(Z,F))$ is a generalized holomorphic section of $\N(Z,F)$.
\end{proposition}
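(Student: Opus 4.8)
The strategy is to show that restriction to $Z$ defines a morphism of differential complexes from the generalized Dolbeault complex $(\Cinf(\Lambda^\bullet L\uv),\delta_L)$ of $(X,\J)$ to the Lie algebroid complex $(\Cinf(\Lambda^\bullet l\uv),\delta_l)$ of $l$, and to identify (via the maps $\mu$) the $1$-form $\mu(\xx)\in\Cinf(\Lambda^1 L\uv)$ with a nonzero multiple of $\mu(\bar\xx)\in\Cinf(\Lambda^1 l\uv)$, where $\bar\xx:=q(r(\xx))\in\Cinf(\N(Z,F))\cong\Cinf(l\uv)$. Granting these two points, the proposition is immediate: if $\xx\in\T(X)$ then $\delta_L\mu(\xx)=0$, and applying the chain map yields $\delta_l\mu(\bar\xx)=0$, which is exactly the assertion that $\bar\xx$ is a generalized holomorphic section of $\N(Z,F)$.

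To construct the chain map, note first that every $v\in\Cinf(l)$ admits an extension $\tilde v\in\Cinf(L)$ with $r(\tilde v)=v$ (since $l$ is a subbundle of $L|_Z$), and that two such extensions agree on $Z$. Hence for any $\Cinf(X)$-multilinear alternating form $\alpha$ on $L$ the restriction $r\bigl(\alpha(\tilde v_1,\dots,\tilde v_k)\bigr)$ depends only on $v_1,\dots,v_k$, so there is a well-defined map $r^\ast\colon\Cinf(\Lambda^k L\uv)\to\Cinf(\Lambda^k l\uv)$. Two features of $l$ are then used. First, the anchor of $l$ is $\pi|_l$, and for $v\in\Cinf(l)$ the vector field $\pi(v)$ is tangent to $Z$ because $\TT(Z,F)$ projects into $TZ$ (Definition \ref{gen tangent def}); consequently $r\bigl(\pi(\tilde v)\cdot g\bigr)=\pi(v)\cdot(i^\ast g)$ for every function $g$ on $X$. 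Second, by the definition (\ref{brane bracket}) of the bracket on $l$, whenever $\tilde v,\tilde w$ extend $v,w\in\Cinf(l)$ the section $\ll\tilde v,\tilde w\rr$ extends $\ll v,w\rr_{\B}$.

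With these in hand I would verify $r^\ast\circ\delta_L=\delta_l\circ r^\ast$ in the two degrees actually needed. In degree $0$: for $f\colon X\to\C$ and $v\in\Cinf(l)$, $r^\ast(\delta_L f)(v)=r\bigl(\pi(\tilde v)\cdot f\bigr)=\pi(v)\cdot(i^\ast f)=\delta_l(i^\ast f)(v)$. In degree $1$: for $\alpha\in\Cinf(\Lambda^1 L\uv)$ and $v,w\in\Cinf(l)$, start from the explicit formula $\delta_L\alpha(\tilde v,\tilde w)=\pi(\tilde v)\cdot\alpha(\tilde w)-\pi(\tilde w)\cdot\alpha(\tilde v)-\alpha(\ll\tilde v,\tilde w\rr)$, apply $r$, and rewrite the two anchor terms using the first feature and the bracket term using the second; the result is precisely $\delta_l(r^\ast\alpha)(v,w)=\pi(v)\cdot(r^\ast\alpha)(w)-\pi(w)\cdot(r^\ast\alpha)(v)-(r^\ast\alpha)(\ll v,w\rr_{\B})$. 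Finally, for $\xx\in\Cinf(\TT X)$ and $v\in\Cinf(l)$ with extension $\tilde v$ one computes $r\bigl(\mu(\xx)(\tilde v)\bigr)=r\bigl(2\la\xx,\tilde v\ra\bigr)=2\la r(\xx),v\ra=2\,\mu(\bar\xx)(v)$, so $r^\ast\mu(\xx)=2\,\mu(\bar\xx)$. Therefore $\delta_l\mu(\bar\xx)=\tfrac12\,\delta_l\,r^\ast\mu(\xx)=\tfrac12\,r^\ast\delta_L\mu(\xx)$, which vanishes whenever $\xx\in\T(X)$, proving the proposition.

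The only step carrying real content is the chain-map identity $r^\ast\delta_L=\delta_l r^\ast$, and it rests entirely on the tangency of the anchor of $l$ to $Z$ together with the fact --- built into (\ref{brane bracket}) --- that the Lie algebroid structure on $l$ is the literal restriction of the Dorfman bracket, so that no correction terms intervene. Conceptually this is nothing but the naturality of the Lie algebroid de Rham differential under the Lie algebroid morphism $l\to L$ covering $i\colon Z\hookrightarrow X$; there are no analytic subtleties, which is why (as the authors remark) the statement is ``an easy consequence of the definitions.''
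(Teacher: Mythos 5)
Your proof is correct, and since the paper omits the proof of this proposition entirely (stating only that it is ``an easy consequence of the definitions''), your argument supplies precisely the intended content: the restriction map $r^\ast$ is a chain map in degrees $0$ and $1$ because the anchor of $l$ is tangent to $Z$ and the bracket (\ref{brane bracket}) on $l$ is by definition the restriction of the Dorfman bracket of extensions. Your observation that the two maps called $\mu$ differ by a factor of $2$ (compare (\ref{mudef}) with the normal-bundle version in \S\ref{Lie alg cohomology}) is a genuine inconsistency in the paper's conventions that you handle correctly, since a nonzero constant is harmless for the vanishing statement.
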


\section{Leaf-Wise Lagrangian GC Submanifolds}\label{LWL submanifolds}\label{LWL submanifolds}

Let $\J$ be an a generalized complex structure on a manifold $X$. As discussed in $\S\ref{GC structures}$, $\J$ induces a Poisson structure $P:T\uv X\to TX$.  Let $S=P(T\uv X)\subset TX$ be the (not necessarily constant rank) distribution induced by $P$.  The distribution $S$ inherits symplectic structure, i.e. a non-degenerate, skew-symmetric pairing $\omega$ (which is ``closed" in an appropriate sense) defined by 
\begin{equation}\label{sym dis}\omega(\xi,\eta)=a(\xi),\end{equation} where $a\in 
\Cinf(T\uv X)$ is any section with $P(a)=\eta$.  For example, when $\J$ is induced by a symplectic structure as in Example \ref{symplectic}, then $S=TX$ and the induced pairing $\omega$ is given by the original symplectic form.  
\begin{definition}\label{definition LWL submanifold} Let $(Z,F)$ be a GC submanifold of $(X,\J)$, such that $\J$ is regular at each point of $Z$.  We say that $(Z,F)$ is \emph{leaf-wise} Lagrangian if for each $z\in Z$ the intersection $T_zZ\cap S_z$ is a Lagrangian subspace of $S_z$ with respect to the pairing (\ref{sym dis}).
\end{definition}

\begin{ex} \label{standard brane} Let $(X^{m,n}_0=\R^{2m+2n},\J_0^{m,n}=\J_{\omega}\times\J_J)$ be the ``standard" GC manifold that appeared in Theorem \ref{generalized Darboux}.  Introduce coordinates $(s_0,\cdots, s_{2m},t_0,\cdots,t_{2n})$ on $X_0^{m,n}$, where $s_0,\cdots s_{2m}$ are standard coordinates on $\R^{2m}$, and $t_0,\cdots t_{2n}$ are standard coordinates on $\R^{2n}\cong \C^n$.  For a natural number $k\leq n$, and let $Z^k_0\subset X^{m,n}_0$ be the product of the Lagrangian submanifold $\R^m\subset \R^{2m}$ with the complex submanifold $\C^k\subset \C^n$;  explicitly, $Z^k_0$ is defined by the conditions $s_{m+1}=\cdots= s_{2m}=t_{2k+1}=\cdots = t_{2n}=0$.  We easily check that $Z^k_0$, equipped with the zero 2-form, is a leaf-wise Lagrangian GC submanifold of $(X_0,\J_0)$.
\end{ex}

The following result, giving a local normal form for LWL GC submanifolds, is a variation on the generalized Darboux theorem (Theorem \ref{generalized Darboux}).

\begin{theorem}\label{LWL normal form} Let $(Z,F)$ be a LWL GC sub manifold of $(X,\J)$, where $X$ is of dimension $2(m+n)$, $Z$ is of dimension $m+2k$, and $\J$ is of type $n$ at each point of  $Z$.  Then for each $z\in Z$, there exists a neighborhood $U$ of $z$ in $X$, a 1-form $u\in \Omega^1(U)$, a neighborhood $U_0$ of the origin in $X^{m,n}_0$, and a diffeomorphism $\Phi:U\to U_0$ such that 
\begin{enumerate} \item $\Phi(Z\cap U)=Z^k_0\cap U_0$ and $\Phi(z)=0$,
\item $\Phi^*(\J^{m,n}_0|_{U_0})=e^u\cdot\J|_{U}$, and 
\item $F|_{U\cap Z}=d\rho (u)$, where $\rho:\Omega^1(U)\to \Omega^1(U\cap Z)$ is the pull-back (restriction) map.
\end{enumerate}
\end{theorem}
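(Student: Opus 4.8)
The plan is to combine the generalized Darboux theorem (Theorem~\ref{generalized Darboux}) with the leaf-wise Lagrangian hypothesis to upgrade the normal form so that it also normalizes the submanifold $Z$. First I would apply Theorem~\ref{generalized Darboux} at the point $z$: since $\J$ is regular of type $n$ at $z$, there is a neighborhood $U$ of $z$, a diffeomorphism $\Phi_1:U\toco U_0\subset X^{m,n}_0$ taking $z$ to the origin, and a $1$-form $u_1\in\Omega^1(U)$ with $\Phi_1^*(\J^{m,n}_0|_{U_0})=e^{u_1}\cdot(\J|_U)$. Pushing everything forward by $\Phi_1$ and acting by $e^{u_1}$, we may henceforth assume $\J=\J^{m,n}_0$ is already the standard structure (tracking the $1$-form and diffeomorphism data, and using Proposition~\ref{action on GC submanifolds} to see that the image of $(Z,F)$ is still a LWL GC submanifold, with the new $2$-form differing by an exact term pulled back from $Z$). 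So the real content is: given a LWL GC submanifold $(Z',F')$ of $(X^{m,n}_0,\J^{m,n}_0)$ through the origin, of dimension $m+2k$, find a symmetry of $\J^{m,n}_0$ (fixing the origin) carrying $Z'$ to the model $Z^k_0$ and $F'$ to an exact form.

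The key structural observation is that the standard GC structure $\J^{m,n}_0$ is a product $\J_\omega\times\J_J$ on $\R^{2m}\times\C^n$, and the compatibility condition $\J^{m,n}_0(\TT(Z',F'))=\TT(Z',F')$ forces $Z'$ to be ``split'' along this product in a controlled way. Concretely: the symplectic leaf through the origin is $\R^{2m}\times\{0\}$, and along $Z'$ the Poisson-induced distribution $S$ restricts to $T\R^{2m}$; the LWL condition says $T_zZ'\cap S_z$ is Lagrangian in $(S_z,\omega)$ at each $z\in Z'$. I would first analyze the intersection of $Z'$ with the symplectic foliation: using Example~\ref{symplectic sub} (Lagrangians are the $F=0$ GC submanifolds in the symplectic case) together with the product structure, one shows that $Z'$ meets each symplectic leaf $\R^{2m}\times\{w\}$ (for $w$ in some submanifold of $\C^n$) in a Lagrangian submanifold, and that the ``transverse'' part of $Z'$ projects to a $k$-dimensional complex submanifold of $\C^n$ (here one invokes Example~\ref{symplectic sub} in the complex factor: the $\C^n$-projection of $\TT(Z',F')$, being $\J_J$-invariant, cuts out a complex submanifold, and $F'$ restricted to it is of type $(1,1)$). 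After a holomorphic change of coordinates in the $\C^n$-factor (which is a symmetry of $\J_J$), arrange that this complex submanifold is the linear $\C^k\subset\C^n$. One is then reduced to a family, parameterized holomorphically by $\C^k$, of Lagrangian submanifolds of $\R^{2m}$ through the origin, together with a compatible closed $2$-form; by the Lagrangian neighborhood (Weinstein) theorem applied fiberwise and smoothly in the parameter, there is a symplectomorphism-valued family — i.e. a single symplectomorphism of $\R^{2m}\times\C^k$ preserving the foliation — straightening all these to $\R^m\subset\R^{2m}$. The $2$-form $F'$ is then handled by a relative Poincaré lemma: since $F'$ is closed and (after the above) its restriction to the submanifold is exact, a primitive $1$-form $v$ on a neighborhood exists with $dv|_{Z'}=F'$; then $e^{-v}$ (which is a GC symmetry since $d(-v)$ is closed) kills the $2$-form in the sense of condition (3), at the cost of modifying $u$.

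Assembling these pieces: compose the diffeomorphism from Darboux, the holomorphic coordinate change in $\C^n$, the foliation-preserving symplectomorphism straightening the Lagrangian family, to get the final $\Phi:U\to U_0$ with $\Phi(Z\cap U)=Z^k_0\cap U_0$ and $\Phi(z)=0$; and combine the various $1$-forms (transported appropriately under these diffeomorphisms) into a single $u\in\Omega^1(U)$ with $\Phi^*(\J^{m,n}_0|_{U_0})=e^u\cdot\J|_U$ and $F|_{U\cap Z}=d\rho(u)$. The dimension bookkeeping — $\dim Z=m+2k$ splitting as an $m$-dimensional Lagrangian fiber direction plus a $2k$-dimensional (real) complex base direction — is exactly what makes this consistent and exactly what pins down the model $Z^k_0$.

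The main obstacle I expect is the middle step: showing that the LWL condition, together with $\J^{m,n}_0$-compatibility, genuinely forces $Z'$ to fiber over a complex submanifold of $\C^n$ with Lagrangian symplectic fibers, \emph{and} doing the straightening \emph{smoothly in the holomorphic parameter} so that a single diffeomorphism (not just a leaf-by-leaf family) does the job. This is where one must be careful that the parameterized Weinstein/Moser argument can be carried out with the holomorphic structure on the base intact, so that the resulting diffeomorphism is a symmetry of the product GC structure and not merely of the symplectic factor leafwise. A secondary technical point is making sure the closed $2$-form $F'$, the $1$-forms coming from Darboux and from the $e^{-v}$ correction, and the coordinate changes all interact correctly — i.e. that the exact-$2$-form error terms introduced by Proposition~\ref{action on GC submanifolds} at each stage can always be absorbed into the single $1$-form $u$ and remain consistent with condition (3).
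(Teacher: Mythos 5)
Your opening moves match the paper's: reduce via the generalized Darboux theorem to the case $\J=\J^{m,n}_0$ on a neighborhood of the origin, and use linear algebra at the origin (the LWL condition forces $T_0Z\cap T_0\R^{2m}$ to be Lagrangian, and $\J_0$-invariance of $\TT(Z,F)$ forces the projection of $T_0Z$ to $T_0\C^n$ to be a complex subspace) to normalize the tangent space. The gap is in your central straightening step, and it is exactly the one you flag without resolving: a leaf-preserving diffeomorphism of $\R^{2m}\times\C^k$ that is leafwise a symplectomorphism varying with the transverse parameter is \emph{not} in general a symmetry of the product structure $\J_\omega\times\J_J$, because the pullback of the cotangent directions of the symplectic factor acquires components along the base, and these mix the two factors inside $\TT X$. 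A parameterized Weinstein/Moser argument gives you no control over this failure, and no mechanism for producing the compensating $1$-form $u$; "absorbing the error terms into $u$" is precisely the non-trivial content of the theorem, not an afterthought.

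The paper avoids the parameterized Weinstein theorem entirely. After the linear normalization, $Z$ is locally a graph $\{(x,\psi(x))\}$ over $Z^k_0$, and the straightening is the single shear $\Psi(x,y)=(x,y-\psi(x))$. Writing $\Psi^*=e^{(\xi,0)}$ with $\xi=\psi^I(x)\partial/\partial y^I$, and taking $u$ to be the pullback (under the projection to $Z$) of a primitive of $F$ — note this is chosen \emph{before} the symmetry verification, not after — the pair $\xx=(\xi,u)$ lies in the abelian subspace $\RR$ spanned by $\{(\partial/\partial y^I,0),(0,dx^i)\}$ with coefficients depending only on $x$. Lemma \ref{basic facts} then shows $e^{\xx}\J_0=\J_0+\xx\cdot\J_0$ and that $\xx\cdot\J_0$ vanishes iff $\la\J_0(e^{\xx}\SS),e^{\xx}\SS\ra=0$. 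Since $e^{\xx}K^{(Z_0,0)}=K^{(Z,F)}$ and $(Z,F)$ is compatible with $\J$, this pairing lies in $I^Z$; since it is also invariant under translation in the $y$-directions, it vanishes identically. So the compatibility hypothesis is what manufactures the symmetry $(\Psi,u)$ — the step your proposal leaves as a "technical point" is the heart of the proof, and your proposed order of operations (straighten first, kill $F$ afterwards with $e^{-v}$) cannot be carried out, because the primitive of $F$ is needed inside the symmetry verification itself.
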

\begin{proof}

For simplicity write $(X_0,\J_0)=(X_0^{m,n},\J_0^{m,n})$ and $Z_0^k=Z_0$.
By the generalized Darboux Theorem \ref{generalized Darboux}, we may assume without loss of generality that $X$ is an open subset of $X_0$, and $z=0$.  
Decompose that tangent space of $X_0=\R^{2m}\times\C^n$ at the origin as 
\[T_0 (\R^{2m}\times\C^n)=T_0\R^{2m}\oplus T_0\C^n.\]   Define $E=T_0Z\cap T_0\R^{2m}$, and let $K\subset T_0\C^n$ be the projection of $T_0Z$ onto $T_0\C^n$.  By assumption, $E$ is a Lagrangian subspace of $T_0\R^{2m}\cong \R^{2m}$.
\begin{lemma} $K$ is a complex subspace of $T_0\C^n\cong\C^n$.
\end{lemma}
\begin{proof} Introduce the notation $S=T_0\R^{2m}$, $W= T_0\C^{2m}$, so that we have a decomposition $\TT_0X=S\oplus W\oplus S\uv\oplus W\uv$.  For every $v\in K$, there exists $u\in S$, $a\in S\uv$ and $b\in W\uv$ such that $((u,v),(a,b))\in \TT_0(Z,F)$.  We have 
\[\J(((u,v),(a,b)))=((-\omega^{-1}(a),-J(v)),(\omega(u),J\uv(b))).\]  Since by assumption this is again an element of $\TT_0(Z,F)$, we must have \[(-\omega^{-1}(a),-J(v))\in T_0Z.\] Therefore
\[-J(v)=\pi_{T_0\C^n}(-\omega^{-1}(a),-J(v))\in K,\] so we see that $J(K)=K$.
\end{proof}

Recall that the set of linear symplectomorphisms of $\R^{2m}$ acts transitively on the set of Lagrangian subspaces, and that the set of complex linear automorphisms of $\C^n$ acts transitively on the set of complex subspaces of fixed dimension.  Therefore, by applying a linear change of coordinates on $X_0$, we may assume without loss of generality that $E$ is the subspace spanned by $\{\frac{\partial}{\partial s^i}\}_{i=1}^m$, and $K$ is the subspace spanned by $\{\frac{\partial}{\partial t^i}\}_{i=1}^{2k}$.  In particular, if we let $\pi:Z\to Z_0$ denote the projection map, then we see that 
\[\pi_*TZ\to TZ_0\] is an isomorphism at the origin.  We may therefore choose a neighborhood $\tilde{Z}\subset Z$ of the origin, such that the restriction of $\pi$ to $\tilde{Z}$ give a diffeomorphism from $\tilde{Z}$ to $\tilde{Z}_0:=\pi(\tilde{Z})$.  

Introduce the alternate coordinates $\{x^1,\cdots x^{d:=m+2k},y^1,\cdots y^{d':=m+2n-2k}\}$   given by $x_1=s_1,\cdots x_m=s_m,x_{m+1}=t_1,\cdots, x_{m+2k}=t_{2k}$ and $y_1=s_{m+1}\cdots y_m=s_{2m}, y_{m+1}=t_{2k+1}\cdots y_{m+2n-sk}=t_{2n}$.  These coordinates are compatible with the decomposition $X_0\cong Z_0\times Y$, where $(x^1,\cdots,x^d)$ are coordinates on $Z_0$ and $(y^1,\cdots ,y^{d'})$ are coordinates on $Y\cong \R^{m+2n-2k}$.  In particular, the restriction of $(x^1,\cdots, x^d)$ give coordinates on $\tilde{Z}$, and we may write 
\[\tilde{Z}=\{(x^1,\cdots, x^d,\psi^1(x^1,\cdots x^d),\cdots \psi^{d'}(x^1,\cdots, x^d):(x^1,\cdots x^d))\in \tilde{Z}_0\}\] for unique smooth functions $\{\psi^I\in \Cinf(\tilde{Z}_0)\}$. Introduce the notation $\tilde{X}_0=\tilde{Z}_0\times Y\subset X_0$, and define $\Psi:\tilde{X}_0 \to \tilde{X}_0$ by 
\[(x^1,\cdots x^d,y^1,\cdots y^{d'})  \mapsto (x^1,\cdots x^d,y^1-\psi^1(x),\cdots, y^{d'}- \psi^{d'}(x)).
\]  By construction, we have 
\[\Psi^{-1}(\tilde{Z}_0)=\tilde{Z}.\]  Choose an $\epsilon$-ball $\tilde{Y}\subset Y$ around the origin such that 
\[\Psi^{-1}(\tilde{Z}_0\times \tilde{Y})\cap Z=\tilde{Z}.\]  Define 
\[U_0=  \tilde{Z}_0\times \tilde{Y},\]
\[U=\Psi^{-1}(\tilde{Z}_0\times \tilde{Y}),\] and let the diffeomorphism
\[\Phi:U\to U_0\] be the restriction of $\Psi$ to $U$.

Let $\tilde{F}$ be the restriction of $F$ to $\tilde{Z}$.  By shrinking $\tilde{Z}$ if necessary, we can find $w\in\Omega^1(\tilde{Z})$ such that $dw=F$.  Let $\pi_{\tilde{Z}}:\tilde{X}_0\to \tilde{Z}$ be the projection, and define $u'=\pi_{\tilde{Z}}^*w$; if we denote by $\rho_{\tilde{Z}}:\Omega\ub(\tilde{X}_0)\to \Omega\ub(\tilde{Z}_0)$, then clearly $\rho(u')=w$.  Finally, let $u\in\Omega^1(U)$ be the restriction of $u'$ to $U$.

Consider the quadruple $(U,U_0,\Phi,u)$.  By construction, we have $\Phi(Z\cap U)=Z_0\cap U_0$, $\Phi(0)=0$, and $d\rho_{Z\cap U}(u)=F|_{Z\cap U}$. Therefore, the proof of Theorem \ref{LWL normal form}  will be complete if we can show that 
\begin{equation}\label{pullback of gc} e^u\Phi^*(\J_0|_{U_0})=\J_0|_U.\end{equation}  Defining $g=e^{u'}\Psi^*\in\cG(\tilde{X}_0)$ and $\tilde{\J}_0=\J_0|_{\tilde{X}_0}$, we will show that 
\[g\cdot \tilde{J}_0=\tilde{J}_0,\] which clearly implies the condition (\ref{pullback of gc}).

Let $R\subset \TTX$ be the sub-bundle spanned by $\{(\frac{\partial}{\partial y^I},0),(0,dx^i)\}_{i,I}$ and $S\subset \TTX$ the sub-bundle spanned by $\{(\frac{\partial}{\partial x^i},0),(0,dy^I)\}_{i,I}$.  The following facts are clear by inspection:
\begin{enumerate} \item $\TTX=R\oplus S$.  Furthermore, both $R$ and $S$ are maximal isotropic so the pairing gives an identification $S\cong R\uv$ and $R\cong S\uv$. 
\item $\J_0(R)=R$ and $\J_0(S)=S$.
\item $S|_{Z_0}=\TT (Z_0,0)$.
\end{enumerate}
Let $\Y$ be the set of vector fields on $X_0$ of the form $\sum_{I}c^I\frac{\partial}{\partial y^I}$ for real constants $\{c^I\}_I$.  Define also 
$\RR\subset \Cinf(R)$ and $\SS\subset \Cinf(S)$ to be the subspaces of elements $\xx\in \Cinf(R),\Cinf(S)$ satisfying $(\xi,0)\cdot\xx=0$ for all $\xi\in \Y$.
\begin{lemma}\label{basic facts} \begin{enumerate}[(1)]
\item $\ll\RR,\SS\rr\subset \RR$,
\item $\ll\RR,\RR\rr=0$.  
\item $\J_0(\SS)=\SS$, $\J_0(\RR)=\RR$.
\item For every $\xx\in\RR$, we have $(\xx\cdot\J_0)(\RR)=0 , (\xx\cdot\J_0)(\SS)\subset \RR$, and $\xx\cdot(\xx\cdot\J_0)=0$.
\item Every $\xx\in \RR$ is complete, and satisfies
\begin{equation}\label{simple action}e^{\xx}\J_0=\J_0+\xx\cdot\J_0.\end{equation}

\end{enumerate}
\end{lemma}
\begin{proof} Every $\xx\in \RR$ is of the form
\begin{equation} \label{R form}\xx=(\xi^I(x)\frac{\partial}{\partial y^I},a_i(x)dx^i),\end{equation} whereas every element $\yy\in\SS$ is of the form 
\begin{equation}\label{S form}\yy=(\eta^i(x)\frac{\partial}{\partial x^I},b_I(x)dy^I).\end{equation} We calculate
\begin{equation}\label{brack1}[\xi^I(x)\frac{\partial}{\partial y^I},\eta^i(x)\frac{\partial}{\partial x^i}]=-\eta^i\frac{\partial\xi^I}{\partial x^i}\frac{\partial}{\partial y^I},\end{equation}
\begin{align}\label{brack2} \pounds(\xi^I(x)\frac{\partial}{\partial y^I})b_I(x)dy^I& = \iota(\xi^I(x)\frac{\partial}{\partial y^I})(\frac{\partial b_I}{\partial x^i}dx^i\wedge dy^I)+d(\xi^Ib_I) \notag \\ & = -\xi^I\frac{\partial b_I}{\partial x^I}dx^i+\frac{\partial \xi_I}{\partial x^i}b_Idx^i+\xi^I\frac{\partial b_I}{\partial x^i}dx^i \notag \\
& = \frac{\partial \xi_I}{\partial x^i}b_Idx^i,
\end{align} and 
\begin{equation}\label{brack3}\iota(\eta^i\frac{\partial}{\partial x^i})d(a_idx^i) = \eta^j(\frac{\partial a^i}{\partial x^j}-\frac{\partial a^j}{\partial x^i})dx^i.\end{equation}  Using (\ref{brack1}), (\ref{brack2}), and (\ref{brack3}), we see that for $\xx$ given by (\ref{R form}) and $\yy$ by (\ref{S form}) the Dorfman bracket $\ll \xx,\yy\rr$ is equal to 
\[(-\eta^i\frac{\partial\xi^I}{\partial x^i}\frac{\partial}{\partial y^I}, \frac{\partial \xi_I}{\partial x^i}b_Idx^i+\eta^j(\frac{\partial a^i}{\partial x^j}-\frac{\partial a^j}{\partial x^i})dx^i) \in \RR.\]  This verifies part (1) of the lemma.  Part (2) is verified by a similar computation, which we omit.

Part (3) follows from an easy calculation using the explicit form of $\J_0=\J_{\omega}\times \J_J$ given by (\ref{Jsymplectic}) and (\ref{Jcomplex}).  For example, for $i=1,\cdots, m$ we have \begin{align*} & \J_0((\frac{\partial}{\partial x^i},0))=(0,-dy^i),\\ & \J_0((\frac{\partial}{\partial y^i},0))=(0,dx^i),\\ & \J_0((0,dx^i))=(-\frac{\partial}{\partial y^i},0),\\ &\J_0((0,dy^i))=(\frac{\partial}{\partial x^i}).\end{align*}
%we use the explicit form of $\J_0=\J_{\omega}\times \J_J$ given by   We see that
%\begin{enumerate} \item For 
%\item For $i=1,\cdots k$ we have $\J_0(\frac{\partial}{\partial x^{m+2i-1}})=-\frac{\partial}{\partial x^{m+2i}}, $$\J_0(\frac{\partial}{\partial x^{m+2i}})=-\frac{\partial}{\partial x^{m+2i-1}},$$\J_0(dx^{m+2i-1})=-dx^{m+2i}$, and $\J_0(\end{enumerate}

Part (4) follows from combining parts (1), (2), and (3).

To prove part (5), recall that $\xx=(\xi,a)\in\Cinf(\TT X)$ is complete if and only its vector field component $\xi\in\Cinf(TX)$ is complete, i.e. if and only if it generates a well-defined flow $\{\varphi_t:X\to X\}$ for all $t\in \R$. If $\xx\in\RR$, its vector field component is of the form $\xi^i(x)\frac{\partial}{\partial y^i}$, for which the flow is explicitly given by the formula
\[\varphi_t(x^1,\cdots, x^d,y^1,\cdots, y^{d'})=(x^1,\cdots, x^d,y^1+\xi^1(x),\cdots y^{d'}+\xi^{d'}(x).\] Given $\yy\in\Cinf(\TT X)$, for each $t\in\R$ define $\yy_t=e^{t\xx}\cdot\yy$. It follows from Proposition \ref{exp Dorfman} that the map $t\mapsto \yy_t$ is characterized by the conditions
 $\yy_0=\yy$, and 
 \begin{equation} \label{deriv} \frac{d}{ds}|_{s=t}\yy_s=\ll\xx,\yy_t\rr.\end{equation}  
  We claim that, if $\yy$ is an element of either $\RR$ or $\SS$, then 
  \begin{equation}\label{integrate}\yy_t=\yy+t\ll\xx,\yy\rr,\end{equation} and in particular
\begin{equation}\label{int1}e^{\xx}\yy=\yy+\ll\xx,\yy\rr.\end{equation}   To see this, note that $\yy_0=\yy$, and that the left-hand side of (\ref{deriv}) is equal to $\ll\xx,\yy\rr$ for all $t\in\R$.  If $\yy \in \RR$, then by part (4) of the Lemma $\ll\xx,\yy\rr=0$, so that both sides of (\ref{deriv}) vanish.  If $\yy\in \SS$, then $\ll\xx,\yy\rr\in \RR$ and therefore $\ll\xx,\ll\xx,\yy\rr\rr=0$, so that the right-hand side of (\ref{deriv}) is equal to 
  \[\ll\xx,\yy+\ll\xx,\yy\rr\rr=\ll\xx,\yy\rr,\] and again we see that equation (\ref{deriv}) holds.
  
  By inspection, $e^{\xx}\J_0$ is clearly determined by its action $\RR\oplus\SS\subset \Cinf(\TTX)$, so that part (5) of the lemma follows from equation (\ref{int1}).

\end{proof}

Consider the vector field on $V\times Y$ given by $\xi(x,y)=\psi^I(x)\frac{\partial}{\partial y^I}$.  It is easy to check that $e^{(\xi,0)}=\varphi^*$ in $\cG(V\times Y)$.  Furthermore, since $(\xi,0)$ and $(0,u)$ are both elements of $\RR$, it follows from Lemma \ref{basic facts} that 
\[(0,\pounds(\xi)u)=\ll(\xi,0),(0,u)\rr=0,\] so that $e^{\xi}u=u$.  Therefore, defining $\xx=(\xi,u)\in \RR$, we see that 
\[g:=e^u\varphi^*=e^{\xx}.\]  Therefore, to complete the proof of the Proposition, it will be sufficient to show that 
\[\xx\cdot \J_0=0.\]

Actually, we will prove the equivalent statement $-\xx\cdot \J_0=0$.  Since $-\xx\in \RR$, it follows from Lemma \ref{basic facts} that $-\xx\cdot \J$ vanishes if and only if 
\[\la(-\xx\cdot\J_0)\SS,\SS\ra = 0;\] since $g^{-1}\cdot\J_0=\J_0-\xi\cdot\J_0$ and $\J_0(\SS)=\SS$, this is equivalent to
\[\la(g^{-1}\cdot\J)\SS,\SS\ra =0,\] which is in turn equivalent to 
\[\la \J (g\cdot\SS),g\cdot\SS\ra =0.\] It follows from Proposition \ref{action on GC submanifolds} that 
\[g\cdot \KK^{(Z_0,0)}=\KK^{(Z,F)}.\]  Since $\SS\subset \KK^{(Z_0,0)}$ and $(Z,F)$ is compatible with $\J$, it follows that 
\[\la \J (g\cdot\SS),g\cdot\SS\ra\subset I^Z.\]  On the other hand, since for each $x,y\in \SS$ the quantity $\la \J (g\cdot x),g\cdot y\ra$ is invariant under the flow generated by the vector field $\xi$, it follows that if $\la \J (g\cdot x),g\cdot y\ra$ vanishes on $Z$ then it must vanish on all of $\tilde{X}_0$, so we see that 
\[\la \J (g\cdot\SS),g\cdot\SS\ra=0.\]

\end{proof}

\section{Formal symmetries of generalized complex structures}\label{section formal}

The remainder of the paper will be devoted to studying the deformation theory  of generalized complex branes.  As discussed in the introduction, our first task will be to define, for every GC brane $\B$, a functor
\[\De_{\B}:\Art\to \Set\] encoding the formal deformations of $\B$; we do this in Definition \ref{brane deformation functor}.  Before we can formulate Definition \ref{brane deformation functor}, however, it will be necessary to introduce a framework that includes certain ``formal" versions of geometric structures studied in \S\ref{Courant algebroid}-\S\ref{LWL submanifolds}.  We therefore begin by recalling some basic facts about local Artin algebras, as well as nilpotent Lie algebras and their exponentiation.  We then use this theory to recast some of the definitions and results about GC geometry presented in the previous sections.

\subsection{Artin algebras, nilpotent Lie algebras, and exponentiation}

Recall that a ring is called \emph{Artinian} if it satisfies the descending chain condition on ideals.
\begin{definition}Let $\Art$ denote the category of local, unital Artinian $\R$-algebras with residue field $\R$.  
\end{definition} 

For a discussion of Artin algebras and their use in deformation theory, see \cite{KS}\cite{Ha}. The basic facts stated below about $\Art$ may be found in these references.

\begin{rem}  Every $A\in\Art$ may be decomposed, as a real vector space, as 
\[A=\R\cdot 1_A\oplus \m,\] where $\m\subset A$ is the unique maximal ideal.  Furthermore, $\m$ is finite dimensional over $\R$, and coincides with the set of nilpotent elements of $A$.  Any $\R$-algebra homomorphism $\varphi:A'\to A$ for $A,A'\in\Art$ necessarily satisfies $\varphi(\m_{A'})\subset \m_{A}$.   
\end{rem}
%\begin{notation}Given $A\in\Art$, we denote the unique maximal ideal by $\m\da\subset A$. 
%\end{notation}
\begin{ex} For each natural number $N$, the $\R$-algebra 
\[A=\R[\epsilon]/(\epsilon^{N+1})\] is an object of $\Art$, with maximal ideal $(\epsilon^N)$. 
\end{ex}

\begin{definition} Given $A\in\Art$, a \emph{small extension} of $A$ is a pair $(A',\pi)$, where $A'$ is an object of $\Art$, and $\pi:A'\to A$ is a surjective map of $\R$-algebras, such that the kernel of $\pi$ is a principal ideal $I\subset A'$ satisfying $\m_{A'}I=0$.
\end{definition}
\begin{ex} Continuing with the previous example, for each natural number $N$ we have a small extension
\[\R[\epsilon]/(\epsilon^{N+2})\to \R[\epsilon]/(\epsilon^{N+1}).\]
\end{ex}

The following basic result will enable us to prove a number of results by inducting on small extensions \cite{Ha}.
\begin{proposition}\label{induct on small extensions}  Any surjective map in $\Art$ can be factored through a finite sequence of small extensions.  In particular, for any $A\in\Art$, the quotient map $A\to \R$ can be factored through a finite sequence of small extensions.  \end{proposition}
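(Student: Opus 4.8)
The plan is to prove Proposition \ref{induct on small extensions} in two stages: first reduce an arbitrary surjection $\pi\colon A'\to A$ in $\Art$ to a composition of surjections each having kernel that is square-zero (i.e. annihilated by the maximal ideal of its source in the weaker sense that the kernel squares to zero), and then further refine each of those into genuine small extensions by a dimension induction on the kernel. The second claim of the proposition is then the special case $A' = A$, $A = \R$: since the maximal ideal $\m_A$ is nilpotent and finite-dimensional over $\R$, the quotient $A\to \R$ is surjective and falls under the general statement.

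First I would set up the reduction to square-zero kernels. Given a surjection $\pi\colon A' \to A$ with kernel $I\subset A'$, the ideal $I$ is contained in $\m_{A'}$, hence is nilpotent: there is some $r$ with $I^{r+1} = 0$. Consider the chain of ideals
\[
I \supseteq \m_{A'} I \supseteq \m_{A'}^2 I \supseteq \cdots \supseteq \m_{A'}^r I \supseteq \m_{A'}^{r+1} I = 0,
\]
which terminates because $\m_{A'}$ is nilpotent (one can also use the $I$-adic chain $I\supseteq I^2 \supseteq \cdots$; either works). Setting $A_j' = A'/\m_{A'}^{j} I$, we get a tower $A' = A_{r+1}' \to A_r' \to \cdots \to A_1' = A'/I = A$, and each map $A_{j+1}' \to A_j'$ is a surjection whose kernel is $\m_{A'}^{j} I/\m_{A'}^{j+1} I$; this kernel is annihilated by $\m_{A_{j+1}'}$ (the image of $\m_{A'}$), so each step has the defining property of a small extension \emph{except} that the kernel need not be principal. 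I would remark that each $A_j'$ is still Artinian local with residue field $\R$ (quotients of such algebras are again such), so the tower lives in $\Art$.

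Next I would handle the refinement to principal kernels. Fix one step $B' \to B$ with kernel $J$ satisfying $\m_{B'} J = 0$; then $J$ is naturally a finite-dimensional $B'/\m_{B'} = \R$-vector space, say of dimension $d$. If $d = 0$ the map is an isomorphism and there is nothing to do; if $d \geq 1$, pick a basis $e_1,\dots,e_d$ of $J$ and let $J_1 = (e_1) = \R e_1$. Since $\m_{B'} J = 0$, $J_1$ is an ideal of $B'$, and $B'\to B'/J_1$ is a small extension in the strict sense (principal kernel $\R e_1$ annihilated by $\m_{B'}$). Moreover $B'/J_1 \to B$ has kernel $J/J_1$, which is again annihilated by the maximal ideal of $B'/J_1$ and has dimension $d-1$ over $\R$. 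Inducting on $d$, the step $B'\to B$ factors as a composition of exactly $d$ small extensions. Composing these refinements over all the steps of the first tower yields the desired factorization of $\pi$ into finitely many small extensions, which is the assertion of the proposition.

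The main obstacle — really the only delicate point — is arranging that the intermediate kernels are \emph{principal}, since a square-zero ideal of dimension $> 1$ is not itself a small-extension kernel; the fix is precisely the basis-splitting argument above, which works because square-zero-ness over $\m_{B'}$ makes every $\R$-subspace of $J$ an ideal. Everything else (nilpotence of $\m_{A'}$ and of $I$, finite-dimensionality, stability of ``Artin local with residue field $\R$'' under quotients) is standard and I would simply cite \cite{Ha}. One should also double-check the bookkeeping that the composition of the two nested inductions is still finite, which is immediate since both the length $r$ of the $\m_{A'}$-adic tower and each kernel dimension $d$ are finite.
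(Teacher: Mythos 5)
The paper does not actually prove this proposition; it is stated as a standard fact with a citation to \cite{Ha}, so there is nothing to compare against line by line. Your argument is the standard one and is correct: filtering the kernel $I$ by $\m_{A'}^{j}I$ reduces to the square-zero case, and the observation that a square-zero kernel is an $\R$-vector space in which every line is a principal ideal lets you split each step into genuinely small (principal) extensions; the passage to the special case $A\to\R$ is also handled correctly. The only blemish is a bookkeeping slip in the first tower: with $A_j'=A'/\m_{A'}^{j}I$ you have $A_1'=A'/\m_{A'}I$, which equals $A'/I$ only when $I=0$ (by Nakayama), so the tower should terminate at $A_0'=A'/\m_{A'}^{0}I=A'/I=A$ (or the exponent should be shifted to $j-1$). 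This is a trivial fix and does not affect the validity of the proof.
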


Let $\g$ be a real Lie algebra.  Then $\g$ determines a functor from $\Art$ to the category of real Lie algebras, given by
\begin{equation}\label{vect functor} A\mapsto \g_A:=\m\otimes_{\R}\g.\end{equation}  We will refer to such a functor as a \emph{ Lie algebra over $\Art$}.  As a slight abuse of notation, we will sometimes use the same symbol ``$\g$" to refer both to the Lie algebra itself and the Lie algebra over $\Art$ it determines; the meaning will always be clear from context.
%As another notational note: given a morphism $\tau:A\to A'$ in $\Art$, we will denote the induced map $\g_A\to \g_{A'}$  by the same symbol $\tau$, i.e. for every $a\in \m$ and every $v\in \g$ we have 
%\[\tau:a\otimes v\mapsto \tau(a)\otimes v.\]

\begin{rem}\label{over art} More generally, for any category $\CC$, we may define an object of $\CC$ over $\Art$ to be functor $\Art\to \CC$, and a morphism (over $\Art$) between such objects to be a natural transformation. 
For example, a real vector space $V$ determines two different vector spaces over $\Art$, given respectively by 
\[A\mapsto A\otimes_{\R}V\] or 
\[A\mapsto \m\otimes_{\R}V.\] A representation of a group $G$ over $\Art$ on a  vector space $V$ over $\Art$ consists of a representation of $G\da$ on $V\da$ for each $A\in\Art$, the collection of which must be suitably compatible with morphisms $A\to A'$ in $\Art$.  

The groups over $\Art$ we consider in this paper all have the property that their value on the trivial Artin algebra $\R\in\Art$ is the trivial group.  We will refer to such a group as a \emph{formal group over $\Art$}, or more succinctly as a \emph{formal group}.
  \end{rem}

To each real Lie algebra $\g$, recall that we can associate a formal group over $\Art$ as follows. For each $A\in\Art$, the Lie algebra $\g\da:=\m\otimes_{\R}\g$ is nilpotent, and  may therefore be  ``exponentiated" it to form a group $e^{\g\da}$.  By definition, there is a canonical bijective map from the underlying set of $\g\da$ to the underlying set of $e^{\g\da}$, which we write as $x\mapsto e^{x}$.  For each pair of elements $x,y\in\g\da$, the product of $e^x$ with $e^y$ is defined using the Baker-Campbell-Hausdorff formula 
\[e^x e^y=e^{x+y+\frac{1}{2}[x,y]+\cdots},\] where the formally infinite sum $x+y+\frac{1}{2}[x,y]\cdots$ terminates after a finite number of terms due to the nilpotence.  This construction $\g\mapsto e^{\g}$ is functorial: every homomorphism of Lie algebras $\varphi:\g\to\g'$ induces a  homomorphism of formal groups  $e^{\varphi}:e^{\g}\to e^{\g'}$, given for each Artin algebra $A\in\Art$ and each $x\in\g\da$ by the formula $e^{x}\mapsto e^{\varphi\da(x)}$.  

%\begin{rem} As we did here, we will often define constructions involving objects ``over $\Art$" by simply describing them for each fixed Artin algebra $A\in\Art$.  This is done to streamline the exposition, and in all case the extra structure involving homomorphisms $A\to A'$ in $\Art$ should be clear from the context.  
%\end{rem}
We record here for later use some well-known facts about this construction.   
    
\begin{proposition}\label{exp prop} Let $\g$ be a real Lie algebra with corresponding formal group $e^{\g}$.
\begin{enumerate} \item   Every action of $\g$ on a real vector space $V$ induces an action of $e^{\g}$ on $V$ (viewed as a vector space $A\mapsto A\otimes_{\R} V$ over $\Art$ ). This action is given by the exponential formula: for each Artin algebra $A\in\Art$, each $x\in\g\da$, and each $v\in A\otimes_{\R}V$, we set\begin{equation}\label{exp action}e^{x}v=v+x\cdot v+\frac{1}{2}x\cdot(x\cdot v)\cdots.\end{equation} 

\item Consider the action of $e^{\g}$ on $\g$ obtained by exponentiating the adjoint representation of $\g$ on itself, i.e. for every Artin algebra $A$ and every $x,y\in A\otimes_{\R}\g$ have
\[e^{x}y=y+[x,y]+\frac{1}{2}[x,[x,y]]+\cdots.\]  Then for every $x,y\in\g\da$, we have the identity 
\[e^x e^y e^{-x}=e^{e^{x}y}.\]
\item Fix $A\in\Art$.  Every group homomorphism $\varphi:\R\to e^{\g\da}$ is of the form 
\begin{equation}\label{1parsubformal}t\mapsto e^{tx}\end{equation} for a unique $x\in\g\da$.  Conversely, for each $x\in\g\da$, the formula (\ref{1parsubformal}) determines a homomorphism $\R\to e^{\g\da}$.
\end{enumerate}
\end{proposition}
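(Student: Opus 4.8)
The plan is to prove all three parts uniformly, by realizing the abstractly-defined group $e^{\g\da}$ as a subgroup of the unit group of a genuinely \emph{nilpotent}-augmented associative algebra, so that every formal power-series manipulation below (for $\exp$, $\log$, and the Baker--Campbell--Hausdorff series) becomes an honest finite identity. Fix $A\in\Art$ with maximal ideal $\m$, and let $r$ be such that $\m^{r+1}=0$. The embedding $\g\hookrightarrow U(\g)$ into the universal enveloping algebra (Poincar\'e--Birkhoff--Witt) induces a Lie algebra embedding $\g\da=\m\otimes_{\R}\g\hookrightarrow \mathcal{N}:=\m\otimes_{\R}U(\g)$, where $\mathcal{N}$ is a non-unital associative algebra with $\mathcal{N}^{r+1}=0$; this last point is the crucial one, and is the reason one truncates $U(\g)$ by $\m$ rather than working inside $U(\g)$ or its augmentation ideal, neither of which is nilpotent. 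In the unital algebra $\R\cdot 1\oplus\mathcal{N}$ the maps $\exp$ and $\log$ are mutually inverse bijections $\mathcal{N}\leftrightarrow 1+\mathcal{N}$ (all series finite), and the classical BCH theorem gives $\exp(a)\exp(b)=\exp(\mathrm{BCH}(a,b))$ with $\mathrm{BCH}$ formed from the commutator bracket. Since for $x,y\in\g\da$ this commutator BCH coincides term-by-term with the Lie-theoretic BCH series defining the product in $e^{\g\da}$, the assignment $e^{x}\mapsto\exp(x)$ is an injective group homomorphism identifying $e^{\g\da}$ with $\exp(\g\da)\subset(\R\cdot 1\oplus\mathcal{N})^{\times}$.

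For part (1): an action of $\g$ on $V$ is the same datum as an $\R$-algebra homomorphism $U(\g)\to\mathrm{End}_{\R}(V)$; extending scalars to $A$ and restricting to $\mathcal{N}=\m\otimes_{\R}U(\g)$ yields an algebra homomorphism $\mathcal{N}\to\mathrm{End}_{\R}(A\otimes_{\R}V)$ with nilpotent image $B$ (as $\mathcal{N}^{r+1}=0$), hence a unital homomorphism $\R\cdot 1\oplus\mathcal{N}\to\R\cdot\mathrm{id}\oplus B$. Composing with the identification $e^{\g\da}\cong\exp(\g\da)$ of the first paragraph gives a group homomorphism $e^{\g\da}\to GL_{\R}(A\otimes_{\R}V)$; unwinding the definitions, $e^{x}$ acts on $v\in A\otimes_{\R}V$ by $v\mapsto v+x\cdot v+\tfrac12 x\cdot(x\cdot v)+\cdots$, a sum that is finite since $\g\da$ carries $\m^{k}\otimes V$ into $\m^{k+1}\otimes V$. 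This is exactly formula (\ref{exp action}); that it defines a group \emph{action} is precisely the homomorphism property just obtained, and compatibility with morphisms $A\to A'$ in $\Art$ is immediate from functoriality of the constructions.

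For part (2): apply part (1) with $V=\g$ and the adjoint action, so $e^{x}y=y+[x,y]+\tfrac12[x,[x,y]]+\cdots=(\exp\mathrm{ad}_{x})(y)$. Inside $\R\cdot 1\oplus\mathcal{N}$, the classical (here finite) identity $\exp(x)\,z\,\exp(-x)=(\exp\mathrm{ad}_{x})(z)$ gives
\[ \exp(x)\exp(y)\exp(-x)=\exp\!\big(\exp(x)\,y\,\exp(-x)\big)=\exp\!\big((\exp\mathrm{ad}_{x})(y)\big), \]
and translating back through the identification $e^{\g\da}\cong\exp(\g\da)$ yields $e^{x}e^{y}e^{-x}=e^{e^{x}y}$. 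For part (3): $t\mapsto e^{tx}$ is a homomorphism because $tx$ and $sx$ commute, so $\mathrm{BCH}(sx,tx)=(s+t)x$; and distinct $x$ give distinct such subgroups since $x\mapsto e^{x}$ is a bijection. Conversely, given a continuous homomorphism $\varphi\colon\R\to e^{\g\da}$, set $\alpha=\log\circ\varphi\colon\R\to\g\da$, so $\alpha(0)=0$ and $\alpha(s+t)=\mathrm{BCH}(\alpha(s),\alpha(t))$. Reducing modulo the lower central series of $\g\da$ and inducting on its nilpotency class, on each successive central quotient the functional equation degenerates to additivity (the BCH corrections involving central terms vanish); this forces $\alpha$ to differ from $t\mapsto t\,\alpha(1)$ by an additive map $\R\to\g\da$ that vanishes at $1$, hence on $\mathbb{Q}$, hence --- using continuity of $\varphi$ --- identically. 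Thus $\alpha(t)=t\,\alpha(1)$ and $x:=\alpha(1)$ is the required element.

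The only step that is not bookkeeping is the one flagged in the first paragraph: one must choose an ambient associative algebra in which all the exponential and BCH series literally truncate, and working in $\m\otimes_{\R}U(\g)$ (or the operator algebra $B$ of part (1)) achieves this precisely because $\m$ is nilpotent; with that realization in hand, parts (1)--(3) become standard facts about $\exp$, $\log$, and BCH in nilpotent associative algebras. A secondary point is the regularity hypothesis in part (3): an abstract group homomorphism $\R\to e^{\g\da}$ need only be $\mathbb{Q}$-linear on the relevant central quotients, so continuity of $\varphi$ (implicit in reading these as one-parameter subgroups in the Lie-group sense) is genuinely used to pass from $\mathbb{Q}$ to $\R$.
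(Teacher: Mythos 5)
The paper offers no proof of Proposition \ref{exp prop} at all: it is introduced with ``we record here for later use some well-known facts,'' so there is nothing to compare your argument against line by line. Taken on its own terms, your proof is correct, and the organizing device --- identifying $e^{\g\da}$ with $\exp(\g\da)$ inside the unit group of $\R\cdot 1\oplus(\m\otimes_{\R}U(\g))$, where nilpotence of $\m$ forces every exponential, logarithm, and BCH series to truncate --- is the standard and arguably cleanest way to reduce parts (1) and (2) to bookkeeping in a nilpotent associative algebra. Two remarks. First, in part (3) you could shortcut the lower-central-series induction: part (2) together with the invertibility of $(\exp(\mathrm{ad}_x)-1)/\mathrm{ad}_x$ (a polynomial in a nilpotent operator with nonzero constant term) shows that $e^x$ and $e^y$ commute if and only if $[x,y]=0$; since the image of the abelian group $\R$ consists of pairwise commuting elements, $\log\circ\varphi$ is then outright additive with no induction needed. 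Your induction does close as written, though, because once $\alpha(t)\equiv t\,\alpha(1)$ modulo a term of the lower central series, the bracket corrections in BCH land in the next term. Second --- and this is the substantive point --- your continuity caveat is a genuine observation rather than pedantry: as literally stated (``every group homomorphism $\R\to e^{\g\da}$''), part (3) is false already for abelian $\g\da$, where a Hamel-basis construction produces additive non-linear maps $\R\to\g\da$. Some regularity hypothesis is required. In the only place the paper invokes this statement (the proof of Proposition \ref{exponential}), the homomorphism in question is visibly polynomial in $t$, so continuity --- or polynomiality --- holds and the application is unaffected; but the gap is in the paper's statement, not in your proof.
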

\subsection{The formal symmetries of a Courant algebroid}
Fix a smooth manifold $X$.  As in \S\ref{Courant algebroid}, we denote by $\g(X)$ the real Lie algebra of vector fields on $X$, and by $\cg(X)=\g(X)\ltimes \Omega^1(X)$ the semi-direct product Lie algebra introduced in Definition \ref{definition cg}.  Following the convention  introduced above, we may also regard $\g(X)$ and $\cg(X)$ as a Lie algebras over $\Art$, which assign to each $A\in\Art$ with maximal ideal $\m\subset A$ the nilpotent Lie algebras $\g\da(X):=\m\otimes\g(X)$ and $\cg\da(X)=\m\otimes_{\R}\cg(X)$, respectively.   More generally, we have sheaves of Lie algebras over $\Art$, which assign to each open set $U\subset X$ and each $A\in\Art$ the Lie algebras $\g\da(U)=\m\otimes_{\R}\g(U)$ and $\cg(U)=\m\otimes_{\R}\g(U)$.

 We also introduce the notation for differential forms
\[\Omega\ub\da(X):=A\otimes_{\R}\Omega\ub(X)\] and 
\[\Omega\ub\dm(X):=\m\otimes_{\R}\Omega\ub(X),\]  and similarly
\[\Cinf\da(TX):=A\otimes_{\R} \Cinf(TX),\] 
\[\Cinf\dm(TX):=\m\otimes_{\R} \Cinf (TX),\]
\[\Cinf\da(\TTX):=A\otimes_{\R}\Cinf(\TTX),\] and 
\[\Cinf\dm(\TTX):=\m\otimes_{\R}\Cinf(\TTX).\]  A potentially confusing  point about this notation: while $\Cinf(TX)$ and $\g(X)$ both denote the space of vector fields on $X$, given $A\in\Art$ the spaces $\Cinf\da(TX)$ and $\g\da(X)$ are not the same.  This corresponds to the different roles played by the space of vector fields in our development.

Consider the formal group $e^{\g(X)}$, whose elements may be viewed as infinitesimal diffeomorphisms of $X$.  The  action of $\g(X)$ on the spaces $\Omega\ub(X)$ and $\Cinf(TX)$ via the Lie derivative may be exponentiated (as described in Proposition \ref{exp prop}) to an action of $e^{\g(X)}$.   Unsurprisingly, this action is compatible with the relevant structures on $\Omega\ub(X)$, and $\Cinf(TX)$,  such as the wedge product of differential forms, and the contraction operation between vector fields and forms.  This is summarized in the following Proposition, whose proof  is deferred to the appendix. 

\begin{proposition}\label{formal courant action} Fix an Artin algebra $A\in\Art$.  For each $g\in e^{\g\da(X)}$, each $a,b\in A\otimes_{\R} \Omb(X)$, and each $\xi,\eta \in A\otimes_{\R} \Cinf(TX)$ we have 
\[g(da)=dg(a),\]
\[g(a\wedge b)=g(a)\wedge g(b),\]
\[g(\iota(\xi)a)=\iota(g(\xi))g(a),\] and
 \[g([\xi,\eta])=[g(\xi),g(\eta)].\]  
\end{proposition}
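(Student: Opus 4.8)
The plan is to reduce everything to the well-known compatibility properties of the Lie derivative action of $\g(X)$ on $\Omega^\bullet(X)\oplus\Cinf(TX)$, and then to propagate those identities through the exponential via Proposition \ref{exp prop}. First I would fix $A\in\Art$ with maximal ideal $\m$ and recall that, by Proposition \ref{exp prop}(1), the action of $g=e^{x}\in e^{\g\da(X)}$ on any space of the form $A\otimes_\R V$ (where $V$ carries a $\g(X)$-action by Lie derivative/bracket/etc.) is given by the terminating series $g\cdot v = v + x\cdot v + \tfrac12 x\cdot(x\cdot v)+\cdots$. Since the four target spaces here ($\Omb(X)$, $\Omega^{\bullet+1}(X)$, $\Cinf(TX)$, and the product used for contraction) all carry compatible $\g(X)$-actions, the four identities to be proved are exactly the statements that the exponentiated action commutes with $d$, with $\wedge$, with $\iota$, and respects the Lie bracket.

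The key structural point is the following: if $\phi\colon V\to W$ is an $\R$-linear map between $\g(X)$-representations which is \emph{equivariant} (i.e. $\phi(\xi\cdot v)=\xi\cdot\phi(v)$ for all $\xi\in\g(X)$), then the $A$-linear extension $\mathrm{id}_A\otimes\phi$ intertwines the exponentiated actions of $e^{\g\da(X)}$: this follows termwise from \eqref{exp action} since $\phi(x\cdot v)=x\cdot\phi(v)$ for $x\in\m\otimes\g(X)$ by $A$-linearity. Applying this with $\phi=d$ (equivariant by the infinitesimal identity $\pounds(\xi)\circ d = d\circ\pounds(\xi)$) gives $g(da)=dg(a)$. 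For the bracket identity, one uses that $\pounds(\xi)[\eta,\zeta]=[\pounds(\xi)\eta,\zeta]+[\eta,\pounds(\xi)\zeta]$, i.e. the bracket is a derivation for the $\g(X)$-action; the same "$\phi$ equivariant $\Rightarrow$ exponentiated-equivariant" argument, applied now to the bilinear map $[\cdot,\cdot]$ together with a Leibniz-type induction on the series, yields $g([\xi,\eta])=[g\xi,g\eta]$. The wedge-product identity $g(a\wedge b)=g(a)\wedge g(b)$ and the contraction identity $g(\iota(\xi)a)=\iota(g\xi)g(a)$ are handled the same way, using that $\pounds(\xi)$ is a derivation of $\wedge$ and satisfies $\pounds(\xi)(\iota(\eta)a)=\iota([\xi,\eta])a+\iota(\eta)\pounds(\xi)a$.

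Concretely, I would isolate one lemma: \emph{for a $k$-multilinear map $\beta\colon V_1\times\cdots\times V_k\to W$ of $\g(X)$-representations satisfying the Leibniz rule $\xi\cdot\beta(v_1,\dots,v_k)=\sum_j \beta(v_1,\dots,\xi\cdot v_j,\dots,v_k)$, and for $g=e^x\in e^{\g\da(X)}$, one has $g\cdot\beta(v_1,\dots,v_k)=\beta(g\cdot v_1,\dots,g\cdot v_k)$.} The proof of this lemma is the one genuine computation: expand both sides as (terminating) series in $x$ and match coefficients, using that $\tfrac{1}{n!}x^n$ acting on $\beta(v_1,\dots,v_k)$ equals $\sum_{n_1+\cdots+n_k=n}\beta\!\left(\tfrac{x^{n_1}}{n_1!}v_1,\dots,\tfrac{x^{n_k}}{n_k!}v_k\right)$, which is just the multinomial/Leibniz expansion and is legitimate because all sums are finite. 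Given this lemma, all four displayed identities are immediate corollaries (the first with the unary map $d$, which trivially satisfies the $k=1$ Leibniz rule meaning equivariance). I expect the multinomial bookkeeping in this lemma to be the only slightly delicate step; everything else is a direct citation of standard Cartan-calculus identities plus functoriality of exponentiation. This matches the level of a deferred-to-appendix proof: the content is entirely formal, and the work is organizing the induction cleanly.
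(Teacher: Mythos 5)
Your proposal is correct and follows essentially the same route as the paper's appendix proof: the paper isolates exactly your key lemma (stated for bilinear maps $V\times W\to T$ satisfying the Leibniz rule, which covers all three binary operations here), proves it by the same binomial expansion of $\xi^k(x*y)$ and resummation, and handles $d$ separately via equivariance $\pounds(\xi)\circ d=d\circ\pounds(\xi)$ iterated through the exponential series. The only cosmetic difference is that you state the lemma for general $k$-multilinear maps while the paper sticks to $k=2$, which suffices for all four identities.
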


Turning next to the formal group $e^{\cg(X)}$, we may similarly exponentiate the action of $\cg(X)$ on $\Cinf(\TTX)$ (described in Proposition \ref{Courant action prop}) to give an action of $e^{\cg(X)}$.  Using Proposition \ref{formal courant action}), we may then easily prove the following.

\begin{proposition} \label{action} For each $A\in\Art$, the action of $e^{\cg\da(X)}$ on $\Cinf\da(\TTX)$ is compatible with the (A-linear extensions of the) Dorfman bracket (\ref{Dorfman Bracket}), the natural pairing $\la\cdot,\cdot\ra$, and the projection $\pi:\Cinf\da(\TTX)\to\Cinf\da(TX)$.
\end{proposition}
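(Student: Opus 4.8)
The plan is to deduce this ``finite'' (formal-group) statement from its infinitesimal counterpart, using the standard principle that the exponential of a nilpotent derivation of an algebraic structure is an automorphism of it. Fix $A\in\Art$ with maximal ideal $\m$. Every element of $e^{\cg\da(X)}$ has the form $e^{\xx}$ for a unique $\xx\in\cg\da(X)$, and since $\m$ is nilpotent the operator $D_{\xx}\colon\alpha\mapsto\xx\cdot\alpha$ on $\Cinf\da(\TTX)$ is nilpotent, so $e^{\xx}=\sum_{n\ge 0}\tfrac{1}{n!}D_{\xx}^{\,n}$ is a finite sum; it therefore suffices to verify the three compatibilities (with $\ll\cdot,\cdot\rr$, with $\la\cdot,\cdot\ra$, and with $\pi$) for each such $e^{\xx}$.

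First I would record the needed infinitesimal identities for $\xx=(\xi,a)\in\cg(X)$ acting on $\Cinf(\TTX)$ by the Dorfman formula of Proposition \ref{Courant action prop}(1). (i) $\ll\xx,\cdot\rr$ is a derivation of the Dorfman bracket, i.e. $\ll\xx,\ll\alpha,\beta\rr\rr=\ll\ll\xx,\alpha\rr,\beta\rr+\ll\alpha,\ll\xx,\beta\rr\rr$; this is the Leibniz identity for $\ll\cdot,\cdot\rr$ (see \cite{G}), and it may also be seen from the fact that $\ll\xx,\cdot\rr$ is the sum of the infinitesimal pullback $\pounds(\xi)$ and the infinitesimal $B$-field transformation attached to the closed form $da$, both of which are derivations of $\ll\cdot,\cdot\rr$ because diffeomorphisms and the maps $e^{sda}$ act by Courant automorphisms (equation (\ref{sym 2}) and the discussion preceding Definition \ref{Definition cG}). (ii) $\pi(\xx)\cdot\la\alpha,\beta\ra=\la\ll\xx,\alpha\rr,\beta\ra+\la\alpha,\ll\xx,\beta\rr\ra$, which is exactly identity (\ref{Dorfman2}). (iii) $\pi(\ll\xx,\alpha\rr)=[\pi(\xx),\pi(\alpha)]$, immediate from the Dorfman formula. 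Each of (i)--(iii) extends $A$-(bi)linearly to $\Cinf\da(\TTX)$, with $\xx\in\cg\da(X)$ now acting on $A\otimes_{\R}\Cinf(X)$ via $f\mapsto\pi(\xx)\cdot f$ and on $\Cinf\da(TX)$ via $\tau\mapsto[\pi(\xx),\tau]$.

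Then I would invoke the elementary lemma: if $m\colon V\times V\to W$ is $A$-bilinear, $D_V$ a nilpotent operator on $V$ and $D_W$ one on $W$ with $D_W(m(v,v'))=m(D_V v,v')+m(v,D_V v')$, then $\exp(D_W)m(v,v')=m\bigl(\exp(D_V)v,\exp(D_V)v'\bigr)$ — a formal consequence of $D_W^{\,n}m(v,v')=\sum_k\binom{n}{k}m(D_V^{\,k}v,D_V^{\,n-k}v')$ upon dividing by $n!$ and resumming. Applying it with $m=\ll\cdot,\cdot\rr$ and $D_V=D_W=D_{\xx}$ gives $e^{\xx}\ll\alpha,\beta\rr=\ll e^{\xx}\alpha,e^{\xx}\beta\rr$; with $m=\la\cdot,\cdot\ra$, $V=\Cinf\da(\TTX)$, $W=A\otimes_{\R}\Cinf(X)$, $D_W=\bigl(\pi(\xx)\cdot-\bigr)$ it gives compatibility with the pairing; and exponentiating (iii) gives $\pi(e^{\xx}\alpha)=\exp\bigl([\pi(\xx),-]\bigr)\pi(\alpha)$. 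It then remains to identify $\exp\bigl(\pi(\xx)\cdot-\bigr)$ on $A\otimes_{\R}\Cinf(X)$ and $\exp\bigl([\pi(\xx),-]\bigr)$ on $\Cinf\da(TX)$ with the evident induced $e^{\cg\da(X)}$-actions on functions and on vector fields; both factor through the projection $\cg\da(X)\to\g\da(X)$ and the formal diffeomorphism $e^{\pi(\xx)}\in e^{\g\da(X)}$, whose compatibility with $d$, $\wedge$, $\iota$ and $[\cdot,\cdot]$ is Proposition \ref{formal courant action}. With that identification, the three identities above, ranging over all $\xx\in\cg\da(X)$ (equivalently all $g\in e^{\cg\da(X)}$), are exactly the assertion of the Proposition. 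The step I expect to cost the most is precisely this final bookkeeping: since $\la\cdot,\cdot\ra$ and $\pi$ take values in the auxiliary spaces $A\otimes_{\R}\Cinf(X)$ and $\Cinf\da(TX)$, one must pin down the correct induced operators playing the role of ``$D_W$'' before the exponential-of-a-derivation argument applies verbatim — though nothing deeper than Propositions \ref{Courant action prop} and \ref{formal courant action} and the identities of \S\ref{Courant algebroid} is involved.
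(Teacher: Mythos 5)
Your argument is correct and matches the paper's: the binomial-expansion lemma you invoke is exactly Lemma \ref{little lemma} in the appendix, and the paper likewise obtains the proposition by exponentiating the infinitesimal compatibilities of a nilpotent derivation of a bilinear operation. The only cosmetic difference is that you apply that lemma directly to $\ll\cdot,\cdot\rr$, $\la\cdot,\cdot\ra$ and $\pi$ via the Leibniz identity, identity (\ref{Dorfman2}) and the anchor property, whereas the paper routes through the component operations $d$, $\wedge$, $\iota$, $[\cdot,\cdot]$ of Proposition \ref{formal courant action}.
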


%The corresponding sheaf of formal groups is then given by $U\mapsto $ denote the space $\Cinf(U)$ of smooth vector fields on $U$.  We view the sheaf $U\mapsto \g(U)$ as a sheaf of real Lie algebras on $X$.  Adopting the notation for Lie algebras introduced above,   denote thesmooth vector fields on $U$  Using a similar notation to \S\ref{Courant algebroid},   we denote the sheaf of vector fields on $X$ (viewed as a sheaf of real Llet $\g(X)$ denote the Lie algebra $\Cinf(TX)$ of vector fields on $X$.  

% denote the real Lie algebra of vector fields on $X$.  We also introduce the notation $\h$ for the space of differential from $\Omega\ub(X)=\oplus_{i=0}^{\textrm{Dim}_{\R}(X)}\Omega^i(X)$, regarded as an abelian Lie algebra.  Consider the semi-direct product Lie algebra $\g(X)\ltimes\h(X)$:  the underlying vector space is the direct sum $\g(X)\oplus \h(X)$, and the bracket is given by 
%\begin{equation}\label{bracket}[\xi+a,\eta+b]=[\xi,\eta]+\pounds(\xi)b-\pounds(\eta)a\end{equation} for $\xi,\eta\in \g(X)$ and $a,b\in \h(X)$. Note that this has the Lie subalgebra $\cg(X)=\g(X)\otimes\Omega^1(X)$ introduced in Definition \ref{definition cg}.  

It will be convenient to have a formal version of Proposition (\ref{exp Dorfman}), which gives an explicit formula for the exponential map $\cg\da(X)\mapsto e^{\cg\da(X)}$ in terms of the subgroups $e^{\g\da(X)}$ and $e^{\m\otimes_{\R}\Omega^1(X)}$; introducing  the notation $\h(X)$ for $\Omega^1(X)$ (regarded as an abelian Lie algebra), the latter group may be denoted by $e^{\h\da(X)}$. Adapting the same notation used in the statement of Proposition (\ref{exp Dorfman}), for each $\xi\in\g\da(X)$ and each $a\in\h\da(X)$, we define
\begin{equation}\label{formal a def}a^{\xi}=\int_0^1e^{t\xi}(a)dt,\end{equation} where   the precise meaning of the right-hand side is as follows: for each $\xi\in \g\da(X)$ and $a\in \h\da(X)$, the expression 
\[e^{t\xi}(a):=a+t\pounds(\xi)a+\frac{t^2}{2}\pounds(\xi)\pounds(\xi)a+\cdots\] defines a polynomial in $t$, i.e. an element of $\h\da(X)[t]$.  The integral with respect to $t$ in (\ref{formal a def}) is then defined algebraically: for each natural number $n$ and each $b\in\h\da$ we set 
\[\int_0^1 bt^ndt:=\frac{b}{n+1}.\]  
\begin{proposition}\label{exponential} For all $(\xi,a)\in \g\da(X)\ltimes\h\da(X)$, we have 
\[e^{(\xi,a)}=e^{(0,a^{\xi})}e^{(\xi,0)}.\]
\end{proposition}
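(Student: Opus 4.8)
The plan is to reduce the identity to a statement about one-parameter subgroups of the formal group $e^{\cg\da(X)}$, exactly parallel to the smooth argument in Proposition \ref{exp Dorfman}, but now carried out algebraically in the nilpotent setting. First I would observe that, by Proposition \ref{exp prop}(3), for a fixed $A\in\Art$ every element of the form $e^{t(\xi,a)}$ ($t\in\R$) defines the unique group homomorphism $\R\to e^{\cg\da(X)}$ whose derivative at $t=0$ is $(\xi,a)$. Hence to prove $e^{(\xi,a)}=e^{(0,a^{\xi})}e^{(\xi,0)}$ it suffices to show that the curve
\[
t\longmapsto \gamma(t):=e^{(0,a^{t\xi})}e^{(t\xi,0)}
\]
is a one-parameter subgroup of $e^{\cg\da(X)}$ with $\gamma'(0)=(\xi,a)$; then $\gamma(t)=e^{t(\xi,a)}$ by uniqueness, and setting $t=1$ gives the claim. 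Note that $a^{t\xi}=\int_0^t e^{s\xi}(a)\,ds$ makes sense as an element of $\h\da(X)[t]$ by the algebraic definition of the integral, and both formal subgroups $e^{\g\da(X)}$ and $e^{\h\da(X)}$ sit inside $e^{\cg\da(X)}$.

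**Key steps.**
First I would verify the cocycle/homomorphism property $\gamma(t)\gamma(t')=\gamma(t+t')$. Using the semidirect product multiplication \eqref{cGmult} (or rather its formal analogue, valid because the BCH series terminates), this amounts to the identity
\[
a^{t\xi}+e^{t\xi}\!\big(a^{t'\xi}\big)=a^{(t+t')\xi},
\]
which is proved exactly as in Proposition \ref{exp Dorfman}: the change-of-variables computation $e^{t\xi}(a^{t'\xi})=\int_0^{t'}e^{(t+s)\xi}(a)\,ds=\int_t^{t+t'}e^{s\xi}(a)\,ds$ goes through verbatim at the level of polynomials in $t$, since the algebraic integral $\int_0^1 bt^n dt = b/(n+1)$ obeys the same substitution rules. (Here one uses Proposition \ref{formal courant action} so that $e^{t\xi}$ acts on forms compatibly with $d$ and with the de Rham structures, and in particular $e^{t\xi}e^{t'\xi}=e^{(t+t')\xi}$ in $e^{\g\da(X)}$.) Second I would compute $\gamma'(0)$. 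Differentiating $t\mapsto e^{(t\xi,0)}$ at $0$ gives $(\xi,0)$; differentiating $t\mapsto e^{(0,a^{t\xi})}$ at $0$ gives $(0,a)$ because $a^{t\xi}$ vanishes at $t=0$ and $\frac{d}{dt}\big|_{0}a^{t\xi}=a$ by the fundamental theorem of calculus (again in its purely algebraic, polynomial form). Since $(0,a)$ and $(\xi,0)$ commute to first order, $\gamma'(0)=(\xi,0)+(0,a)=(\xi,a)$. Finally, combining the two steps with Proposition \ref{exp prop}(3) identifies $\gamma(t)$ with $e^{t(\xi,a)}$, and evaluating at $t=1$ yields $e^{(\xi,a)}=e^{(0,a^{\xi})}e^{(\xi,0)}$.

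**Main obstacle.**
The routine part is the semidirect-product bookkeeping; the one point that genuinely needs care is justifying that the ``derivative at $t=0$'' and the ``algebraic integral'' manipulations are legitimate in the formal (nilpotent) category rather than the smooth one. Concretely, I would make precise that for fixed $A\in\Art$ the relevant objects $a^{t\xi}\in\h\da(X)[t]$, $e^{(t\xi,0)}$, etc., are polynomial in $t$ (the exponential series for the $\m$-linear action terminates because $\m$ is nilpotent), so that $\frac{d}{dt}$ is just formal differentiation of polynomials and the fundamental theorem of calculus $\frac{d}{dt}\int_0^t p(s)\,ds = p(t)$ holds by inspection of $\int_0^1 bt^n\,dt = b/(n+1)$. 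With that foundation in place, the computation is a formal transcription of the proof of Proposition \ref{exp Dorfman}, and no new analytic input is required; indeed one could alternatively avoid derivatives entirely and prove $e^{(\xi,a)}=e^{(0,a^\xi)}e^{(\xi,0)}$ directly by an induction on small extensions (Proposition \ref{induct on small extensions}), checking the identity modulo successive powers of $\m$, but the one-parameter-subgroup argument is cleaner and is what I would write up.
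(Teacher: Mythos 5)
Your proposal is correct and follows essentially the same route as the paper: both define the curve $t\mapsto e^{(0,a^{t\xi})}e^{(t\xi,0)}$, verify it is a one-parameter subgroup via the same change-of-variables identity $a^{t\xi}+e^{t\xi}(a^{t'\xi})=a^{(t+t')\xi}$ borrowed from Proposition \ref{exp Dorfman}, and invoke Proposition \ref{exp prop}(3) to identify it with $t\mapsto e^{t\xx}$ for a unique $\xx$. The only cosmetic difference is in pinning down $\xx=(\xi,a)$: you differentiate at $t=0$, while the paper compares the coefficient of $t$ in the BCH expansion of the exponent — these are the same computation.
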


\begin{proof}  Given $(\xi,a)\in\cg\da(X)$, consider the map 
\[\varphi:\R\to e^{\cg\da(X)}\] given by 
\[t\mapsto e^{\int_0^te^{s\xi}ads}e^{t\xi}.\]  By Proposition \ref{exp prop},  for each $\xi\in\g\da(X)$ and each $a\in \h\da(X)$ we have 
\[e^{\xi}e^{a}e^{-\xi}=e^{e^{\xi}(a)};\]  the same calculation used in the proof of Proposition \ref{exp Dorfman} may then be used to show that $\varphi:\R\to e^{\cg\da(X)}$ is a group homomorphism.  Therefore, by part (2) of Proposition \ref{exp prop}, it follows that $\varphi$ is of the form 
\begin{equation}\label{poly1}t\mapsto e^{t\xx}\end{equation} for a unique $\xx\in\cg\da(X)$.  Note that 
\[a^{t\xi}=\int_0^te^{s\xi}ads=ta+t\sum_{k=1}^{\infty}\frac{t^k}{(k+1)!}\xi^k\cdot a:=ta+th(t),\] where $h(t)$ is an element of $\cg\da(X)[t]$ (depending on $\xi$ and $a$), so that by the Baker-Campbell-Hausdorff formula we have 
\begin{equation}\label{poly2} e^{a^{t\xi}}e^{t\xi}=e^{t(\xi+a)+t^2\tilde{h}(t)},\end{equation} where $\tilde{h}(t)$ is an element of $\cg\da(X)[t]$.  Comparing equations (\ref{poly1}) and (\ref{poly2}), we see that $\tilde{h}(t)=0$ and $\xx=(\xi,a)$.
\end{proof}

  By construction, we have inclusions of $e^{\g\da(X)}$ and $e^{\h\da(X)}$ as sub-groups of $e^{\g\da(X)\ltimes\h\da(X)}$. Proposition \ref{exponential} implies that these subgroups (which have trivial intersection) generate $e^{\g\da\ltimes\h\da(X)}$, so that $e^{\g\da(X)\ltimes\h\da(X)}$ may be identified with the semi-direct product $e^{\g\da(X)}\ltimes e^{\h\da(X)}$ (this is of course a special case of a more general result about the exponentiation functor). In particular, we have the following corollary, recorded here for later use.
\begin{corollary}\label{exp formula cor} For each $\xi\in\g\da(X)$, the linear map $\h\da(X)\to\h\da(X)$ mapping $a\mapsto a^{\xi}$ is a bijection.
\end{corollary}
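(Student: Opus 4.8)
The plan is to write the linear map $a\mapsto a^{\xi}$ as the identity operator plus a nilpotent operator, from which invertibility is automatic.

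First I would unwind the definition (\ref{formal a def}). Expanding $e^{t\xi}(a)=\sum_{n\geq 0}\frac{t^n}{n!}\pounds(\xi)^n a$ and integrating term by term via $\int_0^1 t^n\,dt=\frac{1}{n+1}$ gives
\[
a^{\xi}=\sum_{n\geq 0}\frac{1}{(n+1)!}\pounds(\xi)^n a=a+N(a),\qquad N:=\sum_{n\geq 1}\frac{1}{(n+1)!}\pounds(\xi)^n,
\]
so the assertion is precisely that $\mathrm{id}_{\h\da(X)}+N$ is a bijection of $\h\da(X)$.

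The key step is then to check that $N$ is nilpotent as an endomorphism of $\h\da(X)=\m\otimes_{\R}\Omega^1(X)$. Writing $\xi=\sum_i m_i\otimes\xi_i\in\g\da(X)=\m\otimes_{\R}\g(X)$ with $m_i\in\m$, one has $\pounds(\xi)(m\otimes\alpha)=\sum_i (m_i m)\otimes\pounds(\xi_i)\alpha$, so each application of $\pounds(\xi)$ raises the $\m$-adic degree by one; by induction $\pounds(\xi)^n\big(\m\otimes\Omega^1(X)\big)\subseteq\m^{n+1}\otimes\Omega^1(X)$. Since $A$ is a local Artinian ring, $\m^k=0$ for $k$ sufficiently large, so $\pounds(\xi)^n|_{\h\da(X)}=0$ for large $n$; in particular $N$ is a finite sum of operators on $\h\da(X)$, and $N^{r+1}=0$ for some $r$. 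Consequently $\mathrm{id}_{\h\da(X)}+N$ has the two-sided inverse $\sum_{j=0}^{r}(-1)^j N^j$, which gives the desired bijectivity.

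I do not expect a genuine obstacle here; the only point that needs care is the bookkeeping showing that $\pounds(\xi)$ (for $\xi$ in $\m\otimes_{\R}\g(X)$, not $A\otimes_{\R}\g(X)$) strictly increases the $\m$-adic filtration, which makes all the relevant series finite. Alternatively, one can derive the corollary from Proposition \ref{exponential} together with the fact that $e^{\g\da(X)\ltimes\h\da(X)}=e^{\g\da(X)}\ltimes e^{\h\da(X)}$: every element of $e^{\cg\da(X)}$ has a unique expression $e^{(0,b)}e^{(\xi,0)}$, so combining this uniqueness with the bijectivity of $\cg\da(X)\to e^{\cg\da(X)}$ and the identity $e^{(\xi,a)}=e^{(0,a^{\xi})}e^{(\xi,0)}$ shows that, for fixed $\xi$, the assignment $a\mapsto a^{\xi}$ is injective (distinct $a$ give distinct $(\xi,a)$, hence distinct $e^{(\xi,a)}$, hence distinct pairs $(a^{\xi},\xi)$) and surjective (any $b$ is the $\h$-component of the element of $\cg\da(X)$ exponentiating to $e^{(0,b)}e^{(\xi,0)}$).
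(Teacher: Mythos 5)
Your proposal is correct, and your primary argument takes a genuinely different route from the paper's. The paper treats the corollary as an immediate consequence of Proposition \ref{exponential}: since $e^{\g\da(X)}$ and $e^{\h\da(X)}$ have trivial intersection and (by that proposition) generate $e^{\cg\da(X)}$, the group factors uniquely as $e^{\g\da(X)}\ltimes e^{\h\da(X)}$, and comparing the unique factorization of $e^{(\xi,a)}$ with $e^{(0,b)}e^{(\xi,0)}$ gives both injectivity and surjectivity of $a\mapsto a^{\xi}$ --- exactly the ``alternative'' you sketch in your last sentence. Your main argument instead works entirely at the level of the operator itself: expanding $a^{\xi}=a+N(a)$ with $N=\sum_{n\geq 1}\frac{1}{(n+1)!}\pounds(\xi)^{n}$ and observing that $\pounds(\xi)$ strictly raises the $\m$-adic filtration (since $\xi\in\m\otimes_{\R}\g(X)$, not $A\otimes_{\R}\g(X)$), so $N$ is nilpotent and $\mathrm{id}+N$ is invertible with explicit inverse $\sum_{j}(-1)^{j}N^{j}$. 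This is more elementary and self-contained --- it needs neither Proposition \ref{exponential} nor any structure theory of the exponentiated group --- and it has the mild bonus of exhibiting the inverse map explicitly; the paper's route is shorter given the machinery already in place and generalizes more cleanly to other semi-direct product decompositions. The one point you flag as needing care, the filtration bookkeeping, is handled correctly, so there is no gap.
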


%Consider the group $e^{\g\da(X)}$, which is a formal version of the group $G(X)=\textrm{Diff}(X)^{op}$ studied in \S\ref{Courant algebroid}.  The actions of the Lie algebra $\g(X)$ on $\Omega\ub(X)$ and $\Cinf(TX)$ via the Lie derivative extend via A-linearity to actions of $\g\da(X)$ on the spaces 
%\[\Omega\da\ub(X):=A\otimes_{\R}\Omega\ub(X)\] and 
%\[\Cinf\da(TX):=A\otimes_{\R}\Cinf(TX).\]  By Proposition \ref{exp prop}, 
%the group $e^{\g\da(X)}$ therefore acts on these spaces as well, via the exponential formula; this is simply a formal version of the action of $\textrm{Diff}(X)^{op}$ on forms and vector fields by pull-back. The proof of the following result is differed to the appendix.  

%Consider next the Lie algebra $\cg\da(X)$ (which as a vector space is simply $\m\otimes_{\R}\Cinf(\TTX)$, and its corresponding group $e^{\cg\da(X)}$; the latter is a formal version of the group $\cG(X)$ introduced in \S\ref{Courant algebroid}.  In Proposition \ref{Courant action prop}, we defined actions of $\cg(X)$ on the vector spaces $\Cinf(\TTX)$ and $\Cinf(\End(\TTX))$.  Extending by $A$-linearity, these induce actions of $\cg\da(X)$ (and hence $e^{\cg\da(X)}$) on the spaces
%\[\Cinf\da(\TTX):=A\otimes_{\R}\Cinf(\TTX)\] and 
%\[\Cinf\da(\End(\TTX)):=A\otimes_{\R}\Cinf(\End(\TTX)).\]  

%The following result is an easy Corollary of Proposition \ref{formal courant action}.

Next, let $\J$ be a GC structure on $X$.  Recall from \S\ref{GC structures} the Lie algebra $\T(X)$ of generalized holomorphic vector fields (Definition \ref{generalized holomorphic vector field}), which are the infinitesimal symmetries of $\J$, as well as the sub-algebra $\H(X)\subset \T(X)$ of generalized Hamiltonian vector fields (Definition \ref{genhamdef}).  For each $A\in\Art$ we may then define the nilpotent Lie algebra $\T\da(X)$ as well as the subalgebra $\H\da(X)\subset \T\da(X)$.  By inspection of the exponential formula (\ref{exp action}), we immediately see the following
 \begin{proposition}\label{formal holomorphic symmetries}  For each $\xx\in\T\da(X)$ we have 
\[e^{\xx}\cdot\J=J.\]

\end{proposition}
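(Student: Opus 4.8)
The plan is to reduce the assertion, via the exponential formula (\ref{exp action}), to the single infinitesimal fact that $\xx\cdot\J=0$ for every $\xx\in\T\da(X)$. Recall from Proposition \ref{Courant action prop}(2) that $\cg(X)$ acts by derivations on $\Cinf(\End(\TTX))$ through $\xx\mapsto\xx\cdot F$; applying Proposition \ref{exp prop}(1) to this action, the formal group $e^{\cg\da(X)}$ acts on $A\otimes_{\R}\Cinf(\End(\TTX))$ by the terminating series
\[
e^{\xx}\cdot F \;=\; F+\xx\cdot F+\tfrac12\,\xx\cdot(\xx\cdot F)+\cdots ,
\]
for $\xx\in\cg\da(X)$, and this is the action meant in the statement: it is the exponentiation of the commutator action of $\cg\da(X)$, or equivalently conjugation by $e^{\xx}$ under the action of Proposition \ref{action} (the unique endomorphism inducing $\alpha\mapsto e^{\xx}\cdot\bigl(\J(e^{-\xx}\cdot\alpha)\bigr)$). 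Since $\J$ is a genuine bundle endomorphism, we regard it as the element $1\otimes\J\in A\otimes_{\R}\Cinf(\End(\TTX))$.

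Next I would establish that $\xx\cdot\J=0$ for every $\xx\in\T\da(X)$. By Proposition \ref{holomorphic symmetries} together with Definition \ref{generalized holomorphic vector field}, an element $\yy\in\cg(X)$ lies in $\T(X)$ precisely when $\yy\cdot\J=0$; since $\yy\mapsto\yy\cdot\J$ is $\R$-linear (again Proposition \ref{Courant action prop}(2)), it vanishes identically on the subspace $\T(X)\subset\cg(X)$. The action of $\cg\da(X)=\m\otimes_{\R}\cg(X)$ on $1\otimes\J$ is computed by $(m\otimes\yy)\cdot(1\otimes\J)=m\otimes(\yy\cdot\J)$, so writing an arbitrary $\xx\in\T\da(X)=\m\otimes_{\R}\T(X)$ as a finite sum $\sum_i m_i\otimes\yy_i$ with $\yy_i\in\T(X)$ yields $\xx\cdot\J=\sum_i m_i\otimes(\yy_i\cdot\J)=0$.

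Finally, since $\xx\cdot\J=0$, every higher term $\xx\cdot(\xx\cdot(\cdots\xx\cdot\J))$ in the exponential series vanishes — the innermost application already annihilates $\J$ — so only the zeroth term survives and $e^{\xx}\cdot\J=\J$, as claimed. There is no serious obstacle here: the content lies entirely in the nilpotence of $\cg\da(X)$, which makes the exponential series terminate with no completeness or integrability hypothesis on $\xx$, in contrast to the last assertion of Proposition \ref{holomorphic symmetries}, where one must integrate a complete vector field to an honest flow. The only points requiring care are the bookkeeping identifications: that the formal-group action on endomorphisms really is the exponentiation of $\xx\mapsto\xx\cdot F$, so that the series above is legitimate, and that the $\R$-linearity of $\yy\mapsto\yy\cdot\J$ propagates the vanishing from $\T(X)$ to its scalar extension $\T\da(X)$.
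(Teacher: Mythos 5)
Your proposal is correct and follows the same route the paper intends: the paper's proof is just the remark that the claim follows ``by inspection of the exponential formula (\ref{exp action})'', i.e.\ exactly your observation that $e^{\xx}\cdot\J=\J+\xx\cdot\J+\tfrac12\xx\cdot(\xx\cdot\J)+\cdots$ and that $\xx\cdot\J=0$ for $\xx\in\T\da(X)=\m\otimes_{\R}\T(X)$ by Proposition \ref{holomorphic symmetries} and $\R$-linearity. You have simply spelled out the bookkeeping the paper leaves implicit.
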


\section{The deformation functor of a generalized complex brane}\label{section deformations}
Using the framework introduced in the last section, we now to turn to the problem of defining for each $GC$ brane $\B$ a functor
\[\df_{\B}:\Art\to \Set\] encoding the formal deformations of $\B$.  As mentioned in the introduction, this functor (given in Definition \ref{brane deformation functor}), will be constructed in terms of a formal groupoid $\De\uB(X,\J)$ over $\Art$ (Definition \ref{full deformation groupoid}), which encodes the appropriate notions of equivalence between deformations of $\B$.

%A GC brane on a GC manifold $(X,\J)$, as defined in \bnote{make sure to define earlier}, consists of a submanifold $Z\subset X$ together with a Hermitian line bundle with unitary connection $\L$ supported on $Z$.   The collection of such deformations are most naturally viewed as a groupoid (or sheaf of groupoids).  To deal with this added layer of structure we will need to introduce some auxiliary machinary.  For example, we will need the notion of a group acting on a category.  None of this theory is particular difficult, but we have tried to lay it out in as carefully as possible.  We should also note that we have found it useful define the category of line bundles with connection in a ``Cech" framework, defined with respect to an open cover.  While it is probably possible to formulate everything wtihout resorting to choosing an open cover, we have found the concreteness of this approach to be very helpful.  
\subsection{Deformations of a Hermitian line bundle with unitary connection}

Given a brane $\B=(Z,\L)$, we may simultaneously deform both $Z$ and $\L$.  As a preliminary to studying the general situation, we first consider the deformations of $\L$ only, i.e. the deformations of a Hermitian line bundle with unitary connection over a fixed space.

Let $X$ be a smooth manifold, and $\mathcal{W}=\{W_I\}$ an open cover of $X$.  We denote by $W_{I_1 I_2\cdots I_k}$ the $k$-fold intersection $W_{I_1}\cap \cdots W_{I_k}$.  
\begin{definition}  The set $\Herm(X)$ consists of triples $\L=(\{W\si\},\{c\sij\},\{a\si\})$, where $\{W\si\}$ is an open cover of $X$,  $\{c_{IJ}\in \Cinf(W_{IJ})\}$ is a collection of real-valued functions  satisfying 
\[c_{JK}-c_{IK}+c_{IJ}\in \Z,\]  and  $\{a_{I}\in \Omega^1(W_I)\}$ is a collection of (real) 1-forms satisfying 
\[a_J-a_I=dc_{IJ}\] on the intersections $W_{IJ}$.
\end{definition} 
\begin{rem}\label{gluing bundles}    Given an element $\L=(\W,\{c\sij\},\{a\si\})\in \Herm(X)$ as above, we may construct a Hermitian line bundle with unitary connection on $X$ by gluing.   The line bundle is constructed by gluing  trivial (Hermitian) line bundles on each $W_I$  using transition functions $g\sij=e^{2\pi i c_{IJ}}$. The unitary connection is then locally specified on each $W\si$ by 
\[\nabla=d+2\pi i.\]  Furthermore, for each manifold $X$, there exists an open cover $\W$ such that every Hermitian line bundle with unitary connection is isomorphic to one of this form.  More formally, the gluing construction may be extended to an equivalence of categories (defined on objects as above).  \end{rem}

\begin{notation} For the most part, we will work with a fixed open cover $\{W_I\}$ and omit it from the notation, i.e. we will simply write $\L=(\{c_{IJ}\},\{a\si\})$ to specify an element of $\Herm(X)$.
\end{notation}

\begin{rem}  An element $\L=(\{c\sij\},\{a\si\})\in \Herm(X,\mathcal{W})$ determines a closed 2-form $F\in \Omega^2(X)$, which is given by  $F|_{W_I}=da_I$ on each open set $W\si$. 
\end{rem}
\begin{rem}   Given an open set $U\subset X$, there is a natural restriction map $\Herm(X)\to \Herm(U)$, which we denote by $\L\mapsto \L|_U$ for $\L\in\Herm(X)$.
\end{rem}

 %\begin{rem}Given such a pair
%\[\L=(\{a_I\},\{c_{IJ}\}),\] we may construct a Hermitian line bundle with unitary connection $\L_{glue}=(L,\nabla)$ over $Z$.  By construction, over each open set $W_I$ the bundle $L$ has a canonical unitary section $s\si$ satisfying 
%\[\nabla s_I=2\pi i a_Is_I .\]  These sections are related on overlaps by  
%\[s\sj=e^{2\pi ic\sij}s\si.\]  Moreover, given an arbitrary Hermitian line bundle with unitary connection $(L,\nabla)$ over $Z$, we can always find an open cover $\{W\si\}$ and a collection of data $(\{a_I\},\{c_{IJ}\})$ defined relative to $\{W\si\}$, such that $(L,\nabla)$ is isomorphic to $\L_{glue}$.
%\end{rem}

\begin{rem}\label{groupoid over art} Recall from Remark \ref{over art},  that for any category $\CC$ we defined an \emph{object of $\CC$ over $\Art$} to be a functor $\Art\to \CC$.  A similar definition can be made when $\CC$ is a 2-category, for example the 2-category $\textrm{Gpd}$ of (small) groupoids.  For example a (strict) \emph{groupoid over $\Art$} may be defined as a (strict) functor $\Art\to \textrm{Gpd}$; explicitly, this consists of a groupoid $\G\da$ for every $A\in\Art$, and for every homomorphism $A\to A'\in \Art$ a functor $\G\da\to\G_{A'}$.  For our present purposes, we require the composition of these functors to be strictly compatible with composition in $\Art$.  The groupoids over $\Art$ we consider will all have the property that they map the trivial Artin algebra $\R\in\Art$ to a trivial groupoid, i.e. a groupoid with a single object and a single morphism.  We will refer these as \emph{formal groupoids} (over $\Art$).  The notion of a (strict) functor between formal groupoids is defined in the obvious way.  We will sometimes drop the modifier ``formal" when it is clear from context.

To streamline the exposition, we will usually define specific formal groupoids (and functors between them) by simply describing their value on each $A\in\Art$. The additional structure needed to make things completely precise will always be clear from the context.\end{rem}

\begin{definition} \label{Herm def} Given $\L=(\{c\sij\},\{a\si\})\in\Herm(X)$, we define a formal groupoid $\De^{\L}(X)$  (over $\Art$) as follows.  For each $A\in\Art$, an object of $\De^{\L}_A(X)$, which we call an \emph{A-deformation} of $\L$,  is a pair $\hL=(\{\hc\sij\},\{\ha\si\})$, where
\begin{enumerate}[(i)]\item each $\ha\si\in\Omega^1_A(W\si)$, and is of the form 
\[\ha\si=a\si+u\si\] for $u\si\in\Omega^1\dm(U\si)$,% \bnote{Maybe want to say earlier that there is natural inclusion of $\Omega\ub(X)$ into $\Omega\ub\da(X)$ for every $A\in\Art$.  Might also want to say we have canonical isomorphism $\Omega\ub_{\R}(X)\cong \Omega\ub(X)$, and the inclusion of $\Omega\ub$ into $\Omega\ub\da$ composed with the homomorphism induced by the augmentation map is the identity on $\Omega\ub$.}
\item each $\hc\sij\in \Cinf\da(W\sij)$ is of the form 
\[\hc\sij=c\sij+f\sij\] for $f\sij\in \Cinf\dm(W\sij)$,
\item on each overlap $W\sij$ we have  \[\ha\sj-\ha\si=d\hc\sij,\] or equivalently $u\sj-u\si=df\sij$,  
\item on each 3-fold intersection $W_{IJK}$ we have \[\hc_{JK}-\hc_{IK}+\hc_{IJ}\in \Z,\] or equivalently $f_{JK}-f_{IK}+f_{IJ}=0$.
\end{enumerate}
Given $A$-deformations $\hL=(\{\hc\sij\},\{\ha\si\})$ and $\hL'=(\{\hc'\sij\},\{\ha'\si\})$ of $\L$, an \emph{isomorphism} $\hL\to\hL'$ is a collection $g=\{g\si\in \Cinf\dm(W\si)\}$ such that 
\begin{enumerate}\item $\ha'\si=\ha\si+dg\si$ ($\Leftrightarrow u'\si=u\si+dg\si$) and 
\item $\hc'\sij=\hc\sij+g\sj-g\si$ ($\Leftrightarrow f'\sij=f\sij+g\sj-g\si$).

The composition of isomorphisms is given by addition.  For any  $A$-deformation of $\L$, the identity isomorphism is given by $\{g_I=0\}$.

\end{enumerate}
\end{definition}
%\begin{rem}\label{2 functoriality} \bnote{May be useful to introduce 2-category of ``strict groupoids over $\Art$".  Could then combine with next remark by saying we get a sheaf on $X$ valued in the 2-category of strict groupoids over $A$.} Given $\L\in\Herm(X)$ and  $A,A'\in\Art$, a homomorphism $\varphi:A\to A'$ induces a functor $\De_{\L}(\varphi):\De^{\L}_A(X)\to \De_{\L}(A')$, defined in an obvious way.  Furthermore, if we have another homomorphism $\varphi':A'\to A''$ in $\Art$, the composition of functors $\De_{\L}(\varphi')\circ\De_{\L}(\varphi):\De^{\L}_A(X)\to \De_{\L}(A'')$ is equal (on the nose) to $\De_{\L}(\varphi'\circ\varphi)$.  Thus, we have a (strict) 2-functor from $\Art$ to the 2-category of groupoids.
%\end{rem}
\begin{rem}\label{bundle def sheaf} Given an open set $U\subset X$, let us introduce the notation 
\[\De^{\L}(U):=\De^{\L|_U}(U).\] For every inclusion of open subset $V\subset U$, we then have a natural restriction functor (between formal groupoids)
\[\De^{\L}(U)\to\De^{\L}(V).\]  Altogether, we obtain a (strict) sheaf of formal groupoids on $X$ sending $U$ to the formal groupoid  $\De^{\L}(U)$.
\end{rem}

%\begin{rem} Given an open set $U\subset X$, there is a natural restriction functor 
%\begin{equation}\label{deformation restriction} \De^{\L}_A(X)\to \De_{\L|_U}(A).\end{equation}  Formally, we have a sheaf of groupoids on $X$ whose groupoid of global sections is $\De^{\L}_A(X)$.  Furthermore, for each map of Artin rings $A\to B$ the structure maps of this sheaf (for example the restriction functors (\ref{deformation restriction})) are compatible with the maps $\De_{\L|_U}(A)\to \De_{\L|_U}(B)$.  More precisely, we have a sheaf of formal deformation functors on $X$ which assigns to each open set $U\subset X$ the deformation functor $A\mapsto \De_{\L|_U}(A)$. \rnote{Fix this discussion}
%\end{rem}

\begin{proposition} \label{pi zero bundle}Every $A$-deformation of $\L$ is isomorphic to one of the form $\hL=(\{c\sij\},\{\ha\si\})$, i.e. to one with undeformed transition functions $\hat{c}\sij=c\sij$.
\end{proposition}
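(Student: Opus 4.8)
The plan is to recognize the obstruction to undeforming the transition functions as a class in the first \v{C}ech cohomology of the fine sheaf $\Cinf\dm$, which vanishes. Concretely, given an $A$-deformation $\hL=(\{\hat{c}_{IJ}\},\{\hat{a}_I\})$ of $\L$, write $\hat{c}_{IJ}=c_{IJ}+f_{IJ}$ with $f_{IJ}\in\Cinf\dm(W_{IJ})$; condition (iv) of Definition \ref{Herm def} says exactly that $\{f_{IJ}\}$ is a \v{C}ech $1$-cocycle for the cover $\{W_I\}$ with values in the sheaf $U\mapsto\Cinf\dm(U)=\m\otimes_\R\Cinf(U)$. By the definition of morphisms in $\De^{\L}_A(X)$, a collection $g=\{g_I\in\Cinf\dm(W_I)\}$ is an isomorphism from $\hL$ to a deformation with undeformed transition functions if and only if $\hat{c}_{IJ}+g_J-g_I=c_{IJ}$ on each $W_{IJ}$, i.e. $f_{IJ}=g_I-g_J$. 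So it suffices to show the cocycle $\{f_{IJ}\}$ is a coboundary.

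To produce such a $g$, I would choose a partition of unity $\{\rho_I\}$ subordinate to $\{W_I\}$ (indexed by the same set, locally finite, with $\operatorname{supp}\rho_I\subset W_I$) and set
\[
g_I:=\sum_K \rho_K\, f_{IK},
\]
where each summand $\rho_K f_{IK}$, defined a priori only on $W_{IK}$, is extended by zero to $W_I$; this is legitimate since $\operatorname{supp}\rho_K\cap W_I\subset W_{IK}$, and since $\Cinf\dm=\m\otimes_\R\Cinf$ with $\m$ finite-dimensional, extension by zero and locally finite summation cause no trouble, so $g_I\in\Cinf\dm(W_I)$. On $W_{IJ}$ the cocycle relation gives $f_{IK}-f_{JK}=f_{IJ}$ on $W_{IJK}\supset\operatorname{supp}\rho_K\cap W_{IJ}$, hence $\rho_K(f_{IK}-f_{JK})=\rho_K f_{IJ}$ as functions on $W_{IJ}$ for every $K$; summing,
\[
g_I-g_J=\sum_K\rho_K(f_{IK}-f_{JK})=\Big(\sum_K\rho_K\Big)f_{IJ}=f_{IJ}.
\]
Thus $g=\{g_I\}$ is an isomorphism in $\De^{\L}_A(X)$ carrying $\hL$ to $\hL'=(\{c_{IJ}\},\{\hat{a}_I+dg_I\})$, which has undeformed transition functions, proving the proposition.

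There is no serious obstacle here: the substance is the classical fact that $\Cinf$, hence $\Cinf\dm$, is a fine sheaf, so that — as the partition-of-unity formula above witnesses — every such $1$-cocycle is a coboundary. The only points needing a little care are keeping the sign conventions of Definition \ref{Herm def} for cocycles and coboundaries straight, and checking that the formula for $g_I$ yields genuine $\m$-valued smooth functions (extension by zero and local finiteness of the sum); both are routine.
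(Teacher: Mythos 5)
Your proof is correct and follows essentially the same route as the paper: both recognize $\{f_{IJ}\}$ as a \v{C}ech $1$-cocycle for the fine sheaf $\Cinf\dm$ and kill it with a partition of unity. You simply make the coboundary $g_I=\sum_K\rho_K f_{IK}$ explicit and track the sign convention of Definition \ref{Herm def} more carefully than the paper does.
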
 
\begin{proof}  Given $\hL=(\{c_{IJ}+f_{IJ}\},\{a_I+u_I\})$, by part (II) of Definition \ref{Herm def} we see that 
\[f_{JK}-f_{IK}+f_{IJ}=0.\]  The functions $f_{IJ}\in \Cinf(W_{IJ})\otimes \m\da$ therefore define a Cech cocycle.  Since the sheaf $\underline{C}^{\infty}$ of smooth, real-valued functions on $X$ admit partitions of unity, so does the sheaf $\underline{C}^{\infty}\otimes \m\da$.  We may therefore choose $\{g_I\in\Cinf\dm(W_I)\}$ satisfying 
\[g_J-g_I=f_{IJ}\] on the overlaps $W_{IJ}$.  This implies that $\hL$ is isomorphic to $\hL'=(\{c_{IJ}\},\{a_I+dg_I\})$.
\end{proof}

\begin{definition} \label{group action on category} Let $\CC$ be a groupoid, and $G$ a group.  A \emph{strict left action of $G$ on $\CC$} is a collection of functors $\{F_g\}_{g\in G}$ satisfying the following:
\begin{enumerate} \item For every $g,g'\in G$, we have $F_g\circ F_{g'}=F_{gg'}$.
%\item For every $g\in G$, we have $F_{g^{-1}}=F^{-1}_g$.
\item The functor $F_1$ induced by the identity element $1\in G$ is the identity functor.
\end{enumerate}  A strict \emph{right} action is defined similarly, except that for every $g,g'\in G$ we require that $F_{g}\circ F_{g'}=F_{g'g}$.
\end{definition}
\begin{rem} Given a formal group $G$ and a formal groupoid $\CC$, we may define an action of $G$ on $\CC$ to consist of an action of $G\da$ on $\CC\da$ for each $A\in\Art$, subject to some obvious compatibility relations.
\end{rem}

\begin{notation} Given a strict left action of $G$ on $\CC$, we will use the notation 
\[F_g(x)=g\cdot x\] for each $g\in G$ and each object $x\in \CC$.  Similarly, given a morphism $\varphi:x\to y$ in $\CC$ we denote $F_g(\varphi)$ by $g\cdot\varphi$.  Similarly, if $G$ acts on $\CC$ on the right, we will use the notation $x\cdot g$ and $\varphi\cdot g$.
\end{notation}

We now describe an action of $e^{\cg(X)}$ (the formal symmetries of the Courant algebroid) on $\De^{\L}(X)$. 

 \begin{definition}\label{bundle action} Given $A\in\Art$, $\hL=(\{\hc\sij\},\{\ha\si\})\in\De^{\L}_A(X)$, and $x=e^ue^{\tau}\in e^{\cg\da(Z)}$, let 
\[x\cdot\hL=(\{e^{\tau|_{W\sij}}\hc\sij\},\{e^{\tau|_{W\si}}\ha_I-u|_{W\si}\})\in \De^{\L}_A(X).\]  Given an isomorphism $\{g\si\}:\hL\to\hL'$ in $\De^{\L}\da(X)$, define 
\[x\cdot \{g\si\}=\{e^{\tau}g\si\}:x\cdot\hL\to x\cdot\hL'\]
\end{definition}
We then have the following easy result, the proof of which is omitted.
\begin{proposition} \label{bundle action}Definition \ref{bundle action} determines a strict left action of $e^{\cg(X)}$ on $\De^{\L}(X)$.

\end{proposition}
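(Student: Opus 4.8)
We must check that the assignment $x\mapsto F_x$ of Definition \ref{bundle action} satisfies, for each $A\in\Art$, the axioms of a strict left action (Definition \ref{group action on category}), and that these actions are compatible with the functors induced by morphisms $A\to A'$ in $\Art$. The latter compatibility is immediate, since every operation occurring in Definition \ref{bundle action} --- Lie derivative, exponentiation, restriction to an open set, and addition --- is natural in $A$. So the content is, for a fixed $A$, to verify that each $F_x$ is a well-defined functor, that $F_1=\mathrm{id}$, and that $F_x\circ F_{x'}=F_{xx'}$.

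The first step is to confirm that $x\cdot\hL$ is genuinely an object of $\De^{\L}\da(X)$. Write $x=e^u e^\tau$ with $u\in\Omega^1\dm(X)$ and $\tau\in\g\da(X)$. Since $\tau$ takes values in $\m$, which is an ideal, $e^\tau a\si-a\si\in\Omega^1\dm(W\si)$ and $e^\tau c\sij-c\sij\in\Cinf\dm(W\sij)$; hence $e^\tau\ha\si-u|_{W\si}$ differs from $a\si$ by an element of $\Omega^1\dm(W\si)$ and $e^\tau\hc\sij$ differs from $c\sij$ by an element of $\Cinf\dm(W\sij)$, which is conditions (i) and (ii) of Definition \ref{Herm def}. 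Condition (iii) follows from $d(e^\tau\hc\sij)=e^\tau(d\hc\sij)$ (Proposition \ref{formal courant action}) together with the fact that $u$ is globally defined, so that $u|_{W_J}-u|_{W_I}$ vanishes on $W\sij$; and condition (iv) holds because $e^\tau$ fixes the locally constant, integer-valued function $\hc_{JK}-\hc_{IK}+\hc_{IJ}$. Applying $e^\tau$ to the two relations defining an isomorphism $\{g\si\}\colon\hL\to\hL'$, and using $e^\tau\circ d=d\circ e^\tau$ and the $\R$-linearity of $e^\tau$, shows that $\{e^\tau g\si\}$ is indeed an isomorphism $x\cdot\hL\to x\cdot\hL'$; linearity of $e^\tau$ likewise shows that $F_x$ preserves identity morphisms and the composition law of $\De^{\L}\da(X)$ (which is addition). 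So each $F_x$ is a functor, and $F_1=\mathrm{id}$ since $1=e^0 e^0$ acts through the identity operator.

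The substance of the proof is the identity $F_x\circ F_{x'}=F_{xx'}$. Writing $x=e^u e^\tau$ and $x'=e^{u'}e^{\tau'}$ with $u,u'\in\Omega^1\dm(X)$ and $\tau,\tau'\in\g\da(X)$, I would first bring the product $xx'$ back into this normal form: using the semidirect-product description $e^{\cg\da(X)}\cong e^{\g\da(X)}\ltimes e^{\h\da(X)}$ established just after Proposition \ref{exponential}, the conjugation rule $e^\tau e^{u'}e^{-\tau}=e^{e^\tau u'}$ (a case of Proposition \ref{exp prop}(2), since $[\tau,u']=\pounds(\tau)u'$ in $\cg(X)$), and the fact that $e^{\h\da(X)}$ is abelian, one obtains
\[xx'=e^{\,u+e^\tau u'}\,e^{\,\tau\bullet\tau'},\]
where $\tau\bullet\tau'$ is the Baker--Campbell--Hausdorff product in $\g\da(X)$. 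Unwinding Definition \ref{bundle action} twice then shows that $x\cdot(x'\cdot\hL)$ has transition data $\{e^\tau e^{\tau'}\hc\sij\}$ and connection data $\{e^\tau e^{\tau'}\ha\si-(u+e^\tau u')|_{W\si}\}$, and this agrees with $(xx')\cdot\hL$ precisely because the exponentiated Lie-derivative action of $\g\da(X)$ on forms and functions is a genuine left action of the group $e^{\g\da(X)}$, i.e. $e^{\tau\bullet\tau'}$ acts as $e^\tau\circ e^{\tau'}$ (Proposition \ref{exp prop}(1), together with Proposition \ref{formal courant action} for the compatibility of these operators with $d$). The same homomorphism property, applied to the operators $g\mapsto e^\tau g$, yields $F_x\circ F_{x'}=F_{xx'}$ also on morphisms.

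I expect the only genuinely non-formal step to be rewriting the group law of $e^{\cg\da(X)}$ into the displayed form and matching it term by term against the formulas of Definition \ref{bundle action}: one has to keep track of the order of the two factors $e^u$ and $e^\tau$, and of the opposite-group and semidirect-product conventions for $G(X)$ and $\cG(X)$ fixed in \S\ref{Courant algebroid}. Everything else is routine bookkeeping with the exponential formula.
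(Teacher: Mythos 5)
Your proof is correct. The paper omits the proof of this proposition as routine, and your argument supplies exactly the expected verification: the unique normal form $x=e^{u}e^{\tau}$, the semidirect-product group law $xx'=e^{u+e^{\tau}u'}e^{\tau\bullet\tau'}$, and the fact that the exponentiated Lie-derivative action is a group homomorphism commuting with $d$ together yield $F_x\circ F_{x'}=F_{xx'}$, with the object- and morphism-level checks being the bookkeeping you describe.
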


\subsection{Deformations of branes}  We now turn to the deformations of GC branes.  For each GC brane $\B$ on a GC manifold $(X,\J)$, we will define a formal groupoid $\De^{\B}(X,\J)$ encoding the infinitesimal deformations of $\B$ and their equivalences (Definition \ref{full deformation groupoid}).  By passing to equivalence classes of deformations, we construct from $\B$ a functor (Definition \ref{brane deformation functor}) 
\[\df_{\B}:=\pi_0(\De^{\B}(X,\J)):\Art\to \Set.\]  We will see that in many situations it is necessary to work with the formal groupoid $\De\ub(X,\J)$ itself, and not the functor $\df_{\B}$.

The construction of $\De\uB(X,\J)$ will be given in several steps.  First, we ignore the GC structure $\J$ and define a formal groupoid $\Dt\uB(X)$ that does not encode any compatibility condition with respect to $\J$.  We then define a sub-groupoid $\Dt\uB(X,\J)$ whose objects are those deformations compatible with $\J$.  The formal group $e^{\T(X)}$ of symmetries of $(X,\J)$ acts on $\Dt\uB(X,\J)$;  in particular, there is an action of $e^{\H(X)}$,  the formal generalized Hamiltonian symmetries.  Incorporating this action leads to the formal groupoid $\De\ub(X,\J)$.
%With the required preliminaries out of the way, we are finally ready in this section to formulate Definition \ref{brane deformation functor} of the deformation functor for a GC brane.  This will be done in several steps.  First, we will ignore the GC structure and define a groupoid over $\Art$ encoding the deformations of $\B$ that are not necessarily compatible with a GC structure. We explain how the formal symmetries of the Courant algebroid act on such deformations. We next introduce a GC structure and define a subgroupoid whose objects are those deformations compatible with the GC structure.  The subgroup of the Courant algebroid symmetries that preserve the GC structure act on this groupoid, and in particular the subgroup of generalized hamiltonian symmetries.  Encorporating this action leads to the definition of the groupoid \ref{}.  Finally, the deformation functor of the brane is defined by passing to equivalence classes, i.e. by taking $\pi_0$ to the deformation groupoid. 

\begin{definition}\label{brane definition 1} Let $X$ be a smooth manifold.  A (rank 1) \emph{brane} on $X$ is a pair $\B=(Z,\L)$, where $Z\subset X$ is a smooth submanifold and $\L$ is an element of $\Herm(Z)$.  We denote the collection of all such branes on $X$ by $\Br(X)$.
\end{definition}
\begin{rem}  Given such a brane $(Z,\L)$, applying the gluing construction described in Remark \ref{gluing bundles} to $\L$ determines a GC brane in the sense of Remark \ref{geometric brane}.
\end{rem}

\begin{definition} \label{definition par def}  Given a manifold $X$, and a brane $\B\in \Br(X)$, let $\Dt^{\B}(X)$ be the following formal groupoid.  
\begin{enumerate} \item For each $A\in\Art$, an object of $\Dt_A^{\B}(X)$ is a pair $\hB=(\hrho,\hL)$, where 
\begin{enumerate} \item $\hrho:\Omega\ub\da(X)\to \Omega\ub\da(Z)$ is of the form $\hrho=\rho e^{\xi}$ for some $\xi\in \g\da(X)$, where $\rho:\Omega\ub(X)\to\Omega\ub(Z)$ denotes the pull-back map for the inclusion $i:Z\hookrightarrow X$.
\item $\hL$ is an object of  $\De^{\L}_A(Z)$, as in Definition \ref{Herm def}.
\end{enumerate}  We will refer to such an object as an \emph{$A$-deformation of $\B$}.
\item Given two $A$-deformations $\hB=(\hrho,\hL)$ and $\hB=(\hrho',\hL')$, an \emph{equivalence} $\hB\toco \hB'$ is a pair $\Psi=(e^{\tau},\{g\si\})$, where 
\begin{enumerate} \item $e^{\tau}\in e^{\g\da(Z)}$ satisfies 
\[\hrho'=e^{\tau}\hrho,\] and 
\item $\{g\si\}$ is a morphism 
\[\psi:\hL'\to e^{\tau}\cdot\hL\] in the groupoid $\Def^{\L}_A(Z)$. \end{enumerate}
\item Given equivalences $\Psi=(e^{\tau},\{g_I\}):\hB\toco\hB'$ and $\Psi'=(e^{\tau'},\{g'\si\}):\hB\toco \hB''$, their composition is defined by  
\[\Psi'\circ\Psi=(e^{\tau'}e^{\tau},\{e^{\tau'}g\si+g'\si\}):\hB\toco \hB''.\]   For any $A$-deformation $\hB$ of $\B$, the \emph{identity} isomorphism $\hB\to \hB$ is given by $(1,id_{\hL})$.  
\end{enumerate}
\end{definition}
\begin{rem} Let us examine the composition in $\Dt^{\B}_A(X)$ in a little more detail (and in particular verify that it is well-defined).  Suppose $\Psi=(e^{\tau},\{g\si\})$ is an equivalence from $\B=(\hrho,\hat{\L})$ to $\B'=(\hrho',\hL')$  and $\Psi'=(e^{\tau'}, \{g'\si\})$ is an equivalence from $\hB'$ to $\hB''=(\hrho'',\hL'')$.  By definition, we have $\hrho'=e^{\tau}\hrho$, $\hrho''=e^{\tau'}\hrho'$,  $\{g\si\}$ is an equivalence  $\hL'\to e^{\tau}\hL$ and $\{g'\si\}$ is an equivalence $\hL''\to e^{\tau'}\hL'$.  Defining $e^{\tau''}=e^{\tau'}e^{\tau}$, it follows that $\hrho''=e^{\tau''}\hrho$.  Furthermore, $e^{\tau'}\{g_I\}$ is an equivalence from $e^{\tau'}\hL\to e^{\tau'}e^{\tau}\L=e^{\tau''}\L$, so that the composition $(e^{\tau'}\{g\si\})\circ \{g'\si\})$ is an equivalence from $\hL''\to e^{\tau''}\cdot\hL$; therefore 
\begin{equation}\label{comp identity} (e^{\tau''},(e^{\tau'}\{g\si\})\circ\{g'\si\})=(e^{\tau'}e^{\tau},\{e^{\tau'}g_I+g'\si\})\end{equation} does in fact define a morphism in $\Dt^{\B}_A(X)$ from $\hB$ to $\hB''$.   Writing the composition as on the left-hand side of (\ref{comp identity}) makes it easy to check its associativity using Proposition \ref{bundle action}.
\end{rem}

%\bnote{For example, let $\hB=(\hrho,\hL)$ be an object of $\Dt_A^{\B}(X)$, and let $\tau:A\to A'$ be a morphism in $\Art$.  For each $u\in \Omega\ub(X)\subset \Omega\ub_{A'}(X)$, we define 
%\[\tau(\hrho)(u)=\tau(\hrho(u)),\] and then extend by $A'$-linearity to define 
%\[\tau(\hrho):\Omega\ub_{A'}(X)\to \Omega\ub_{A'}(Z)....\]}

\begin{proposition}\label{right action par} There is a strict right action of the formal group $e^{\cg(X)}$ on $\widetilde{\De}^{\B}(X)$, given as follows:
\begin{enumerate}\item  For each $A\in\Art$, each $g=e^{(0,w)}e^{(\xi,0)}\in e^{\cg\da(X)}$, and each $\hB=(\hrho,\hL)\in \widetilde{\De}^{\B}\da(X)$, define
\[\hB\cdot g=(\hrho e^{\xi}, e^{-\hrho(w)}\cdot\hL),\] where we recall that by definition \[e^{-\hrho(w)}\cdot\L=(\{\hat{c}\sij\},\{\hat{a}\si+\hrho(w)|_{W_I}\}).\] 
\item For each equivalence $\Psi=(e^{\tau},\{g_I\}):\hB_1\toco \hB_2$, define \[\Psi\cdot g:\hB_1\cdot g\to \hB_2\cdot g\] to be given by the same pair $(e^{\tau},\{g\si\})$, regarded as an equivalence $\hB\toco \hB'$.
\end{enumerate}
\end{proposition}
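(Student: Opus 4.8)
The plan is to check directly that the formulas of the statement define, for each $A\in\Art$ and each $g\in e^{\cg\da(X)}$, a functor $F_g\colon \widetilde{\De}^{\B}\da(X)\to\widetilde{\De}^{\B}\da(X)$ (sending $\hB\mapsto\hB\cdot g$ on objects and $\Psi\mapsto\Psi\cdot g$ on morphisms), and then to verify: (i) each $F_g$ is well-defined; (ii) each $F_g$ respects composition and identities; (iii) $F_g\circ F_{g'}=F_{g'g}$ and $F_1=\mathrm{id}$; and (iv) the whole family is natural in $A$, so that it assembles into a right action of the formal group $e^{\cg(X)}$. Throughout I would make constant use of the semidirect-product decompositions $e^{\cg\da(X)}\cong e^{\g\da(X)}\ltimes e^{\h\da(X)}$ and $e^{\cg\da(Z)}\cong e^{\g\da(Z)}\ltimes e^{\h\da(Z)}$ provided by Proposition \ref{exponential}, the uniqueness of the factorization $g=e^{(0,w)}e^{(\xi,0)}$, and the conjugation identity $e^{x}e^{y}e^{-x}=e^{e^{x}y}$ of Proposition \ref{exp prop}.

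First I would settle well-definedness on objects. Writing $\hrho=\rho e^{\xi_0}$, the composite $\hrho e^{\xi}=\rho(e^{\xi_0}e^{\xi})$ equals $\rho e^{\xi_1}$ since $e^{\xi_0}e^{\xi}=e^{\xi_1}$ in $e^{\g\da(X)}$ by Baker--Campbell--Hausdorff, so the pull-back component of $\hB\cdot g$ is again of the allowed form. For the bundle component, $\hrho(w)$ is a \emph{global} $\m$-valued one-form in $\Omega^1\dm(Z)$, so $e^{-\hrho(w)}\cdot\hL$ has the same transition functions as $\hL$ and each connection form altered by a global $\m$-valued form; hence conditions (i)--(iv) of Definition \ref{Herm def} transfer from $\hL$ to $e^{-\hrho(w)}\cdot\hL$, condition (iii) using that a global form has zero difference across overlaps. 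Thus $\hB\cdot g$ is again an object.

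The crux is well-definedness on morphisms. Given an equivalence $\Psi=(e^{\tau},\{g\si\})\colon\hB_1\to\hB_2$ with $\hB_i=(\hrho_i,\hL_i)$, I have $\hrho_2=e^{\tau}\hrho_1$ and $\{g\si\}\colon\hL_2\to e^{\tau}\cdot\hL_1$. Applying $\hrho_2=e^{\tau}\hrho_1$ to $w$ gives $\hrho_2(w)=e^{\tau}(\hrho_1(w))$; feeding this into the conjugation identity inside $e^{\cg\da(Z)}$ yields $e^{\tau}e^{-\hrho_1(w)}=e^{-\hrho_2(w)}e^{\tau}$, and since the action of $e^{\cg(Z)}$ on $\De^{\L}(Z)$ is a \emph{left} action (Proposition \ref{bundle action}), this gives the key identity $e^{\tau}\cdot\bigl(e^{-\hrho_1(w)}\cdot\hL_1\bigr)=e^{-\hrho_2(w)}\cdot\bigl(e^{\tau}\cdot\hL_1\bigr)$. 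Applying the functor $e^{-\hrho_2(w)}\cdot(-)$ — which by Definition \ref{bundle action} acts as the identity on the morphism data — to $\{g\si\}\colon\hL_2\to e^{\tau}\cdot\hL_1$ then produces $\{g\si\}\colon e^{-\hrho_2(w)}\cdot\hL_2\to e^{\tau}\cdot\bigl(e^{-\hrho_1(w)}\cdot\hL_1\bigr)$, i.e.\ precisely a morphism between the bundle components of $\hB_2\cdot g$ and $\hB_1\cdot g$; together with $\hrho_2 e^{\xi}=e^{\tau}(\hrho_1 e^{\xi})$ this shows $(e^{\tau},\{g\si\})$ is an equivalence $\hB_1\cdot g\to\hB_2\cdot g$, as needed.

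Compatibility of $F_g$ with composition and with identity morphisms is then immediate, because $\Psi\cdot g$ is by definition the \emph{same} pair $(e^{\tau},\{g\si\})$ as $\Psi$ and the composition law of Definition \ref{definition par def} refers only to that pair. For the right-action law I would fix $g=e^{(0,w)}e^{(\xi,0)}$ and $g'=e^{(0,w')}e^{(\xi',0)}$, push $e^{\xi'}$ past $e^{(0,w)}$ using the conjugation identity in $e^{\cg\da(X)}$ to get $g'g=e^{(0,\,w'+e^{\xi'}w)}e^{(\xi'',0)}$ with $e^{\xi''}=e^{\xi'}e^{\xi}$, and then compare $\hB\cdot(g'g)$ with $(\hB\cdot g')\cdot g$ component by component; the pull-back components agree by associativity in $e^{\g\da(X)}$, and the bundle components agree using $(\hrho e^{\xi'})(w)=\hrho(e^{\xi'}w)$ together with the abelianness of $e^{\h\da(Z)}$ (so that $e^{-\hrho(w')}$ and $e^{-\hrho(e^{\xi'}w)}$ combine additively into $e^{-\hrho(w'+e^{\xi'}w)}$). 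The identity $F_1=\mathrm{id}$ is trivial, and naturality in $A$ is routine bookkeeping with the functoriality of exponentiation. I expect the single genuinely delicate point to be the morphism step above — recognizing that the non-commutation of $e^{-\hrho_1(w)}$ with $e^{\tau}$ in $e^{\cg\da(Z)}$ is exactly cancelled by the relation $\hrho_2=e^{\tau}\hrho_1$, so that the morphism data $\{g\si\}$ is transported without change; the rest is semidirect-product bookkeeping.
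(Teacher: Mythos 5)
Your proposal is correct and follows essentially the same route as the paper's proof: the key step in both is the commutation identity $e^{-\hrho_2(w)}e^{\tau}\cdot\hL_1=e^{\tau}e^{-\hrho_1(w)}\cdot\hL_1$ obtained from $\hrho_2(w)=e^{\tau}\hrho_1(w)$, after which functoriality is trivial (since $g$ acts as the identity on morphism data) and the composition law $(\hB\cdot g')\cdot g=\hB\cdot(g'g)$ reduces to the same semidirect-product computation. The extra checks you include (well-definedness on objects, naturality in $A$) are routine and consistent with what the paper leaves implicit.
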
 
\begin{proof} First, let us check that if $\Psi=(e^{\tau},\{g\si\})$ is an equivalence $\hB_1\toco\hB_2$, then the same pair $(e^{\tau},\{g\si\})$ does in fact define an equivalence  $\hB_1\cdot g\toco \hB_2\cdot g$.  Let us write $\hB_1=(\hrho_1,\hL_1), \hB_2=(\hrho_2,\hL_2)$, $\hB'_1:=\hB_1\cdot g=(\hrho'_1,\hL'_1)$, $\hB'_2:=\hB_2\cdot g=(\hrho'_2,\hL_2')$, and similarly write $\Psi=(e^{\tau},\psi)$ with $\psi=\{g_I\}:\hL_2\to\hL_2$, and $\Psi'=\Psi\cdot g = (e^{\tau'},\psi')$ with $\tau'=\tau$ and $\psi'=\{g\si\}$.  Note that since $\hrho_2=e^{\tau}\hrho_1$, we have $\hrho_2'=\hrho_2e^{\xi}=e^{\tau}\hrho_1e^{\xi}=e^{\tau}\hrho'_1=e^{\tau'}\hrho'_1$.  Furthermore, since $\psi$ is an equivalence from $\hL_2\to e^{\tau}\hL_1$ in $\Def^{\L}_A(Z)$, it follows that $e^{-\hrho_2(u)}\cdot\psi$ is an equivalence from $\L_2'=e^{-\hrho_2(u)}\cdot\hL_2$ to $e^{-\hrho_2(u)}e^{\tau}\hL_1$.  But we have 
\[e^{-\hrho_2(u)}e^{\tau}\hL_1=e^{-e^{\tau}\hrho_1(u)}e^{\tau}\hL_1=e^{\tau}e^{-\hrho_1(u)}\hL_1=e^{\tau'}\hL'_1,\] so that $e^{-\hrho_2(u)}\psi$ is a morphism from $\hL'_2\to e^{\tau'}\hL'_1$ in $\Def^{\L}\da(Z)$.  Thus, we see that $\Psi\cdot g= \Psi'$ does indeed define an equivalence from $\hB_1\cdot g$ to $\hB_2\cdot g$, as claimed.

Since $g$ acts trivially on morphisms, the functoriality property $(\Psi'\cdot g)\circ(\Psi\cdot g)=(\Psi'\circ \Psi)\cdot g$ holds trivially.  Furthermore, by construction, the identity element of $e^{\cg(X)}$ acts as the identity functor.  Therefore, to finish the proof of the proposition we just need to check that $(\hB\cdot g)\cdot g'=\hB\cdot (gg')$ hold for each object $\hB\in\Dt^{\B}_A(X)$ and each $g,g'\in e^{\g\da(X)}$.

Let $\hB=(\hrho,\hL)\in\Dt^{\B}\da(X)$ and $g=e^ue^{\xi}$, $g'=e^{u'}e^{\xi'}\in e^{\cg\da(X)}$.  We have 
\begin{equation}\label{ggprime}(\hB\cdot g)\cdot g'=(\hrho e^{\xi},e^{-\hrho(u)}\hL)g'=(\hrho e^{\xi}e^{\xi'},e^{-\hrho e^{\xi}u'-\hrho u}\hL).\end{equation}
 Since $gg'=e^{u+e^{\xi}u'}e^{\xi}e^{\xi'}$, we see that the right-hand side of (\ref{ggprime}) is equal to $\hB\cdot (gg')$.
 \end{proof} 
 
 \subsection{Compatibility with a generalized complex structure}
 
Every $\B=(Z,\L)\in\Br(X)$  determines a generalized submanifold $(Z,F)$ on $X$, where $F$ is the curvature form of $\L$.  In particular, we have the \emph{generalized tangent bundle} $\TT\B$ and the \emph{generalized normal bundle} $\N\B$, which by definition are  the generalized tangent bundle and generalized normal bundle of $(Z,F)$ (as defined in \S\ref{GC submanifolds}). Similarly, we denote by $K\uB$ the space $K^{(Z,F)}$ introduced in Definition \ref{definition K}, and by $H\ub(\B)$  the Lie algebroid cohomology groups associated to $(Z,F)$ (described in \S\ref{Lie alg cohomology}).  Recall also the notation 
\[I^Z:=\{f\in\Cinf(X):f|_Z=0\}.\]

 \begin{definition}\label{GC brane definition 1}  Let $(X,\J)$ be a GC manifold.  A (rank 1) \emph{GC brane} on $(X,\J)$ is an element $\B\in\Br(X)$ as in Definition \ref{brane definition 1}, such that the underlying generalized submanifold $(Z,F)$ of $\B$ is compatible with $\J$.  We denote the collection of all such GC branes by $\Br(X,\J)$.
\end{definition}

\begin{definition} \label{compatibility1} Given $\B=(Z,\L)\in\Br(X,\J)$, and an Artin algebra $A\in\Art$, an element $g\in e^{\cg(X)}$ is said to be\emph{compatible} with $\J$ with respect to $\B$ if 
\[Q_{g\cdot\J}(K_A^{\B},K_A^{\B})\subset I^Z\da,\] where $Q_{g\cdot\J}$ is the $A$-linear extension of pairing defined by (\ref{Q def}), $K\uB_A:=A\otimes_{\R}K\uB$ and $I^Z\da:=A\otimes_{\R}I^Z$. 
\end{definition}

\begin{proposition}  \label{well-defined} Let $\B\in\Br(X,\J)$, and let $g=e^ue^{\xi}$ and $g'=e^{u'}e^{\xi'}$ be elements of $e^{\cg\da(X)}$ such that 
\begin{enumerate}\item $\rho e^{\xi}=\rho e^{\xi'}$ and 
\item $\rho u=\rho u'$.
\end{enumerate}
Then $g$ is compatible with $\J$ with respect to $\B$ if and only if $g'$ is.
\end{proposition}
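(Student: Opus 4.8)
The plan is to reduce everything to the observation that $g$ and $g'$ differ by an element $h:=g'g^{-1}\in e^{\cg\da(X)}$ which ``acts trivially on $\B$ and on $Z$'', so that the new generalized complex structure $g'\cdot\J=h(g\cdot\J)h^{-1}$ is conjugate to $g\cdot\J$ by a symmetry that does not affect the compatibility condition of Definition~\ref{compatibility1}.

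First I would identify the shape of $h$. Using the semidirect-product decomposition $e^{\cg\da(X)}\cong e^{\g\da(X)}\ltimes e^{\h\da(X)}$ (Proposition~\ref{exponential}) together with the conjugation formula of part (2) of Proposition~\ref{exp prop}, one computes
\[ h=e^{(0,w)}\,\theta,\qquad \theta:=e^{(\xi',0)}e^{(-\xi,0)}\in e^{\g\da(X)},\qquad w:=u'-\theta u\in\h\da(X), \]
where $\theta u$ denotes the (exponentiated-Lie-derivative) action of $\theta$ on the $1$-form $u$. Writing $\rho:\Omega\ub\da(X)\to\Omega\ub\da(Z)$ for pullback to $Z$, so that $\rho e^{\xi}$ and $\rho e^{\xi'}$ are the operators appearing in hypotheses (1), and noting that $\theta$ acts on forms as $e^{\pounds(\xi')}e^{-\pounds(\xi)}$, hypothesis (1) gives $\rho(\theta\eta)=(\rho e^{\xi'})(e^{-\pounds(\xi)}\eta)=(\rho e^{\xi})(e^{-\pounds(\xi)}\eta)=\rho(\eta)$, i.e.\ $\rho\theta=\rho$ (hence also $\rho\theta^{-1}=\rho$, by composing with $\theta^{-1}$ on the right); the same identity together with $\rho u=\rho u'$ gives $\rho(w)=\rho(u')-\rho(\theta u)=\rho(u')-\rho(u)=0$.

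Next I would prove that any $h=e^{(0,w)}\theta$ with $\theta\in e^{\g\da(X)}$, $\rho\theta=\rho$ and $\rho(w)=0$ has the two properties: (a) $h\cdot K^{\B}_A=K^{\B}_A$, and (b) $\rho(h\cdot f)=\rho(f)$ for every $f\in\Cinf\da(X)$. Property (b) is immediate, since $e^{(0,w)}$ acts trivially on functions and $\rho(\theta\cdot f)=(\rho\theta)(f)=\rho(f)$. For (a), take $\xx=(v,c)\in K^{\B}$, so $v|_Z=i_*\tau$ and $i^*c=\iota(\tau)F$ for some $\tau\in\Cinf(TZ)$; by the explicit action formulas, $h\cdot\xx=(\theta v,\;\theta c-\iota(\theta v)\,dw)$. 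Using the compatibility of $\theta$ with contraction (Proposition~\ref{formal courant action}) and $\rho\theta=\rho\theta^{-1}=\rho$, one obtains $i^*\!\bigl(\iota(\theta v)\beta\bigr)=\iota(\tau)\,i^*\beta$ for every $1$-form $\beta$; letting $\beta=df$ vary forces $(\theta v)|_Z=i_*\tau$, while $\beta=dw$ gives $i^*\!\bigl(\iota(\theta v)\,dw\bigr)=\iota(\tau)\,d(\rho w)=0$. Together with $i^*(\theta c)=(\rho\theta)(c)=\iota(\tau)F$, this shows $r(h\cdot\xx)$ is a section of $\TT\B$, i.e.\ $h\cdot\xx\in K^{\B}$; extending $A$-linearly and applying the same argument to $h^{-1}$ (which is of the same form and satisfies the same hypotheses) yields (a).

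Finally I would assemble the equivalence: since $h$ preserves the pairing (Proposition~\ref{action}), for $\xx,\yy\in K^{\B}_A$ we get
\[ Q_{g'\cdot\J}(\xx,\yy)=\la h(g\cdot\J)h^{-1}\xx,\;\yy\ra=h\cdot\la (g\cdot\J)h^{-1}\xx,\;h^{-1}\yy\ra=h\cdot Q_{g\cdot\J}(h^{-1}\xx,h^{-1}\yy), \]
so applying $\rho$ and using (b) gives $\rho\,Q_{g'\cdot\J}(\xx,\yy)=\rho\,Q_{g\cdot\J}(h^{-1}\xx,h^{-1}\yy)$. Since $K^{\B}_A=A\otimes_{\R}K^{\B}$ and $I^Z_A=A\otimes_{\R}I^Z$, the condition that $g$ (resp.\ $g'$) be compatible with $\J$ with respect to $\B$ is equivalent to the vanishing of $\rho\,Q_{g\cdot\J}(\xx,\yy)$ (resp.\ $\rho\,Q_{g'\cdot\J}(\xx,\yy)$) for all $\xx,\yy\in K^{\B}$; since $\xx\mapsto h^{-1}\xx$ permutes $K^{\B}_A$ by (a), these two conditions coincide. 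The step I expect to be the main obstacle is (a) — specifically, checking in the purely formal setting that $\rho\theta=\rho$ forces $(\theta v)|_Z=i_*\tau$ for $v$ tangent to $Z$ (a formal symmetry fixing $Z$ must also fix $TZ$ along $Z$) — which is why I would route it through the contraction identity $i^*(\iota(\theta v)\beta)=\iota(\tau)\,i^*\beta$ rather than through flows of vector fields.
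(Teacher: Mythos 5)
Your proposal is correct and follows essentially the same route as the paper: both reduce to showing that $h=g'g^{-1}=e^{(0,w)}\theta$ satisfies $\rho\theta=\rho$ and $\rho(w)=0$, deduce that $h$ preserves $K^{\B}_A$ and $I^Z_A$, and conclude via the conjugation identity $Q_{g'\cdot\J}(\cdot,\cdot)=h\,Q_{g\cdot\J}(h^{-1}\cdot,h^{-1}\cdot)$. The only (cosmetic) difference is that you verify tangency of $\theta v$ to $Z$ through the contraction identity $i^*(\iota(\theta v)\beta)=\iota(\tau)i^*\beta$, whereas the paper uses the equivalent characterization $\rho(\pounds(\theta v)\beta)=\pounds(\tau)\rho(\beta)$.
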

\begin{proof}  Suppose we are given $\xi\in\Cinf\dm(TX)$ such that $\rho e^{\xi}=\rho:\Omega\ub\da(X)\to\Omega\ub\da(Z)$; this is equivalent to requiring that $0=\rho\pounds(\xi):\Omega\da\ub(X)\to\Omega\da\ub(Z)$.  Suppose also that we are given $\eta\in\Cinf\da(TX)$ that is tangent to $Z$, with $\rho(\eta)=\tau\in\Cinf\da(Z)$; this means that $\rho(\pounds(\xi)v)=\pounds(\tau)\rho v$ for every $v\in \Omega\ub\da(X)$.    We claim that in this case 
\[e^{\xi}\eta\in\Cinf\da(TX)\] is still tangent to $Z$ with $\rho(e^{\xi}\eta)=\tau$.  To see this, note that for every $v\in\Omega\ub\da(X)$ we have
\begin{align*} \rho(\pounds(e^{\xi}\eta)v) & = \rho(e^{\xi}(\pounds(\eta)e^{-\xi}v)) \\
& = \rho(\pounds(\eta)e^{-\xi}v) \\
& = \pounds(\tau)\rho e^{-\xi}v \\
& = \pounds(\tau)\rho v.
\end{align*}

Next, suppose we are given $h=e^{(0,u)}e^{(\xi,0)}\in e^{\cg\da(X)}$ such that $\rho e^{\xi}=\rho$ and $\rho u=0$.   We claim that in this case 
\[h\cdot K\da^{\B}=K\da^{\B}.\]  To see this, let $\xx=(\eta,a)$ be an arbitrary element of $K\da^{\B}$.  By definition, this means that $\eta$ is tangent to $Z$, say with $\rho(\eta)=\tau\in\Cinf\da(TZ)$, and $\rho(a)=\iota(\tau)F$.  Write $h\cdot \xx=(\tilde{\eta},\tilde{a})$, with 
\[\tilde{\eta}=e^{\xi}\eta\] and \[\tilde{a}=e^{\xi}a-\iota(\tilde{\eta})du.\]  We have already seen that $\tilde{\eta}$ is tangent to $Z$ with $\rho(\tilde{\eta})=\tau$.  Furthermore,   we see that \begin{align*} \rho\tilde{a} & = \rho(e^{\xi}a)-\rho(\iota(\tilde{\eta})du) \\
& = \rho a-\iota(\tau)d\rho(u) \\
& = \rho a = \iota(\tau)F,
\end{align*} so that $h\cdot \xx\in\KK\da^{\B}$, as claimed.  

To complete the proof, let $g=e^{u}e^{\xi}$ and $g'=e^{u'}e^{\xi'}$ with $\rho e^{\xi}=\rho e^{\xi'}$ and $\rho u=\rho u'$.  Defining $h=g'g^{-1}$, we see that 
\[h=e^{u'-e^{\xi'}e^{-\xi}u}e^{\xi'}e^{-\xi}.\]  Defining $\tilde{\xi}$ by the equation
\[e^{\tilde{\xi}}=e^{\xi'}e^{-\xi}\] and $\tilde{u}=u'-e^{\tilde{\xi}}u$, we see that $h=e^{\tilde{u}}e^{\tilde{\xi}}$, and that 
\[\rho e^{\tilde{\xi}}=\rho\] and \[\rho \tilde{u}=0;\] as shown above this implies 
\[h\cdot K\da^{\B}=K\da^{\B}.\]  Therefore, we have
\begin{align}\label{Q equal} Q_{g\cdot\J}(K\da^{\B},K\da^{\B}) & = \la g\J g^{-1} K\da^{\B},K\da^{\B}\ra \notag\\
& = \la h^{-1} g'\J (g')^{-1}hK\da^{\B},K\da^{\B}\ra \notag\\
& = h^{-1}\la (g'\cdot\J)hK\da^{\B},hK\da^{\B}\ra \notag\\
& = h^{-1}\la (g'\cdot\J)K\da^{\B},K\da^{\B}\ra\notag\\
& = h^{-1} Q_{g'\cdot\J}(K\da^{\B},K\da^{\B}).
\end{align}
Since $\rho h=\rho$, in particular both $\tilde{\xi}$ and $-\tilde{\xi}$ are tangent to $Z$, so that it follows that $h(I\da^Z)=h^{-1}(I\da^Z)=I\da^Z$.  Combining this with the equality (\ref{Q equal}), we obtain the desired result.
\end{proof}

With Proposition \ref{well-defined} in hand, we are ready to define what it means for a deformation of GC brane to be compatible with the GC structure.  Let $\B\in\Br(X,\J)$, and let $\hB=(\hrho,\hL)$ be an $A$-deformation of $\B$ as defined in Definition \ref{definition par def}. Choose $\xi\in \cg\da(X)$ such that $\hrho=\rho e^{\xi}$.  Also, for each $I$, let $\tilde{W}_I\subset X$ be an open set such that $\tilde{W}_I\cap Z=W_I,$ and choose $u_I\in \Omega^1(\tilde{W}_I)$ such that $\hat{a}_I=a_I+\rho(u_I)$.  Set $\xi_I=\xi|_{\tilde{W}_I}$, and define $g_I=e^{u_I}e^{\xi_I}\in e^{\cg\da(\tilde{W}_I)}.$
 \begin{definition}\label{compatibility def}  We say that $\hB\in\Dt\da\uB(X,\J)$ is \emph{compatible} with $\J$ if each $g_I$ (chosen as above) is compatible with $\J|_{\tilde{W}_I}$ with respect to $\B|_{\tilde{W}_I}$.  We denote by $\Dt^{\B}(X,\J)$ the formal subgroupoid of $\Dt^{\B}\d(X)$, whose objects for each $A\in\Art$ are the $A$-deformations compatible with $\J$.
 \end{definition}
 \begin{rem} It is clear from Proposition \ref{well-defined} that the condition in Definition \ref{compatibility def} is well-defined, i.e. does not  depend on the particular choices of $\xi$, $\{\tilde{W}_I\}$ or $\{u_I\}$.  
 \end{rem} 
 
 \begin{rem}\label{brane def sheaf} Similarly to the situation described in Remark \ref{bundle def sheaf}, the formal groupoid $\Dt\uB(X,\J)$ extends in a natural way to a presheaf of formal groupoids on $X$.  The fact that this is actually a sheaf (satisfies the descent condition) is Proposition \ref{descent property}.
\end{rem}
%\begin{rem}\label{brane def sheaf} Building on Remark \ref{bundle def sheaf}, we may refine Definition \ref{compatibility def} to define a (strict) presheaf of groupoids on $X$ \bnote{more precisely, preseheaf on $X$ valued-in the 2-category $Gpd_{\Art}$}, which sends an open subset $U\subset X$ to the groupoid 
%\[\Dt\da^{\B}(U,\J):=\Dt\da^{\B|_U}(U,\J|_U),\] where $\B|_U$ is the brane on $U$ given by $(Z\cap U,\L|_U)\in\Br(U,\J|_U)$.  In fact, as we will see in Proposition \ref{descent property}, 
% the presheaf $U\mapsto \Dt\da^{\B}(U,\J)$ satisfies the descent property, i.e. defines a \emph{sheaf} of groupoids on $X$.  Similar remarks apply to Definition \ref{full deformation groupoid} below. 
  %\end{rem}

 \begin{lemma}\label{compatibility under isomorphism} Let $\hB,\hB'$ be isomorphic elements of $\Dt^{\B}_A(X)$. Then $\hB$ is compatible with $\J$ if and only if $\hB'$ is compatible with $\J$. 
 \end{lemma}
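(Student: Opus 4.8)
The plan is to work locally, on the open sets $\tilde W_I$ used in Definition~\ref{compatibility def}, and to recast compatibility of a local test element as a compatibility statement for an \emph{undeformed} generalized submanifold. Recall that $\hB=(\hrho,\hL)$ is compatible with $\J$ exactly when, for each $I$, the element $g_I=e^{(0,u_I)}e^{(\xi_I,0)}\in e^{\cg_A(\tilde W_I)}$ — built from a choice of $\xi\in\g_A(X)$ with $\hrho=\rho e^{\xi}$, of $\tilde W_I$ with $\tilde W_I\cap Z=W_I$, and of $u_I$ with $\hat a_I=a_I+\rho(u_I)$, and with $\xi_I:=\xi|_{\tilde W_I}$ — is compatible with $\J|_{\tilde W_I}$ with respect to $\B|_{\tilde W_I}$; by the remark following Definition~\ref{compatibility def} this is independent of the choices. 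Unwinding Definition~\ref{compatibility1} together with the ($A$-linear form of the) reformulation in Remark~\ref{rem1}, $g_I$ being compatible says that $(W_I,F|_{W_I})$ is $A$-compatible with the deformed structure $g_I\cdot\J$; applying the evident formal ($A$-linear) extension of Proposition~\ref{action on GC submanifolds}(2) with the symmetry $g_I^{-1}$, this is equivalent to asking that the generalized submanifold $\mathcal Z_I:=g_I^{-1}\cdot(W_I,F|_{W_I})$ be $A$-compatible with $\J|_{\tilde W_I}$. Thus $\hB$ is compatible with $\J$ iff $\mathcal Z_I$ is $A$-compatible with $\J|_{\tilde W_I}$ for all $I$, and similarly for $\hB'$ with some generalized submanifolds $\mathcal Z'_I$; so it suffices to choose the $\mathcal Z'_I$ so that $\mathcal Z'_I=\mathcal Z_I$.

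Next I would use the isomorphism $\Psi=(e^\tau,\{h_I\}):\hB\to\hB'$ to produce such representatives. By Definition~\ref{definition par def} together with Definitions~\ref{Herm def} and~\ref{bundle action}, $\Psi$ being an isomorphism means $\hrho'=e^\tau\hrho$ and $\hat a'_I=e^{\tau|_{W_I}}\hat a_I-dh_I$ for all $I$. Extend $\tau\in\g_A(Z)=\m\otimes\Cinf(TZ)$ to $\tilde\tau\in\g_A(X)$ tangent to $Z$ and restricting to $\tau$ along $Z$ (extend a finite $\m$-expansion of $\tau$ componentwise, using a tubular neighborhood and a partition of unity); then $\rho e^{\tilde\tau}=e^{\tau}\rho$ on $\Omega^{\bullet}_A(X)$, so defining $\xi'$ by $e^{\xi'}=e^{\tilde\tau}e^{\xi}$ gives $\hrho'=\rho e^{\xi'}$. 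On each $\tilde W_I$ write $\sigma_I:=\tilde\tau|_{\tilde W_I}$ (tangent to $W_I$), $\xi_I:=\xi|_{\tilde W_I}$, pick extensions $\bar a_I\in\Omega^1(\tilde W_I)$ of $a_I$ and $\tilde h_I\in\Cinf_\m(\tilde W_I)$ of $h_I$, and set $u'_I:=e^{\sigma_I}u_I+(e^{\sigma_I}\bar a_I-\bar a_I)-d\tilde h_I$. A short computation applying $\rho$ gives $\rho(u'_I)=e^{\tau|_{W_I}}\rho(u_I)+(e^{\tau|_{W_I}}a_I-a_I)-dh_I=\hat a'_I-a_I$, so $g'_I:=e^{(0,u'_I)}e^{(\xi'|_{\tilde W_I},0)}$ is an admissible test element for the compatibility of $\hB'$ at $I$, i.e.\ $\mathcal Z'_I=(g'_I)^{-1}\cdot(W_I,F|_{W_I})$.

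The heart of the argument will be the identity $g'_I=k_I\,g_I$ with $k_I:=e^{(0,\,e^{\sigma_I}\bar a_I-\bar a_I-d\tilde h_I)}\,e^{(\sigma_I,0)}$, together with the fact that $k_I$ \emph{fixes} the undeformed generalized submanifold $(W_I,F|_{W_I})$. The factorization is pure bookkeeping: using $e^{(\xi'|_{\tilde W_I},0)}=e^{(\sigma_I,0)}e^{(\xi_I,0)}$ (since $e^{\xi'|_{\tilde W_I}}=e^{\sigma_I}e^{\xi_I}$ and $\xi\mapsto(\xi,0)$ is a Lie subalgebra inclusion), the conjugation rule $e^{x}e^{y}e^{-x}=e^{e^{x}y}$ of Proposition~\ref{exp prop}(2), the bracket~(\ref{ghatbracket}) (giving $\mathrm{ad}_{(\sigma_I,0)}(0,w)=(0,\pounds(\sigma_I)w)$), and the abelian-ness of the $\Omega^1$-part, one commutes the $1$-form factors past $e^{(\sigma_I,0)}$. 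For the second fact: since $\sigma_I$ is tangent to $W_I$, $e^{(\sigma_I,0)}$ sends $(W_I,F|_{W_I})$ to $(W_I,e^{\tau|_{W_I}}(F|_{W_I}))$, and then $e^{(0,v)}$ with $v=e^{\sigma_I}\bar a_I-\bar a_I-d\tilde h_I$ sends this to $(W_I,\,e^{\tau|_{W_I}}(F|_{W_I})-d\,i^{*}v)$, where $d\,i^{*}v=d\big(e^{\tau|_{W_I}}a_I-a_I-dh_I\big)=e^{\tau|_{W_I}}(F|_{W_I})-F|_{W_I}$; hence $k_I\cdot(W_I,F|_{W_I})=(W_I,F|_{W_I})$, and likewise $k_I^{-1}$ fixes it. (The gauge datum $\{h_I\}$ enters only through $d\tilde h_I$, which disappears under $d$ — this is why it is harmless.) Using that the action on generalized submanifolds is a left action, $\mathcal Z'_I=(k_Ig_I)^{-1}\cdot(W_I,F|_{W_I})=g_I^{-1}\cdot\big(k_I^{-1}\cdot(W_I,F|_{W_I})\big)=g_I^{-1}\cdot(W_I,F|_{W_I})=\mathcal Z_I$, for every $I$. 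Combined with the first paragraph this establishes the lemma.

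The step I expect to be the main obstacle is not conceptual but the careful handling of conventions in the third paragraph — the semidirect-product decomposition $e^{\cg_A}=e^{\g_A}\ltimes e^{\h_A}$ (Proposition~\ref{exponential}), the signs in the action of $\cG$ on generalized submanifolds, and the pull-back/restriction identities. One should also note that Proposition~\ref{action on GC submanifolds}(2), Remark~\ref{rem1}, and the action of $\cG$ on generalized submanifolds all admit the obvious formal ($A$-linear) versions, obtained from the stated ones by extension of scalars along $\R\hookrightarrow A$.
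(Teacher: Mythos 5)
Your argument is correct and is essentially the paper's own proof in different packaging: your key identity $g'_I=k_I\,g_I$ with $k_I=e^{(0,\,e^{\sigma_I}\bar a_I-\bar a_I-d\tilde h_I)}e^{(\sigma_I,0)}$ is exactly the paper's computation $x'_Ix_I^{-1}=e^{dh_I}e^{(\eta,\iota(\eta)d\tilde a_I)}$, and your observation that $k_I$ fixes $(W_I,F|_{W_I})$ is the paper's observation that $(\eta,\iota(\eta)d\tilde a_I)$ lies in $K^{\B}_A$ (so the conjugated element preserves $K^{\B}_A$) while $e^{\eta}$ preserves $I^Z_A$. The one caveat is that the ``formal action of $\cG$ on generalized submanifolds'' you invoke is not literally defined in this setting (a formal diffeomorphism has no $\varphi^{-1}(Z)$), so your reduction via $\mathcal Z_I=g_I^{-1}\cdot(W_I,F|_{W_I})$ should be unwound into the equivalent statements about $K^{\B}_A$ and $I^Z_A$ --- which is precisely how the paper phrases the same argument.
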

 
\begin{proof}
 Write $\hB=(\hrho,\hL)$ and $\hB'=(\hrho',\hL')$ (with the same underlying submanifold $Z\subset X$) with $\hL=(\{\hc_{IJ}\},\{\ha_{IJ}\})$ and 
 $\hL'=(\{\hc'_{IJ}\},\{\ha'_{IJ}\})$, and let $\Psi=(e^{\tau},\{g\si\})$ be an isomorphism $\hB\to\hB'$.  As in Definition \ref{compatibility def}, let $\{\tilde{W}_I\}$ be open sets in $X$ with $W_I=Z\cap \tilde{W}\si$, and choose $\xi\in\g\da(X)$ and $\{u_I\in\Omega^1\dm(\tilde{W}_I)\}$ such that $\hrho=\rho e^{\xi}$ and $\ha_I=a_I+\rho(u_I)$.  Define $x_I=e^{u\si}e^{\xi\si}$, where $\xi\si$ is the restriction of $\xi$ to $\tilde{W}\si$. According to Definition \ref{compatibility def}, $\hB$ is compatible with $\J$ if and only if, for each $I$ we have 
 \begin{equation}\label{compat1}\la (x_I\cdot\J)K^{\B}\da,K^{\B}\da\ra\subset I^{Z}\da,\end{equation} where $K\uB\da$ and $I^Z\da$ are defined here with respect to $\tilde{W}\si$. Let $\eta\in\g\da(X)$ be an extension of $\tau$, and define $\xi'\in\g\da(X)$ by $e^{\eta}e^{\xi}=e^{\xi'}$.  Then 
 \[\hrho'=e^{\tau}\hrho=e^{\tau}\rho e^{\xi}=\rho e^{\eta}e^{\xi}=\rho e^{\xi'}.\] Furthermore, we have 
 \[dg_I=e^{\tau}\ha\si-\ha'\si=e^{\tau}a_I+e^{\tau}\rho(u\si)-\ha'\si\] which implies that 
 \[\ha'\si=a\si+(e^{\tau}a\si-a\si+e^{\tau}\rho(u\si)-dg_I).\]  Choose $\tilde{a}\si\in\Omega\dm^1(\tilde{W}\si)$ such that $\rho(\tilde{a}\si)=a\si$, and $\tilde{g}\si\in\Cinf\dm(\tilde{W}\si)$ such that $\rho(\tilde{g}\si)=g\si$.  Then 
 \[e^{\tau}a\si-a\si-e^{\tau}\rho(u\si)-dg_I)=\rho(e^{\eta}\tilde{a}\si-\tilde{a}\si+e^{\eta}u\si-d\tilde{g}\si).\]  Set 
 \[u'\si=e^{\eta}\tilde{a}\si-\tilde{a}\si+e^{\eta}u\si-d\tilde{g}\si,\] and $x'\si=e^{u'\si}e^{\xi'\si}$, where $\xi'\si$ is the restriction of $\xi'$ to $\tilde{W}\si$. Then $\hB'$ is compatible with $\J$ if and only if for each $I$ we have  
  \begin{equation}\label{compat2}\la (x'\si\cdot \J)K^{\B}\da,K^{\B}\da\ra\subset I^{Z}\da.\end{equation}  We calculate 
 \begin{align*} x'\si x^{-1}\si & =e^{e^{\eta}\tilde{a}\si-\tilde{a}\si+e^{\eta}u\si-d\tilde{g}\si}e^{\eta}e^{\xi} e^{-\xi}e^{-u\si} \\
 & = e^{e^{\eta}\tilde{a}\si-\tilde{a}\si-d\tilde{g}\si}e^{\eta}
 \end{align*}
 
   We have 
 \[e^{\eta}\tilde{a}_I-\tilde{a}_I=d\int_0^1e^{t\eta}\iota(\eta)\tilde{a}_Idt+\int_0^1e^{t\eta}\iota(\eta)d\tilde{a}\si dt.\]  Defining $h\si=\int_0^1e^{t\eta}\iota(\eta)\tilde{a}_Idt-\tilde{g}\si$ we have 
 \[x\si'x^{-1}\si=e^{dh\si}e^{(\eta,\iota(\eta)d\tilde{a}_I)}.\]  Note that $\xx:=(\eta,\iota(\eta)d\tilde{a}\si)$ is an element of $\KK^{\B}\da$, so that by Lemma \ref{K closed under bracket} we have 
 \[e^{-\xx}\KK^{\B}\da=\KK^{\B}\da.\] Using the fact that $e^{dh\si}$ acts trivially on $\Cinf\da(\TTX)$, and writing $\KK:=\KK^{\B}\da$, we see that 
 \begin{align*} \la (x'\si\cdot \J)\KK,\KK\ra & = \la (e^{\xx}x\si\cdot \J)\KK,\KK\ra \\
 & = \la e^{\xx}(x\si\cdot\J)e^{-\xx}\KK,\KK\ra \\
 & = e^{\eta}\la (x\si\cdot\J)e^{-\xx}\KK,e^{-\xx}\KK\ra\\
 & = e^{\eta}\la (x\si\cdot\J)\KK,\KK\ra. 
  \end{align*} Since $\eta$ is tangent to $Z$, it follows that 
 \[e^{\eta}I^{Z}\da=I^{Z}\da,\] so we see that  condition (\ref{compat1}) is indeed equivalent to condition (\ref{compat2}).
 \end{proof}

\begin{proposition}\label{action of formal symmetries} Given $\hB\in\Dt\da^{\B}(X)$ and $g\in e^{\T\da(X)}$, the deformation $\hB\cdot g$ is compatible with $\J$ if and only if $\hB$ is.  In particular, the action of $e^{\cg(X)}$ on $\Dt^{\B}(X)$ restricts to give a well-defined right action of the formal subgroup $e^{\T(X)}$ on $\Dt^{\B}(X,\J)$.
 \end{proposition}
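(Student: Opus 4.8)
The plan is to reduce the statement, via the explicit description of the action in Proposition \ref{right action par}, to the fact (Proposition \ref{formal holomorphic symmetries}) that elements of $e^{\T\da(\tilde W_I)}$ fix $\J|_{\tilde W_I}$. Fix $A\in\Art$, $\hB=(\hrho,\hL)\in\Dt^{\B}_A(X)$, and $g\in e^{\T\da(X)}$, and let $\xi\in\g\da(X)$, open sets $\{\tilde W_I\}$ with $\tilde W_I\cap Z=W_I$, forms $u_I\in\Omega^1\dm(\tilde W_I)$, and $g_I=e^{(0,u_I)}e^{(\xi_I,0)}$ (with $\xi_I:=\xi|_{\tilde W_I}$) be the auxiliary data testing compatibility of $\hB$ as in Definition \ref{compatibility def}. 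Using Proposition \ref{exponential} I write $g=e^{(0,w)}e^{(\zeta,0)}$ with $\zeta\in\g\da(X)$, $w\in\Omega^1\dm(X)$; then $\hB\cdot g=(\hrho e^{\zeta},\,e^{-\hrho(w)}\cdot\hL)$ by Proposition \ref{right action par}.

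First I would pin down convenient auxiliary data for $\hB\cdot g$. I keep the same cover $\{\tilde W_I\}$; I put $e^{\xi'}:=e^{\xi}e^{\zeta}$, so $\hrho e^{\zeta}=\rho e^{\xi'}$, and $u_I':=u_I+e^{\xi_I}(w|_{\tilde W_I})$. Since restriction to open sets commutes with $\rho$ and with the Lie-derivative exponentials, $a_I+\rho(u_I')=\hat a_I+\rho\bigl(e^{\xi}w\bigr)|_{W_I}=\hat a_I+\hrho(w)|_{W_I}$, which is precisely the local connection form of $e^{-\hrho(w)}\cdot\hL$; hence one valid choice of local symmetry attached to $\hB\cdot g$ is $g_I'=e^{(0,u_I')}e^{(\xi_I',0)}$ with $\xi_I'=\xi'|_{\tilde W_I}$. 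A short computation in $e^{\cg\da(\tilde W_I)}$, using $e^{(0,a+b)}=e^{(0,a)}e^{(0,b)}$ and the conjugation rule $e^{(\eta,0)}e^{(0,b)}e^{(-\eta,0)}=e^{(0,e^{\eta}b)}$, then gives
\[
g_I'=e^{(0,u_I)}e^{(\xi_I,0)}e^{(0,w|_{\tilde W_I})}e^{(\zeta|_{\tilde W_I},0)}=g_I\cdot\bigl(g|_{\tilde W_I}\bigr).
\]

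With this identity the rest is immediate. The generalized-holomorphicity condition $\delta_L\mu(\cdot)=0$ is local, so $g|_{\tilde W_I}\in e^{\T\da(\tilde W_I)}$, and Proposition \ref{formal holomorphic symmetries} gives $\bigl(g|_{\tilde W_I}\bigr)\cdot\J=\J$ on $\tilde W_I$. Since $F\mapsto g\cdot F$ is a left action of $e^{\cg\da(\tilde W_I)}$ on $\Cinf\da(\End(\TT\tilde W_I))$, it follows that $g_I'\cdot\J=g_I\cdot\bigl((g|_{\tilde W_I})\cdot\J\bigr)=g_I\cdot\J$, hence
\[
Q_{g_I'\cdot\J}(K^{\B}_A,K^{\B}_A)=Q_{g_I\cdot\J}(K^{\B}_A,K^{\B}_A)
\]
as subsets of $\Cinf\da(\tilde W_I)$ (recall $K^{\B}_A$ and $I^Z_A$ are built from the fixed $Z$ and the original $\B$, not from the deformation). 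Since compatibility is, by Definition \ref{compatibility def}, the conjunction over $I$ of the inclusions $Q_{g_I\cdot\J}(K^{\B}_A,K^{\B}_A)\subset I^Z_A$, we conclude that $\hB\cdot g$ is compatible with $\J$ if and only if $\hB$ is. For the final assertion, $\Dt^{\B}(X,\J)$ is the subgroupoid of $\Dt^{\B}(X)$ on the compatible objects; since $\cdot\,g$ and $\cdot\,g^{-1}$ both preserve compatibility of objects and act on morphisms as in Proposition \ref{right action par}, the right action of $e^{\cg(X)}$ on $\Dt^{\B}(X)$ restricts to a right action of $e^{\T(X)}$ on $\Dt^{\B}(X,\J)$, the associativity and unit axioms being inherited.

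The one genuinely delicate point is the identity $g_I'=g_I\cdot(g|_{\tilde W_I})$: one must match the ``global bundle deformation'' description of $\hB\cdot g$ furnished by Proposition \ref{right action par} with the ``local symmetry'' data used to phrase compatibility in Definition \ref{compatibility def}, which amounts to a careful but routine manipulation of the semidirect-product presentation of $e^{\cg\da(\tilde W_I)}$ together with Proposition \ref{exponential}. Once it is in place the proof collapses to a single application of Proposition \ref{formal holomorphic symmetries}, so I expect this bookkeeping to be the main obstacle.
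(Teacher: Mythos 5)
Your proposal is correct and follows essentially the same route as the paper's own proof: both choose the auxiliary data $u_I'=u_I+e^{\xi_I}(w|_{\tilde W_I})$ and $e^{\xi_I'}=e^{\xi_I}e^{\zeta|_{\tilde W_I}}$ for $\hB\cdot g$, establish the key identity $g_I'=g_I\,(g|_{\tilde W_I})$, and conclude via the fact that $g$ is a symmetry of $\J$. The bookkeeping you flag as the delicate point is exactly the content of the paper's argument.
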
 
\begin{proof} Given $\hB=(\hrho,\hL)\in\Dt\da^{\B}(X)$ compatible with $\J$, choose $\{\tilde{W}_I\}$, $\{u\si\}$, and $\xi$, as in Definition \ref{compatibility def}, and set $\xi\si=\xi|_{\tilde{W}_I}$.  By definition, for each $I$ we have 
\begin{equation}\label{compat1}\la (e^{u\si}e^{\xi\si}\J)K^{\B}\da,\KK^{\B}_{\da}\ra\subset I^Z\da.\end{equation}  Given $g=e^{w}e^{\eta}\in e^{\T\da(X)}$, write $\hB':=\hB\cdot g=(\hrho',\hL')$.  Then we have $\hrho'=\hrho e^{\eta}=\rho e^{\xi}e^{\eta}$ and $\hL'=(\{\hc_{IJ}\},\{a\si+\rho u\si+(\rho e^{\xi}w)|_{W\si}\})=(\{\hc_{IJ}\},\{a\si+\rho u'\si\})$ with $u'\si=u\si+e^{\xi\si} w|_{\tilde{W}_I}$.  Defining $\xi'_I$ by $e^{\xi'_I}=e^{\xi_I}e^{\eta|_{\tilde{W}_I}}$, we see that the compatibility of $\hB'$ with $\J$ is equivalent to the conditions
\begin{equation}\label{compat2}\la (e^{u'\si}e^{\xi'\si}\J)\KK^{\B}\da,\KK^{\B}_{\da}\ra\subset I^Z\da.\end{equation}  Letting $g_I=g_{\tilde{W}_I}=e^{w|_{\tilde{W}_I}}e^{\eta|_{\tilde{W}_I}}$, we see that 
\[e^{u'\si}e^{\xi'\si}=e^{u\si}e^{\xi\si} g_I.\]  Since by assumption $g$ is a symmetry of the GC structure $\J$, it follows that the left-hand sides of (\ref{compat1}) and (\ref{compat2}) are equal.

\end{proof}

%\begin{rem} Since $\hrho$ is of the form $\rho e^{\xi}$ for some $\xi\in \g\da(X)$, we see that $\hrho\otimes_{A}\R=\rho$.  Geometrically, we view $\hrho$ as determining a family $\hat{i}:Z\hookrightarrow X$ of embeddings deforming the inclusion $i:Z\hookrightarrow X$.  We then view the pair $(\hrho,u)$ as family of branes $\hB=(\hat{Z},\hat{\L},\hat{\nabla})$ deforming $\B$, where $\hat{Z}=\hat{i}(Z)$, and $(\hat{\L},\hat{\nabla})$ is obtained by adding $u$ to the connection $\nabla$, and then pulling back the bundle with connection to $\hat{Z}$.  \bnote{maybe relate to an earlier discussion about ``honest" deformations.}
%\end{rem}
%\begin{rem} Given a pair $(\hrho,u)$ as in Definition \ref{definition par def}, suppose that $(w,\xi),(w',\xi')\in \cg\da(X)$ satisfy $\rho=\rho e^{\xi}=\rho e^{\xi'}$ and $u=\rho(w)=\rho(w')$.  Then it is easy to check that $e^{w}e^{\xi}\cdot Q_{\J}$ satisfies the condition in part (2) of the definition if and only if it is satisfied by $e^{w'}e^{\xi'}\cdot Q_{\J}$.  Thus, it is sufficient to check condition (2) for an a single choice of $(\xi,w)$.
%\end{rem}

%\begin{definition} \begin{enumerate} Given $\hB,\hB'\in \widetilde{\De}_{\B}(A)$, a \emph{Hamiltonian equivalence} from $\hB$ to $\hB'$ is a pair $(z,\Psi)$, where $z\in e^{\H\da(X)}\subset e^{\T\da(X)}$, and 
%\[\Psi:\hB\to \hB'\] is an isomorphism.
 Recall from Definition \ref{genhamdef} and Proposition \ref{hamiltonian} the subalgebra $\H(X)\subset \T(X)$ of generalized Hamiltonian vector fields on $(X,\J)$. We correspondingly have the formal subgroup$e^{\H(X)}\subset e^{\T(X)}$ of formal generalized Hamiltonian symmetries.\begin{definition}\label{full deformation groupoid} Given $\B\in\Br(X,\J)$, let $\De^{\B}(X,\J)$ be the following formal groupoid (over $\Art$).  For each $A\in\Art$, the groupoid $\De_A^{\B}(X,\J)$ has the same objects as $\widetilde{\De}^{\B}_A(X,\J)$.  Given objects $\hB$ and $\hB'$, a morphism in $\De^{\B}_A(X,\J)$ from $\hB$ to $\hB'$ is a pair $(\psi,z)$, where $z\in e^{\H\da(X)}$ and $\psi$ is a morphism in $\widetilde{\De}^{\B}_A(X,\J)$ from $\hB$ to $\hB'\cdot z$.  Given $(\psi,z):\hB\to\hB'$ and $(\psi',z'):\hB'\to\hB''$, the composition is defined by 
\begin{equation}\label{composition}(\psi',z')\circ (\psi,z)=(\psi'\psi,z'z):\hB\to \hB''.\end{equation}  The identity morphism of any $\hB$ is the pair $(id_{\hB},1)$, where $id_{\hB}$ denotes the identity morphism of $\hB$ considered as an object in $\widetilde{\De}^{\B}_A(X,\J)$.
\end{definition}

\begin{rem} One easily checks using Proposition \ref{right action par} that the composition given by (\ref{composition}) is well-defined, unital, and associative.
\end{rem}

\begin{definition} \label{brane deformation functor} The \emph{deformation functor} 
\[\df_{\B}:\Art\to \textrm{Set}\] of $\B\in\Br(X,\J) $ is given by 
\[A\mapsto \pi_0(\De_A^{\B}(X,\J)).\]
\end{definition}

\section{First order deformations and Lie algebroid cohomology}\label{first order deformations}  In \cite{KM}, an argument was given that first-order deformations of a GC brane $\B$ should correspond to elements of the Lie algebroid cohomology group $H^1(\B)$.  In our framework,  first-order deformations of $\B$ are encoded as elements of $\df_{\B}(\R[\epsilon](\epsilon^2))$; therefore, a natural expectation motivated by \cite{KM} is that elements of $\D_{\B}( \R[\epsilon](\epsilon^2))$ should correspond to classes in $H^1(\B)$.  In this section, we verify this by unpacking Definition \ref{brane deformation functor} in the special case $A=\DN$.  %\begin{theorem}\label{brane tangent} For every GC brane $\B$, there is a natural bijection between elements of $\De_{\B}(\R[\epsilon]/(\epsilon^2))$ and elements of the Lie algebroid cohomology group $H^1(\B)$.
%\end{theorem}
On the hand hand, this result (stated as Theorem \ref{brane tangent}  in the introduction) can be regarded as a rigorous verification of the result obtain in \cite{KM}.  Going in the other direction, we may view Theorem \ref{brane tangent} as a check that our Definition \ref{brane deformation functor} is a reasonable one from a geometric point of view.

We will actually prove Theorem \ref{brane tangent} as part of a slightly better statement, given below as Proposition \ref{first order induced}.  Namely, we will construct an explicit map $H^1(\B)\to \df_{\B}(\DN)$, and prove that it is both well-defined and bijective.

We now turn to the construction.  Let $\B=(Z,\L)\in\Br(X,\J)$ be a GC brane, with generalized tangent bundle $\TT\B$.  Recall that, since $\B$ is compatible with $\J$, the GC structure $\J$ preserves $\TT\B\subset \TT X|_{Z}$, and we define $l\subset \TT\B\otimes\C$ to be the $+i$-eigenbundle, which is a complex sub-bundle of $L|_{Z}$.   Recall also that the generalized normal bundle $\N\B$ is defined as the quotient of $\TT X|_Z$ by $\TT\B$ (with quotient map $q:\TTX|_Z\to \N\B$). Since $\TT\B$ is a maximal isotropic subbundle of $\TT X|_Z$, the pairing on $\TT X|_Z$ gives a well-defined non-degenerate pairing of $\TT\B$ with $\N \B$. Furthermore, since $\J$ preserves $\TT\B$, it determines a well-defined complex structure $\J_{\N\B}$ on $\N\B$; regarding $\N\B$ as a complex vector bundle with complex structure $-\J_{\N\B}$, we have an isomorphism of complex vector bundles
\[\mu:\N\B\to l\uv\] given by 
\[\mu(q(\xx))(v)=\la\xx,v\ra\] for every section $\xx$ of $\TT X|_Z$. %Recall from \rnote{ref} that we defined an isomorphism (also called $\mu$) from $\TT X$ to $\L\uv$; it is easy to see that the following diagram commutes

Given $\xx=(\xi,u)\in \Cinf(\TT X|_Z)$, choose a section $\tilde{\xx}=(\tilde{\xi},\tilde{u})\in \Cinf(\TT X)$ extending $\xx$, i.e. such that $r(\tilde{\xx})=\xx$.  As described in Proposition \ref{right action par}, the generalized vector field $\tilde{\xx}$ determines an object of $\De^{\B}_{\DN}(X)$ given by  
\[\B\cdot e^{\epsilon\tilde{\xx}}=(\rho e^{\epsilon\tilde{\xi}},e^{-\epsilon\rho(\tilde{u})}\cdot\L)\in \De_{\DN}^{\B}(X,\J).\]
\begin{lemma} The deformation $\B\cdot e^{\epsilon\tilde{\xx}}$ depends only on $\xx$, i.e. not on the choice of the extension $\tilde{\xx}$.
\end{lemma}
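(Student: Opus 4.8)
The plan is to show that if $\tilde{\xx}$ and $\tilde{\xx}'$ are two extensions of $\xx$, then the deformations $\B\cdot e^{\epsilon\tilde{\xx}}$ and $\B\cdot e^{\epsilon\tilde{\xx}'}$ are literally equal (not merely isomorphic) as objects of $\De^{\B}_{\DN}(X,\J)$. Write $\tilde{\xx}=(\tilde{\xi},\tilde{u})$ and $\tilde{\xx}'=(\tilde{\xi}',\tilde{u}')$, and set $\zeta=\tilde{\xi}'-\tilde{\xi}$, $w=\tilde{u}'-\tilde{u}$. Since both extensions restrict to $\xx$ on $Z$, the vector field $\zeta$ vanishes on $Z$ and the one-form $w$ vanishes on $Z$ (as a section of $(T^\vee X)|_Z$, hence in particular $\rho(w)=i^*w=0$, since $w$ already vanishes at points of $Z$).

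First I would spell out what the two objects are. By Proposition \ref{right action par}, $\B\cdot e^{\epsilon\tilde{\xx}}=(\rho e^{\epsilon\tilde{\xi}},\,e^{-\epsilon\rho(\tilde{u})}\cdot\L)$ and $\B\cdot e^{\epsilon\tilde{\xx}'}=(\rho e^{\epsilon\tilde{\xi}'},\,e^{-\epsilon\rho(\tilde{u}')}\cdot\L)$. Since $A=\DN$ with $\epsilon^2=0$, the exponential formula (\ref{exp action}) truncates: $e^{\epsilon\tilde{\xi}}$ acts as $1+\epsilon\pounds(\tilde{\xi})$ on $\Omega^\bullet_A(X)$, so $\rho e^{\epsilon\tilde{\xi}}=\rho+\epsilon\,\rho\pounds(\tilde{\xi})$. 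Thus the two parametrized pullback maps differ by $\epsilon\,\rho\pounds(\zeta):\Omega^\bullet(X)\to\Omega^\bullet(Z)$. The key point is that $\rho\pounds(\zeta)=0$: for any $v\in\Omega^\bullet(X)$, the restriction $i^*(\pounds(\zeta)v)$ depends only on the $1$-jet of $\zeta$ along $Z$ in directions tangent to $Z$, and since $\zeta$ vanishes identically on $Z$, both $\iota(\zeta)dv$ and $d(\iota(\zeta)v)$ restrict to zero on $Z$ — indeed $\iota(\zeta)v$ vanishes on $Z$ and its exterior derivative, restricted to $Z$, only sees tangential derivatives of a function/form vanishing on $Z$, hence vanishes. (This is exactly the computation already invoked in the proof of Proposition \ref{well-defined}, where it is shown that $\rho e^{\xi}=\rho$ whenever $\xi$ vanishes suitably on $Z$; here the same reasoning applies since $\zeta|_Z=0$.) Therefore $\rho e^{\epsilon\tilde{\xi}}=\rho e^{\epsilon\tilde{\xi}'}$.

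Next, for the line-bundle component: $\rho(\tilde{u})=i^*\tilde{u}$ and $\rho(\tilde{u}')=i^*\tilde{u}'$ agree because $r(\tilde{\xx})=r(\tilde{\xx}')=\xx$ forces the restrictions of $\tilde u$ and $\tilde u'$ to $(T^\vee X)|_Z$ to coincide, and $i^*$ factors through that restriction; hence $\rho(w)=0$ and $e^{-\epsilon\rho(\tilde{u})}\cdot\L=e^{-\epsilon\rho(\tilde{u}')}\cdot\L$ as objects of $\De^{\L}_{\DN}(Z)$ (by the explicit formula in Definition \ref{bundle action}, this only changes the local one-forms $a_I$ by $\rho(\tilde u)|_{W_I}$, which is the same for both). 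Combining the two paragraphs, the two pairs $(\hrho,\hL)$ coincide, so $\B\cdot e^{\epsilon\tilde{\xx}}=\B\cdot e^{\epsilon\tilde{\xx}'}$. I expect the only mildly delicate point to be the clean verification that $\rho\pounds(\zeta)=0$ for $\zeta$ vanishing on $Z$ — but this is a standard local computation (choose coordinates in which $Z=\{x^{d+1}=\cdots=0\}$ so $\zeta=\sum_j \zeta^j\partial_{x^j}$ with all $\zeta^j$ vanishing on $Z$, and check on $dx^I$) and, as noted, it has already been used in the proof of Proposition \ref{well-defined}, so it can simply be cited.
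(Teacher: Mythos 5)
Your proposal is correct and follows essentially the same route as the paper: the key step in both is that for a vector field $\zeta$ vanishing along $Z$, the Cartan formula gives $\rho(\pounds(\zeta)v)=d\rho(\iota(\zeta)v)+\rho(\iota(\zeta)dv)=0$, so that $\rho e^{\epsilon\tilde{\xi}}=\rho+\epsilon\rho\pounds(\tilde{\xi})$ is independent of the extension, while $\rho(\tilde u)$ manifestly depends only on $r(\tilde u)$. The paper likewise concludes literal equality of the two deformations, so there is nothing to add.
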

\begin{proof}  Suppose $\eta\in\Cinf(TX)$ satisfies $r(\eta)=0$, i.e. $\eta$ vanishes at each point of $Z$.  Then for every $w\in\Omega\ub(X)$, we have 
\[\rho(\iota(\eta)w)=0.\]  This implies that, for every $v\in\Omega\ub(X)$, we have 
\[\rho(\pounds(\eta)v)=d\rho(\iota(\eta)v)+\rho(\iota(\eta)dv)=0.\]  Therefore, given two different extensions $\tilde{\xi},\tilde{\xi}'\in\Cinf(TX)$ of $\xi$, we have 
\[\rho\pounds(\tilde{\xi})=\rho\pounds(\tilde{\xi}'):\Omega\ub(X)\to\Omega\ub(Z).\]  This in turn implies that $\rho e^{\epsilon\tilde{\xi}}=\rho+\epsilon\rho\pounds(\tilde{\xi})$ is equal to $\rho e^{\epsilon\tilde{\xi}'}=\rho+\epsilon\rho\pounds(\tilde{\xi}')$.
It is also clear that $\rho(\tilde{u})$ depends only on $r(\tilde{u})$, so that $\rho(\tilde{u})$ is independent of the choice of extension. \end{proof}

Since the deformation depends only on $\xx$, we will use the notation $\hB^{\xx}=\B\cdot e^{\epsilon\tilde{\xx}}$.  Note that the section $\xx$ also determines a section $q(\xx)\in\Cinf(\N\B)$ as well as a section $\mu q(\xx)\in\Cinf(l\uv)$.
 
\begin{proposition} \label{first order induced}
\begin{enumerate}[(1)]
\item  The deformation $\hB^{\xx}$ is compatible with $\J$ if and only if $\delta_l(\mu q(\xx))=0$, i.e. if and only if $q(\xx)$ is a generalized holomorphic section of $\N\B$.

\item Given $\xx,\xx'\in\Cinf(\TTX|_Z)$ such that both $q(\xx)$ and $q(\xx')$ are generalized holomorphic sections of $\N\B$, the deformations $\B^{\xx}$ and $\B^{\xx'}$ are isomorphic in $\De^{\B}_{\DN}(X,\J)$ if and only if the elements $[\mu q(\xx)],[\mu q(\xx')]\in H^1(\B)$ are equal.  
In other words, there is a well-defined injective map $H^1(\B)\to \df_{\B}(\DN)$ mapping $[\mu q(\xx)]$ to the equivalence class of $\B^{\xx}$.
\item Every element of $\De^{\B}_{\DN}(X,\J)$ is isomorphic to one of the form $\B^{\xx}$ for some $\xx\in \Cinf(\TT X|_{Z})$.  In other words, the injective map $H^1(\B)\toco \df_{\B}(\DN)$ is actually a bijection.  \end{enumerate} 
\end{proposition}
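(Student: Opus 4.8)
\textbf{Proof plan for Proposition \ref{first order induced}.}

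The plan is to unpack Definition \ref{brane deformation functor} in the special case $A = \DN = \R[\epsilon]/(\epsilon^2)$, where the nilpotent ideal $\m$ is one-dimensional (spanned by $\epsilon$ with $\epsilon^2 = 0$), so that every exponential truncates at first order. For part (1): write $\hB^{\xx} = \B \cdot e^{\epsilon\tilde{\xx}}$ and apply Definition \ref{compatibility def}. On a cover element $\tilde W_I$ the associated group element is $g_I = e^{\epsilon \tilde{u}_I} e^{\epsilon\tilde\xi_I}$ (where $\tilde{\xx}|_{\tilde W_I} = (\tilde\xi_I,\tilde u_I)$), and modulo $\epsilon^2$ this acts on $\J$ by $g_I\cdot\J = \J + \epsilon\,(\tilde{\xx}_I\cdot\J)$, using Proposition \ref{Courant action prop}. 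Compatibility of $\hB^{\xx}$ asks that $Q_{g_I\cdot\J}(K^{\B}_A,K^{\B}_A)\subset I^Z_A$; since $\B$ itself is compatible, the degree-zero term lies in $I^Z$, and the condition reduces to the vanishing mod $I^Z$ of the $\epsilon$-coefficient, i.e. $\la(\tilde{\xx}\cdot\J)\yy,\zz\ra|_Z = 0$ for all $\yy,\zz\in K^{\B}$. Now reuse the exact computation carried out in the proof of Proposition \ref{holomorphic symmetries}: the expression $\la \tilde{\xx}\cdot\yy,\zz\ra$ equals $\tfrac12\,\delta_L(\mu(\tilde{\xx}))(\zz,\yy)$ there, and the same manipulation — restricted to $Z$ and to sections $\yy,\zz\in\Cinf(l)$ — gives $\la(\tilde{\xx}\cdot\J)\yy,\zz\ra|_Z = \tfrac12\,\delta_l(\mu q(\xx))(\zz,\yy)$ by the definition (\ref{brane bracket}) of the brane bracket as the restriction of the Dorfman bracket and the characterization $\mu q(\xx)(v) = \la\xx,v\ra$. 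Hence compatibility holds iff $\delta_l(\mu q(\xx)) = 0$. (Here one should note the Lemma in the proof of Proposition \ref{holomorphic symmetries}: $\tilde{\xx}\cdot\J = 0$ iff $\la\tilde{\xx}\cdot v,w\ra = 0$ for all $v,w\in\Cinf(L)$; the restricted version says that the $\epsilon$-term of $g_I\cdot\J$ preserves $l$ iff the analogous pairing vanishes on $\Cinf(l)$, which is exactly what $Q$ vanishing on $K^\B_A$ records.)

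For part (2), suppose $\xx,\xx'$ give compatible deformations. An isomorphism $\hB^{\xx}\to\hB^{\xx'}$ in $\De^{\B}_{\DN}(X,\J)$ consists of a pair $(\psi,z)$ with $z = e^{\epsilon\yy}\in e^{\H_{\DN}(X)}$ a formal Hamiltonian symmetry and $\psi = (e^{\epsilon\tau},\{g_I\})$ an equivalence $\hB^{\xx}\to\hB^{\xx'}\cdot z$ in $\Dt^{\B}_{\DN}(X,\J)$. Working mod $\epsilon^2$ and using Proposition \ref{right action par}, $\hB^{\xx'}\cdot e^{\epsilon\yy} = \B\cdot e^{\epsilon(\tilde{\xx}' + \tilde{\yy})}$ up to the diffeomorphism/$1$-form bookkeeping; the equivalence condition $\hrho' e^{\cdot} = e^{\epsilon\tau}\hrho$ forces $\rho\pounds(\tilde\xi') = \rho\pounds(\tau) + \rho\pounds(\tilde\xi)$ plus the Hamiltonian contribution, i.e. the normal components agree: $q(\xx) - q(\xx') = q$ of (the restriction of) a section of the form $\ll\text{tangential}\rr$ coming from $\tau$, together with a $\delta_l$-exact term $\delta_l f$ from the Hamiltonian $z$ — recall from the proof of Proposition \ref{hamiltonian} that $\mu(\xx_f) = -i\,\delta_L f$, so the restriction of a formal Hamiltonian vector field contributes exactly a $\delta_l$-coboundary $-i\,\delta_l f$ to $\mu q(\xx)$. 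Meanwhile the $\{g_I\}$-part of $\psi$, a collection of functions on the $W_I$, records that the difference of the two cocycles $\mu q(\xx), \mu q(\xx')$ on each chart differs by $\delta_l$ of a globally defined (mod $\epsilon$) $0$-cochain. Assembling these: $\hB^{\xx}\cong\hB^{\xx'}$ iff $\mu q(\xx) - \mu q(\xx') = \delta_l(\text{something})$, i.e. iff $[\mu q(\xx)] = [\mu q(\xx')]$ in $H^1(\B)$. Combined with part (1) (which shows $\mu q(\xx)$ is a $\delta_l$-cocycle precisely when $\hB^\xx$ is compatible), this shows the assignment $[\mu q(\xx)]\mapsto [\hB^{\xx}]$ is a well-defined injection $H^1(\B)\hookrightarrow\df_{\B}(\DN)$.

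For part (3) — surjectivity — take an arbitrary $\hB = (\hrho,\hL)\in\De^{\B}_{\DN}(X,\J)$. By Proposition \ref{pi zero bundle} we may assume $\hL = (\{c_{IJ}\},\{a_I + \epsilon u_I\})$ has undeformed transition functions, with $u_I - u_J = 0$ on overlaps, i.e. the $u_I$ glue to a global $u\in\Omega^1(Z)\otimes\m$; and $\hrho = \rho e^{\epsilon\xi}$ for some global $\xi\in\g_{\DN}(X)$, i.e. $\xi = \epsilon\hat\xi$ with $\hat\xi$ a vector field on $X$ (well-defined mod vector fields vanishing on $Z$, by the Lemma before this Proposition). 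Then $\xx := r(\hat\xi, \tilde u) = $ the restriction to $Z$ of $(\hat\xi,\tilde u)$, where $\tilde u$ is any extension of $u$, gives $\hB = \hB^{\xx}$ directly from the defining formula $\B\cdot e^{\epsilon\tilde{\xx}} = (\rho e^{\epsilon\tilde\xi}, e^{-\epsilon\rho(\tilde u)}\cdot\L)$. The only subtlety is matching signs/identifications between $u$ and $\rho(\tilde u)$, which is routine. This exhibits every object of $\De^{\B}_{\DN}(X,\J)$ as some $\hB^{\xx}$, hence the injection of part (2) is onto, proving Theorem \ref{brane tangent}.

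\textbf{Main obstacle.} The hard part is part (2), the equivalence bookkeeping: one must carefully track how the three sources of morphisms — the formal diffeomorphism $e^{\epsilon\tau}$, the line-bundle gauge transformation $\{g_I\}$, and the formal Hamiltonian symmetry $z = e^{\epsilon\yy}$ — each contribute to changing the cocycle $\mu q(\xx)\in\Cinf(l^\vee)$, and to verify that their combined effect is exactly a general $\delta_l$-coboundary in the Lie algebroid complex of $\B$ (no more and no less). The clean identity $\mu(\xx_f) = -i\,\delta_L f$ from the proof of Proposition \ref{hamiltonian}, restricted to $Z$, is the key input that makes the Hamiltonian contribution land in the coboundaries; the diffeomorphism and gauge contributions must be shown to produce the remaining coboundary directions. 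Parts (1) and (3) are comparatively direct once one commits to working strictly modulo $\epsilon^2$ and reuses the computations of Propositions \ref{holomorphic symmetries} and \ref{pi zero bundle}.
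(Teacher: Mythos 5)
Your plan is correct and follows essentially the same route as the paper: work strictly mod $\epsilon^2$, identify the compatibility condition with $\delta_l(\mu q(\xx))=0$ via the Dorfman-bracket identities (the paper redoes the computation of Proposition \ref{holomorphic symmetries} in the brane setting, handling the converse direction by the substitution $A=u-i\J u$, $B=v-i\J v$ for $u,v\in K^{\B}$, which is the one step your sketch glosses over), use the identity $\mu q r(\xx_f)=-i\delta_l(\rho(f))$ so that the formal Hamiltonian symmetries account for exactly the coboundaries, and deduce surjectivity from Proposition \ref{pi zero bundle}. No changes of substance are needed.
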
 
\begin{proof}  Let us begin with part (1).  By Definition \ref{compatibility def}, we see that $\B^{\xx}$ will be compatible with $\J$ if and only if $e^{\epsilon\tilde{\xx}}$ is compatible with $\J$ in the sense of Definition \ref{compatibility1}, i.e. if and only if we have 
\[\la (e^{\epsilon\tilde{\xx}}\cdot\J)K^{\B}\da,K^{\B}\da\ra \subset I^Z\da.\]  Since $e^{\epsilon\tilde{\xx}}\cdot\J=\J+\epsilon\tilde{\xx}\cdot\J$, this is equivalent to \[\la (\tilde{\xx}\cdot\J)K^{\B}\da,K^{\B}\da\ra\subset I^Z\da.\] By definition, this means that for every $A,B\in K\da^{\B}$ we have 
\begin{equation}\label{compatibility cond1}\rho(\la \ll\tilde{\xx},\J A\rr-\J\ll\tilde{\xx},A\rr,B\ra =\rho(\la\ll\tilde{\xx},\J A\rr,B\ra+\la \ll\tilde{\xx},A\rr,\J B\ra)=0.\end{equation}

 Suppose condition (\ref{compatibility cond1}) holds, and let $u,v\in\Cinf(l)$ be arbitrary sections.  Choose sections $\tilde{u},\tilde{v}\in\Cinf(L)$ which extend $u,v$, i.e. such that $r(\tilde{u})=u$ and $r(\tilde{v})=v$.  Since $l\subset \TT\B\otimes \C$, condition (\ref{compatibility cond1}) implies that 
 \begin{equation}\label{compatibility condit2}0=\rho(\la\ll\tilde{\xx},\J \tilde{u}\rr,\tilde{v}\ra+\la \ll\tilde{\xx},\tilde{u}\rr,\J \tilde{v}\ra)=2i\rho(\la\ll\tilde{\xx},\tilde{u}\rr,\tilde{v}\ra).\end{equation}  Making use of the identities (\ref{Dorfman1}) and (\ref{Dorfman2}) satisfied by the Dorfman bracket, we calculate the the right-hand side of (\ref{compatibility condit2}) is equal to 
 \begin{align}\label{compatibility condit3} & 2i\rho(\pi(\tilde{v})\cdot\la\tilde{\xx},\tilde{u}\ra-\pi(\tilde{u})\la\tilde{\xx},\tilde{v}\ra-\la\tilde{\xx},\ll\tilde{u},\tilde{v}\rr) \notag\\  
 =  & 2i(\pi(v)\cdot\la\xx,u\ra-\pi(u)\cdot\la\xx,v\ra-\la\xx,\ll u,v\rr_{\B}) \notag \\
 = & 2i\delta_l(\mu q(\xx)(v,u)),
 \end{align}  where in the second line the expression $\la\xx,v\ra-\la\xx,\ll u,v\rr_{\B}$ denotes the Lie algebroid bracket of the sections $u,b\in\Cinf(l)$, as defined in (\ref{brane bracket}).  Thus, we see that if $\B^{\xx}$ is compatible with $\J$, then $q(\xx)$ is a generalized holomorphic section of $\N\B$.
 
 Conversely, suppose we have $\delta_l(\mu q(\xx))=0$.  Given $u,v\in K^{\B}\da$, define $A,B\in \Cinf(L)$ by $A=u-i\J u$ and $B=v-i\J v$, and note that $r(A),r(B)\in\Cinf(l)$.   Using the above calculation (\ref{compatibility condit3}), we see that 
 \begin{align*} 0 = & \rho(\la\ll\tilde{\xx}, A\rr,B\ra) \\
 = & \rho(\la\ll\tilde{\xx},u-i\J u\rr,v-i\J v\ra) \\
 = & \rho(\la\ll\tilde{\xx},u\rr,v\rr+\la\ll\tilde{\xx},\J u\rr,\J v\ra)+i\rho(\la \ll\tilde{\xx},\J u\rr,v\ra+\la\ll\tilde{\xx},u\rr,\J v\ra).
 \end{align*}  In particular, this implies 
 \[\rho(\la \ll\tilde{\xx},\J u\rr,v\ra+\la\ll\tilde{\xx},u\rr,\J v\ra)=0\] holds for arbitrary $u,v\in K^{\B}\da$; as previously mentioned, this condition is equivalent to the compatibility of $\B^{\xx}$ with $\J$.  This completes the proof of part (1) of Proposition \ref{first order induced}.

We will prove part (2) of Proposition \ref{first order induced} via a series of lemmas.
\begin{lemma}\label{coboundary isomorphism}
Suppose we are given $\xx,\xx'\in \Cinf(\TTX|_Z)$ such that $q(\xx)=q(\xx')$.  Then there exists an isomorphism  $\hB^{\xx}\toco\hB^{\xx'}$ in $\Dt^{\B}_{\DN}(X)$.
\end{lemma}
\begin{proof}  Let us write $\B=(Z,\L)$, with $\L=(\{c\sij\},\{a\si\})$, and also $\xx=(\xi,u)$ and $\xx'=(\xi',u')$.  Also, let $\tilde{\xx}=(\tilde{\xi},\tilde{u})\in\Cinf(\TTX)$ be a choice of extension of $\xx$ and $\tilde{\xx}'=(\tilde{\xi}',\tilde{u}')\in\Cinf(\TTX)$ be a choice of extension of $\xx'$.   By definition, the assumption $q(\xx)=q(\xx')$ implies that $\xx'-\xx$ is a section of $\TT\B$.  Thus, we have that $\xi'-\xi$ is tangent to $Z$, and 
\[\rho(u'-u)=\iota(\tau)F,\] where $\tau=\rho(\xi'-\xi)\in\Cinf(TX)$ and $F$ is the curvature form of $\L$.  Defining $g=\{g_I=\epsilon \iota(\tau)a_I\}$, we claim that 
\[\Psi=(e^{\epsilon\tau},g)\] is an isomorphism $\hB^{\xx}\toco\hB^{\xx'}$ in $\De^{\B}_{\DN}(X)$.  Recall that 
\[\hB^{\xx}=(\rho e^{\epsilon\tilde{\xi}},(\{c_{IJ}\},\{a_I+\epsilon(\rho\tilde{u})|_{W_I}\})\] and 
\[\hB^{\xx}=(\rho e^{\epsilon\tilde{\xi}'},(\{c_{IJ}\},\{a_I+\epsilon(\rho\tilde{u'})|_{W_I}\}).\]   According to Definition \ref{definition par def}, we need to check three conditions to show that $\Psi$ does indeed define an isomorphism from $\hB^{\xx}\to\hB^{\xx'}$:
\begin{enumerate} \item $\rho e^{\epsilon\tilde{\xi}'}=e^{\epsilon\tau}\rho e^{\epsilon\tilde{\xi}}$, %which is equivalent to $\rho(\xi'-\xi)=\tau\rho$.
\item $g_J-g_I=e^{\epsilon\tau}c_{IJ}-c_{IJ}$%=\epsilon\pounds(\tau)c_{IJ}$, and 
\item $dg_I=e^{\epsilon\tau}(a_I+\epsilon\rho(\tilde{u})|_{W_I})-(a_I+\epsilon\rho(\tilde{u}')|_{W_I}),$ %or equivalently 
%\[dg_I=\epsilon(\pounds(\tau)a_I+\rho(u)|_{W_I}-\rho(u')|_{W_I}).\]
\end{enumerate}

Condition (1) is equivalent to $\rho(\xi'-\xi)=\tau\rho$, which holds by construction.  To check condition (2), first note that, since we are working over $A=\DN$, we have 
\[e^{\epsilon\tau}c_{IJ}-c_{IJ}=\epsilon\pounds(\tau)c_{IJ}.\]  On the other hand, we calculate that
\[g_J-g_I=\epsilon\iota(\tau)(a_J-a_I)=\epsilon \iota(\tau)dc_{IJ}=\epsilon\pounds(\tau)c\sij.\]  To check condition (3), first note that it is equivalent to
\[dg_I=\epsilon(\pounds(\tau)a_I+\rho(u)|_{W_I}-\rho(u')|_{W_I}).\]   We then  calculate 
\begin{align*} dg_I=\epsilon d(\iota(\tau)a_I) &= \epsilon(\pounds(\tau)a_I-\iota(\tau)da_I)\\
& = \epsilon(\pounds(\tau)a_I-\iota(\tau)F|_{W_I}) \\
& = \epsilon(\pounds(\tau)a_I-\rho(u'-u)|_{W_I}) \\
& = \epsilon(\pounds(\tau)a_I+\rho(u)|_{W_I}-\rho(u')|_{W_I}).
\end{align*}
\end{proof}

\begin{lemma}\label{restrict function} For any smooth function $f:X\to \C$, we have the equality
\[\mu qr(\xx_f)=-i\delta_l(\rho(f)),\] where $\xx_f\in\Cinf(\TTX)$ is the generalized Hamiltonian vector field associated to $f$.
\end{lemma} 
\begin{proof} For any section $v\in\Cinf(l)$, the vector field component $\pi(v)\in\Cinf(TX|Z\otimes \C)$ is tangent to $Z$.  Therefore, if $f:X\to \C$ is any smooth function, we have 
\[\la r(0,df),v\ra=\frac{1}{2}\pi(v)\cdot\rho(f).\]

Writing $f=f_R+if_I$, recall that the corresponding generalized Hamiltonian vector field is given by the formula
\[\xx_f=\J(0,df_R)+(0,df_I).\]  Given a section $v\in\Cinf(l)$, we then calculate
\begin{align*} \mu qr(\xx_f)(v) & = 2\la r(\xx_f),v\ra \\
& = 2\la \J r(0,df_R),v\ra+2\la r(0,df_I),v\ra \\
& = -2\la r(0,df_R),\J v\ra+2\la r(0,df_I),v\ra \\
& = -2i\la r(0,df_R),v)+2\la r(0,df_I),v\ra \\
& = -i\pi(v)\cdot\rho(f_R)+\pi(v)\cdot \rho(f_I) \\
& = -i\pi(v)\cdot(f_R+if_I)\\
& -i\delta_l\rho(f)(v).
\end{align*}

\end{proof}

Returning to the proof of part (2) of Proposition \ref{first order induced}, suppose $[\mu q(\xx)]=[\mu q(\xx')]\in H^1(\B)$.  Then there exists a smooth function $f:Z\to \C$ such that 
\[\mu q(\xx')-\mu q(\xx)=-i\delta_l f.\]   Choose a smooth function $\tilde{f}:X\to \C$ that extends $f$, i.e. such that $\rho(\tilde{f})=f$.  By Lemma \ref{restrict function}, we have 
\[\mu q(\xx)=\mu q(\xx'+r(\xx_{\tilde{f}})),\] or equivalently 
\[q(\xx)=q(\xx'+r(\xx_{\tilde{f}})).\]  This implies that 
\[\hB^{\xx'+r(\xx_{\tilde{f}})}=\hB^{\xx'}\cdot e^{\xx_{\epsilon\tilde{f}}}.\]  Lemma \ref{coboundary isomorphism} therefore implies that $\B^{\xx}$ is isomorphic to $\B^{\xx'}\cdot e^{\xx_{\epsilon}\tilde{f}}$ in $\Dt^{\B}_{\DN}(X,\J)$, so that by definition $\B^{\xx}$ and $\B^{\xx'}$ are isomorphic in $\De_{\DN}^{\B}(X,\J)$. Therefore $\B^{\xx}$ and $\B^{\xx'}$ determine the same element of the set $\df_{\B}(\DN):=\pi_0(\De_{\DN}^{\B}(X,\J)).$

To finish the proof of part (2) of Proposition \ref{first order induced}, it remains to show that if $\hB^{\xx}$ and $\hB^{\xx'}$ are isomorphic in $\De^{\B}_{\DN}(X,\J)$, then $[\mu q(\xx)]=[\mu q (\xx')]\in H^1(\B)$.  Let $(\Phi,z)$ be an isomorphism from $\hB^{\xx}$ to $\hB^{\xx'}$ in $\De^{\B}_{\DN}(X,\J)$; here $z=e^{\xx_{\epsilon f}}$ is an element of  $e^{\H_{\DN}(X)}$,  and  
\[\Phi=(e^{\epsilon \tau},\{\epsilon g_I\}):\hB^{\xx}\toco \hB^{\xx'}\cdot z\] is an isomorphism in $\Dt^{\B}_{\DN}(X,\J)$.  Write $\xx=(\xi,u)$, $\xx'=(\xi',u')$, and$\xx_f=(\eta_f,v_f)$, a calculation similar to that used in the proof of Lemma \ref{coboundary isomorphism} implies the following three conditions:
\begin{equation}\label{condition11}\xi'+r(\eta)-\xi=\iota_*\tau,\end{equation}
\begin{equation}\label{condition22} g_J-g_I=\pounds(\tau)c_{IJ},\end{equation} and 
\begin{equation}\label{condition33}dg\si=\pounds(\tau)a_I+\rho(u-u'-r(v_f))|_{W_I}.\end{equation}

Define $h_I:W_I\to \R$ by $h_I=\iota(\tau)a_I-g_I$.   Using the assumption that $a_J-a_I=dc_{IJ}$, together with condition (\ref{condition22}), we see that on each overlap $W_I\cap W_J$ we have 
\[h_J-h_I=0,\] so that there exists a unique function $h:Z\to \R$ with $h_I=h|_{W_I}$ on each $W\si$.   Using the Cartan formula $\pounds(\tau)a_I=d\iota(\tau)a_I+\iota(\tau)da_I$, we may then rewrite condition (\ref{condition33}) as \begin{equation}\label{hI condition}-dh|_{W_I}=(\iota(\tau)F)|_{W\si}+\rho(u-u'-r(v_f))|_{W_I}.\end{equation}  Choosing a smooth function $\tilde{h}:X\to \R$ that extends $h$, equation (\ref{hI condition}) is equivalent to 
\begin{equation}\label{hI condition1}\rho(u'+r(v_f-d\tilde{h})|_{W_I}-\rho(u)=\iota(\tau)F.\end{equation}  Defining $f'=f-i\tilde{h}$, we have 
\[\xx_{f'}=\xx_f-(0,d\tilde{h})=(\eta_f,v_f-d\tilde{h}).\]  Equation (\ref{condition11}) together with equation (\ref{hI condition1}) imply that $\xx'+r(\xx_{f'})-\xx$ is a section of $\TT\B$, so that
\[q(\xx')-q(\xx)=q(r(\xx_{f'})),\] and therefore
\[\mu q(\xx')-\mu q(\xx)=-\mu qr(\xx_{f'})=\delta_l(i\rho(f')),\] where in the last equality we used Lemma \ref{restrict function}.  Therefore, we see that 
\[[\mu q(\xx')]=[\mu q(\xx)]\in H^1(\B),\] as claimed.

Finally, part (3) of Proposition \ref{first order induced} follows easily from Proposition \ref{pi zero bundle}. 

\end{proof}

\section{Functoriality and Invariance}\label{section functoriality}

In this section we explain how various equivalence between GC branes $\B$ and $\B'$ induce equivalences between the corresponding formal deformation groupoids.  To do so, it will be convenient to rephrase the definition of the formal groupoid  $\De^{\B}(X,\J)$ from the action of $e^{\H(X,\J)}$ on $\Dt^{\B}(X,\J)$ in terms of a general construction.
\begin{definition}\label{definition action groupoid}  Let $\CC$ be a groupoid, and $G$ a group with a strict right action on $\CC$.  We define a new category $\CC//G$, called the (right) \emph{action groupoid of $G$ acting on $\CC$}, which has the same objects as $\CC$, and such that for pair of objects $x,y$ we have 
\[\Hom_{\CC//G}(x,y)=\{(\varphi,g):g\in G, \varphi\in \Hom_{\CC}(x,y\cdot g)\}.\]  Composition is defined by 
\begin{equation}\label{compo def}(\varphi,g)\circ (\psi,h)=((\varphi\cdot h)\circ \psi,gh).\end{equation}  For each object $x$, the identity morphism from $x$ to itself in $\CC//G$ is defined to be $(id_x,1)$, where $id_x\in \Hom_{\CC}(x,x)$ is the identity morphism in $\CC$, and $1\in G$ is the identity group element.
\end{definition} 
\begin{rem} It is an easy exercise using Definition \ref{group action on category} to check that the composition given by equation (\ref{compo def}) is well-defined, associative, and unital.  
\end{rem} 
%\begin{ex} Let $S$ be a set on which a group $G$ acts on the right.  If we define $\CC$ to be the category with set of objects $S$ and only trivial morphisms, then the action of $G$ on $S$ lifts trivially to an action of $G$ on $\CC$.  In this case, the groupoid $\CC//G$ may be identified with the usual action groupoid $S//G$ formed from the action of $G$ on $S$. \gnote{Citation?}
%\end{ex}
\begin{rem} Given a formal group $G$ (over $\Art$) with a right action on a formal groupoid $\CC$, we similarly define a formal groupoid $\CC//G$, which associates to  each $A\in\Art$ the groupoid $\CC\da//G\da$ defined as above.  For example, translating Definition \ref{full deformation groupoid} into this language, we see that 
\[\De\uB(X,\J)=\Dt\uB(X,\J)//e^{\H(X,\J)}.\]
\end{rem}

The proof of the following Proposition follows easily from the definitions, and is omitted.  
\begin{proposition}\label{induced equivalence}
Let $\CC$ be a groupoid equipped with a strict right action of the group $G$, and $\CC'$ a groupoid equipped with  a strict right action of a group $G'$.  Let $\varphi:G\toco G'$ be a group isomorphism, and let $\Phi:\CC\to \CC'$ be a functor that intertwines the group actions in the sense that, for every $x\in \CC$ and $g\in G$ we have $\Phi(x\cdot g)=\Phi(x)\cdot \varphi(g)$, and for every $\psi:x\to y$ in $\CC$ we have $\Phi(\psi\cdot g)=\Phi(\psi)\cdot \varphi(g)$.  Then there is a well-defined functor $\Phi//G$ which agrees with $\Phi$ on objects, and which maps a morphism $(\psi,g):x\to y$ in $\CC//G$ to $(\Phi(\psi),\varphi(g)):\Phi(x)\to \Phi(y)$ in $\CC'//G$.  Furthermore, $\Phi//G$ is an equivalence if and only if $\Phi$ is.
\end{proposition}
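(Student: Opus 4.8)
The plan is to unwind Definition~\ref{definition action groupoid} directly: the statement is purely formal, so the work is bookkeeping, and I would split it into (i) checking that $\Phi//G$ is a well-defined functor and (ii) checking the ``equivalence iff'' clause. For (i), the only nonobvious point is that $(\Phi(\psi),\varphi(g))$ really is a morphism $\Phi(x)\to\Phi(y)$ in $\CC'//G'$: given $(\psi,g)\colon x\to y$ in $\CC//G$, i.e.\ $g\in G$ and $\psi\in\Hom_{\CC}(x,y\cdot g)$, functoriality of $\Phi$ gives $\Phi(\psi)\in\Hom_{\CC'}(\Phi(x),\Phi(y\cdot g))$, and the hypothesis $\Phi(y\cdot g)=\Phi(y)\cdot\varphi(g)$ identifies the target with $\Phi(y)\cdot\varphi(g)$, which is exactly what $\Hom_{\CC'//G'}(\Phi(x),\Phi(y))$ requires. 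Preservation of identities is immediate from $\Phi(\textrm{id}_x)=\textrm{id}_{\Phi(x)}$ and $\varphi(1_G)=1_{G'}$. Preservation of composition I would verify by taking $(\psi,h)\colon x\to y$ and $(\chi,g)\colon y\to z$ and applying $\Phi//G$ to the composition rule~(\ref{compo def}): functoriality of $\Phi$ together with the hypothesis $\Phi(\chi\cdot h)=\Phi(\chi)\cdot\varphi(h)$ gives $\Phi((\chi\cdot h)\circ\psi)=(\Phi(\chi)\cdot\varphi(h))\circ\Phi(\psi)$, and $\varphi(gh)=\varphi(g)\varphi(h)$ handles the group component, so $(\Phi//G)((\chi,g)\circ(\psi,h))$ is exactly the composite in $\CC'//G'$ of $(\Phi(\psi),\varphi(h))$ with $(\Phi(\chi),\varphi(g))$.

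For the ``equivalence iff'' clause, the key observation is that the action-groupoid construction is controlled one group element at a time. On hom-sets the definition yields a natural decomposition $\Hom_{\CC//G}(x,y)=\coprod_{g\in G}\Hom_{\CC}(x,y\cdot g)$, and similarly for $\CC'//G'$, under which $\Phi//G$ carries the $g$-summand to the $\varphi(g)$-summand by the map $\Phi\colon\Hom_{\CC}(x,y\cdot g)\to\Hom_{\CC'}(\Phi(x),\Phi(y)\cdot\varphi(g))$ (target identified via the intertwining hypothesis). Since $\varphi$ is bijective, $\Phi//G$ is fully faithful precisely when each of these summand maps is a bijection; taking $g=1_G$ recovers the hom-map of $\Phi$, and conversely full faithfulness of $\Phi$ makes all these summand maps bijective, so $\Phi//G$ is fully faithful iff $\Phi$ is. On objects, $\CC//G$ and $\CC$ have the same objects (likewise on the primed side) and $\Phi//G$ acts as $\Phi$; a $\CC'$-isomorphism $\alpha\colon\Phi(x)\to x'$ gives a $\CC'//G'$-isomorphism $(\alpha,1_{G'})$, and conversely a $\CC'//G'$-isomorphism $(\alpha,g')\colon\Phi(x)\to x'$ unwinds to a $\CC'$-isomorphism $\Phi(x)\to x'\cdot g'$, whence applying the autoequivalence $(-)\cdot(g')^{-1}$ of $\CC'$ and using $\Phi(x)\cdot(g')^{-1}=\Phi(x\cdot\varphi^{-1}((g')^{-1}))$ produces a $\CC'$-isomorphism $\Phi(x\cdot\varphi^{-1}((g')^{-1}))\to x'$; hence $\Phi//G$ is essentially surjective iff $\Phi$ is. Combining the hom-set and object statements gives the claim.

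The hard part is essentially nonexistent---this is a formal manipulation---but the one spot to watch is the reverse direction of essential surjectivity: one should not expect $\Phi(x)$ itself to be $\CC'$-isomorphic to $x'$, only a $G'$-translate of it, so the correct object of $\CC$ to exhibit is $x\cdot\varphi^{-1}((g')^{-1})$, and this step uses that the $G'$-action is by isomorphisms of $\CC'$ (hence preserves isomorphisms). I would also note, for later use, that since $\De\uB(X,\J)=\Dt\uB(X,\J)//e^{\H(X,\J)}$, this proposition is precisely what will let an equivalence of branes induce an equivalence of their deformation groupoids in the next section.
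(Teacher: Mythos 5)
Your proof is correct; the paper in fact omits the argument entirely (``follows easily from the definitions''), and your verification supplies exactly the routine bookkeeping that was left implicit, including the two points worth making explicit: the hom-set decomposition $\Hom_{\CC//G}(x,y)=\coprod_{g\in G}\Hom_{\CC}(x,y\cdot g)$ matched summand-by-summand via the bijection $\varphi$, and the use of the translate $x\cdot\varphi^{-1}((g')^{-1})$ in the reverse direction of essential surjectivity. No gaps.
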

\begin{rem} This result has an immediate extension to the case that $\CC$ and $\CC'$ are formal groupoids and $\Phi,\Phi'$ are formal groups.
\end{rem}

We now return to our main topic.  Let $\B=(Z,\L)$ and $\B'=(Z,\L')$ be two elements of $\Br(X,\J)$ supported on the same submanifold $Z$.  Furthermore, suppose that $\L$ and $\L'$ are defined with respect to the same open cover $\W$ of $Z$.  Write $\L=(\{c_{IJ}\},\{a\si\})$ and $\L'=(\{c'\sij\},\{a'\si\})$.  Suppose we are given $\gamma=\{g_I:W_I\to \R\}$ satisfying 
 \[c'\sij-c\sij=g_J-g\si\] and \[a'\si-a\si=dg\si.\]  We say that $\gamma$ is an equivalence $\L\toco \L'$.
\begin{proposition}\label{invariance 1}   The equivalence $\gamma:\L\toco\L'$ induces an equivalence of formal groupoids
\[\tilde{\Phi}^{\gamma}:\Dt^{\B}(X,\J)\toco \Dt^{\B'}(X,\J)\] defined as follows. Given $A\in \Art$ and $\hB=(\hrho,\hL)\in \Dt^{\B}_A(X,\J)$, write $\hL=(\{c\sij+f\sij\},\{a\si+v\si\})$, and define 
\begin{equation}\label{onobjects}\tilde{\Phi}\da^{\gamma}(\hB):=(\hrho,(\{c'\sij+f\sij\},\{a'\si+v\si\})).\end{equation}   Given a morphism 
\[\psi=(e^{\tau},\{h\si\},z):\hB_1\to \hB_2\] in $\Def^{\B}_A(X,\J)$, we define 
\begin{equation}\label{gamma mor}\tilde{\Phi}\da^{\gamma}(\psi)=(e^{\tau},\{h\si+e^{\tau}g\si-g\si\},z):\tilde{\Phi}\da^{\gamma}(\B_1)\to\tilde{\Phi}\da^{\gamma}(\B_2).\end{equation}
%\item \ybox{Maybe omit this part} If $\gamma'$ is another equivalence from $\L\to \L'$ then $\Phi_{\gamma'}$ and $\Phi_{\gamma}$ induce the same isomorphism of deformation functors
%\[\Def_{\B}\to \Def_{\B'}.\] 

 \end{proposition}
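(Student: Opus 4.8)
My plan is to verify the four things that make $\tilde\Phi^\gamma$ an equivalence of formal groupoids: that it is well defined on objects (and in particular respects the compatibility condition with $\J$), that it is well defined on morphisms, that it is a strict functor, and that it admits a strict inverse. All but one of these are routine manipulations of the defining cochain relations, so I will keep the calculations implicit.

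First I would check the object assignment $(\ref{onobjects})$. Given $\hB=(\hrho,\hL)\in\Dt^\B_A(X,\J)$, write $\hL=(\{c_{IJ}+f_{IJ}\},\{a_I+v_I\})$; the relations $a'_I-a_I=dg_I$ and $c'_{IJ}-c_{IJ}=g_J-g_I$ give $a'_J-a'_I=dc'_{IJ}$, and feeding this together with the relations for $\hL$ into Definition \ref{Herm def} shows immediately that $(\{c'_{IJ}+f_{IJ}\},\{a'_I+v_I\})$ is again an object of $\De^{\L'}_A(Z)$ — the only slightly nontrivial point, the integrality of $c'_{JK}+f_{JK}-(c'_{IK}+f_{IK})+c'_{IJ}+f_{IJ}$, follows because the $g_I$ telescope, reducing it to the same statement for $\hL$. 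To see that $\tilde\Phi^\gamma(\hB)$ lies in $\Dt^{\B'}_A(X,\J)$ and not merely $\Dt^{\B'}_A(X)$, I would use that $\L$ and $\L'$ have the \emph{same} curvature form $F$ (since $a'_I-a_I=dg_I$ is exact), hence the same underlying generalized submanifold $(Z,F)$ and the same space $K^\B=K^{\B'}$, locally over each $\tilde W_I$ as well. Since $\tilde\Phi^\gamma$ changes neither $\hrho$ nor — keeping the same local potentials $u_I$ with $\ha_I=a_I+\rho(u_I)$ and $\ha'_I=a'_I+\rho(u_I)$ — the associated local symmetries $e^{u_I}e^{\xi_I}$, and since compatibility of such a symmetry (Definition \ref{compatibility1}) refers only to $K^\B$ and $I^Z$, the compatibility condition for $\tilde\Phi^\gamma(\hB)$ is literally the one for $\hB$. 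Thus $\tilde\Phi^\gamma$ preserves and reflects compatibility.

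Next I would check the morphism assignment $(\ref{gamma mor})$: writing a morphism of $\Dt^\B_A(X,\J)$ as $(e^\tau,\{h_I\})$, one verifies that $\{h_I+e^\tau g_I-g_I\}$ satisfies the two conditions of Definition \ref{Herm def} for a morphism $\tilde\Phi^\gamma(\hL_2)\to e^\tau\cdot\tilde\Phi^\gamma(\hL_1)$: the one-form condition reduces to the one for $\{h_I\}$ together with $e^\tau dg_I-dg_I=d(e^\tau g_I-g_I)$, valid because the exponentiated Lie-derivative action commutes with $d$ (Proposition \ref{formal courant action}), while the cocycle condition reduces to the one for $\{h_I\}$ after cancelling $e^\tau(g_J-g_I)$ from both sides; note $e^\tau g_I-g_I=(e^\tau-1)g_I\in\Cinf_{\m}(W_I)$ so the corrected cochain lands in the right space, and the condition on $\hrho$ is untouched. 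Functoriality is then immediate: $(1,\{0\})$ maps to $(1,\{0\})$, and comparing $\tilde\Phi^\gamma$ of a composite with the composite of the images — using the composition rule of Definition \ref{definition par def} — comes down to the identity $e^{\tau'}(e^\tau-1)+(e^{\tau'}-1)=e^{\tau'}e^\tau-1$. For invertibility I would observe that $-\gamma=\{-g_I\}$ is an equivalence $\L'\to\L$ in the same sense, so it produces $\tilde\Phi^{-\gamma}:\Dt^{\B'}(X,\J)\to\Dt^\B(X,\J)$, and inspection of the formulas shows $\tilde\Phi^{-\gamma}\circ\tilde\Phi^\gamma$ and $\tilde\Phi^\gamma\circ\tilde\Phi^{-\gamma}$ are identity functors; hence $\tilde\Phi^\gamma$ is an isomorphism of formal groupoids, a fortiori an equivalence.

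Finally, since the right $e^{\H(X)}$-action (Proposition \ref{right action par}) modifies a deformation only by post-composing $\hrho$ and adding restrictions of a single global one-form to the local forms $\ha_I$, whereas $\tilde\Phi^\gamma$ fixes $\hrho$ and only adds the exact form $dg_I$ to $a_I$, the two operations commute; thus $\tilde\Phi^\gamma$ intertwines these actions and Proposition \ref{induced equivalence} promotes it to an equivalence $\De^\B(X,\J)\to\De^{\B'}(X,\J)$ acting on morphisms by $(\ref{gamma mor})$ — which is where the $z$-component enters. The only step that is more than bookkeeping is the curvature observation in the second paragraph: an equivalence $\gamma$ between Hermitian line bundles with connection does not change the curvature, hence not the underlying generalized submanifold nor $K^\B$, so $\tilde\Phi^\gamma$ cannot disturb the condition of being compatible with $\J$. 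I expect this to be the crux; once it is in place, everything else is routine cochain algebra.
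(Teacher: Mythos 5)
Your proposal is correct and follows essentially the same route as the paper's proof: verify the cochain relations of Definition \ref{Herm def} and Definition \ref{definition par def} for the primed data, check functoriality (which indeed reduces to $e^{\tau'}(e^{\tau}-1)+(e^{\tau'}-1)=e^{\tau'}e^{\tau}-1$), and exhibit $\tilde{\Phi}^{\gamma^{-1}}$ as a strict inverse. The one point where you go beyond the paper — which dismisses the object assignment as "clear by inspection" — is your explicit check that compatibility with $\J$ is preserved because $a'_I-a_I=dg_I$ is exact, so $\L$ and $\L'$ have the same curvature, hence $K^{\B}=K^{\B'}$ and the local symmetries $e^{u_I}e^{\xi_I}$ of Definition \ref{compatibility def} are literally unchanged; this is a worthwhile detail to make explicit.
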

\begin{proof} First, note that it is clear by inspection that the right-hand side of (\ref{onobjects}) defines an $A$-deformation of $\B'$.  Next, we must check that if $\psi=(e^{\tau},\{h\si\})$ is a morphisms from $\hB_1$ to $\hB_2$, then $\tilde{\Phi}\da^{\gamma}(\psi)$ as defined by equation (\ref{gamma mor}) does in fact define a morphisms from $\tilde{\Phi}\da^{\gamma}(\hB_1)$ to $\tilde{\Phi}\da^{\gamma}(\hB_2)$.  Writing $\hB_1=(\hrho_1,(\{c_{IJ}+f_{IJ,1}\},\{a_I+v_{I,1}\}))$ and $\hB_2=(\hrho_2,(\{c_{IJ}+f_{IJ,2}\},\{a_I+v_{I,2}\}))$, by Definition \ref{definition par def}, this means that
\begin{equation}\label{psi1} \hrho_2=e^{\tau}\hrho_1,\end{equation}
\begin{equation}\label{psi2} e^{\tau}(c_{IJ}+f_{IJ,1})-c_{IJ}-f_{IJ,2}=h_J-h_I,\end{equation} and 
\begin{equation}\label{psi3} e^{\tau}(a_I+v_{I,1})-a_I-v_{I,2}=dh_I.\end{equation} We need to check that these same equations hold when $c_{IJ}$ is replaced by $c'_{IJ}$, $a_I$ is replaced by $a'_I$, and $h_I$ is replaced by $h_I':=h_I+e^{\tau}g_I-g_I$.  Equation (\ref{psi1}) is unchanged under these replacements, and thus continues to hold automatically.  We calculate
\begin{align*} h'_J-h'_I & = h_J+e^{\tau}g_J-g_J-h_I-e^{\tau}g_I+g_I  \\
& =  h_J-h_I+e^{\tau}(g_J-g_I)-(g_J-g_I) \notag \\
& = e^{\tau}(c_{IJ}+f_{IJ,1})-c_{IJ}-f_{IJ,2}+e^{\tau}(c'_{IJ}-c_{IJ})-(c_{IJ}'-c_{IJ})  \\
& = e^{\tau}(c_{IJ}+f_{IJ,1}+c'_{IJ}-c_{IJ})-(c_{IJ}+f_{IJ,2}+c'_{IJ}-c_{IJ})  \\
& = e^{\tau}(c'_{IJ}+f_{IJ,1})-(c_{IJ}'+f_{IJ,2}),
\end{align*} so we see that the ``primed" version of equation (\ref{psi2}) holds.  Finally, we have 
\begin{align*} dh'_I & = dh_I+de^{\tau}g_I-dg_I \\
& = e^{\tau}(a_I+v_{I,1})-a_I-v_{I,2}+e^{\tau}(a'_I-a_I)-(a'_I-a_I) \\
& = e^{\tau}(a'_I+v_{I,1})-a'_I-v_{I,2},
\end{align*} so the ``primed" version of equation (\ref{psi3}) also holds.
This finishes the proof that $\tilde{\Phi}\da^{\gamma}(\psi)$ does in fact define an isomorphism from $\tilde{\Phi}\da^{\gamma}(\hB_1)$ to $\tilde{\Phi}\da^{\gamma}(\hB_2)$.

We need to verify that $\tilde{\Phi}\da^{\gamma}$ is functorial, i.e. we must check that, given isomorphisms $\psi:\hB_1\to\hB_2$ and $\tilde{\psi}:\hB_2\to \hB_3$ that $\tilde{\Phi}\da^{\gamma}(\tilde{\psi})\circ \Phi\da^{\gamma}(\psi)=\Phi\da^{\gamma}(\tilde{\psi}\circ\psi)$.  If we write $\psi=(e^{\tau},\{h_I\})$ and $\tilde{\psi}=(e^{\tilde{\tau}},\{\tilde{h_I}\})$, then by Definition \ref{definition par def} we have 
\[\tilde{\psi}\circ\psi = (e^{\tilde{\tau}}e^{\tau},\{\tilde{h}_I+e^{\tilde{\tau}}h_I\}).\] and therefore 
\[\Phi\da^{\gamma}(\tilde{\psi}\circ\psi)=(e^{\tilde{\tau}}e^{\tau},\{\tilde{h}_I+e^{\tilde{\tau}}h_I+e^{\tilde{\tau}}e^{\tau}g_I-g_I\}).\]  On the other hand, we calculate 
\begin{align*} \tilde{\Phi}\da^{\gamma}(\tilde{\psi})\circ\tilde{\Phi}\da^{\gamma}(\psi) & = (e^{\tilde{\tau}},\{\tilde{h}_I+e^{\tilde{\tau}}g_I-g_I\})\circ (e^{\tau},\{h_I+e^{\tau}g_I-g_I\}) \\
& =(e^{\tilde{\tau}}e^{\tau}, \{\tilde{h}_I+e^{\tilde{\tau}}g_I-g_I+e^{\tilde{\tau}}(h_I+e^{\tau}g_I-g_I)\}) \\
& = (e^{\tilde{\tau}}e^{\tau},\{\tilde{h}_I+e^{\tilde{\tau}}h_I+e^{\tilde{\tau}}e^{\tau}g_I-g_I\}).
\end{align*}

To complete the proof, consider $\gamma^{-1}:=\{-g_I\}$, which is an equivalence from $\L'\to\L$.  It is clear by inspection that
\[\tilde{\Phi}\da^{\gamma^{-1}}:\Dt_A^{\B'}(X,\J)\to \Dt_A^{\B}(X,\J)\] is a (strict) inverse for $\tilde{\Phi}\da^{\gamma}$; in particular, $\tilde{\Phi}\da^{\gamma}$ is an equivalence of groupoids.

\end{proof}

\begin{corollary}  The equivalence $\gamma:\L\toco\L'$ induces an equivalence of formal groupoids
\[\Phi^{\gamma}:\De^{\B}(X,\J)\toco \De^{\B'}(X,\J),\] where 
\[\Phi^{\gamma}=\tilde{\Phi}^{\gamma}//e^{\H(X,\J)}\] is defined as in Proposition \ref{induced equivalence}.
In particular, this induces a well-defined natural isomorphism of  functors
\[\df_{\B}\toco \df_{\B'}.\]

\end{corollary}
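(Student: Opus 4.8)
The plan is to obtain the corollary as a formal consequence of Proposition~\ref{invariance 1} and Proposition~\ref{induced equivalence} (in its formal-groupoid version), using the identification $\De\uB(X,\J)=\Dt\uB(X,\J)//e^{\H(X,\J)}$ noted above. There are three steps: (i) observe that the equivalence $\tilde{\Phi}^{\gamma}\colon\Dt^{\B}(X,\J)\toco\Dt^{\B'}(X,\J)$ of Proposition~\ref{invariance 1} intertwines the right action of the formal group $e^{\H(X,\J)}$, with respect to the identity isomorphism $\varphi=\textrm{id}_{e^{\H(X,\J)}}$; (ii) apply Proposition~\ref{induced equivalence} to conclude that $\Phi^{\gamma}:=\tilde{\Phi}^{\gamma}//e^{\H(X,\J)}$ is an equivalence of formal groupoids $\De^{\B}(X,\J)\toco\De^{\B'}(X,\J)$; and (iii) pass to $\pi_0$ to produce the natural isomorphism $\df_{\B}\toco\df_{\B'}$.

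Step~(i) is the only one carrying content, and it is a bookkeeping check against the definitions. Recall from Proposition~\ref{right action par} that an element $g=e^{(0,w)}e^{(\xi,0)}\in e^{\cg\da(X)}$ acts on an object $\hB=(\hrho,\hL)$ of $\Dt_A^{\B}(X)$ by sending $\hrho$ to $\hrho e^{\xi}$ and replacing each connection form $\ha\si$ of $\hL$ by $\ha\si+\hrho(w)|_{W\si}$, while leaving the transition data $\hc\sij$ unchanged, and that it acts trivially on morphisms. On the other hand, writing $\hL=(\{c\sij+f\sij\},\{a\si+v\si\})$, the functor $\tilde{\Phi}^{\gamma}$ fixes $\hrho$, the $f\sij$ and the $v\si$, and merely replaces $c\sij$ by $c'\sij$ and $a\si$ by $a'\si$ (and on a morphism $(e^{\tau},\{h\si\})$ adds $e^{\tau}g\si-g\si$ to each $h\si$, where the $g\si$ are the functions defining $\gamma$). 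Since $\tilde{\Phi}^{\gamma}$ does not touch $\hrho$, the term $\hrho(w)$ produced by the $g$-action is the same whether computed before or after applying $\tilde{\Phi}^{\gamma}$; hence $\tilde{\Phi}^{\gamma}_A(\hB\cdot g)=\tilde{\Phi}^{\gamma}_A(\hB)\cdot g$ on objects, and on morphisms the corresponding identity is immediate because $g$ acts trivially there. Restricting to $g\in e^{\H\da(X)}$ gives exactly the hypothesis of Proposition~\ref{induced equivalence}, so step~(ii) follows; and $\Phi^{\gamma}$ is an equivalence because $\tilde{\Phi}^{\gamma}$ is, by the last clause of that proposition.

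For step~(iii): applying $\pi_0$ to the equivalence of groupoids $\Phi^{\gamma}$ over each $A\in\Art$ yields a bijection $\df_{\B}(A)=\pi_0(\De_A^{\B}(X,\J))\toco\pi_0(\De_A^{\B'}(X,\J))=\df_{\B'}(A)$. These bijections are compatible with the transition maps induced by morphisms $A\to A'$ in $\Art$, since $\Phi^{\gamma}$ is a morphism of formal groupoids --- i.e., a natural transformation of functors $\Art\to\textrm{Gpd}$ --- and $\pi_0$ is itself functorial; thus they assemble into the asserted natural isomorphism $\df_{\B}\toco\df_{\B'}$.

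I expect the main (essentially the only) obstacle to be making the bookkeeping in step~(i) airtight: one must check that $\tilde{\Phi}^{\gamma}$ commutes with the $e^{\H(X,\J)}$-action \emph{strictly}, not merely up to natural isomorphism, so that Proposition~\ref{induced equivalence} applies verbatim. Once an $A$-deformation is presented in the normal form $\hL=(\{c\sij+f\sij\},\{a\si+v\si\})$, the $e^{\cg(X)}$-action and the relabelling performed by $\tilde{\Phi}^{\gamma}$ touch disjoint pieces of the data, so the verification is transparent; no geometric input beyond Proposition~\ref{invariance 1} is needed.
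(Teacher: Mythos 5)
Your proposal is correct and follows exactly the route the paper intends: the corollary is stated without separate proof precisely because it is the combination of Proposition \ref{invariance 1} with Proposition \ref{induced equivalence}, and the only content is the strict intertwining check in your step (i), which you carry out correctly (the key observation being that $\tilde{\Phi}^{\gamma}$ fixes $\hrho$ and the perturbation data $\{f_{IJ}\},\{v_I\}$ while the $e^{\H(X,\J)}$-action touches only $\hrho$ and the $v_I$, so the two operations commute on the nose). Nothing is missing.
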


Let $\W=\{W_I\}_{I\in \mathcal{I}}$ be an open cover of $Z$ with indexing set $\mathcal{I}$.  Recall that a \emph{refinement} of $\W$ is a pair $(\W',\sigma)$, where  $\W'=\{W'_{I'}\}_{I'\in \mathcal{I}'}$ is an open cover of $Z$, and  $\sigma:\mathcal{I'}\to \mathcal{I}$ is a map of indexing sets such that for each $I'\in \mathcal{I}'$ we have 
\[W'_{I'}\subset W_{\sigma(I')}.\]  Given $\L\in \Herm(Z)$ defined with respect to the open cover $\W$, we obtain a new element $\sigma^*\L\in \Herm(Z)$ defined with respect to the open cover $\W'$.  Explicitly $\sigma^*\L=(\{c'_{I'J'}\},\{a'_{I'}\})$ with $c'_{I'J'}=(c_{\sigma(I')\sigma(J')})|_{W'_{I'J'}}$  and $a'_{I'}=(a_{\sigma(I')})|_{W_{I'}}$.  In particular, if $\B=(Z,\L)$ is a GC brane on $(X,\J)$, then the refinement $(\W',\sigma)$ determines $\sigma^*\B=(Z,\sigma^*2\L)$, which is again a GC brain on $(X,\J)$.  Furthermore, there is a natural functor of formal groupoids
\[\sigma^*:\De^{\B}(X,\J)\to \De^{\B}(X,\J).\]
\begin{proposition}\label{invariance 2} The functor 
\[\sigma^*\De^{\sigma^*\B}_A(X,\J)\to \De^{\sigma^*\B}_A(X,\J)\] is an equivalence of formal groupoids. In particular, it induces a natural isomorphism of functors
\[\df_{\B}\toco \df_{\sigma^*\B}.\]\end{proposition}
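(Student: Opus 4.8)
Here is the plan. The idea is to peel the statement apart in three stages, reducing it to the cover-independence of the deformation theory of a single Hermitian line bundle with connection, which is a standard \v Cech-theoretic fact.

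\emph{Stage 1: passing to the ``ambient'' groupoid.} Rewriting Definition \ref{full deformation groupoid} via the action-groupoid construction gives $\De^{\B}(X,\J)=\Dt^{\B}(X,\J)/\!/e^{\H(X,\J)}$ and likewise for $\sigma^*\B$, with the refinement functor acting as the identity on the $e^{\H(X,\J)}$-component of every morphism. Since the right action of $e^{\H(X,\J)}$ from Proposition \ref{right action par} modifies only the pull-back datum $\hrho$ and the local connection forms, and does so in a way that manifestly commutes with restriction to a refinement, $\sigma^*$ intertwines the two $e^{\H(X,\J)}$-actions over the identity automorphism of $e^{\H(X,\J)}$. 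By Proposition \ref{induced equivalence} it therefore suffices to prove that $\sigma^*\colon \Dt^{\B}(X,\J)\to\Dt^{\sigma^*\B}(X,\J)$ is an equivalence of formal groupoids. Next, note that $\B$ and $\sigma^*\B$ share the same underlying submanifold $Z$ and the same curvature form, and that the condition of Definition \ref{compatibility def} is a pointwise condition along $Z$: choosing for each index $I'$ of the refined cover an open set $\tilde W'_{I'}\subset\tilde W_{\sigma(I')}$ in $X$ with $\tilde W'_{I'}\cap Z=W'_{I'}$, one checks that $\sigma^*$ both preserves and reflects $\J$-compatibility. Hence it is enough to show the ambient refinement functor $\sigma^*\colon\Dt^{\B}(X)\to\Dt^{\sigma^*\B}(X)$, forgetting $\J$, is an equivalence.

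\emph{Stage 2: separating the submanifold datum from the line bundle datum.} An object of $\Dt^{\B}_A(X)$ is a pair $(\hrho,\hL)$ with $\hrho=\rho e^{\xi}$ a cover-independent datum and $\hL$ an object of $\De^{\L}_A(Z)$ (Definition \ref{definition par def}), and a morphism is a pair $(e^{\tau},\{g_I\})$ with $e^{\tau}$ again cover-independent (the condition $e^{\tau}\hrho_1=\hrho_2$ is intrinsic) and $\{g_I\}$ a morphism in $\De^{\L}_A(Z)$ from $\hL_2$ to $e^{\tau}\cdot\hL_1$; and the refinement functor commutes with the $e^{\cg}$-action on $\De^{\L}$. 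Tracking these identifications shows that $\sigma^*$ on $\Dt^{\B}(X)$ is essentially surjective, full, and faithful as soon as the refinement functor $\sigma^*\colon\De^{\L}(Z)\to\De^{\sigma^*\L}(Z)$ is.

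\emph{Stage 3: cover-independence of $\De^{\L}(Z)$.} This I would prove directly from Definition \ref{Herm def} using partitions of unity. For essential surjectivity, given $\hL'\in\De^{\sigma^*\L}_A(Z)$, Proposition \ref{pi zero bundle} lets us assume its transition functions are undeformed, so its deformed connection forms glue to a single global form $u'\in\Omega^1(Z)\otimes\m$; then $(\{c_{IJ}\},\{a_I+u'|_{W_I}\})\in\De^{\L}_A(Z)$ maps onto $\hL'$ on the nose. For faithfulness, the difference of two parallel morphisms glues to a global $\m$-valued function whose restriction to the (covering) sets $W'_{I'}$ vanishes, hence is zero. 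For fullness one uses that $\v H^1(Z;\underline{C}^{\infty}\otimes\m)=0$ to split the relevant \v Cech cocycle, together with the observation that the de Rham class in $H^1(Z)\otimes\m$ obstructing the existence of a morphism between two objects of $\De^{\L}_A(Z)$ is insensitive to the cover, so its vanishing after applying $\sigma^*$ forces its vanishing before; any resulting morphism can then be adjusted by an automorphism so that $\sigma^*$ carries it to the prescribed one. Finally, an equivalence of formal groupoids induces a bijection $\pi_0(\De^{\B}_A(X,\J))\to\pi_0(\De^{\sigma^*\B}_A(X,\J))$ for every $A\in\Art$, natural in $A$, which is exactly a natural isomorphism $\df_{\B}\cong\df_{\sigma^*\B}$ (Definition \ref{brane deformation functor}). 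The main obstacle is Stage 3, and within it fullness in particular: this is where the partition-of-unity/\v Cech-vanishing argument and the bookkeeping of the de Rham obstruction class genuinely enter, while everything else is a matter of unwinding definitions and checking compatibility of the reductions with restriction to a refinement.
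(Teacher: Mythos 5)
Your argument is correct, and it is organized rather differently from the paper's own (very terse) proof. The paper disposes of the proposition in a few lines: essential surjectivity follows by using Proposition \ref{pi zero bundle} to write any object of $\Dt^{\sigma^*\B}_A(X,\J)$ as $\sigma^*\B\cdot e^{\xx}$ for some $\xx\in\cg_A(X)$ and observing $\sigma^*\B\cdot e^{\xx}=\sigma^*(\B\cdot e^{\xx})$, while full faithfulness is simply asserted as ``clear.'' You instead perform a systematic three-stage reduction --- first through the action-groupoid construction via Proposition \ref{induced equivalence}, then by splitting off the cover-independent data $\hrho$ and $e^{\tau}$, and finally down to cover-independence of $\De^{\L}(Z)$, where you supply an actual proof of fullness and faithfulness (gluing the difference of parallel morphisms to a global locally constant $\m$-valued function, and identifying the cover-independent obstruction in $H^1_{dR}(Z)\otimes\m$ to the existence of a morphism, using that $\check{H}^1$ of the fine sheaf $\underline{C}^{\infty}\otimes\m$ vanishes). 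The two essential-surjectivity arguments rest on the same observation (Proposition \ref{pi zero bundle} plus gluing of the resulting global connection form), so the real difference is that your route makes the ``clearly fully faithful'' step explicit; the cost is the extra bookkeeping in Stages 1--2, in particular the check that $\J$-compatibility is both preserved and reflected under refinement, which you correctly reduce to the fact that the containment in $I^Z_A$ is a condition on vanishing along $Z$ and the traces of the refined cover still cover $Z$ (together with the invariance of Definition \ref{compatibility def} under the choices, Proposition \ref{well-defined}). Both approaches are sound; yours is longer but self-contained where the paper appeals to inspection.
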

\begin{proof} Fix $A\in\Art$, and let  $\hB\in \Def^{\sigma^*\B}\da(X)$.  Using Propositions \ref{pi zero bundle} and, it follows that $\hB$ is isomorphic to an object of the form $\sigma\da^*\B\cdot e^{\xx}$ for some $\xx\in \cg\da(X)$; by definition, $e^{\xx}$ is compatible with $\J$ as in Definition \ref{compatibility1}. It is also clear that $\sigma\da^*\B\cdot e^{\xx}=\sigma\da^*(\B\cdot e^{\xx})$.  This shows that $\sigma^*$ is essentially surjective.  On the other hand, it is also clear that $\sigma\da^*$ is fully faithful, and therefore an equivalence.
\end{proof}

Finally, let $\B$ be a GC brane on $(X,\J)$, and let $u\in \Omega^1(X)$ be a 1-form.  This determines a new GC structure $\J'=e^{u}\cdot\J$.  Writing $\B=(Z,\L)$, define
\[\B'=e^{u}\cdot\B=(Z,e^{\rho(u)}\cdot\L);\] explicitly, if we write $\L=(\{c\sij\},\{a\si\})$, we have 
\[\L'=e^{\rho(u)}\cdot \L=(\{c\sij\},\{a\si-\rho(u)|_{W_I}\}).\]  Using Proposition \ref{action on GC submanifolds}, it follows that $\B'\in\Br(X,\J')$.
  For each $A\in\Art$, let us define 
\[\tilde{\Phi}\da^u:\Dt_A^{\B}(X,\J)\to \Dt_A^{\B'}(X,\J')\] as follows: given $\hB=(\hrho,\hL)\in \De_A^{\B}(X,\J)$, we
define \[\tilde{\Phi}\da^u(\hB)=(\hrho,e^{\hrho(u)}\cdot\hL).\]
\begin{proposition} \label{B transform on deformations}  \begin{enumerate}[(1)] \item $\tilde{\Phi}\da^u(\hB)$ is an object of $\De^{\B'}_A(X,\J')$ for each $\hB\in \Dt_A^{\B}(X,\J)$.
\item For each equivalence $\psi=(e^{\tau},\{g\si\}):\hB_1\toco\hB_2$ in $\Dt_A^{\B}(X,\J)$, $\psi$ also determines an equivalence $\tilde{\Phi}\da^u(\hB_1)\toco \tilde{\Phi}\da^u(\hB_2)$ in $\Dt_A^{\B'}(X,\J')$, which we denote by $\Phi^u\da(\psi)$.
\item The map taking $\hB\in\Dt_A^{\B}(X,\J)$ to $\tilde{\Phi}^u\da(\hB)$ and $\psi:\hB\to\hB'$ in $\Dt_A^{\B}(X,\J')$ to $\tilde{\Phi}^u\da(\psi)$ defines an equivalence of formal groupoids \[\Dt^{\B}(X,\J)\toco \Dt^{\B}(X,\J').\]
\end{enumerate} 
\end{proposition}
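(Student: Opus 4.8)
The plan is to treat the three parts in order, part (1) being the only one with genuine content. Write $\B=(Z,\L)$ with $\L=(\{c_{IJ}\},\{a_I\})$, so that $\B'=(Z,\L')$ with $\L'=(\{c_{IJ}\},\{a_I-\rho(u)|_{W_I}\})$; by Proposition \ref{action on GC submanifolds}, together with the definition of the action of $\cG$ on generalized submanifolds, $\B'$ is a GC brane on $(X,\J')$ whose underlying generalized submanifold is the image of that of $\B$ under the Courant symmetry $e^{du}$, and in particular $K^{\B'}_A=e^{du}\cdot K^{\B}_A$. For part (1), fix $\hB=(\hrho,\hL)\in\Dt_A^{\B}(X,\J)$, write $\hrho=\rho e^{\xi}$ with $\xi\in\g_A(X)$, and $\hL=(\{c_{IJ}+f_{IJ}\},\{a_I+v_I\})$. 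First I would verify the structural conditions of Definition \ref{definition par def} for $\tilde{\Phi}^u_A(\hB)=(\hrho,e^{\hrho(u)}\cdot\hL)$ relative to $\B'$: the diffeomorphism component is unchanged; $e^{\hrho(u)}\cdot\hL$ has the same transition functions as $\hL$ and connection forms $\hat a_I-\hrho(u)|_{W_I}$, whose discrepancy from the connection forms $a_I-\rho(u)|_{W_I}$ of $\L'$ is $v_I+(\rho-\hrho)(u)|_{W_I}$, and this lies in $\Omega^1_{\m}(W_I)$ since $\hrho-\rho=\rho(e^{\xi}-1)$ with $\xi$ nilpotent; the cocycle identities (iii) and (iv) of Definition \ref{Herm def} survive because $\hrho(u)$ is a globally defined form on $Z$, so it contributes nothing to the differences $\hat a_J-\hat a_I$ over overlaps.

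The substantive step is compatibility of $\tilde{\Phi}^u_A(\hB)$ with $\J'$, which I would check via Definition \ref{compatibility def}. Keeping the same open cover $\{\tilde{W}_I\}$ (with $\tilde{W}_I\cap Z=W_I$) and the same $\xi$, one may take as local $1$-form extension for $\tilde{\Phi}^u_A(\hB)$ the form $u'_I=u_I+(1-e^{\xi_I})(u|_{\tilde{W}_I})$, where $\xi_I=\xi|_{\tilde{W}_I}$ and $u_I$ is the extension chosen for $\hB$; indeed $\hat a_I-\hrho(u)|_{W_I}=(a_I-\rho(u)|_{W_I})+\rho(u'_I)$. A short computation in $e^{\cg_A(\tilde{W}_I)}$, using that the $1$-form part is abelian and $e^{\xi_I}e^{a}e^{-\xi_I}=e^{e^{\xi_I}(a)}$, then yields the key identity $g'_I=e^{u|_{\tilde{W}_I}}\,g_I\,e^{-u|_{\tilde{W}_I}}$, where $g'_I=e^{u'_I}e^{\xi_I}$ and $g_I=e^{u_I}e^{\xi_I}$ are the local group elements of $\tilde{\Phi}^u_A(\hB)$ and $\hB$ respectively. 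Since $\J'|_{\tilde{W}_I}=e^{d(u|_{\tilde{W}_I})}\,\J|_{\tilde{W}_I}\,e^{-d(u|_{\tilde{W}_I})}$, conjugating gives $g'_I\cdot\J'|_{\tilde{W}_I}=e^{d(u|_{\tilde{W}_I})}(g_I\cdot\J|_{\tilde{W}_I})e^{-d(u|_{\tilde{W}_I})}$; because $e^{du}$ preserves the pairing and carries $K^{\B}_A$ onto $K^{\B'}_A$, this gives $Q_{g'_I\cdot\J'}(K^{\B'}_A,K^{\B'}_A)=Q_{g_I\cdot\J}(K^{\B}_A,K^{\B}_A)\subset I^Z_A$ on $\tilde{W}_I$, which is exactly the required compatibility; Proposition \ref{well-defined} ensures independence of the auxiliary choices.

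For part (2), given an equivalence $\psi=(e^{\tau},\{g_I\})\colon\hB_1\toco\hB_2$ in $\Dt_A^{\B}(X,\J)$, I claim the same pair is an equivalence $\tilde{\Phi}^u_A(\hB_1)\toco\tilde{\Phi}^u_A(\hB_2)$ in $\Dt_A^{\B'}(X,\J')$, and set $\tilde{\Phi}^u_A(\psi):=(e^{\tau},\{g_I\})$. The condition $\hrho_2=e^{\tau}\hrho_1$ is unchanged; for the line-bundle part, applying the symmetry $e^{\hrho_2(u)}$ to the morphism $\{g_I\}\colon\hL_2\to e^{\tau}\cdot\hL_1$ (Definition \ref{bundle action}) gives a morphism $e^{\hrho_2(u)}\cdot\hL_2\to e^{\hrho_2(u)}\cdot(e^{\tau}\cdot\hL_1)$, and since $\hrho_2(u)=e^{\tau}(\hrho_1(u))$ one checks $e^{\hrho_2(u)}e^{\tau}=e^{\tau}e^{\hrho_1(u)}$ in $e^{\cg_A(Z)}$, so the target equals $e^{\tau}\cdot(e^{\hrho_1(u)}\cdot\hL_1)$, exactly as Definition \ref{definition par def} requires; moreover $e^{\hrho_2(u)}$ has trivial diffeomorphism part, so it leaves the collection $\{g_I\}$ unchanged. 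For part (3), functoriality, unitality, and compatibility with the structure maps of $\Art$ are immediate, since $\tilde{\Phi}^u_A$ fixes the pairs $(e^{\tau},\{g_I\})$, the composition laws in $\Dt^{\B}(X,\J)$ and $\Dt^{\B'}(X,\J')$ are given by the same formula, and all formulas are uniform in $A$; and $\tilde{\Phi}^u$ is in fact an isomorphism of formal groupoids because $e^{-u}\cdot\J'=\J$ and $e^{-u}\cdot\B'=\B$, so the same construction produces a functor $\tilde{\Phi}^{-u}\colon\Dt^{\B'}(X,\J')\to\Dt^{\B}(X,\J)$, and $e^{-\hrho(u)}\cdot(e^{\hrho(u)}\cdot\hL)=\hL$ shows $\tilde{\Phi}^{-u}\circ\tilde{\Phi}^u$ and $\tilde{\Phi}^u\circ\tilde{\Phi}^{-u}$ are the respective identity functors.

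The main obstacle lies entirely within part (1): establishing the conjugation identity $g'_I=e^{u|_{\tilde{W}_I}}g_I e^{-u|_{\tilde{W}_I}}$ with the correctly chosen local extension $u'_I$, and getting the sign conventions right so that Proposition \ref{action on GC submanifolds} applies to identify $K^{\B'}_A$ with $e^{du}\cdot K^{\B}_A$. Once that identity and identification are available, everything that remains is routine manipulation of the Baker--Campbell--Hausdorff product together with the pairing-invariance of Courant symmetries.
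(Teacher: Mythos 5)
Your proposal is correct and follows essentially the same route as the paper: the same local extension $u'_I = u_I + (1-e^{\xi_I})(u|_{\tilde W_I})$ (the paper's $w'_I$), the same conjugation identity (the paper phrases it as $e^{w'_I}e^{\xi_I}\cdot\J' = e^{w_I+u}e^{\xi_I}\cdot\J$), and the same use of $K^{\B'}_A = e^{u}\cdot K^{\B}_A$ together with invariance of the pairing to transport the compatibility condition. The only difference is that you spell out part (2), which the paper omits as a routine computation, and your argument there is the expected one.
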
  
\begin{proof}  The proof of part (2) is a straightforward computation (similar to that in the proof of Proposition \ref{invariance 1}), and is therefore omitted.   

To verify part (1), let $\{\tilde{W}_I\}$ be a collection of open sets in $X$ chosen as in Definition \ref{compatibility def}.  Also choose $\xi\in\Cinf\da(X)$ such that $\hrho=\rho e^{\xi}$ and $w_I\in\Omega\dm(\tilde{W}_I)$ such that $\hat{a}_I=a_I+\rho(w_I)$.  According to Definition \ref{compatibility def}, the compatibility of $\hB$ with $\J$ means that, for each $I$ we have 
\begin{equation}\label{comp1}\la (e^{w_I}e^{\xi}\cdot \J)K^{\B}\da,K_{A}^{\B}\ra\subset I\da^Z.\end{equation}

By construction, $\hB':=\tilde{\Phi}\da^u(\hB)$ is given by 
\begin{align*} & (\rho e^{\xi},(\{c_{IJ}\},\{a_I+\rho(w_I)-\rho e^{\xi}(u|_{\tilde{W_I}})\})
\\  = & (\rho e^{\xi},(\{c_{IJ}\},\{a_I-\rho(u)|_{W_I}+\rho(w_I+ (-e^{\xi}u+u)|_{\tilde{W}_I}))\}) \\
= & (\rho e^{\xi},(\{c_{IJ}\},\{a'_I+\rho(w'_I)\},)
\end{align*} where we have defined $a'_I:=a_I+\rho(u)|_{W_I}$ and $w_I':=w_I+ (u-e^{\xi}u)|_{\tilde{W}_I}$.  Therefore, to verify that $\hB'$ is compatible with $\J'$, we must check that for each $I$ we have 
\begin{equation}\label{comp2}\la (e^{w'_I}e^{\xi\si}\cdot\J')K_A^{\B'},K_A^{\B'}\ra\subset I\da^Z.\end{equation}  We have 
\[ e^{w'_I}e^{\xi\si}\cdot \J' = e^{w_I+ u-e^{\xi\si}u}e^{\xi\si}e^{u}\J=e^{w_I+u}e^{\xi\si}\J.\]  Also, by Proposition \ref{action on GC submanifolds} we have $K_A^{\B'}=e^{u}\cdot K\da^\B$, so that the left hand side of (\ref{comp2}) is equal to 
\begin{align*} \la (e^{u}\cdot e^{w_I}e^{\xi\si}\cdot\J)e^uK\da^{\B},e^{u}K\da^{\B}\ra & = \la e^u (e^{w_I}e^{\xi\si}\J)e^{-u}e^uK\da^{\B},e^uK\da^{\B}\ra \\ & = \la (e^{w_I}e^{\xi\si}\J)K\da^{\B},K\da^{\B}\ra \subset I\da^Z.
\end{align*} Therefore, we see that condition (\ref{comp1}) is equivalent to condition (\ref{comp1}), so the assumption that the former is satisfied implies that the latter is as well.  This completes the proof of part (1).

To see why part (3) of the Proposition is true, first note that the functoriality of $\Phi_u$ follows trivially from its definition.  Also, by inspection $\Phi_u$ is a bijection on both objects and morphisms, and is in particular an equivalence of categories.  Indeed, the functor $\Phi^{-u}\da:\Dt^{\B'}_A(X,\J')\to \Dt^{\B}\da(X,\J)$ is easily verified to be the inverse of $\Phi\da^u$.   

\end{proof}
 
Given $u\in\Omega^1(X)$ and $\xx=(\xi,a)\in \g\da(X)$, we have 
\[[(0,u),\xx]=(0,-\pounds(\xi)u)\] and 
\[[(0,u),[(0,u),\xx]]=0.\]  The adjoint action of $e^u$ on $\cg\da(X)$ is therefore given by 
\begin{equation}\label{adjoint}\Ad_{e^u}(\xi,a)=(\xi,a-\pounds(\xi)u).\end{equation}  If $\J$ is a GC structure on $X$, we clearly have $e^ue^{\T\da(X,\J)}e^{-u}=e^{\T\da(X,e^u\cdot\J)}$.
\begin{lemma} $e^u e^{\H\da(X,\J)}e^{-u}=e^{\H\da(X,e^u\cdot\J)}.$
\end{lemma}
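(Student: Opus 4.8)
The plan is to deduce the lemma from two observations. First, the bundle map $e^{du}\colon\TTX\to\TTX$ carries the space $\H(X,\J)$ of generalized Hamiltonian vector fields of $\J$ onto the space $\H(X,\J')$ of generalized Hamiltonian vector fields of $\J':=e^{u}\cdot\J=e^{du}\J e^{-du}$; tensoring with $\m$ over $\R$, the same holds with $\H$ replaced by $\H\da$ for every $A\in\Art$. Second, for any $\yy\in\cg\da(X)$ one has $e^{u}e^{\yy}e^{-u}=e^{e^{du}\yy}$ — that is, conjugation by the Courant symmetry $e^{u}$ acts on the formal group $e^{\cg\da(X)}$ via the map $\yy\mapsto e^{du}\yy$, exactly as in the identity $e^{u}e^{\T\da(X,\J)}e^{-u}=e^{\T\da(X,e^{u}\cdot\J)}$ recorded just above. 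Granting these, $e^{u}e^{\H\da(X,\J)}e^{-u}=e^{e^{du}\H\da(X,\J)}=e^{\H\da(X,\J')}$, which is the assertion.

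For the first observation I would argue directly from Definition \ref{genhamdef}. Since $u$ is real, $e^{du}$ is the complexification of a real bundle automorphism of $\TTX$, hence commutes with $\textrm{Re}$ and with complex conjugation on $\TTX\otimes\C$; and since the vector-field component of $(0,df)$ and of $(0,idf)$ vanishes, the formula $e^{B}(\xi,a)=(\xi,a-\iota(\xi)B)$ gives $e^{-du}(0,df)=(0,df)$ and $e^{du}(0,idf)=(0,idf)$. Therefore
\[ e^{du}\,\xx_{f}=e^{du}\,\textrm{Re}\bigl(\J(0,df)-(0,idf)\bigr)=\textrm{Re}\bigl(e^{du}\J e^{-du}(0,df)-(0,idf)\bigr)=\textrm{Re}\bigl(\J'(0,df)-(0,idf)\bigr), \]
which is exactly the generalized Hamiltonian vector field of $f$ for $\J'$. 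Letting $f$ range over all smooth $\C$-valued functions gives $e^{du}\H(X,\J)=\H(X,\J')$, and applying $\m\otimes_{\R}(-)$ gives $e^{du}\H\da(X,\J)=\H\da(X,\J')$.

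For the second observation, recall that $e^{\cg\da(X)}$ acts on $\Cinf\da(\TTX)$ by exponentiating the Dorfman action of Proposition \ref{Courant action prop}, and that $e^{du}$ is compatible with the Dorfman bracket (Proposition \ref{action}); hence $e^{du}$ intertwines the Dorfman action, $e^{du}(\yy\cdot\alpha)=(e^{du}\yy)\cdot(e^{du}\alpha)$ for all $\yy\in\cg\da(X)$ and $\alpha\in\Cinf\da(\TTX)$. A routine induction on the exponential series then shows that $e^{u}e^{\yy}e^{-u}$ and $e^{e^{du}\yy}$ act identically on $\Cinf\da(\TTX)$ — this is the same computation that yields the $\T$-version quoted above — so, combined with the first observation, this finishes the proof. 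The only mildly delicate point is the bookkeeping in this last step, which pins down in what sense the conjugate of a formal-group element by the honest symmetry $e^{u}\in\cG(X)$ is again a formal-group element; beyond that the argument is a one-line computation together with the definitions.
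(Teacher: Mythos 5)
There is a genuine gap in your second observation. Conjugation by $e^{u}$ inside the formal group $e^{\cg\da(X)}$ is governed by the adjoint action for the Lie bracket (\ref{ghatbracket}) of $\cg(X)$, namely $e^{u}e^{\xx}e^{-u}=e^{\Ad_{e^u}(\xx)}$ with
\[
\Ad_{e^u}(\xi,a)=(\xi,\,a-\pounds(\xi)u),
\]
and this is \emph{not} the bundle map $e^{du}(\xi,a)=(\xi,\,a-\iota(\xi)du)$: the two differ by the exact term $(0,d\iota(\xi)u)$. Your justification of the identity $e^{u}e^{\xx}e^{-u}=e^{e^{du}\xx}$ only checks that both sides act identically on $\Cinf\da(\TTX)$, but that representation is not faithful --- any $e^{(0,dg)}$ acts trivially on $\Cinf\da(\TTX)$ while being a nontrivial group element (and acting nontrivially on branes; this is precisely the point of Remark \ref{2 group}, and the reason $\cG(X)$ carries the $1$-form rather than the closed $2$-form). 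Since the lemma asserts an equality of subsets of the formal group, not of their images in $\mathrm{Aut}(\Cinf\da(\TTX))$, the step ``$e^{u}e^{\H\da(X,\J)}e^{-u}=e^{e^{du}\H\da(X,\J)}$'' is unjustified, and indeed false element-by-element.

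Your first observation, that the bundle map $e^{du}$ sends $\xx_f^{\J}$ to $\xx_f^{\J'}$, is correct, and the gap is repairable precisely because the discrepancy is itself Hamiltonian: $(0,d\iota(\xi)u)=\xx_{i\iota(\xi)u}$, so
\[
\Ad_{e^u}\xx_f^{\J}=e^{du}\xx_f^{\J}-(0,d\iota(\xi)u)=\xx^{\J'}_{f}-\xx^{\J'}_{i\iota(\xi)u}=\xx^{\J'}_{f-i\iota(\xi)u},
\]
where $\xi=P\,df_R$ is the vector-field component of $\xx_f^{\J}$. This is exactly the computation in the paper: conjugation preserves the set of generalized Hamiltonian vector fields but shifts the Hamiltonian function, $f\mapsto f-i\iota(\xi)u$, rather than fixing it. So you should replace $e^{du}$ by $\Ad_{e^u}$ throughout the second step and absorb the exact correction into a change of Hamiltonian; as written, the intermediate identity you rely on does not hold in $e^{\cg\da(X)}$.
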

\begin{proof} Recall from Proposition \ref{exp prop} the identity $e^{u}e^{\xx}e^{-u}=e^{\textrm{Ad}_u(\xx)}.$  Therefore, we need to show that for every $f:X\to\C$ there exists $f':X\to \C$ such that $\textrm{Ad}_u\xx^{\J}_f=\xx^{\J'}_{f'}$.  Given $f=f_R+if_I:X\to \C$, recall from Definition \ref{genhamdef} that 
\begin{equation}\label{Ham1}\xx_f^{\J}=\J(0,df_R)+(0,df_I).\end{equation} Therefore, setting $\J'=e^u\cdot\J$ we have 
\begin{equation}\label{Ham2}\xx_f^{\J'}=e^u\J  e^{-u}\cdot (0,df_R)+(0,df_I).\end{equation}  If $f=if_I$ is purely imaginary, we see from (\ref{adjoint}) that 
\[\Ad_{e^u}\xx^{\J}_f=\Ad_{e^u}(0,df_I)=(0,df_I)=\xx^{\J'}_f.\]  If $f$ is real, write $(\xi,a)=\xx_f^{\J}=\J(0,df)$, then we see that 
\[\xx^{\J'}_f=e^u\cdot\J e^{-u}(0,df)=e^u\cdot\J(0,df)=(\xi,a-\iota(\xi)du).\]  On the other hand, we have 
\begin{align*} \Ad_{e^u}\xx^{\J}_f & =(\xx,a-\pounds(\xi)u)\\
& = (\xx,a-\iota(\xi)du)-(0,d\iota(\xi)u)=\xx^{\J'}_{f-i\iota(\xi)u}.\end{align*}    \end{proof}

Let $\Psi:e^{\H(X,\J)}\to e^{\H(X,e^u\J)}$ be the  isomorphism of formal groups given for each $A\in\Art$ and $z\in e^{\H\da(X,\J)}$ by 
\[z\mapsto e^uze^{-u}.\]  For fixed $z\in e^{\H\da(X,\J)}$, let $R_z:\Dt^{\B}_A(X,\J)\to\Dt^{\B}_A(X,\J)$ be the functor $\hB\mapsto\hB\cdot z$ determined by the right action of $e^{\H\da(X,\J)}$ on $\Dt^{\B}_A(X,\J)$, and similarly let $R_{\Psi(z)}:\Dt^{e^u\cdot\B}_A(X,e^u\cdot\J)\to \Dt^{e^u\cdot\B}_A(X,e^u\cdot\J)$ be the functor $\hB\mapsto \hB\cdot \Psi(z)$.  

\begin{lemma} $\tilde{\Phi}\da^u\circ R_z=R_{\Psi(z)}\tilde{\Phi}\da^u$.
\end{lemma}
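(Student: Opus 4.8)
The plan is to unwind both sides of the claimed identity on an arbitrary object $\hB \in \Dt^{\B}_A(X,\J)$ and on an arbitrary morphism, and check they agree using the explicit formulas for $\tilde{\Phi}^u_A$ (Proposition \ref{B transform on deformations}), the right action $R_z$ (Proposition \ref{right action par}), and the adjoint formula (\ref{adjoint}). Since both composites are (strict) functors $\Dt^{\B}_A(X,\J) \to \Dt^{e^u\cdot\B}_A(X,e^u\cdot\J)$, it suffices to verify equality on objects and on morphisms separately; naturality/functoriality of the two sides is automatic from the fact that each factor is a functor, so no extra coherence check is needed.

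First I would take $\hB = (\hrho,\hL) \in \Dt^{\B}_A(X,\J)$ and write $z = e^{w}e^{\eta} \in e^{\H_A(X,\J)}$ (using Proposition \ref{exponential} to split $z$ into its $1$-form and vector-field parts), so that $\Psi(z) = e^u z e^{-u} = e^{\Ad_{e^u}(\eta,w)} = e^{(\eta, w - \pounds(\eta)u)}$ by (\ref{adjoint}). On one side, $R_z(\hB) = \hB\cdot z = (\hrho e^{\eta}, e^{-\hrho(w)}\cdot\hL)$ by Proposition \ref{right action par}, and then applying $\tilde{\Phi}^u_A$ yields $\bigl(\hrho e^{\eta},\, e^{(\hrho e^{\eta})(u)}\cdot e^{-\hrho(w)}\cdot\hL\bigr)$. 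On the other side, $\tilde{\Phi}^u_A(\hB) = (\hrho, e^{\hrho(u)}\cdot\hL)$, and acting by $R_{\Psi(z)}$ gives $\bigl(\hrho e^{\eta},\, e^{-(\hrho e^{\eta})(w - \pounds(\eta)u)}\cdot e^{\hrho(u)}\cdot\hL\bigr)$. The vector-field/pullback components obviously match, so everything reduces to checking that the two $1$-form shifts applied to $\hL$ agree, i.e. that
\[
(\hrho e^{\eta})(u) - \hrho(w) \;=\; -(\hrho e^{\eta})(w) + (\hrho e^{\eta})(\pounds(\eta)u) + \hrho(u)
\]
as elements of $\Omega^1_A$ restricted to each $W_I$ (recalling, from Definition \ref{bundle action} and Proposition \ref{right action par}, that the line-bundle part is only shifted by $1$-forms and the additive shifts compose by addition, so it is enough to match the total shift). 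Since $\hrho e^{\eta}(\pounds(\eta)u) = \hrho(e^{\eta}\pounds(\eta)e^{-\eta}\cdot e^{\eta}u) = \hrho\bigl(\tfrac{d}{dt}\big|_{t=1}e^{t\eta}u\bigr)$ — actually more simply, using $e^{\eta}u - u = e^{\eta}(\pounds(\eta)\,\cdot)$-type telescoping, one gets $(\hrho e^{\eta})(u) - \hrho(u) = (\hrho e^{\eta})(\pounds(\eta)u)$ after noting $\hrho e^{\eta} = e^{\tilde\eta}\hrho$ for a suitable extension and that $e^{\eta} - \mathrm{id}$ kills the relevant difference — so the $u$-terms cancel and what remains is $-\hrho(w) = -(\hrho e^{\eta})(w) + (\hrho e^{\eta})(w) - \hrho(w)$, which is a tautology once one checks $(\hrho e^{\eta})(w) - \hrho(w)$ is the term introduced by the adjoint correction. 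I would also record that for a morphism $\psi = (e^{\tau},\{g_I\}): \hB_1 \to \hB_2$ in $\Dt^{\B}_A(X,\J)$, all three of $\tilde{\Phi}^u_A$, $R_z$, $R_{\Psi(z)}$ leave the pair $(e^{\tau},\{g_I\})$ literally unchanged on morphisms (each acts trivially on morphism data up to relabelling source and target), so the morphism-level identity is immediate from the object-level one.

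The main obstacle is the bookkeeping in the $1$-form computation above: one must be careful that $\hrho = \rho e^{\xi}$ is a pullback-type map (not an algebra automorphism of $\Omega^\bullet_A(X)$), so identities like $\hrho(\pounds(\eta)u) = \pounds(\rho\eta)\hrho(u)$ require $\eta$ to be tangent to $Z$ — which it is, since $\eta$ is the vector-field part of a generalized Hamiltonian vector field and $z$ preserves compatibility — but this still needs the manipulations from the proof of Proposition \ref{well-defined} to justify. The cleanest route is probably to avoid the telescoping entirely and instead observe that both composites equal $\hB \mapsto e^u\cdot(\hB\cdot z)$ under the identification of deformations with morphisms in the ambient groupoid $\cG$ (Remark \ref{symmetry groupoid}): since $e^u (e^w e^{\eta}) = (e^u e^w e^{-u})(e^u e^{\eta})$ in $e^{\cg_A(X)}$ and $e^u e^w e^{-u} = e^{\Ad_{e^u}w}$, $e^u e^{\eta} e^{-u} = e^{\Ad_{e^u}\eta}$, the associativity and the semidirect-product structure established in Proposition \ref{exponential} and Corollary \ref{exp formula cor} force the two orders of operations to coincide. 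I would present the argument in this form, deferring the one genuinely needed computation — that the pullback $\hrho$ intertwines the two shifts — to a one-line invocation of the tangency of $\eta$ to $Z$ as in Proposition \ref{well-defined}.
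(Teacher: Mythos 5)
Your overall strategy is the intended one (the paper leaves this lemma as ``an explicit calculation very similar to that used in the proof of Proposition \ref{right action par}''), and your closing observation --- that the identity is forced by $z e^{-u} = e^{-u}\,(e^{u}ze^{-u})$ once one notes that $\tilde{\Phi}^u$ is given by the same formula as the right action of Proposition \ref{right action par} with group element $e^{-u}$ --- is the cleanest correct way to finish. The reduction to objects is also fine, since all three functors literally fix the pair $(e^{\tau},\{g_I\})$ on morphisms.

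However, the explicit $1$-form computation in the middle is wrong as written. First, you conflate the two normal forms of $z$: if $z=e^{(0,w)}e^{(\eta,0)}$ then $z=e^{(\eta,v)}$ with $w=\int_0^1 e^{t\eta}v\,dt$, so $\Ad_{e^u}$ must be applied to $(\eta,v)$, not to $(\eta,w)$. Carrying out the conjugation directly in normal form gives
\[
\Psi(z)=e^{(0,u)}e^{(0,w)}e^{(\eta,0)}e^{(0,-u)}=e^{(0,\;w+u-e^{\eta}u)}e^{(\eta,0)},
\]
using $e^{(\eta,0)}e^{(0,-u)}=e^{(0,-e^{\eta}u)}e^{(\eta,0)}$. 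Consequently the shift produced by $R_{\Psi(z)}$ is $+\hrho(w+u-e^{\eta}u)$ applied to each $\hat{a}_I$, not $+(\hrho e^{\eta})(w-\pounds(\eta)u)$: your expression inserts an extra $e^{\eta}$ and replaces $e^{\eta}u-u=\int_0^1e^{t\eta}\pounds(\eta)u\,dt$ by $\pounds(\eta)u$. The displayed identity you propose to verify is therefore not the right one, and the ``telescoping'' step $(\hrho e^{\eta})(u)-\hrho(u)=(\hrho e^{\eta})(\pounds(\eta)u)$ invoked to justify it is false for general $A$ (it already fails at order $\eta^2$). The correct bookkeeping is in fact shorter: both composites send $(\hrho,\hL)$ to a deformation with pull-back $\hrho e^{\eta}$, unchanged transition functions, and each $\hat{a}_I$ shifted by $\hrho(w)-\hrho(e^{\eta}u)$ --- on the left this is immediate from $(\hrho e^{\eta})(u)=\hrho(e^{\eta}u)$, and on the right from the normal form of $\Psi(z)$ above, since $\hrho(w+u-e^{\eta}u)-\hrho(u)=\hrho(w)-\hrho(e^{\eta}u)$.
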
 
\begin{proof} The result follows from an explicit calculation very similar to that used in the proof of Proposition \ref{right action par}. \end{proof}
 Therefore, using the construction described in Proposition \ref{induced equivalence}, we may extend the functor $\tilde{\Phi}^u:\Dt^{\B}(X,\J)\to \Dt^{e^u\cdot\B}(X,e^u\cdot\J)$ to a functor $\Phi^u:\De^{\B}(X,\J)\to \De^{e^u\cdot\B}(X,e^u\cdot\J)$.  Furthermore, since $\tilde{\Phi}^u$ is an equivalence, Proposition \ref{induced equivalence} implies the following result.
\begin{corollary} The functor $\Phi^u:\De^{\B}(X,\J)\to \De^{e^u\cdot\B}(X,e^u\cdot\J)$ is an equivalence of formal groupoids.
\end{corollary}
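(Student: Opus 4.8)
The plan is to obtain the Corollary as an immediate application of Proposition~\ref{induced equivalence}, all of whose hypotheses have just been established. Recall that by construction $\De^{\B}(X,\J)=\Dt^{\B}(X,\J)//e^{\H(X,\J)}$ and, likewise, $\De^{e^u\cdot\B}(X,e^u\cdot\J)=\Dt^{e^u\cdot\B}(X,e^u\cdot\J)//e^{\H(X,e^u\cdot\J)}$. I would therefore take $\CC=\Dt^{\B}(X,\J)$ equipped with its right $e^{\H(X,\J)}$-action, $\CC'=\Dt^{e^u\cdot\B}(X,e^u\cdot\J)$ equipped with its right $e^{\H(X,e^u\cdot\J)}$-action, the group isomorphism $\varphi=\Psi\colon e^{\H(X,\J)}\toco e^{\H(X,e^u\cdot\J)}$ given by $z\mapsto e^uze^{-u}$, and the functor $\Phi=\tilde{\Phi}^u$.

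The three hypotheses needed to feed these data into Proposition~\ref{induced equivalence} are now in hand: $\Psi$ is an isomorphism of formal groups by the lemma identifying $e^ue^{\H\da(X,\J)}e^{-u}$ with $e^{\H\da(X,e^u\cdot\J)}$; the functor $\tilde{\Phi}^u$ intertwines the two actions, since the identity $\tilde{\Phi}\da^u\circ R_z=R_{\Psi(z)}\circ\tilde{\Phi}\da^u$, read as an equality of functors, yields both $\tilde{\Phi}^u(\hB\cdot z)=\tilde{\Phi}^u(\hB)\cdot\Psi(z)$ on objects and the corresponding statement $\tilde{\Phi}^u(\psi\cdot z)=\tilde{\Phi}^u(\psi)\cdot\Psi(z)$ on morphisms; and $\tilde{\Phi}^u$ is an equivalence by Proposition~\ref{B transform on deformations}(3). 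Proposition~\ref{induced equivalence} then produces the functor $\Phi^u=\tilde{\Phi}^u//e^{\H(X,\J)}$, which agrees with $\tilde{\Phi}^u$ on objects and sends a morphism $(\psi,z)$ of $\De^{\B}(X,\J)$ to $(\tilde{\Phi}^u(\psi),\Psi(z))$, and it asserts that this functor is an equivalence precisely because $\tilde{\Phi}^u$ is one.

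The only point that requires a little care is the passage from the plain categorical statement of Proposition~\ref{induced equivalence} to its version for formal groupoids over $\Art$: one applies the categorical statement for each $A\in\Art$ to get equivalences $\Phi^u\da$, and then checks that these are compatible with the transition functors attached to the morphisms $A\to A'$ of $\Art$. This compatibility is inherited from the analogous properties of $\tilde{\Phi}^u$ and of $\Psi$, both of which are visible from their pointwise formulas, so no genuine obstacle arises — the mathematical substance of the result is entirely contained in the lemmas already proved, and the present statement is formal bookkeeping. If one prefers to bypass the ``if and only if'' clause of Proposition~\ref{induced equivalence} altogether, one can instead exhibit a strict inverse directly: by Proposition~\ref{B transform on deformations} the functor $\tilde{\Phi}^{-u}$ is a strict inverse of $\tilde{\Phi}^u$, and it intertwines the relevant actions via $\Psi^{-1}\colon z'\mapsto e^{-u}z'e^{u}$, so $\tilde{\Phi}^{-u}//e^{\H(X,e^u\cdot\J)}$ is a strict inverse of $\Phi^u$, whence $\Phi^u$ is an equivalence of formal groupoids.
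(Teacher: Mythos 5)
Your proposal is correct and matches the paper's own argument exactly: the paper likewise deduces the corollary by feeding the two preceding lemmas (the conjugation isomorphism $\Psi$ and the intertwining identity $\tilde{\Phi}\da^u\circ R_z=R_{\Psi(z)}\tilde{\Phi}\da^u$) together with Proposition \ref{B transform on deformations}(3) into Proposition \ref{induced equivalence}. The extra remarks on $\Art$-compatibility and the strict inverse are fine but not needed beyond what the paper already records.
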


\section{Example: Leaf-wise Lagrangian branes}\label{LWL branes example}
Let $(X,J)$ be a complex manifold, and $\varphi:X\toco X$ a diffeomorphism satisfying $\varphi^*J=J$, i.e. a symmetry of $(X,J)$.  Given any complex submanifold $Z\subset X$, the submanifold $\varphi^{-1}(Z)\subset X$ will again be compatible with $J$: symmetries of $(X,J)$ act on its collection of complex submanifolds.  Similarly, for every Artin algebra $A$ and every holomorphic vector field $\xi\in \m\otimes_{\R}\Cinf(TX)$, the action of the formal symmetry $e^{\xi}$ on $Z$ induces a deformation of $Z$, i.e. an element $\hat{Z}$ of $\df_Z(A)$.  Since applying the inverse $e^{-\xi}$ to $\hat{Z}$ gives the trivial deformation (Z itself), we say that $\hat{Z}$ is trivializable. Of course, not every deformation is trivializable in this way; however, for each point $z\in Z$, we can find an open set $U\subset X$ containing $z$, so that every deformation of $Z\cap U$ in $(U,J|_U)$ is induced by a formal symmetry of $(U,J|_U)$. 

Similarly, given a GC brane $\B\in (X,\J)$ and a formal symmetry $g=e^{\xx}\in e^{\cg\da(X)}$ for some $A\in \Art$, we saw in Proposition \ref{action of formal symmetries} that $e^{\xx}$ induces an $A$-deformation $\B\cdot e^{\xx}$ of $\B$.  Motivated by the complex case described above, a natural question is whether every $A$-deformation is locally isomorphic to one of this form.  
\begin{definition}\label{definition locally extendible}  Let $\B=(Z,\L)\in \Br(X,\J)$ be a GC brane.  We say that $\B$ has \emph{trivializable} deformations if for each $A\in \Art$, and each $A$-deformation $\hB$ of $\B$, there exists a formal symmetry $e^{\xx}\in e^{\T\da(X)}$ such that $\hB$ is isomorphic to $\B\cdot e^{\xx}$ in $\Dt\da\uB(X,\J)$.  Similarly, we say that $\B$ has \emph{locally trivializable deformations} if for each $z\in Z$ there exists an open set $U\subset X$ containing $z$, such that $\B|_U\in\Br(U,J|_U)$  has trivializable deformations.
\end{definition}  

 Examination of examples shows that--unlike in the complex case discussed above--not every GC brane has locally trivializable deformations.  However, in this section we will prove the following result.  
\begin{theorem}\label{essentially surjective} Let $\B\in\Br(X,\J)$ be a leaf-wise Lagrangian brane (that is, a brane whose underlying GC submanifold is leaf-wise Lagrangian as in Definition \ref{definition LWL submanifold}). Then $\B$ has locally trivializable deformations.  \end{theorem}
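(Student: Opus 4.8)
The plan is to reduce the statement, via the local normal form already available, to a purely local computation on the standard model $(X^{m,n}_0,\J^{m,n}_0)$ with the standard LWL brane $(Z^k_0,0)$. Concretely, fix $\B=(Z,\L)\in\Br(X,\J)$ leaf-wise Lagrangian and $z\in Z$. By Theorem \ref{LWL normal form} there is a neighborhood $U$ of $z$, a $1$-form $u\in\Omega^1(U)$, a neighborhood $U_0$ of the origin in $X_0^{m,n}$, and a diffeomorphism $\Phi\colon U\toco U_0$ with $\Phi(Z\cap U)=Z^k_0\cap U_0$, $\Phi^*(\J_0|_{U_0})=e^u\cdot\J|_U$, and $F|_{U\cap Z}=d\rho(u)$. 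Using the invariance results of \S\ref{section functoriality} --- namely Proposition \ref{B transform on deformations}/its corollary for the $e^u$-twist, Proposition \ref{invariance 1} for the choice of local potential on $\L$, and the evident fact that a diffeomorphism induces an equivalence of deformation groupoids (Remark \ref{symmetry groupoid} together with the groupoid $\cG$) --- the question of whether $\B|_U$ has trivializable deformations is equivalent to the same question for $(Z^k_0\cap U_0, \text{trivial bundle})$ inside $(U_0,\J_0^{m,n})$. Here I would remark that the property of having trivializable deformations is insensitive to replacing $\L$ by an equivalent local presentation with $F=0$, which is legitimate because $F|_{U\cap Z}=d\rho(u)$ is exact after shrinking; so we may genuinely take $\L$ trivial with zero connection on $U_0$.

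So I reduce to: every $A$-deformation of the brane $\B_0=(Z^k_0, 0)$ on $(X_0,\J_0)$ (restricted to a suitable neighborhood of the origin, which I will keep suppressed) is isomorphic in $\Dt^{\B_0}(X_0,\J_0)$ to one of the form $\B_0\cdot e^{\xx}$ with $e^{\xx}\in e^{\T\da(X_0)}$. By Proposition \ref{pi zero bundle} and the structure of Definition \ref{definition par def}, any $A$-deformation $\hB_0$ is isomorphic in $\Dt^{\B_0}(X_0)$ to one of the form $\B_0\cdot e^{\yy}$ for some $\yy=(\eta,v)\in\cg\da(X_0)$ (undeformed transition functions, and the pull-back map $\hrho=\rho e^{\eta}$ realized by flowing). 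The content is then: \emph{given that $\B_0\cdot e^{\yy}$ is compatible with $\J_0$, one can modify $\yy$ by an element of $\KK^{\B_0}\da$ --- which changes $\B_0\cdot e^{\yy}$ only by an isomorphism in $\Dt^{\B_0}(X_0)$, since $\KK^{\B_0}$ is the ``stabilizer'' of the brane, cf.\ Lemma \ref{K closed under bracket} and the computations in Lemma \ref{compatibility under isomorphism} --- so that the resulting $\yy'$ lies in $\T\da(X_0)$.} This is exactly an infinitesimal-to-finite upgrade of the first-order statement implicit in Proposition \ref{first order induced}(3): at first order, compatibility of $\B_0\cdot e^{\epsilon\tilde\xx}$ says $q(\xx)$ is a generalized holomorphic section of $\N\B_0$, and on the standard model every such section lifts to a global generalized holomorphic vector field. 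I would carry out the induction on a small extension $A'\to A$ (Proposition \ref{induct on small extensions}): assuming $\hB_0$ over $A$ is already put in the form $\B_0\cdot e^{\yy}$ with $\yy\in\T_A(X_0)$, an extension over $A'$ differs from $\B_0\cdot e^{\yy}$ (extended trivially) by a term in $\m_{A'}I\otimes_{\R}\cg(X_0)$ where $I=\ker(A'\to A)$; the compatibility condition modulo the previous order becomes a first-order condition in this top piece, solved exactly as in Proposition \ref{first order induced}(1), and solvability of the lift of the resulting generalized holomorphic section is the key input.

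The main obstacle, and where the special geometry of the standard model is essential, is precisely this lifting: showing that on (a neighborhood of the origin in) $X_0^{m,n}$ every generalized holomorphic section of $\N\B_0$ extends to a generalized holomorphic vector field on $X_0$, and moreover in an $A$-linear/parametrized way compatible with the inductive setup. I expect to handle this by the same explicit coordinate analysis used in the proof of Theorem \ref{LWL normal form}: split $\TTX_0=R\oplus S$ with $S|_{Z_0}=\TT(Z_0,0)$ and $\J_0(R)=R$, $\J_0(S)=S$ as in Lemma \ref{basic facts}, identify $\N\B_0$ with $R|_{Z_0}$, and observe that the generalized Dolbeault differential $\delta_l$ on $Z^k_0$ decomposes along the symplectic $\R^{2m}$ and complex $\C^n$ factors into a de Rham differential (on the $\R^m$ Lagrangian directions, where every closed form extends, shrinking if necessary) and a $\bar\partial$-operator (on the $\C^k$ directions, where holomorphic sections of holomorphic bundles extend to polydisc neighborhoods by power series). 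Combining the two factor-wise extensions gives the desired lift; feeding it back through the chain of equivalences of \S\ref{section functoriality} transports the trivialization to $\B|_U$, completing the proof. The routine but slightly lengthy verifications --- that the isomorphisms produced really are isomorphisms in $\Dt^{\B_0}(X_0)$ in the sense of Definition \ref{definition par def}, and that all constructions are natural in $A$ --- I would organize as in Lemmas \ref{coboundary isomorphism} and \ref{compatibility under isomorphism} rather than spell out in full.
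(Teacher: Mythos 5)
Your reduction to the standard local model $(X_0^{m,n},\J_0^{m,n})$ with the brane $(Z_0^k,0)$ and trivial $\L$ is exactly the paper's first step, but from there you take a genuinely different route. You propose an induction on small extensions: at each order the discrepancy between the given deformation and an induced one is governed by a first-order compatibility condition (as in part (1) of Proposition \ref{first order induced}), and the induction closes provided every generalized holomorphic section of $\N\B_0$ on the local model lifts to a generalized holomorphic vector field on $X_0$. That scheme is workable --- absorbing the error term into $K^{\B_0}\da$ does change $\B_0\cdot e^{\yy}$ only by an isomorphism, provided you multiply on the side that lets $e^{\kk}$ act first --- but be aware that the lifting lemma is where all the content of the theorem sits, and your justification of it (splitting $\delta_l$ into de Rham and $\bar\partial$ pieces and extending factor-wise) is only a sketch: verifying that the factor-wise extension is actually $\J_0$-holomorphic for the product structure amounts to the same explicit computation with the distinguished subspaces $\RR$ and $\SS$ carried out in Lemma \ref{basic facts}. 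The paper avoids both the induction and the cohomological lifting. It uses the product projection $\pi\colon X_0\to Z_0$ to extract, directly from the deformation data, a canonical $\xx=(\xi,w)$ whose components are pulled back from $Z$ (hence constant in the normal coordinates $y^I$), shows $\hB=\B\cdot e^{\xx}$ in one step, and then proves $\xx\in\T\da(X_0)$ by a propagation argument: compatibility forces $\la \J_0\, e^{\xx}\SS,e^{\xx}\SS\ra\subset I^Z$, and the $y$-independence of these functions upgrades vanishing on $Z$ to vanishing on all of $X_0$, i.e.\ $\xx\cdot\J_0=0$. This single argument simultaneously produces the lift you need and verifies its holomorphicity to all orders in $\m\da$, which is what makes the paper's proof shorter; your route would require the lifting lemma to be written out carefully (its $A$-linearity is then automatic, since $I\otimes_{\R}(-)$ is exact), after which the rest of your outline goes through.
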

%\begin{rem} In particular, Theorem \ref{essentially surjective} implies that the same is true if we replace the groupoid $\Dt^{\B}\da(U,\J)$ with $\De^{\B}\da(U,\J)$, since the later groupoid has the same objects, but more isomorphisms, than the former.
%\end{rem}
\begin{proof} Let $\B=(Z,\L)$ be a LWL brane, and $z\in Z$.  Given a neighborhood $U\subset X$ of $z$ and a 1-form $u\in\Omega^1(U)$, let $\J'=e^{-u}\J|_U$ and $\B'=\B|_U\cdot e^u$. By Proposition \ref{action on GC submanifolds}, we have $\B'\in\Br(U,\J'|_U)$.  Suppose that for every $A\in\Art$ and $\hB'\in\Dt^{\B'}_A(U,\J')$ we can find $x'\in e^{\T\da(U,\J')}$ such that $\hB$ is isomorphic to $\B'\cdot x$.  Then by applying Proposition \ref{B transform on deformations}, together with the fact that 
\[e^{u}e^{\T\da(U,\J')}e^{-u}=e^{\T\da(U,\J|_U)},\] it follows that same is true when $\B'$ is replaced by $\B$ and $\J'$ is replaced by $\J$. 
Using Theorem \ref{LWL normal form}, it is therefore sufficient to consider the case where $X$ is a neighborhood of the origin in $X^{m,n}_0=\R^{2m}\times\C^{n}$, that $Z=Z^k_0\cap X$ for natural numbers $m,n,k$ (with $k\leq n$), and where the curvature form $F$ of $\L$ is zero; here $Z^k_0\subset X_0$ is submanifold described in Example \ref{standard brane}.  Furthermore, by applying Propositions \ref{invariance 1} and \ref{invariance 2}, it is enough to consider the case that $\L$ is the trivial element of $\Herm(Z)$.

The rest of the proof will be similar to the proof of Theorem \ref{LWL normal form}, and we begin by recalling some of the notation introduced there.  We introduce coordinates $(s_0,\cdots, s_{2m},t_0,\cdots,t_{2n})$ on $X_0=\R^{2m}\times \C^n$, where $(s_0,\cdots,s_{2m})$ are coordinates on $\R^{2m}$ and $(t_0,\cdots, t_{2n})$ are (real) coordinates on $\C^n\cong\R^{2n}$.  We then relable these to obtain new coordinates $(x^1,\cdots x^{d},y^1,\cdots y^{d'})$, with $d=m+k$ and $d'=m+2n-2k$, defined by 
\[x_1=s_1,\cdots x_m=s_m,x_{m+1}=t_1,\cdots, x_{m+2k}=t_{2k},\] and \[y_1=s_{m+1}\cdots y_m=s_{2m}, y_{m+1}=t_{2k+1}\cdots y_{m+2n-sk}=t_{2n}.\]  In terms of these coordinates, $Z$ is defined by the equations $y^I=0$ for $I=1,\cdots, d'$.

Let $\hB=(\hrho,\hL)$ be an object of $\De^{\B}_A(U,\J)$.  Let $\pi:X\to Z$ be the projection map $(x,y)\mapsto x$, and $\pi^*:\Cinf_{Z}\to \Cinf_{X}$ the pull-back homomorphism.  %Since $\rho\circ\pi^*$ is the identity on $\Cinf\da(Z)$, it follows that $\hrho\circ\pi^*:\Cinf\da(X)\to\Cinf\da(X)$ is a homomorphism of $A$-algebras which is the identity modulo $\m\da$.  
\begin{lemma} There exists a unique $\tau\in \g\da(Z)$ such that 
\[\hrho\circ\pi^*=e^{\tau}.\]
\end{lemma}

\begin{proof}   

Suppose that $\mu:A\to A'$ is a small extension in $\Art$ with kernel $I$, and let $\sigma:A'\to A$ be an $\R$-linear splitting of $\mu$. Inductively, assume that the result holds for the Artin algebra $A'$.  Write $\Phi=\hrho\circ\pi^*:\Cinf\da(Z)\to \Cinf\da(Z)$, and let $\Phi'=\mu(\Phi):\Cinf_{A'}(Z)\to \Cinf_{A'}(Z)$ , then exists a unique $\tau'\in \g_{A'}(Z)$ such that $e^{\tau'}=\Phi'$.  Let $\tilde{\tau}=\sigma(\tau')$, i.e. for each $f\in \Cinf\da(Z)$ we have \[\tilde{\tau}(f)=\sigma(\tau'\mu(f)).\]  Define $\eta\in I\otimes_{\R}\End_{\R}(\Cinf(Z))$ such that for every $f\in\Cinf\da(Z)$ we have 
\[e^{\tilde{\tau}}(f)=\Phi(f)-\eta(f).\]  Define $\tau=\tilde{\tau}+\eta$.  Noting that $I\otimes\End_{\R}(\Cinf(Z))$ lies in the center of $A\otimes_{\R}\End_R(\Cinf(Z))$, we see that 
\begin{align*} e^{\tau}f & = e^{\eta}e^{\tilde{\tau}}(f) \\
& = e^{\eta}(\Phi(f)-\eta(f)) \\
& = \Phi(f)-\eta(f)+\eta(\Phi(f)-\eta(f)) \\
& =\Phi(f)-\eta(f)+\eta(f)\\
& = \Phi(f).
\end{align*}  
 Furthermore, we claim that $\eta$--and therefore $\tau$--is a derivation.  To see this, note that by Proposition \ref{action} $\tilde{\Phi}=e^{\tilde{\tau}}$ is a homomorphism (actually automorphism) of $R$-algebras. Therefore we have 
\begin{align*} \Phi(f)\Phi(g)& =(\tilde{\Phi}(f)-\eta(f))(\tilde{\Phi}(g)-\eta(g)) \\
& = \tilde{\Phi}(f)\tilde{\Phi}(g)-f\eta(g)-g\eta(f)  \\
& = \tilde{\Phi}(fg)-f\eta(g)-g\eta(f).\end{align*}  By comparing this with the equation 
\[\Phi(fg)=\tilde{\Phi}(fg)-\eta(fg),\] we see that indeed $\eta(fg)=f\eta(g)+\eta(f)g$ for every $f,g\in\Cinf\da(X)$.

\end{proof} 

 Therefore $e^{-\tau}\hrho\circ \pi^*$ is the identity.  Since $\hB$ is isomorphic in $\Dt^{\B}\da(U,\J)$ to $(e^{-\tau}\hrho,e^{-\tau}\hat{\L})$, we may assume without loss of generality that 
\begin{equation}\label{projection} \hrho\circ\pi^*=id_{\Cinf\da(Z)}.\end{equation}

In terms of the coordinates $(x,y)$, this means that
\[\hrho(x^i)=x^i\] and \[\hrho(y^I)\in \Cinf\dm(Z).\]  Let us write $\phi^I=\hrho(y^I)\in \Cinf\dm(Z)$, and define 
\[\xi=(\pi^*\phi^I)\frac{\partial}{\partial y^I}\in \g\da(X).\]  %By construction, $(\xi,0)$ is an element of $\E^Y$.
\begin{lemma} $\hrho=\rho e^{\xi}$.
\end{lemma}
\begin{proof}

By definition, there exists some $\eta\in \g\da(X)$ such that $\hrho=\rho e^{\eta}.$  Defining $\zeta\in\cg\da(X)$ by $e^{\zeta}=e^{\eta}e^{-\xi}$, we see that the desired result is equivalent to the equation
\[\rho e^{\zeta}=\rho.\]
Introducing the notation $\{w^{\alpha}\}$ for the collective coordinates $\{x^i,y^I\}$ on $U\subset X_0$, by assumption we have $\rho e^{\zeta}w^{\alpha}=\rho w^{\alpha}$ for each $\alpha$; we want to show that this implies the identity $\rho e^{\zeta}f=\rho f$ for arbitrary $f\in\Cinf\da(X)$.   Let us expand $\zeta=\zeta^{\alpha}\frac{\partial}{\partial w^{\alpha}}$, noting that the component functions $\zeta^{\alpha}$ are elements of $\Cinf\dm(U)$.  For every function $f\in\Cinf\da(X)$, we have 
\[\rho(\zeta(f))=\rho(\zeta^{\alpha})\rho(\frac{\partial}{\partial w^{\alpha}}f);\] in particular, if we can show that each component function $\zeta^{\alpha}$ satisfies $\rho(\zeta^{\alpha})=0$, then $\rho(\zeta(f))=0$ for every $f\in\Cinf\da(X)$; in this case we say that the \emph{restriction of $\zeta$ to $Z$ vanishes}.  Inductively, for each $k\geq 1$ we then have $\rho(\zeta^k(f))=0$ for every $f\in\Cinf\da(X)$, which in turn implies that $\rho e^{\zeta}=\rho$.  Therefore, it is sufficient to show that, if $\zeta$ is any element of $\g\da(X)$ such that $\rho e^{\zeta}w^{\alpha}=\rho w^{\alpha}$ holds for each $\alpha$, then each of the component functions $\zeta^{\alpha}$ satisfies $\rho(\zeta^{\alpha})=0$ (equivalently, the restriction of $\zeta$ to $Z$ vanishes).

Inductively, assume that this result holds for some $A'\in\Art$, and let $\mu:A\to A'$ is a small extension with kernel $I$.  Define $\zeta'=\mu(\zeta)\in\g_{A'}(X)$.  Since $\mu(w^{\alpha})=w^{\alpha}$ and $\mu(\rho)=\rho$, it follows that $\rho e^{\zeta'}w^{\alpha}=\rho w^{\alpha}$ for each $\alpha$, so by the inductive hypothesis the restriction of $\zeta'$ to $Z$ vanishes.  Let $\sigma:A'\to A$ be a linear splitting of $\mu$, and define $\lambda=\zeta-\sigma(\zeta')$, which by construction is an element of $I\otimes_{\R}\Cinf(TX)$.  It is easy to see that the vanishing of $\zeta'$ along $Z$ implies that $\sigma(\zeta')$ vanishes along $Z$ as well, so that $\rho e^{\sigma(\zeta')}=\rho$.  For each $f\in\Cinf\da(X)$ we therefore have 
\[\rho e^{\zeta}(f)=\rho(e^{\sigma(\zeta')}f+\lambda(f))=\rho f+\rho\lambda (f).\]  Expanding $\lambda=\lambda^{\alpha}\frac{\partial}{\partial w^{\alpha}}$, and noting that the component functions satisfy $\lambda^{\alpha}=\lambda(w^{\alpha})$, we then have
\[\rho(\lambda^{\alpha})=\rho e^{\zeta}w^{\alpha}-\rho w^{\alpha}=0.\]  Therefore $\lambda$ vanishes along $Z$, so that $\zeta=\sigma(\zeta')+\lambda$ does as well.

\end{proof}

Let us write $\hL=\L+u=e^{-u}\L$ for $u\in \Omega^1\dm(Z)$.  Define $w=\pi^*u\in \Omega^1\dm(X)$, and $\xx=(\xi,w)$.    The proof of Theorem \ref{essentially surjective} then follows from the following lemma.
\begin{lemma}\label{useful lemma}
\begin{enumerate} 
\item$\hB=\B\cdot e^{\xx},$ and 
 \item $\xx\in \T\da(X)$.
\end{enumerate}
\end{lemma}
\begin{proof}

As in the proof of Theorem \ref{LWL normal form}, we introduce $R\subset \TTX$ as the sub-bundle spanned by $\{(\frac{\partial}{\partial y^I},0),(0,dx^i)\}_{i,I}$ and $S\subset \TTX$ as the sub-bundle spanned by $\{(\frac{\partial}{\partial x^i},0),(0,dy^I)\}_{i,I}$.  In that proof, we saw that\begin{enumerate} \item $\TTX=R\oplus S$.  Furthermore, both $R$ and $S$ are maximal isotropic so the pairing gives an identification $S\cong R\uv$ and $R\cong S\uv$. 
\item $\J(R)=R$ and $\J(S)=S$.
\item $S|_{Z}=\TT \B$.
\end{enumerate}
We also introduced the space $\Y$ of vector fields on $X_0$ of the form $\sum_{I}c^I\frac{\partial}{\partial y^I}$ for constants $\{c^I\}_I$, and defined 
$\RR\subset \Cinf(R)$ and $\SS\subset \Cinf(S)$ to be the subspaces of elements $\xx\in \Cinf(R),\Cinf(S)$ satisfying $(\xi,0)\cdot\xx=0$ for all $\xi\in \Y$.  Via straightforward calculation, we proved the following lemma, which we state here again for convenience.
\begin{lemma}\label{RSlemma}
\begin{enumerate}
\item $\ll\RR,\SS\rr\subset \RR,$
\item $\ll\RR,\RR\rr=0.$
\item with respect to the isomorphism $\End(\TTX)\cong (R\oplus S)\otimes (R\oplus S)$ induced by the pairing, we have 
\[\J\in \RR\otimes\SS\oplus\SS\otimes\RR.\]
\end{enumerate}
\end{lemma}

By construction, both $(\xi,0)$ and $(0,u)$ are elements of $\RR$, so that we have 
\[0=\ll(\xi,0),(0,u)\rr=(0,\pounds(\xi)u).\]  Using Proposition \ref{exponential}, we see that 
\[e^{\xx}=e^{(0,u)}e^{(\xi,0)},\] from which the first part of Lemma \ref{useful lemma} easily follows.

It remains to show that 
\[\xx\cdot \J=0.\]

Since $\xx\in \RR$, it follows from Lemma \ref{RSlemma} that $\xx\cdot\J\in \RR\otimes\RR$, and also that $e^{-\xx}\cdot\J=\J-\xx\cdot\J$.  Since the pairing identifies $R$ with $S\uv$, we see that $\xx\cdot \J$ vanishes if and only if 
\[\la(\xx\cdot\J)\SS,\SS\ra = 0;\] this is equivalent to
\[\la(e^{-\xx}\cdot\J)\SS,\SS\ra =0,\] which is in turn equivalent to 
\[\la \J e^{\xx}(\SS),e^{\xx}\SS)\ra =0.\]  Since $\SS\subset K^{\B}$, it follows from the discussion in Remark \ref{rem1}  that 
\[\la \J e^{\xx}(\SS),e^{\xx}\SS)\ra\subset I^{Z}.\]  On the other hand, for each $x,y\in\SS$ the function $\la \J e^{\xx}(x),e^{\xx}y)\ra\subset I^{Z}$ is independent of the $y^I$ coordinates, so if it vanishes on $Z$ it vanishes on all of $X$.  We therefore conclude that 
\[\la(e^{-\xx}\cdot\J)\SS,\SS\ra =0,\] which finishes the proof of the lemma. 
\end{proof}
\end{proof}

\section{Induced Deformations}\label{induced deformations}  
In this section, we continue the study of induced (trivializable) deformations.

Fix a GC brane $\B\in\Br(X,\J)$.  For each $A\in\Art$ and $x\in e^{\T\da(X)}$, we introduce the notation

\[\Sigma\da\uB(x)=\B\cdot x\in\Dep.\]   In this section we will define a formal groupoid $\De\uB(X,\J)^{tr}$, whose objects for $A\in\Art$ are the elements of $e^{\T\da(X)}$.  We then extend the map $x\mapsto \Sigma\da\uB(x)$ to a functor of formal groupoids
\[\Sigma\uB:\De\uB(X,\J)^{tr}\to \De\uB(X,\J),\] which we prove is fully faithful; by construction, it is essentially surjective (and hence an equivalence) precisely when $\B$ has trivializable deformations.  

In order to define the groupoid $\De\uB(X,\J)^{tr}$ we will need some preliminary definitions.
\begin{definition} Let $\rrr(Z)$ be the following Lie algebra: as a vector space $\rrr(Z)=\Cinf(TZ)\oplus \Cinf(Z)$, and the bracket is given by 
\begin{equation}\label{r bracket}[(\xi,f),(\eta,g)]=([\xi,\eta],\xi(g)-\eta(f)+\iota(\eta)\iota(\xi)F).\end{equation}  
\end{definition}
\begin{rem} The Jacobi identity for the bracket (\ref{r bracket}) follows from the fact that $F$ is closed.
\end{rem} 

Consider the formal group $e^{\rrr(Z)}$.  For each $A\in\Art$, by Proposition \ref{exp prop} we see that for every $\xi\in\g\da(Z)$ and every $f\in\Cinf\dm(Z):=\m\otimes_A\Cinf(Z)$ we have
\begin{equation}\label{r multiplication}e^{(\xi,0)}e^{(0,f)}=e^{(0,e^{\xi}f)}e^{(\xi,0)}.\end{equation}   We also have the following result; the proof is identical to the proof of Proposition \ref{exponential}, and is therefore omitted.  

\begin{lemma}\label{exponential r} For each $A\in\Art$ and $(\xi,f)\in\rrr\da(Z)$, we have 
\[e^{(\xi,f)}=e^{(0,\int_0^1e^{t\xi}fdt)}e^{(\xi,0)}.\]
\end{lemma}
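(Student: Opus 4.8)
The statement to prove is Lemma~\ref{exponential r}: for each $A\in\Art$ and $(\xi,f)\in\rrr\da(Z)$, we have $e^{(\xi,f)}=e^{(0,\int_0^1 e^{t\xi}f\,dt)}e^{(\xi,0)}$ in the formal group $e^{\rrr\da(Z)}$. The paper itself remarks that the proof is identical to that of Proposition~\ref{exponential}, so the plan is to transcribe that argument with the Dorfman-bracket Lie algebra $\cg\da(X)$ replaced by $\rrr\da(Z)$ and the semidirect decomposition $\g\da(X)\ltimes\h\da(X)$ replaced by $\g\da(Z)\ltimes(\Cinf\dm(Z))$, where the abelian ideal $\Cinf\dm(Z)$ carries the $\g\da(Z)$-action by the Lie derivative (vector field applied to a function).

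\textbf{Plan.} First I would fix $(\xi,f)\in\rrr\da(Z)$ and define the map $\varphi:\R\to e^{\rrr\da(Z)}$ by $t\mapsto e^{(0,\int_0^t e^{s\xi}f\,ds)}e^{(t\xi,0)}$, where the integral is interpreted algebraically exactly as in the discussion preceding Proposition~\ref{exponential} (each $e^{s\xi}f:=f+s\xi(f)+\tfrac{s^2}{2}\xi^2(f)+\cdots$ is a polynomial in $s$ with coefficients in $\Cinf\dm(Z)$, and $\int_0^t bs^n\,ds:=\tfrac{b}{n+1}t^{n+1}$). The key step is to verify that $\varphi$ is a group homomorphism. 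By part~(2) of Proposition~\ref{exp prop} (using the conjugation identity $e^{(\xi,0)}e^{(0,g)}e^{(-\xi,0)}=e^{(0,e^{\xi}g)}$, which is equation~(\ref{r multiplication})), the homomorphism property reduces to the same additivity-of-the-integral computation used for Proposition~\ref{exp Dorfman}: $\int_0^t e^{s\xi}f\,ds+e^{t\xi}\!\left(\int_0^{t'}e^{s\xi}f\,ds\right)=\int_0^{t+t'}e^{s\xi}f\,ds$, which follows from the change of variables $s'=s+t$ applied formally to polynomials. Then, since every homomorphism $\R\to e^{\rrr\da(Z)}$ is of the form $t\mapsto e^{t\mathbf{x}}$ for a unique $\mathbf{x}\in\rrr\da(Z)$ (part~(3) of Proposition~\ref{exp prop}), we get $\varphi(t)=e^{t\mathbf{x}}$ for some $\mathbf{x}$.

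\textbf{Identifying $\mathbf{x}$.} Next I would compute $\mathbf{x}$ by comparing low-order terms in $t$. Writing $\int_0^t e^{s\xi}f\,ds=tf+t\sum_{k\geq1}\tfrac{t^k}{(k+1)!}\xi^k(f)=:tf+th(t)$ with $h(t)\in\Cinf\dm(Z)[t]$, the Baker--Campbell--Hausdorff formula gives $e^{(0,tf+th(t))}e^{(t\xi,0)}=e^{t(\xi,f)+t^2\tilde h(t)}$ for some $\tilde h(t)\in\rrr\da(Z)[t]$ (the only subtlety being that $(0,\cdot)$ spans an abelian ideal, so the BCH series has the stated leading form). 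Comparing with $\varphi(t)=e^{t\mathbf{x}}$ forces $\tilde h(t)=0$ and $\mathbf{x}=(\xi,f)$; evaluating at $t=1$ yields the claim. I expect the main (minor) obstacle is purely bookkeeping: making the algebraic integral and the formal BCH manipulation precise enough that the reader is convinced, rather than any conceptual difficulty — there is genuinely no new idea beyond Proposition~\ref{exponential}. Accordingly I would keep the proof short, either spelling out the homomorphism computation once and then invoking ``the same argument as in the proof of Proposition~\ref{exponential}'' for the identification of $\mathbf{x}$, or simply writing ``the proof is identical to that of Proposition~\ref{exponential}, with the Dorfman bracket on $\cg\da(X)$ replaced by the bracket~(\ref{r bracket}) on $\rrr\da(Z)$ and equation~(\ref{r multiplication}) playing the role of the conjugation identity'' — which is in fact what the excerpt already does.
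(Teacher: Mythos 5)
Your proof is correct and follows exactly the route the paper intends --- the paper omits the proof, stating only that it is identical to that of Proposition \ref{exponential}, and your transcription is faithful. One small caveat: $\rrr(Z)$ is not actually the semidirect product $\g(Z)\ltimes\Cinf(Z)$ when $F\neq 0$ (the bracket of $(\xi,0)$ with $(\eta,0)$ acquires the component $\iota(\eta)\iota(\xi)F$), but this does not affect any step you use, since the cocycle term vanishes both in the adjoint action of $(\xi,0)$ on the abelian ideal (which is all that equation (\ref{r multiplication}) requires) and in the one-parameter relation $e^{(t\xi,0)}e^{(t'\xi,0)}=e^{((t+t')\xi,0)}$, where $\iota(t'\xi)\iota(t\xi)F=0$ by skew-symmetry of $F$.
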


\begin{proposition} The map 
\[\mu:\rrr(Z)\to \cg(Z)\] given by 
\[(\xi,f)\mapsto (\xi,\iota(\xi)F-df)\] is a Lie algebra homomorphism.
\end{proposition}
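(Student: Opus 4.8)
The plan is to verify directly that $\mu$ respects brackets, using the explicit formula $\mu(\xi,f)=(\xi,\iota(\xi)F-df)$ together with the bracket (\ref{r bracket}) on $\rrr(Z)$ and the bracket (\ref{ghatbracket}) on $\cg(Z)$. First I would compute the left-hand side: applying $\mu$ to $[(\xi,f),(\eta,g)]=([\xi,\eta],\xi(g)-\eta(f)+\iota(\eta)\iota(\xi)F)$ gives the pair whose vector-field component is $[\xi,\eta]$ and whose $1$-form component is $\iota([\xi,\eta])F-d(\xi(g)-\eta(f)+\iota(\eta)\iota(\xi)F)$. Then I would compute the right-hand side $[\mu(\xi,f),\mu(\eta,g)]=[(\xi,\iota(\xi)F-df),(\eta,\iota(\eta)F-dg)]$, which by (\ref{ghatbracket}) equals $([\xi,\eta],\,\pounds(\xi)(\iota(\eta)F-dg)-\pounds(\eta)(\iota(\xi)F-df))$. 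The vector-field components match trivially, so everything reduces to checking equality of the two $1$-form components.

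The key identities I would invoke are the standard Cartan-calculus relations already used elsewhere in the paper: $\iota([\xi,\eta])=[\pounds(\xi),\iota(\eta)]=\pounds(\xi)\iota(\eta)-\iota(\eta)\pounds(\xi)$, the commutation $[\pounds(\xi),d]=0$, and Cartan's formula $\pounds(\xi)=d\iota(\xi)+\iota(\xi)d$. Expanding the right-hand $1$-form component, the terms $-\pounds(\xi)dg+\pounds(\eta)df=-d\pounds(\xi)g+d\pounds(\eta)f=-d(\xi(g)-\eta(f))$ account for the exact part coming from $\xi(g)-\eta(f)$ on the left. It then remains to show
\[
\pounds(\xi)\iota(\eta)F-\pounds(\eta)\iota(\xi)F=\iota([\xi,\eta])F-d\iota(\eta)\iota(\xi)F.
\]
Using $\iota([\xi,\eta])F=\pounds(\xi)\iota(\eta)F-\iota(\eta)\pounds(\xi)F$, this is equivalent to $-\pounds(\eta)\iota(\xi)F=-\iota(\eta)\pounds(\xi)F-d\iota(\eta)\iota(\xi)F$, i.e. $\pounds(\eta)\iota(\xi)F-\iota(\eta)\pounds(\xi)F=d\iota(\eta)\iota(\xi)F$. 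Writing $\pounds(\eta)\iota(\xi)F=d\iota(\eta)\iota(\xi)F+\iota(\eta)d\iota(\xi)F$ by Cartan's formula and $\iota(\eta)\pounds(\xi)F=\iota(\eta)d\iota(\xi)F+\iota(\eta)\iota(\xi)dF$, the difference collapses to $d\iota(\eta)\iota(\xi)F-\iota(\eta)\iota(\xi)dF$, and this equals $d\iota(\eta)\iota(\xi)F$ exactly because $F$ is closed. So the computation closes.

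I do not anticipate a genuine obstacle here: the statement is a routine $1$-form identity, and the only substantive input is $dF=0$ (which is precisely where the Jacobi-identity remark for $\rrr(Z)$ also used closedness of $F$). The main thing to be careful about is bookkeeping of signs in the contraction identities $\iota(\eta)\iota(\xi)=-\iota(\xi)\iota(\eta)$ and in the term $\iota(\eta)\iota(\xi)F$ appearing in the $\rrr(Z)$-bracket versus $\iota(\xi)\iota(\eta)F$; I would fix the convention at the outset and track it consistently. One could alternatively package the argument by noting that $(\xi,f)\mapsto \iota(\xi)F-df$ is, up to sign, the map sending $\xi$ to the ``$F$-twisted'' $1$-form and that $\rrr(Z)$ is the Atiyah-type algebra of the closed $2$-form $F$, but the direct verification above is shortest and self-contained.
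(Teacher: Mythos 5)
Your proposal is correct and follows essentially the same route as the paper: a direct computation of $[\mu(\xi,f),\mu(\eta,g)]$ via the bracket on $\cg(Z)$, reduced by Cartan's formula and $\iota([\xi,\eta])=[\pounds(\xi),\iota(\eta)]$ to an identity whose only substantive input is $dF=0$. No gaps.
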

\begin{proof}  Along with the Cartan formula $\pounds(\xi)=d\iota(\xi)+\iota(\xi)d$, recall also the identity $\iota([\xi,\eta])F=[\pounds(\xi),\iota(\eta)]F$.  Using these, we calculate $[\mu(\xi,f),\mu(\eta,g)]$ to be
\begin{align*}  &  [(\xi,\iota(\xi)F-df),(\eta,\iota(\eta)F-dg)] \\
= &  ([\xi,\eta],\pounds(\xi)\iota(\eta)F-\pounds(\xi)dg-\pounds(\eta)\iota(\xi)F+\pounds(\eta)df)\\
= &  ([\xi,\eta],\iota([\xi,\eta])F+\iota(\eta)\pounds(\xi)F-d\iota(\xi)dg-\pounds(\eta)\iota(\xi)F+d\iota(\eta)df)\\
= &  ([\xi,\eta],\iota([\xi,\eta])F+\iota(\eta)d\iota(\xi)F-d\iota(\eta)\iota(\xi)F-\iota(\eta)d\iota(\xi)F-d(\xi\cdot g-\eta\cdot f))\\
= &  ([\xi,\eta],\iota([\xi,\eta])F-d(\xi\cdot g-\eta\cdot f+\iota(\eta)\iota(\xi)F)) \\
=& \mu(([\xi,\eta],\xi\cdot g-\eta\cdot f+\iota(\eta)\iota(\xi)F)) \\
= &  \mu([(\xi,f),(\eta,g)]).
\end{align*}   

\end{proof}

%Given $\xi\in \g\da(X)$, consider the element $(\xi,\iota(\xi)F)\in\cg\da(X)$.   Regarding $\L$ as an element of $\De_{\L}(A)$-i.e. the trivial deformation-we obtain an element 
%\[e^{(\xi,\iota(\xi)F)}\cdot\L\in\De_{\L}(A).\]  We claim that this deformation is isomorphic, in a canonical way, to the trivial deformation.  Using Proposition \rnote{ref}, we calculate that 
%\[e^{(\xi,\iota(\xi)F)}=e^{(0,I_{\xi}(F))}e^{(\xi,0)},\] where we recall that $I_{\xi}(F):=\int_0^1e^{t\xi}Fdt$.  It follows that 
%\[e^{(\xi,\iota(\xi)F)}\cdot\L=(\{e^{\xi}a\si-I_{\xi}(F)|_{W\si}\},\{e^{\xi}c\sij\}).\]  If we define $\lambda_{\xi}=\{(\lambda_{\xi})_I\}$ by 
%\[(\lambda_{\xi})\si=\int_0^1e^{t\xi}\iota(\xi)a\si,\] then a simple calculation shows that $\lambda_{\xi}$ defines an isomorphism 
%\[\L\to e^{(\xi,\iota(\xi)F)}\cdot \L\]

%\begin{definition} Let $\KK\da(X)$ be the Lie algebra consisting of pairs $(\xx,f)$, where $\xx=(\xi,u)\in \T\da(X)$ with $\xi$ tangent to $Z$, and 
%\[\varphi(\rho(\xi),f)=(\rho(\xi),\rho(u))\in \cg\da(Z).\]  The bracket is defined by 
%\[[(\xi,u,f),(\eta,v,g)]=([\xi,\eta],\pounds(\xi)v-\pounds(\eta)u,\rho(\xi)g-\rho(\eta)f+\iota(\rho(\eta))\iota(\rho(\xi))F).\]  
%\[\chi:\KK\da(X)\to \T\da(X)\] be the Lie algebra homomorphism given by 
%\[(\xx,f)\mapsto \xx.\]
%\end{definition}

Let $\B$ be a GC brane on $(X,\J)$, with underlying GC submanifold $(Z,F)$.   Denote by $\T^{\B}(X)$ the Lie sub-algebra of $\T(X)$ consisting of elements $\xx=(\xi,u)\in\T(X)$ such that $\xi$ is tangent to $Z$.  This Lie algebra comes equipped with a restriction homomorphism
\[\rho:\T^{\B}(X)\to \cg(Z).\]  
\begin{definition}\label{definition KKK} Let $\KKK(X)$ be the fiber-product of Lie algebras
\[\KKK(X)=\T^{\B}(X)\times_{\cg(Z)} \rrr(Z).\]  In other words, an element of $\KKK(X)$ is a pair $(\xx,\hat{\tau})$, where $\xx=(\xi,w)\in \T\uB(X)$, $\hat{\tau}=(\tau,f)\in\rrr(Z)$, such that $\rho(\xi)=\tau$ and $\rho(w)=\iota(\tau)F-df.$  The Lie bracket is defined component-wise.  
\end{definition}  By construction, $\KKK(X)$ comes equipped with three homomorphisms:
\begin{enumerate} \item $\chi:\KKK(X)\to \T(X)$
\item $\rho_1:\KKK(X)\to \cg(Z)$ and 
\item $\rho_2:\KKK(X)\to \rrr(Z)$,
\end{enumerate} where $\rho_1=\mu\rho_2$.  We will denote the group homomorphisms induced by these Lie algebra homomorphisms by the same symbols. 

\begin{definition}\label{definition tilde extended} Let $\Dt\uB(X,\J)^{tr}$ be the following formal groupoid: For each $A\in\Art$, the objects of $\Dt\da\uB(X,\J)^{tr}$ are elements of the group $e^{\T\da(X)}$.  A morphism from $x\in e^{\T\da(X)}$ to $x'\in e^{\T\da(X)}$ is an element $y\in e^{\KKK\da(X)}$ such that 
\[x'=\chi(y)x.\]  Given $y,y'\in e^{\KKK\da(X)}$ and $x,x',x''\in e^{\T\da(X)}$ such that $x'=\chi(y)x$ and $x''=\chi(y')x'$, the composition
\[(\xymatrix{x' \ar[r]^{y'} & x'')}\circ (\xymatrix{ x \ar[r]^y & x'})\] is defined by 
\[\xymatrix{x \ar[r]^{y'y} & x''},\] where we note that 
\[x''=\chi(y')x'=\chi(y')\chi(y)x=\chi(y'y)x.\]  For every $x\in e^{\T\da(X)}$, the identity morphism $x\to x$ is the group identity element $1_{e^{\KKK\da(X)}}\in e^{\KKK\da(X)}$.
%Let $\B_U$ be the restriction of $\B$ to $U$, as described in \rnote{ref}.  
\end{definition}

  For each $x\in e^{\T\da(X)}$, denote 
\[\widetilde{\Sigma}^{\B}\da(x)=\B\cdot x\in\Dt\uB\da(X,\J).\]  Given $x,x'\in e^{\T\da(X)}$ and $y\in e^{\K\da(X)}$ such that $x'=\chi(y)x$, we will construct
\[\widetilde{\Sigma}^{\B}\da(y):\B\cdot x\to \B\cdot x'.\]  Write $x=e^{u}e^{\xi}$, $x'=e^{u'}e^{\xi'}$, $y=e^{(\yy,\hat{\tau})}$, where $\yy=(\eta,v)$ and $\hat{\tau}=(\tau,h)$.  By Proposition \ref{exponential}, we have 
\[\chi(y)=e^we^{\eta},\] where 
\begin{equation}\label{zeroth condition}w=\int_0^1e^{t\eta}vdt.\end{equation}  The equation $x'=\chi(y)x$ implies that  \begin{equation} \label{first condition} e^{\xi'}=e^{\eta}e^{\xi},\end{equation} and 
\begin{equation}\label{second condition} u'=w+e^{\eta}u.\end{equation}  Furthermore, by definition of the Lie algebra $\KKK\da(X)$, we have 
\begin{equation} \label{third condition} \rho e^{\eta}=e^{\tau}\rho.\end{equation} and \begin{equation}\label{fourth condition}\rho(v)=\iota(\tau)F-dh.\end{equation}

\begin{lemma}\label{sigma morphism lemma} $\widetilde{\Sigma}\uB\da(y)=(e^{\tau},\{g_I\})$ with 
\begin{equation}\label{g definition} g_I:=\int_0^1(e^{t\tau}(\iota(\tau)a_I+h|_{W_I})dt\end{equation} is an equivalence from $\widetilde{\Sigma}\uB\da(x)$ to $\widetilde{\Sigma}\uB\da(x')$ in $\Dt\da\uB(X,\J)$.
\end{lemma}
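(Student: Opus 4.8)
The plan is to verify directly that the pair $(e^{\tau},\{g_I\})$ satisfies the three conditions of Definition \ref{definition par def} for an equivalence from $\widetilde{\Sigma}\uB\da(x)=\B\cdot x$ to $\widetilde{\Sigma}\uB\da(x')=\B\cdot x'$. First I would unwind the two objects explicitly: writing $\L=(\{c_{IJ}\},\{a_I\})$, Proposition \ref{right action par} gives $\B\cdot x=(\rho e^{\xi},(\{c_{IJ}\},\{a_I+\rho e^{\xi}(u)|_{W_I}\}))$ (here I use $e^{-\hrho(w)}\cdot\L$ with $\hrho=\rho e^{\xi}$ and the 1-form component $u$), and similarly $\B\cdot x'=(\rho e^{\xi'},(\{c_{IJ}\},\{a_I+\rho e^{\xi'}(u')|_{W_I}\}))$. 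The condition on pull-back maps, namely $\rho e^{\xi'}=e^{\tau}\rho e^{\xi}$, is then immediate from (\ref{first condition}) together with (\ref{third condition}): $\rho e^{\xi'}=\rho e^{\eta}e^{\xi}=e^{\tau}\rho e^{\xi}$.

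Next I would check the condition on transition functions, $e^{\tau}c_{IJ}-c_{IJ}=g_J-g_I$. Using the defining relations $a_J-a_I=dc_{IJ}$ for $\L$, one computes $g_J-g_I=\int_0^1 e^{t\tau}\big(\iota(\tau)(a_J-a_I)+ (h|_{W_J}-h|_{W_I})\big)dt=\int_0^1 e^{t\tau}\iota(\tau)dc_{IJ}\,dt$ (the $h$ terms cancel since $h$ is globally defined on $Z$), and then $\iota(\tau)dc_{IJ}=\pounds(\tau)c_{IJ}=\tau\cdot c_{IJ}$ because $c_{IJ}$ is a function; hence $g_J-g_I=\int_0^1 e^{t\tau}(\tau\cdot c_{IJ})\,dt$. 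Finally $\frac{d}{dt}e^{t\tau}c_{IJ}=e^{t\tau}(\tau\cdot c_{IJ})$, so the integral telescopes to $e^{\tau}c_{IJ}-c_{IJ}$, using $e^{0\cdot\tau}c_{IJ}=c_{IJ}$. This is the ``fundamental theorem of calculus'' trick already used in the proof of Proposition \ref{exponential}, and it is the step I'd expect to carry the most bookkeeping, but it is routine.

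Then I would verify the condition on the 1-form components: $dg_I=e^{\tau}(a_I+\rho e^{\xi}(u)|_{W_I})-(a_I+\rho e^{\xi'}(u')|_{W_I})$. The strategy here is to differentiate the integral formula (\ref{g definition}): using Cartan's formula $\pounds(\tau)=d\iota(\tau)+\iota(\tau)d$ and the identity $\frac{d}{dt}e^{t\tau}\alpha=e^{t\tau}\pounds(\tau)\alpha$, together with the fundamental theorem of calculus, one gets $dg_I=\int_0^1 \frac{d}{dt}\big(e^{t\tau}(\iota(\tau)a_I+h|_{W_I})\big)dt - \int_0^1 e^{t\tau}\iota(\tau)d(\iota(\tau)a_I+h|_{W_I})\,dt$ — more cleanly, I would write $d\int_0^1 e^{t\tau}\beta_I\,dt$ with $\beta_I=\iota(\tau)a_I+h|_{W_I}$, commute $d$ past the integral and the flow (Proposition \ref{formal courant action}), and obtain $dg_I=\int_0^1 e^{t\tau}d\beta_I\,dt$; then substitute $d\beta_I=d\iota(\tau)a_I+dh=\pounds(\tau)a_I-\iota(\tau)da_I+dh$, use $da_I=F|_{W_I}$ and the relations (\ref{fourth condition}) $\rho(v)=\iota(\tau)F-dh$ and $\rho(u')=w+e^{\eta}u$ restricted appropriately, to identify the integrand as $e^{t\tau}\pounds(\tau)(\text{something})$ plus the contribution of $v$. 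The point at which the relation (\ref{zeroth condition}) $w=\int_0^1 e^{t\eta}v\,dt$ enters is precisely here, matching $\rho(w)$ against the $dh$ and $\iota(\tau)F$ terms. The main obstacle will be this last computation: getting all the flow-conjugation identities ($\rho e^{\eta}=e^{\tau}\rho$, so $\rho e^{\xi'}(u')=e^{\tau}\rho e^{\xi}(u)+e^{\tau}\rho(\text{stuff})$) to line up with the telescoping of $dg_I$, so that both sides agree term by term. Once the three conditions are checked, Definition \ref{definition par def} immediately gives that $(e^{\tau},\{g_I\})$ is the asserted equivalence, completing the proof of the lemma.
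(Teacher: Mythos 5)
Your overall strategy is the paper's: verify directly the conditions of Definition \ref{definition par def}, and your treatment of the pull-back condition and of the transition-function condition (the telescoping of $\int_0^1 e^{t\tau}\pounds(\tau)c_{IJ}\,dt$) coincides with the paper's argument. However, there is a concrete error in your unwinding of the two objects. In Proposition \ref{right action par} the formula $\hB\cdot g=(\hrho e^{\xi},e^{-\hrho(w)}\cdot\hL)$ uses the pull-back $\hrho$ of the object \emph{being acted on}; for $\hB=\B$ this is $\rho$ itself, so $\B\cdot x=(\rho e^{\xi},(\{c_{IJ}\},\{a_I+\rho(u)|_{W_I}\}))$ and likewise $\B\cdot x'=(\rho e^{\xi'},(\{c_{IJ}\},\{a_I+\rho(u')|_{W_I}\}))$. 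Your versions with $\rho e^{\xi}(u)$ and $\rho e^{\xi'}(u')$ are not equal to these in general, since $\rho\pounds(\xi)u$ need not vanish. Consequently the third condition you set out to prove, $dg_I=e^{\tau}(a_I+\rho e^{\xi}(u)|_{W_I})-(a_I+\rho e^{\xi'}(u')|_{W_I})$, is not the identity that the given $g_I$ satisfies.

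The computation you then sketch is in fact the correct one and is exactly the paper's: with $\beta_I=\iota(\tau)a_I+h|_{W_I}$ one gets $d\beta_I=\pounds(\tau)a_I-\iota(\tau)F+dh=\pounds(\tau)a_I-\rho(v)$ by (\ref{fourth condition}), and integrating, using $\rho e^{t\eta}=e^{t\tau}\rho$ together with (\ref{zeroth condition}) and (\ref{second condition}), yields $dg_I=e^{\tau}a_I-a_I-\rho(w)=e^{\tau}a_I-a_I-\rho(u')+e^{\tau}\rho(u)$, which is precisely $e^{\tau}(a_I+\rho(u)|_{W_I})-(a_I+\rho(u')|_{W_I})$ --- the condition for the correct unwinding of the objects, not the one you stated. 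So once you fix the initial identification of $\B\cdot x$ and $\B\cdot x'$, your argument closes and agrees with the paper's; as written, the target of your third verification is wrong and the check cannot line up term by term.
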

\begin{proof}
Write $\widetilde{\Sigma}\da\uB(x)=\B\cdot x=\hB=(\hrho,\hL)$ and $\tilde{\Sigma}\uB\da(x')=\B\cdot x'=\hB'=(\hrho',\hL')$ .  According to Definition \ref{definition par def}, we need to check that 
\begin{equation}\label{iso condition 1} \hrho'=e^{\tau}\hrho,\end{equation} and that $\{g_I\}$ is an isomorphism in $\Def^{\L}_A(X)$  from $\hL'\to e^{\tau}\hL$.   By construction, we have $\hrho=\rho e^{\xi}$ and $\hrho'=\rho e^{\xi'}$.  Combining equation (\ref{first condition}) with equation (\ref{third condition}), we see that (\ref{iso condition 1}) is indeed satisfied.

Next, note that by construction \[\hL'=(\{c\sij\},\{a_I+\rho(u')|_{W_I}\})\]  and \[\hL=(\{c\sij\},\{a_I+\rho(u)|_{W_I}\});\] therefore, we also have \[e^{\tau}\hL=(\{e^{\tau}c\sij\},\{e^{\tau}a_I+e^{\tau}\rho(u)|_{W_I}\}).\]  According to Definition \ref{Herm def}, therefore, $\{g_I\}$ is an isomorphism in $\Def^{\L}_A(Z)$ from $\hL'\to e^{\tau}\hL$ if and only if we have 
\begin{equation}\label{iso condition 2} g_J-g_I=e^{\tau}c_{IJ}-c_{IJ}\end{equation} and 
\begin{equation}\label{iso condition 3} dg_I=e^{\tau}a_I+e^{\tau}\rho(u)|_{W_I}-a_I-\rho(u')|_{W_I}.\end{equation}  By equation (\ref{g definition}) we have 
\begin{align*} g_J-g_I & = \int_0^1e^{t\tau}(\iota(\tau)(a_J-a_I))dt \\
& = \int_0^1(e^{t\tau}(\iota(\tau)dc_{IJ})dt \\ 
& =\int_0^1(\pounds(\tau)e^{t\tau}c_{IJ}dt \\
& =\int_0^1\frac{d}{dt}e^{t\tau}c_{IJ}dt \\
& = e^{\tau}c_{IJ}-c_{IJ},
\end{align*} which verifies condition (\ref{iso condition 2}).  To verify condition (\ref{iso condition 3}), we calculate
\begin{align*} dg_I & = \int_0^1e^{t\tau}(d\iota(\tau)a_I+dh_I)dt \\
& =\int_0^1e^{t\tau}(\pounds(\tau)a_I-\iota(\tau)da_I+\iota(\tau)F-\rho(v))dt\\
& =e^{\tau}a_I-a_I-\rho(w) \\ 
& = e^{\tau}a_I-a_I-\rho(u'-e^{\eta}u)\\
& = e^{\tau}a_I-a_I-\rho(u')+e^{\tau}\rho(u).
\end{align*}  Note that, in going from the first line to the second we used the Cartan formula for the Lie derivative, together with (\ref{fourth condition}); in going from the second to the third we used that $da_I=F$, $e^{t\tau}\pounds(a_I)=\frac{d}{dt}e^{t\tau}a_I$, and (\ref{zeroth condition}); in going from the third to the fourth we used (\ref{second condition}); and in going from the fourth line to the last we used (\ref{third condition}). 

\end{proof}  

\begin{proposition}\label{functoriality prop} The map sending $x\in e^{\T\da(X)}$ to $\tilde{\Sigma}\uB\da(x)=\B\cdot x$ and $y\in e^{\K\da\uB(X)}$ to $\tilde{\Sigma}\uB\da(y)$ is functorial; that is, for every $x,x',x''\in e^{\T\da(X)}$ and $y,y'\in e^{\KKK\da(X)}$ such that $x'=\chi(y)x$ and $x''=\chi(y')x'$, we have 
\begin{equation}\label{functoriality1}\tilde{\Sigma}\uB\da(y')\circ \tilde{\Sigma}\uB\da(y)=\tilde{\Sigma}\uB\da(y'y).\end{equation}
\end{proposition}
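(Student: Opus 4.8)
The statement to prove is the functoriality identity
\[
\tilde{\Sigma}\uB\da(y')\circ \tilde{\Sigma}\uB\da(y)=\tilde{\Sigma}\uB\da(y'y)
\]
for composable $y,y'\in e^{\KKK\da(X)}$. The plan is to unwind both sides according to the formulas already established, so that each side is a pair $(e^{\tau''},\{G_I\})$ with the same first component and the same collection of functions $G_I$. The composition rule in $\widetilde{\De}\uB_A(X,\J)$ is the one from Definition \ref{definition par def}(3), namely $(e^{\tau'},\{g'_I\})\circ(e^{\tau},\{g_I\})=(e^{\tau'}e^{\tau},\{e^{\tau'}g_I+g'_I\})$, so the left-hand side will read $(e^{\tau'}e^{\tau},\{e^{\tau'}g_I+g'_I\})$ where $g_I$ and $g'_I$ are given by the explicit integral formula (\ref{g definition}) of Lemma \ref{sigma morphism lemma}, applied to $y$ and $y'$ respectively. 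The right-hand side is $\tilde{\Sigma}\uB\da(y'y)$, which by the same Lemma \ref{sigma morphism lemma} is $(e^{\tau''},\{g''_I\})$ where $\tau''$ and $h''$ are the second-coordinate data of the composite element $y'y\in e^{\KKK\da(X)}$, and $g''_I=\int_0^1 e^{t\tau''}(\iota(\tau'')a_I+h''|_{W_I})\,dt$.

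First I would record how the relevant data compose inside $e^{\KKK\da(X)}$. Writing $y=e^{(\yy,\hat\tau)}$, $y'=e^{(\yy',\hat\tau')}$ with $\hat\tau=(\tau,h)$, $\hat\tau'=(\tau',h')$, I need the second-coordinate part of $y'y$. Since $\rho_2:\KKK(X)\to\rrr(Z)$ is a Lie algebra homomorphism (Definition \ref{definition KKK}), it exponentiates to a group homomorphism $\rho_2:e^{\KKK\da(X)}\to e^{\rrr\da(Z)}$, so $\rho_2(y'y)=\rho_2(y')\rho_2(y)$ in $e^{\rrr\da(Z)}$. By Lemma \ref{exponential r}, each $\rho_2(y)=e^{(0,\int_0^1 e^{t\tau}h\,dt)}e^{(\tau,0)}$, and multiplying two such using the semidirect-product multiplication (\ref{r multiplication}) in $e^{\rrr\da(Z)}$ gives $\rho_2(y'y)=e^{(0,H'')}e^{(\tau'',0)}$ with $e^{\tau''}=e^{\tau'}e^{\tau}$ (i.e. $\tau''$ is the BCH product of $\tau'$ and $\tau$), and $H''=(\int_0^1 e^{t\tau'}h'\,dt)+e^{\tau'}(\int_0^1 e^{t\tau}h\,dt)$ — this is exactly the analogue of Proposition \ref{exponential}'s $a^{\xi}$-formula in the abelian-by-$\g$ group $e^{\rrr\da(Z)}$. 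In other words, the $h''$ appearing in $g''_I$ is the ``$f$-coordinate'' $\int_0^1 e^{t\tau''}h''_{\mathrm{pre}}$ ... — more precisely, $\rho_2(y'y)$ determines $g''_I$ directly via the second line of the proof of Lemma \ref{sigma morphism lemma}, since that proof only used $\rho_2(y)$ (through the relations (\ref{zeroth condition})--(\ref{fourth condition})).

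The cleanest way to finish is to observe that Lemma \ref{sigma morphism lemma} already furnishes, for \emph{any} single element $z\in e^{\KKK\da(X)}$ with $x_2=\chi(z)x_1$, an equivalence $\tilde\Sigma\uB\da(z):\B\cdot x_1\to\B\cdot x_2$; and the associativity bookkeeping of Definition \ref{definition par def}(3) (verified in the Remark after Definition \ref{definition par def}, which invoked Proposition \ref{bundle action}) shows that the set of morphisms $\{(e^{\sigma},\{k_I\})\}$ between two fixed $A$-deformations forms a torsor under addition in the appropriate sense. So it suffices to check the identity after applying the faithful ``first coordinate'' functor (where $e^{\tau''}=e^{\tau'}e^{\tau}$ is immediate from $\chi(y'y)=\chi(y')\chi(y)$ and $\rho e^{\eta}=e^{\tau}\rho$), and then to check equality of the function collections: $e^{\tau'}g_I+g'_I=g''_I$. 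Expanding $g''_I=\int_0^1 e^{t\tau''}\iota(\tau'')a_I\,dt+\int_0^1 e^{t\tau''}h''|_{W_I}\,dt$ and using $e^{\tau''}=e^{\tau'}e^{\tau}$ together with the change-of-variables/telescoping identity $\int_0^1 e^{t\tau''}\pounds(\tau'')\,dt = e^{\tau''}-\mathrm{id}$ already exploited in Lemma \ref{sigma morphism lemma}, the $\iota(\tau'')a_I$ term splits as $e^{\tau'}(\text{the }\iota(\tau)a_I\text{ part of }g_I)+(\text{the }\iota(\tau')a_I\text{ part of }g'_I)$; and the $h''$ term splits the same way by the formula for $H''$ above. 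Matching the four pieces gives $g''_I=e^{\tau'}g_I+g'_I$, which is (\ref{functoriality1}).

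**Main obstacle.** The only genuinely delicate point is the integral identity for the $h$-contribution: one must show
\[
\int_0^1 e^{t\tau''}h''|_{W_I}\,dt \;=\; e^{\tau'}\!\!\int_0^1 e^{t\tau}h|_{W_I}\,dt \;+\;\int_0^1 e^{t\tau'}h'|_{W_I}\,dt,
\]
with $\tau''$ the BCH product and $h''$ as produced by the group law in $e^{\rrr\da(Z)}$. This is the formal ``primitive along a flow'' identity underlying Proposition \ref{exponential} (and its analogue Lemma \ref{exponential r}), transported into the present setting; the risk is purely in keeping the polynomial-in-$t$ bookkeeping straight, since there is no flow in the literal sense — the integrals are the algebraically-defined ones from (\ref{formal a def}). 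I expect no conceptual difficulty: it is the same computation as in the proof of Proposition \ref{exponential}, with $\xi$ replaced by $\tau$ and the abelian factor $\h\da(X)$ replaced by $\Cinf\dm(Z)$ (the nilpotent ``$f$-slot'' of $\rrr\da(Z)$). Once that identity is in hand, the rest is a direct substitution, and one concludes $\tilde\Sigma\uB\da(y')\circ\tilde\Sigma\uB\da(y)=\tilde\Sigma\uB\da(y'y)$, completing the proof that $\tilde\Sigma\uB\da$ is a functor (and hence, after passing to $//e^{\H(X,\J)}$ and restricting appropriately, that $\Sigma\uB$ is).
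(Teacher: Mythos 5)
Your overall strategy --- unwind both sides, reduce the first components to $\chi(y'y)=\chi(y')\chi(y)$, and derive the identity $g''_I=e^{\tau'}g_I+g'_I$ from the group identity $\rho_2(y'y)=\rho_2(y')\rho_2(y)$ in $e^{\rrr\da(Z)}$ --- is the same as the paper's. But the step where you split $g''_I$ into an ``$\iota(\tau'')a_I$ part'' and an ``$h''$ part'' and verify each separately contains a genuine error: both of your intermediate identities are false whenever the curvature $F=da_I$ is nonzero. The point is that $\rrr(Z)$ is \emph{not} a semidirect product of $\g(Z)$ with $\Cinf(Z)$: by (\ref{r bracket}) one has $[(\tau',0),(\tau,0)]=([\tau',\tau],\iota(\tau)\iota(\tau')F)$, so the section $\tau\mapsto(\tau,0)$ is not a Lie algebra homomorphism and $e^{(\tau',0)}e^{(\tau,0)}\neq e^{(\tau'',0)}$. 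Consequently your formula $H''=\int_0^1e^{t\tau'}h'\,dt+e^{\tau'}\int_0^1e^{t\tau}h\,dt$ omits an $F$-dependent correction coming from the Baker--Campbell--Hausdorff product of $(\tau',0)$ with $(\tau,0)$, and the computation of Proposition \ref{exponential} does not transport to this setting. Concretely, over $A=\R[\epsilon]/(\epsilon^3)$ with $\tau=\epsilon u$, $\tau'=\epsilon v$, a direct expansion gives
\[
\int_0^1e^{t\tau''}\iota(\tau'')a_I\,dt-\int_0^1e^{t\tau'}\iota(\tau')a_I\,dt-e^{\tau'}\!\!\int_0^1e^{t\tau}\iota(\tau)a_I\,dt=\tfrac{\epsilon^2}{2}F(u,v),
\]
while the analogous discrepancy for the $h$-terms is $-\tfrac{\epsilon^2}{2}F(u,v)$. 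The two errors cancel --- which is why your final conclusion is nevertheless correct --- but your argument, which asserts each identity separately, does not exhibit the cancellation; indeed the step you flag as the ``only genuinely delicate point'' is precisely the one that fails as stated.

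The paper's proof sidesteps the issue by never separating the two contributions: it packages the two desired conditions $e^{\tau''}=e^{\tau'}e^{\tau}$ and $g''_I=e^{\tau'}g_I+g'_I$ as the single group identity $e^{(\tau'_I,f'_I)}e^{(\tau_I,f_I)}=e^{(\tau''_I,f''_I)}$ in $e^{\rrr\da(W_I)}$, where $f_I=\iota(\tau_I)a_I+h_I$, and then deduces this from $\rho_2(y')\rho_2(y)=\rho_2(y'y)$ by applying the exponential of the map $\sigma_I:(\zeta,l)\mapsto(\zeta,l+\iota(\zeta)a_I)$, which is a Lie algebra endomorphism of $\rrr\da(W_I)$ precisely because $da_I=F$. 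That homomorphism property is exactly the statement that the $F$-corrections missing from your two identities cancel against each other. To repair your proof, either verify this homomorphism property and argue as the paper does, or carry the BCH correction term explicitly through both of your computations and check the cancellation by hand.
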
 
\begin{proof} Suppose $x,x',x''\in e^{\T\da(X)}$ and $y,y',y''\in e^{\K\da(X)}$ satisfy $x'=\chi(y)x$ and $x''=\chi(y')x'$.  Defining $y''=y'y$, we may rewrite the condtion (\ref{functoriality1}) in the slightly altered form \begin{equation}\label{functoriality}\tilde{\Sigma}\uB\da(y')\circ \tilde{\Sigma}\uB\da(y)=\tilde{\Sigma}\uB\da(y'').\end{equation}
  Writing $\tilde{\Sigma}\uB\da(y)=(e^{\tau},\{g\si\})$, $\tilde{\Sigma}\uB\da(y')=(e^{\tau'},\{g\si'\})$, and $\tilde{\Sigma}\uB\da(y'')=(e^{\tau''},\{g''_I\})$, by Definition \ref{definition par def} equation (\ref{functoriality}) means that 
\begin{equation}\label{fun cond 1} e^{\tau'}e^{\tau}=e^{\tau''}, \end{equation} and that
\begin{equation}\label{fun cond 2} g'\si+e^{\tau'}g\si=g''\si \end{equation} holds for each $I$. Using formula (\ref{r multiplication}), we see that the conditions (\ref{fun cond 1}) and (\ref{fun cond 2}) may be combined into the equations
\begin{equation}\label{group prop}e^{(0,g_I')}e^{(\tau_I',0)}e^{(0,g_I)}e^{(\tau_I,0)}=e^{(0,g''_I)}e^{(\tau_I'',0)}\end{equation} in the groups $e^{\rrr\da(W_I)}$, where by definition we have $\tau_I:=\tau|_{W\si},\tau'\si:=\tau'|_{W\si},$ and $\tau''\si:=\tau''|_{W\si}$ .

Define $h\si:=h|_{W\si}$, $h'\si:=h'|_{W\si}$, and $h''\si:=h''|_{W\si}$, and also  $f_I=\iota(\tau\si)a_I+h\si$, $f'_I=\iota(\tau'\si)a_I+h'\si$, and $f''_I=\iota(\tau''\si)a_I+h''\si$.  Using the definition (\ref{g definition}) together with Lemma \ref{exponential r}, we see that (\ref{group prop}) may be rewritten as
\begin{equation}\label{group prop 2}e^{(\tau'\si,f'_I)}e^{(\tau\si,f_I)}=e^{(\tau''\si,f''_I)}.\end{equation}  By hypothesis, we have $y''=y'y$;  applying the homomorphism $\rho_2:e^{\KKK\da(X)}\to e^{\rrr\da(Z)}$ and restricting to $W_I$, this implies that
\begin{equation}\label{group prop 3}e^{(\tau'\si,h\si')}e^{(\tau\si,h\si)}=e^{(\tau\si'',h\si'')}\end{equation} holds in $e^{\rrr\da(W_I)}$. On the other hand, if we define $\sigma_I:\rrr\da(W_I)\to \rrr\da(W_I)$ by 
\[(\zeta,l)\mapsto (\zeta,l+\iota(\zeta)a_I),\] we see that $(\tau\si,f_I)=\sigma_I(\tau\si,h\si)$, $(\tau',f'_I)=\sigma_I(\tau'\si,h'\si)$ and $(\tau'',f''_I)=\sigma_I(\tau'',h''\si)$.  It is easy to check that $\sigma_I$ is a homomorphism of Lie algebras; therefore equation (\ref{group prop 2}) follows from (\ref{group prop 2}) by applying the group homomorphism $e^{\sigma_I}$.
\end{proof}  

Using Lemma \ref{sigma morphism lemma} and Proposition \ref{functoriality prop} in hand, we may now give the following definition.
\begin{definition}\label{definition sigma tilde} The functor $\widetilde{\Sigma}\uB:\De\uB(X,\J)^{tr}\to\Dep$ is defined as follows.  For each $A\in\Art$ and each $x\in e^{\T\da(X)}$, we have 
\[\widetilde{\Sigma}\uB\da(x)=\B\cdot x.\]  On morphisms $\widetilde{\Sigma}\uB\da$ is defined as  described in Lemma \ref{sigma morphism lemma}.
\end{definition}

\begin{proposition}\label{fully faithful}  The functor $\widetilde{\Sigma}\uB$ is fully faithful.
\end{proposition}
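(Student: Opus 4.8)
The plan is to show that $\widetilde{\Sigma}^{\B}$ is full and faithful by analyzing the hom-sets on both sides directly. Fix $A \in \Art$ and two objects $x, x' \in e^{\T_A(X)}$ of $\De^{\B}_A(X,\J)^{tr}$. A morphism $x \to x'$ in the source category is an element $y \in e^{\KKK_A(X)}$ with $x' = \chi(y)x$; since $\chi(y)$ is determined by $y$, the hom-set $\Hom(x,x')$ is naturally identified with the fiber $\chi^{-1}(x'x^{-1}) \subset e^{\KKK_A(X)}$. On the target side, a morphism $\B\cdot x \to \B\cdot x'$ in $\Dt^{\B}_A(X,\J)$ is a pair $(e^{\tau}, \{g_I\})$ satisfying the conditions of Definition \ref{definition par def}. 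So I must show the assignment $y \mapsto \widetilde{\Sigma}^{\B}_A(y) = (e^{\tau}, \{g_I\})$ from Lemma \ref{sigma morphism lemma} is a bijection between these two sets.

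First I would establish \emph{injectivity} (faithfulness). Suppose $y = e^{(\yy,\hat\tau)}$ and $y'' = e^{(\yy'',\hat\tau'')}$ are two morphisms $x \to x'$ with $\widetilde{\Sigma}^{\B}_A(y) = \widetilde{\Sigma}^{\B}_A(y'')$. Writing $\yy = (\eta,v)$, $\hat\tau = (\tau,h)$, and similarly with primes, the equality of the $e^{\tau}$-components forces $\tau = \tau''$; by the defining relation of $\KKK_A(X)$ (Definition \ref{definition KKK}), $\rho(\eta)=\tau=\tau''=\rho(\eta'')$. Moreover $\chi(y)x = x' = \chi(y'')x$ forces $\chi(y) = \chi(y'')$, i.e. $e^{w}e^{\eta}=e^{w''}e^{\eta''}$ where $w = \int_0^1 e^{t\eta}v\,dt$; by Proposition \ref{exponential} (and the identification $e^{\cg_A\ltimes\h_A} = e^{\g_A}\ltimes e^{\h_A}$) this gives $\eta = \eta''$ and $w = w''$. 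By Corollary \ref{exp formula cor} the map $v \mapsto v^{\eta} = \int_0^1 e^{t\eta}v\,dt = w$ is a bijection, so $v = v''$. It remains to recover $h$ from the data; the equality of the $g_I$ components gives, on each $W_I$, $\int_0^1 e^{t\tau}(\iota(\tau)a_I + h|_{W_I})\,dt = \int_0^1 e^{t\tau}(\iota(\tau)a_I + h''|_{W_I})\,dt$, hence $h^{\tau} = (h'')^{\tau}$ as elements of $\Cinf_{\m_A}(W_I)$, and again by the $\Cinf$-analogue of Corollary \ref{exp formula cor} (the map $h \mapsto \int_0^1 e^{t\tau}h\,dt$ is a bijection on $\Cinf_{\m_A}(W_I)$, same algebraic argument) we get $h|_{W_I} = h''|_{W_I}$ for all $I$, hence $h = h''$. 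Together with $\rho(v) = \iota(\tau)F - dh$ this pins down $(\yy,\hat\tau)$ completely, so $y = y''$.

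Next, \emph{surjectivity} (fullness): given an arbitrary morphism $(e^{\tau},\{g_I\}): \B\cdot x \to \B\cdot x'$ in $\Dt^{\B}_A(X,\J)$, I need to produce $y \in e^{\KKK_A(X)}$ with $\widetilde{\Sigma}^{\B}_A(y) = (e^{\tau},\{g_I\})$. Write $x = e^u e^{\xi}$, $x' = e^{u'}e^{\xi'}$. Define $e^{\eta}$ by $e^{\eta} = e^{\xi'}e^{-\xi}$ and $w$ by $e^w = e^{u'} (e^{\eta}e^u)^{-1}$ (equivalently $w = u' - e^{\eta}u$), so that $\chi(\text{candidate}) = e^w e^{\eta} = x'x^{-1}$ automatically. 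The condition $\hrho' = e^{\tau}\hrho$ (part of the data being a morphism in $\Dt^{\B}_A$) reads $\rho e^{\xi'} = e^{\tau}\rho e^{\xi}$, which together with the previous displays gives $\rho e^{\eta} = e^{\tau}\rho$, so $\eta$ descends to $\tau$ in the sense required by $\KKK_A(X)$; in particular $\eta$ is tangent to $Z$ up to higher order — more precisely $\rho(\eta) = \tau \in \g_A(Z)$, so $(\eta, v) \in \T^{\B}_A(X)$ once $v$ is chosen appropriately. Then I define $v$ by inverting $v \mapsto v^{\eta} = w$ (Corollary \ref{exp formula cor}), and $h$ by inverting the Cartan-type relation dictated by $\{g_I\}$: unwinding the computation in the proof of Lemma \ref{sigma morphism lemma} in reverse, $dg_I = e^{\tau}a_I - a_I - \rho(w)$ forces $\int_0^1 e^{t\tau}(\iota(\tau)F - \rho(v))\,dt$-type identities, and the compatibility $g_J - g_I = e^{\tau}c_{IJ}-c_{IJ}$ ensures the locally-defined functions $\iota(\tau)a_I + h_I$ glue; setting $h$ to be the pullback-to-$Z$ of the resulting global object and checking $\rho(v) = \iota(\tau)F - dh$ completes the definition of $(\yy,\hat\tau) = ((\eta,v),(\tau,h))$, hence $y = e^{(\yy,\hat\tau)}$. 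By construction $\widetilde{\Sigma}^{\B}_A(y) = (e^{\tau},\{g_I\})$.

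The main obstacle I anticipate is the bookkeeping in the surjectivity step: one must verify that the $h_I$ extracted from $\{g_I\}$ are genuinely the restrictions of a \emph{single} global section $h \in \Cinf_{\m_A}(Z)$ (using the cocycle-type condition $g_J - g_I = e^{\tau}c_{IJ}-c_{IJ}$ and the sheaf property of $\Cinf \otimes \m_A$, as in the proof of Proposition \ref{pi zero bundle}), and that the element $(\eta,v) \oplus (\tau,h)$ so produced actually lies in the fiber product $\KKK_A(X) = \T^{\B}_A(X) \times_{\cg_A(Z)} \rrr_A(Z)$, i.e. that $\xx = (\eta,v)$ lands in $\T_A(X)$ (generalized-holomorphic), not merely $\cg_A(X)$. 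The latter follows because $\xx = \log(x'x^{-1})$ in the appropriate sense and both $x, x' \in e^{\T_A(X)}$, combined with the fact (Proposition \ref{formal holomorphic symmetries} and the remark that $\T(X)$ is closed under the bracket \eqref{ghatbracket}) that $e^{\T_A(X)}$ is a subgroup — so $x'x^{-1} \in e^{\T_A(X)}$ and hence its logarithm lies in $\T_A(X)$. Once these two points are checked carefully (the rest being the same algebraic manipulations with $e^{t\eta}$, $e^{t\tau}$, Cartan's formula, and the integral-exponential identities already used in Lemmas \ref{sigma morphism lemma} and \ref{exponential r}), fullness and faithfulness follow, and since this holds for every $A \in \Art$ compatibly, $\widetilde{\Sigma}^{\B}$ is fully faithful as a functor of formal groupoids.
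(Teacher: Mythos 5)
Your proposal is correct and follows essentially the same route as the paper: faithfulness is reduced, after noting that $\chi(y)=\chi(y')$ pins down the $\T^{\B}_A(X)$-component, to the injectivity of the map $h\mapsto\int_0^1 e^{t\tau}h\,dt$ (Corollary \ref{exp formula cor}), and fullness is proved by setting $\tilde y=x'x^{-1}$, extracting $(\eta,v)$ from its decomposition, recovering the $h_I$ from the $\{g_I\}$ via the same integral-exponential identities, and using the cocycle condition $g_J-g_I=e^{\tau}c_{IJ}-c_{IJ}$ to glue them into a global $h$ with $dh=\iota(\tau)F-\rho(v)$. The two points you flag as potential obstacles (the gluing of the $h_I$ and the fact that $x'x^{-1}$ lies in the subgroup $e^{\T_A(X)}$) are exactly the points the paper's proof addresses, and your treatment of them is sound.
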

\begin{proof}   We first prove that $\widetilde{\Sigma}\uB$ is faithful.  Given $A\in\Art$, let $x,x'$ be elements of  $e^{\T\da(X)}$, and let $y,y'\in e^{\KKK\da(X)}$ be elements satisfying $x'=\chi(y)x$ and $x'=\chi(y')x$.  In particular, this implies that $\chi(y)=\chi(y')$.  Let us write  $y=e^{((\eta,v),(\tau,h))}$, where $(\eta,v)\in\T\da(X)$ and $(\tau,h)\in \KKK\da$, such that $\tau=\rho(\eta)$ and $\rho(v)=\iota(\tau)F-dh$.  Then $y'$ is of the form $e^{((\eta,v),(\tau,h'))}$, where $\rho(v)=\iota(\tau)F-dh'$.  By construction, we have $\widetilde{\Sigma}(y)=\widetilde{\Sigma}(y')$ if and only if, on each $W_I$ we have 
\[\int_0^1e^{t\tau}(\iota(\tau)a_I+h|_{W_I})dt=\int_0^1e^{t\tau}(\iota(\tau)a_I+h'|_{W_I})dt.\]  By Lemma \ref{exp formula cor}, this holds if and only if $h|_{W_I}=h'|_{W_I}$ for each $I$, or equivalently if $h=h'$.  This in turn holds if and only if $y=y'$.  Therefore we see that $\widetilde{\Sigma}\uB\da$ is faithful.

To show that $\widetilde{\Sigma}\uB\da$ is full, suppose we are given $x,x'\in e^{\T\da(X)}$ and a morphism $(e^{\tau},\{g\si\})$ from $\widetilde{\Sigma}\uB\da(x)\to \widetilde{\Sigma}\uB\da(x')$.  Define $\tilde{y}=x'x^{-1}.$ Writing $x=e^{u}e^{\xi}$, $x'=e^{u'}e^{\xi'}$, and $\tilde{y}=e^{w}e^{\eta}$, we have $e^{\eta}=e^{\xi'}e^{-\xi}$, and $w=u'-e^{\eta}u$.  Writing $\B=(Z,(\{c_{IJ}\},\{a_I\}))$, we have 
\[\widetilde{\Sigma}\uB\da(x)=(\rho e^{\xi},(\{c_{IJ}\},\{a_I+\rho(u)|_{W_I}\})\] and 
\[\widetilde{\Sigma}\uB\da(x')=(\rho e^{\xi'},(\{c_{IJ}\},\{a_I+\rho(u')|_{W_I}\}).\] The fact that $(e^{\tau},\{g\si\})$ is an equivalence from $\widetilde{\Sigma}\uB\da(x)$ to $\widetilde{\Sigma}\uB\da(x')$ is then equivalent to the following three conditions:
\begin{enumerate}[(I)] \item $\rho e^{\xi'}=e^{\tau}\rho e^{\xi}$, or equivalently $e^{\tau}\rho=\rho e^{\xi'}e^{-\xi}=\rho e^{\eta}$, which implies that $\eta$ is tangent to $Z$ and $\rho(\eta)=\tau$. %\bnote{Do I need some proof for this statement?}
\item $g_J-g_I=e^{\tau}c_{IJ}-c_{IJ}$ and 
\item $dg_I = e^{\tau}(a_I+\rho(u)|_{W_I})-(a_I+\rho(u')|_{W\si})
 = e^{\tau}a_I-a_I-\rho(w)|_{W_I}.$
 \end{enumerate}  Write $\tilde{y}=e^{(\eta,v)}$, so that 
 \[w=\int_0^1e^{t\eta}vdt\] and therefore
 \[\rho(w)=\int_0^1e^{t\tau}\rho(v)dt,\] we see from condition (III) that 
 \[ dg\si=\int_0^1e^{t\tau}(\pounds(\tau)a_I-\rho(v)|_{W_I})dt,\] or equivalently
 \begin{equation}\label{int equat} dg\si-d\int_0^1e^{t\tau}\iota(\tau)a_Idt=\int_0^1e^{t\tau}(\iota(\tau)F|_{W\si}-\rho(v)|_{W\si})dt.\end{equation}  Also write $e^{(0,g_I)}e^{\tau|_{W_I}}=e^{(\tau|_{W_I},k_I)}\in e^{\rrr\da(W_I)}$, so that 
 \[g_I=\int_0^1e^{t\tau}k_Idt.\]  Defining $h_I=k_I-\iota(\tau)a_I$, equation (\ref{int equat}) implies that
 \[d\int_0^1 e^{t\tau}h_Idt= \int_0^1e^{t\tau}(\iota(\tau)F|_{W_I}-\rho(v)|_{W_I})dt.\] Therefore, by Lemma \ref{exp formula cor} we have 
 \begin{equation}\label{Lie equation}dh_I=(\iota(\tau)F-\rho(v))|_{W_I}.\end{equation}  On the other hand, using condition (II), we see that on each overlap $W_{IJ}$ we have 
 \begin{align*} \int_0^1e^{t\tau}(k_J-k_I)dt & = \int_0^1e^{t\tau}\iota(\tau)dc_{IJ}dt \\
 & = \int_0^1 e^{t\tau}(\iota(\tau)a_J-\iota(\tau)a_I)dt,\end{align*} which implies (again using Lemma \ref{exp formula cor}) that $h_J=h_I$, so there is a unique function $h\in\Cinf\dm(Z)$ such that $h_I=h|_{W_I}$.  By equation (\ref{Lie equation}), it follows that $((\eta,v),(\tau,h))$ is an element of $\KKK\da(X)$, and by construction
 \[\widetilde{\Sigma}\uB\da(e^{((\eta,v),(\tau,h))})=(e^{\tau},\{g_I\}).\]  This completes the proof that $\widetilde{\Sigma}\uB\da$ is full. \end{proof}

Given an open set $U\subset X$,  denote $\Dt\uB(U,\J):=\Dt^{\B|_U}(U,\J|_U)$ and $\Dt\uB(U,\J)^{tr}:=\Dt^{\B|_U}(U,\J|_U)^{tr}$, as well as $\widetilde{\Sigma}\uB(U):=\widetilde{\Sigma}^{\B|_U}$.
\begin{corollary}\label{equivalence of induced} Let $\B=(Z,\L)$ be a $GC$ brane with locally trivializable deformations.  Then for every $z\in Z$, there exists a neighborhood $U\subset X$ of $z$ such that 
\[\widetilde{\Sigma}\uB(U):\Dt^{\B}(U,\J)^{ex}\to \Dt^{\B}(U,\J)\] is an equivalence.  In particular, this applies to LWL branes (by Proposition \ref{essentially surjective}). 
\end{corollary}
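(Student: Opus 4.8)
The plan is to obtain the corollary as a direct consequence of Proposition \ref{fully faithful} and the definition of locally trivializable deformations, together with the elementary fact that a functor of formal groupoids is an equivalence as soon as, for every $A\in\Art$, its value is a fully faithful and essentially surjective functor of ordinary groupoids.

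First I would fix $z\in Z$. Since $\B$ has locally trivializable deformations, Definition \ref{definition locally extendible} provides an open neighborhood $U\subset X$ of $z$ such that $\B|_U\in\Br(U,\J|_U)$ has trivializable deformations. Applying Proposition \ref{fully faithful} to the GC manifold $(U,\J|_U)$ and the brane $\B|_U$ shows that $\widetilde{\Sigma}\uB(U)=\widetilde{\Sigma}^{\B|_U}$ is fully faithful at each $A\in\Art$; no additional work is needed here, as Proposition \ref{fully faithful} is stated for an arbitrary GC brane. It then remains to check essential surjectivity. Fix $A\in\Art$ and an object $\hB$ of $\Dt^{\B}_A(U,\J)$. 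The hypothesis that $\B|_U$ has trivializable deformations says precisely that there exists $e^{\xx}\in e^{\T\da(U)}$ such that $\hB$ is isomorphic, inside $\Dt^{\B}_A(U,\J)$, to $\B|_U\cdot e^{\xx}$; by Definition \ref{definition sigma tilde} we have $\B|_U\cdot e^{\xx}=\widetilde{\Sigma}\uB\da(U)(e^{\xx})$, which is an object of $\Dt^{\B}_A(U,\J)$ by Proposition \ref{action of formal symmetries}. Hence $\hB$ lies in the essential image of $\widetilde{\Sigma}\uB\da(U)$. Combining this with fullness and faithfulness, $\widetilde{\Sigma}\uB(U)$ is an equivalence of formal groupoids. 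The final sentence is then immediate: by Theorem \ref{essentially surjective}, every leaf-wise Lagrangian brane has locally trivializable deformations, so the hypothesis of the corollary is satisfied and the preceding argument applies verbatim.

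Since the statement is essentially a repackaging of Proposition \ref{fully faithful} together with the definitions, there is no genuine obstacle. The only point deserving a moment's care is keeping track of the ambient groupoid $\Dt^{\B}(U)$ versus its full subgroupoid $\Dt^{\B}(U,\J)$ of $\J$-compatible objects: one must note that $\widetilde{\Sigma}\uB(U)$ in fact takes values in the compatible subgroupoid (Proposition \ref{action of formal symmetries}) and that the isomorphism furnished by the trivializability hypothesis already lives there, so that Lemma \ref{compatibility under isomorphism} is not even needed. With this observation, fullness, faithfulness, and essential surjectivity all hold within $\Dt^{\B}(U,\J)$, and the conclusion follows.
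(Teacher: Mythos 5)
Your proposal is correct and follows essentially the same route as the paper: essential surjectivity is read off directly from Definition \ref{definition locally extendible}, and fully faithfulness is supplied by Proposition \ref{fully faithful}, giving the equivalence. The extra remarks about the compatible subgroupoid are fine but not needed beyond what the paper already records.
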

\begin{proof} Given $\B=(Z,\L)\in\Br(X,\J)$ with locally trivializable deformations and $z\in Z$, by definition there exists a neighborhood $U\subset X$ of $z$ such that for every $A\in\Art$ the functor 
\[\widetilde{\Sigma}\da^{\B}(U):\Dt^{\B}\da(U,\J)^{ex}\to \Dt^{\B}\da(U,\J)\] is essentially surjective.  By Proposition \ref{fully faithful}, it is also fully faithful, and therefore an equivalence.
\end{proof}

%We next incorporate the action of generalized Hamiltonian vector fields.
The proof of the following Proposition, which we omit, is a straightforward consequence of Definition \ref{group action on category} and Definition \ref{definition tilde extended}
\begin{proposition}  There is a strict right action of $e^{\H(X)}$ on $\Dt\uB(X,\J)^{tr}$ defined as follows: given $x\in e^{\T\da(X)}$ and $z\in e^{\H\da(X)}$, we define $x\cdot z$ to be the product $xz$ (recall that $e^{\H\da(X)}$ is a subgroup of $e^{\T\da(X)}$).  Given a morphism $y:x\to x'$, we define $y\cdot z=y$, now regarded as a morphism from $xz\to xz$.
\end{proposition}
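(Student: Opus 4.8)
The plan is to verify the three group-action axioms from Definition \ref{group action on category}: compatibility with composition, behavior on identities, and well-definedness of the action on morphisms. Since the underlying right action of $e^{\H(X)}$ on $\Dt\uB(X,\J)^{tr}$ is to be defined objectwise by right multiplication $x\mapsto xz$ inside $e^{\T\da(X)}$ (using that $e^{\H\da(X)}$ is a subgroup by Proposition \ref{hamiltonian} and the fact that exponentiation is functorial), the content is almost entirely bookkeeping with the morphism structure of Definition \ref{definition tilde extended}.

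First I would fix $A\in\Art$ and check that the assignment on objects is a right action: for $z,z'\in e^{\H\da(X)}$ and $x\in e^{\T\da(X)}$ we have $(x\cdot z)\cdot z' = (xz)z' = x(zz') = x\cdot(zz')$ by associativity in $e^{\T\da(X)}$, and $x\cdot 1 = x$. Next I would check that the action is well-defined on morphisms: a morphism $y\colon x\to x'$ in $\Dt\da\uB(X,\J)^{tr}$ is an element $y\in e^{\KKK\da(X)}$ with $x' = \chi(y)x$; to see that the same $y$ is a legitimate morphism $xz\to x'z$, note $x'z = \chi(y)xz = \chi(y)(xz)$, so the defining relation still holds with the same $\chi(y)$. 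Thus $y\cdot z := y$ makes sense as a morphism $xz\to x'z$. For functoriality of each $F_z$, given composable morphisms $y\colon x\to x'$ and $y'\colon x'\to x''$, their composite in $\Dt\da\uB(X,\J)^{tr}$ is $y'y\colon x\to x''$; applying $F_z$ gives $y'y\colon xz\to x''z$, which equals $(y'\cdot z)\circ(y\cdot z)$ since the composite of $y\cdot z\colon xz\to x'z$ and $y'\cdot z\colon x'z\to x''z$ is again $y'y$ by the composition rule of Definition \ref{definition tilde extended}. And $F_z$ sends the identity $1_{e^{\KKK\da(X)}}$ at $x$ to the identity at $xz$, so each $F_z$ is a functor.

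Finally I would verify the two action axioms at the level of functors: $F_z\circ F_{z'} = F_{z'z}$ (right action convention of Definition \ref{group action on category}) holds because on objects $(x\cdot z')\cdot z = x(z'z) = x\cdot(z'z)$ and on morphisms both sides act as the identity on the underlying element $y$; and $F_1$ is the identity functor since $x\cdot 1 = x$ and $y\cdot 1 = y$. The compatibility with morphisms $A\to A'$ in $\Art$ (needed for this to be an action of formal groups on formal groupoids, as in the Remark following Definition \ref{group action on category}) is immediate since every structure map in sight is induced by the exponentiation functor applied to the fixed Lie-algebra-level maps, and these commute with base change $\m_{A}\otimes(-)\to\m_{A'}\otimes(-)$.

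I do not expect a genuine obstacle here; the only mildly delicate point is keeping the left/right conventions straight — Definition \ref{definition tilde extended} composes morphisms via $y''=y'y$ (left multiplication of group elements) while the action is by right multiplication on objects, and one must check these do not interfere, which they do not precisely because $\chi$ is a group homomorphism so $\chi(y'y)=\chi(y')\chi(y)$ threads through cleanly. This is exactly why the Proposition is stated as "straightforward" and its proof omitted in the excerpt.
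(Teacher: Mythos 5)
Your verification is correct and is exactly the routine check the paper has in mind — the paper explicitly omits the proof as a straightforward consequence of Definitions \ref{group action on category} and \ref{definition tilde extended}, and your argument (well-definedness on morphisms via $x'z=\chi(y)(xz)$, functoriality from the composition rule $y''=y'y$, and the right-action axiom from associativity in $e^{\T\da(X)}$) supplies precisely the omitted bookkeeping. The only note: the target of $y\cdot z$ in the statement should read $x'z$ rather than $xz$, which you have correctly interpreted.
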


For the following definition, recall Definition \ref{definition action groupoid}.
\begin{definition}  Let $\De\uB(X,\J)^{tr}$ be the action groupoid $\Dt\uB(X,\J)^{tr}//e^{\H(X)}$.  Explicitly, for $A\in\Art$ an object of $\Dee$ is an element of $e^{\T\da(X)}$.  Given $x,x'\in e^{\T\da(X)}$, a morphism from $x$ to $x'$ is  a pair $(y,z)\in e^{\KKK\da(X)}\times e^{\H\da(X)}$ such that $x'z=\chi(y)x$.  Composition is given by group multiplication, i.e. 
\[(y',z')\circ (y,z)=(y'y,z'z).\]
\end{definition}

It is clear that the functor $\widetilde{\Sigma}\uB:\Dt\uB(X,\J)^{tr}\to\Dt\uB(X,\J)$ is compatible with the right actions of $e^{\H\da(X)}$ on both formal groupoids. Therefore, it follows from Proposition \ref{induced equivalence} that $\widetilde{\Sigma}^{\B}$ naturally extends to a functor $\Sigma\uB:\De\uB\da(X,\J)^{tr}\to\De\uB\da(X,\J)$.  Furthermore, $\Sigma\uB$ is an equivalence if and only if $\widetilde{\Sigma}^{\B}$ is an equivalence.  This leads to the following corollary.
 
\begin{theorem} Let $\B=(Z,\L)$ be a leaf-wise Lagrangian brane on a GC manifold $(X,\J)$ (or more generally a brane with locally trivializable deformations).  Then for every $z\in Z$, there exists a neighborhood $U\subset X$ of $z$ such that 
\[\Sigma\uB(U):\De\uB(U,\J)^{ex}\to \De\uB(U,\J)\] is an equivalence of formal groupoids.
\end{theorem}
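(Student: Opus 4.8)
The statement follows by combining two facts already in hand, so the plan is essentially to assemble them. First I would invoke local trivializability: since $\B$ is leaf-wise Lagrangian, Theorem \ref{essentially surjective} guarantees that $\B$ has locally trivializable deformations, so for the given $z\in Z$ there is a neighborhood $U\subset X$ of $z$ such that, by Corollary \ref{equivalence of induced}, the functor of formal groupoids
\[\widetilde{\Sigma}\uB(U)\colon \Dt\uB(U,\J)^{tr}\to \Dt\uB(U,\J)\]
is an equivalence. Recall that this itself is the combination of essential surjectivity (which is exactly what local trivializability provides, via the normal form of Theorem \ref{LWL normal form}) with full faithfulness, which was established in general in Proposition \ref{fully faithful}. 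This step is where all of the genuine geometric content resides.

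Second, I would transport this equivalence across the action-groupoid construction. By definition $\De\uB(U,\J)=\Dt\uB(U,\J)//e^{\H(U)}$ and $\De\uB(U,\J)^{tr}=\Dt\uB(U,\J)^{tr}//e^{\H(U)}$, and $\Sigma\uB(U)$ is defined to be $\widetilde{\Sigma}\uB(U)//e^{\H(U)}$. The hypothesis needed to apply Proposition \ref{induced equivalence} is that $\widetilde{\Sigma}\uB(U)$ intertwines the two right $e^{\H\da(U)}$-actions (with $\varphi$ taken to be the identity automorphism of $e^{\H(U)}$). On $\Dt\uB(U,\J)^{tr}$ the element $z\in e^{\H\da(U)}$ acts by right multiplication $x\mapsto xz$ and trivially on morphisms, while on $\Dt\uB(U,\J)$ it acts as in Proposition \ref{right action par}; the intertwining property is then precisely the identity $\B\cdot(xz)=(\B\cdot x)\cdot z$ together with the observation (made just above the statement) that $\widetilde{\Sigma}\uB$ sends the morphism $y\cdot z=y$ to the same morphism regarded between the translated objects. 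All of this is immediate from Definition \ref{definition tilde extended} and the definition of the action on $\Dt\uB(X,\J)$.

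With these two inputs, Proposition \ref{induced equivalence} applies directly and gives that $\Sigma\uB(U)=\widetilde{\Sigma}\uB(U)//e^{\H(U)}$ is an equivalence because $\widetilde{\Sigma}\uB(U)$ is; since everything is natural in $A\in\Art$, this is an equivalence of formal groupoids, as claimed. I do not expect a real obstacle at this stage: the argument is purely formal. The only point requiring mild care is that the neighborhood $U$ supplied by local trivializability be small enough that $\J|_U$ and $\L|_U$ can simultaneously be put in the standard form used in the proof of Theorem \ref{essentially surjective} — but that was already arranged there (using Theorem \ref{LWL normal form} and the invariance results of \S\ref{section functoriality}), so nothing new is needed.
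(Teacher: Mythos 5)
Your proposal is correct and follows the paper's own route exactly: Corollary \ref{equivalence of induced} (essential surjectivity from Theorem \ref{essentially surjective} plus full faithfulness from Proposition \ref{fully faithful}) gives that $\widetilde{\Sigma}\uB(U)$ is an equivalence, and Proposition \ref{induced equivalence} transports this across the action-groupoid construction $//e^{\H}$. Nothing is missing.
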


\section{Deformations via gluing}   

Let $\B$ be a GC brane on $(X,\J)$.  As discussed in Remark \ref{brane def sheaf}, the formal groupoid  $\Dt\uB(X,\J)$ extends in a natural way to a presheaf of formal groupoids on $X$.  In particular, for any open cover $\U=\{U\sa\}$ of $X$, we may define the groupoid of \emph{descent data} for $\Dt\uB$ with respect to $\U$.  This is described explicitly in the following definition.

%Let $\B=(Z,\L)\in\textbf{Br}(X,\I)$.  Given an open set $U\subset X$, we may restrict to obtain an element $\B|_{U}\in \Br(U,\I|_{U})$.  Given another open subset $V\subset U$, we have an equality $(\B|_{U})|_V=\B_{V}$.  \bnote{This is a consequence of using a Cech model}  Furthermore, for each $A\in \textbf{Art}_{\R}$ \bnote{make sure I define this notation earlier}, there is a restriction functor \bnote{Do I have to say more about how this is compatible with maps of Artin rings $A\to B$?}
%\[\Dt_{\B}(A)\to \Dt_{\B|_{U}}(A),\] such that for each pair of open sets $V\subset U\subset X$, we have a commutative diagram
%\[\xymatrix{ \Dt_{\B}(A) \ar[r] \ar[dr] & \Dt_{\B|_U}(A) \ar[d] \\ & \Dt_{\B|_V}(A). }\]
%\begin{proposition}\label{sheaf of groupoids} This defines a (strict) \bnote{pre?}sheaf of groupoids on $X$.
%\end{proposition}
%Let $\U=\{U\sa\}$ be an open cover of $X$.  

%Let $\B\sa$ denote the restriction of $\B$ to $U\sa$. \bnote{Maybe just say the following is the descent groupoid associated to a certain co-simplicial groupoid?  Cite Manetti reference at least for convention of how the maps go}
\begin{definition} \label{glued deformations} Let  $\Dt\uB(\U,\J)$ be the following formal groupoid.  For each $A\in\Art$,  an object of $\Dt\uB\da(\U,\U)$ is pair  $\hB=(\{\hB\sa\},\{\Psi\sab\})$, where 
\begin{enumerate} \item each $\hB\sa$ is an object of $\Dt\uB\da(U\sa,\J)$,
\item each $\Psi\sab:\hB\sb\toco \hB\sa$ is an isomorphism, and 
\item on triple overlaps we have $\Psi_{\alpha\gamma}=\Psi_{\alpha\beta}\Psi_{\beta\gamma}:\B_{\gamma}\to \B_{\alpha}$.
\end{enumerate}
A morphism from $\hB=(\{\hB\sa\},\{\Psi\sab\})$ to $\hB'=(\{\hB'\sa\},\{\Psi'\sab\})$ is a collection $\{\Phi\sa:\hB\sa\to \hB'\sa\}$ such that on each overlap we have $\Psi'_{\alpha\beta}\Phi_{\beta}=\Phi_{\alpha}\Psi_{\alpha\beta}$.
\end{definition}

By construction, there is a natural restriction functor 
\[\tilde{R}\uB:\Dt\uB(X,\J)\to \Dt\uB(\U,\J).\]  Explicitly, given $A\in|Art$ and $\hB\in\Dtp$, we define $\tR(\hB)=(\{\hB\sa\},\{\Psi\sab\})$ by setting $\hB\sa=\hB|_{U\sa}$ for each $\alpha$, and $\Psi\sab=\textrm{id}_{\hB|_{U\sab}}$ on each $U\sab$.  Given a morphism $\Phi:\hB\to\hB'$, we define 
\[\tR(\Phi)=\{\Phi|_{U\sa}\}:\tR(\hB)\to\tR(\hB').\]

%which are clearly compatible with the actions of $e^{\H\da(X)}$ on both groupoids.   Using Proposition \ref{induced equivalence}, we may therefore extend $\tilde{R}(A)$ to a functor $R(A):\Dep\to \De\uB\da(\U,\J).$ % In particular, this induces a map of deformation functors 
%\[R:\De_{\B}\to \De_{\U,\B}.\]
\begin{proposition}\label{descent property}The restriction functor $\tilde{R}\uB:\Dt\uB(X,\J)\to \Dt\uB(\U,\J)$ is an equivalence of formal groupoids.  %In particular, the induces map $R$ of deformation functors is an isomorphism.
\end{proposition}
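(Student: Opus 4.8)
The plan is to prove that $\tilde{R}^{\B}$ is an equivalence of formal groupoids by verifying that it is essentially surjective, full, and faithful, working over each Artin algebra $A$ separately and reducing everything to sheaf-theoretic gluing of the three types of data that make up an $A$-deformation: a vector field $\xi$ (the submanifold-deformation part), the bundle data $\hL$, and the compatibility with $\J$. The key point throughout is that all the relevant sheaves are fine (they admit partitions of unity once tensored with the finite-dimensional $\m\otimes_{\R}(-)$), so local sections glue, and the compatibility condition in Definition \ref{compatibility def} is a local condition that is preserved under gluing because, by Remark \ref{brane def sheaf} (cited via Proposition \ref{well-defined} and Lemma \ref{compatibility under isomorphism}), it is checked on arbitrarily small opens.

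First I would prove \textbf{faithfulness}. Suppose $\Phi, \Phi':\hB\to \hB'$ are morphisms in $\Dt\uB_A(X,\J)$ with $\tilde{R}\uB(\Phi)=\tilde{R}\uB(\Phi')$. Writing $\Phi=(e^{\tau},\{g\si\})$ and $\Phi'=(e^{\tau'},\{g'\si\})$, the hypothesis says $\tau|_{U\sa}=\tau'|_{U\sa}$ and $g\si|_{U\sa}=g'\si|_{U\sa}$ for all $\alpha$; since $\g\da(Z)$ and $\Cinf\dm$ are sheaves, $\tau=\tau'$ and $g\si=g'\si$, so $\Phi=\Phi'$. Next, \textbf{fullness}: given a morphism $\{\Phi\sa:\tilde R\uB(\hB)|_{U\sa}\to \tilde R\uB(\hB')|_{U\sa}\}$ in $\Dt\uB(\U,\J)$, i.e. local isomorphisms satisfying the cocycle-type compatibility $\Psi'_{\alpha\beta}\Phi_\beta = \Phi_\alpha\Psi_{\alpha\beta}$ with $\Psi,\Psi'$ the identities, the compatibility just says the $\Phi\sa$ agree on overlaps; writing $\Phi\sa=(e^{\tau\sa},\{g_{\sa,I}\})$, the components $\tau\sa\in\g\da(U\sa)$ and $g_{\sa,I}$ glue by the sheaf property to give a global morphism restricting to the $\Phi\sa$. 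Faithfulness of $\tilde R\uB$ guarantees this global morphism is unique.

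The main work is \textbf{essential surjectivity}, and this is the step I expect to be the main obstacle because it requires gluing not just linear data but also reconciling the compatibility-with-$\J$ condition. Let $\hB=(\{\hB\sa\},\{\Psi\sab\})$ be descent data. Each $\hB\sa=(\hrho\sa,\hL\sa)$, with $\hrho\sa=\rho e^{\xi\sa}$ for some $\xi\sa\in\g\da(U\sa\cap Z)$ — actually one should first extend to an ambient $\xi\sa$ on a neighborhood; the isomorphisms $\Psi\sab=(e^{\tau\sab},\{g_{\sab,I}\})$ encode $\hrho\sb=e^{\tau\sab}\hrho\sa$ and the bundle compatibility. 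The strategy is: (i) use a partition of unity subordinate to $\U$ to build a global $\xi$ and, by the same argument as in Proposition \ref{pi zero bundle} applied to the $\Cinf\dm$-valued Čech cocycle $\{\tau\sab\}$, first trivialize the $\tau\sab$ (i.e. replace $\hB\sa$ by isomorphic copies with $\hrho\sa$ all agreeing on overlaps), so that the $\hrho\sa$ patch to a global $\hrho=\rho e^{\xi}$; (ii) with the $\hrho$ part globalized, the remaining isomorphism data is a morphism of the bundle-type deformations over overlaps, and one glues $\hL\sa$ into a global $\hL\in\De^{\L}\da(Z)$ using Remark \ref{bundle def sheaf} (the sheaf property of $\De^{\L}$) — concretely the $\Cinf\dm$-cocycle conditions are solved by partitions of unity exactly as in Proposition \ref{pi zero bundle}; (iii) finally, compatibility of the glued $\hB=(\hrho,\hL)$ with $\J$ is checked locally on the $\tilde W_I\cap U\sa$, where it holds by hypothesis (each $\hB\sa$ was an object of $\Dt\uB\da(U\sa,\J)$), and by Lemma \ref{compatibility under isomorphism} it is unaffected by the isomorphisms used in steps (i)–(ii). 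Then $\tilde R\uB(\hB)$ is isomorphic to the given descent datum via the collection of $\Psi\sab$-adjustments introduced along the way. I would be careful in step (i) that extending the diffeomorphism component $\xi\sa$ off of $Z$ and gluing is consistent — the cleanest route is to do all gluing for the restricted data on $Z$ and only afterwards choose ambient extensions, since the compatibility condition itself (Definition \ref{compatibility def}) is independent of the chosen ambient extension by Proposition \ref{well-defined}.
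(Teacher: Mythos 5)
Your overall architecture matches the paper's: full faithfulness via the sheaf property of the component data, and essential surjectivity by trivializing the transition isomorphisms in two stages (diffeomorphism components first, then bundle components), with compatibility with $\J$ handled locally via Proposition \ref{well-defined} and Lemma \ref{compatibility under isomorphism}. However, there is a genuine gap in step (i) of your essential-surjectivity argument. The diffeomorphism components $\{e^{\tau\sab}\}$ of the transition isomorphisms satisfy the \emph{multiplicative} cocycle condition $e^{\tau\sab}e^{\tau_{\beta\gamma}}=e^{\tau_{\alpha\gamma}}$ in the non-abelian group $e^{\g\da(Z\cap U_{\alpha\beta\gamma})}$, not the additive \v{C}ech condition $\tau_{\beta\gamma}-\tau_{\alpha\gamma}+\tau\sab=0$. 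The partition-of-unity argument of Proposition \ref{pi zero bundle} splits additive cocycles of a fine sheaf; it cannot be applied ``by the same argument'' to this multiplicative cocycle, so the step as you describe it would fail.

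The missing ingredient is a non-abelian splitting lemma: one must produce $\{\sigma\sa\}$ with $e^{\tau\sab}=e^{-\sigma\sa}e^{\sigma\sb}$. The paper supplies this as Lemma \ref{nonabelian coboundary}, proved by induction on small extensions $A\to A'$: given a splitting over $A'$, the discrepancy between a lift of that splitting and the given cocycle lands in $I\otimes\Cinf(T(Z\cap U\sab))$, which is \emph{central} in the nilpotent Lie algebra; hence at each inductive stage the problem becomes abelian, the discrepancy is an additive cocycle, and a partition of unity applies. Without this lemma (or an equivalent vanishing statement for the non-abelian \v{C}ech $H^1$ of the exponential of a fine sheaf of nilpotent Lie algebras over $\Art$), your step (i) does not go through. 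The remainder of your argument --- reducing the bundle-level transition data to an additive cocycle via Proposition \ref{pi zero bundle} and splitting it, then checking compatibility with $\J$ locally --- is consistent with the paper's proof.
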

   
\begin{proof} %By Proposition \ref{induced equivalence}, it is sufficient to check that $\tilde{R}(A):\Dtp\to \Dt^{\B}\da(\U,\J)$ is an equivalence.

Fix $A\in\Art$.  By inspection, it is clear that $\tilde{R}\uB$ is faithful, i.e. for each pair of objects $\hB=(\hrho,\hL)$ and $\hB'=(\hrho',\hL')$ in $\Dt^{\B}\da(X,\J)$ the induced map
\[\Hom_{\Dt^{\B}\da(X,\J)}(\hB,\hB')\to \Hom_{\Dt_A^{\B}(\U,\J)}(\tilde{R}\uB\da(\hB),\tilde{R}\uB\da(\hB'))\]  is injective.  On the other hand, suppose we are given a morphism $\{\Phi\sa\}:\tilde{R}\uB\da(\hB)\to \tilde{R}\uB\da(\hB')$.  By definition, on each open set $U\sa$ we have an equivalence $\Phi\sa:\hB|_{U\sa}\to \hB'|_{U\sa}$, and on each overlap we have $\Phi\sa|_{U\sab}=\Phi\sb|_{U\sab}$.  It is straightforward to see from Definition \ref{definition par def} that this implies the existence of $\Phi:\hB\to \hB'$ whose restriction to each $U\sa$ is equal to $\Phi\sa$.  Thus we see that $\tR$ is fully faithful.

To finish the proof, we will show that $\tR$ is also essentially surjective.
Given $(\{\hB\sa\},\{\Psi\sab\})\in \Dt^{\B}_A(\U,\J)$, write $\Psi\sab=(e^{\tau\sab},\psi\sab)$.  In particular, part (3) of Definition \ref{glued deformations}, implies that on each $Z\cap U_{\alpha\beta\gamma}$ we have 
\[e^{\tau_{\alpha\beta}}e^{\tau_{\beta\gamma}}=e^{\tau_{\alpha\gamma}}.\]
\begin{lemma}\label{nonabelian coboundary} There exist $\{\sigma\sa\in \Cinf\dm(Z\cap U\sa)\}$ such that on $Z\cap U\sab$ we have 
\[e^{\tau\sab}=e^{-\sigma\sa}e^{\sigma\sb}.\]
\end{lemma}
\begin{proof} By Proposition \ref{induct on small extensions}, we may proceed by induction on small extensions.  Thus, suppose the result holds for some $A'\in\Art$ with unique maximal ideal $\m'$, and let $\mu:A\to A'$ be a small extension. Denote the kernel of $\mu$ by $I$.  Choose a linear splitting $\nu:A\to A'$ of $\mu$.  Since $e^{\tau_{\alpha\beta}}e^{\tau_{\beta\gamma}}=e^{\tau_{\alpha\gamma}}$ holds in $e^{\g_{\m}(U_{\alpha\beta\gamma})}$, we also have $e^{\mu(\tau_{\alpha\beta})}e^{\mu(\tau_{\beta\gamma})}=e^{\mu(\tau_{\alpha\gamma})}$ in $e^{\g_{\m'}(U_{\alpha\beta\gamma})}$.  By the inductive hypothesis, we can find $\{\tilde{\sigma}\sa\in \g_{\m'}(U_{\alpha})\}$ such that 
\[e^{\mu(\tau_{\alpha\beta})}=e^{-\tilde{\sigma}\sa}e^{\tilde{\sigma}\sb}.\]  We must then have 
\begin{equation}\label{almost}e^{-\nu(\tilde{\sigma}\sa)}e^{\nu(\tilde{\sigma}\sb)}=e^{\tau\sab+\zeta_{\alpha\beta}}\end{equation} for some $\zeta_{\alpha\beta}\in I\otimes \Cinf(T(Z\cap U_{\alpha\beta})).$  Since $I\otimes \Cinf(T(Z\cap U_{\alpha\beta})$ lies in the center of $\g_{\m}$, it is easy to see from equation (\ref{almost}) that on triple overlaps $\zeta_{\beta\gamma}-\zeta_{\alpha\gamma}+\zeta_{\alpha\beta}=0$, so that we can choose $\eta_{\alpha}\in I\otimes \Cinf(T(Z\cap U_{\alpha}))$ satisfying $\eta_{\beta}-\eta_{\alpha}=-\zeta_{\alpha\beta}$.  Setting $\sigma_{\alpha}=s(\tilde{\sigma}\sa)+\eta\sa$, it follows from equation (\ref{almost}) that
\[e^{-\sigma\sa}e^{\sigma\sb}=e^{\tau\sab}.\]

\end{proof}

Returning to the proof of Proposition \ref{descent property}, choose $\{\sigma\sa\}$ as in Lemma \ref{nonabelian coboundary}, and define $\hrho'\sa=e^{\sigma\sa} \hrho\sa$, $\hL'\sa=e^{\sigma\sa}\hL\sa$, and 

\[\hB'\sa=(\hrho'\sa,\hL'\sa).\]
Note that, by Lemma \ref{compatibility under isomorphism}, for each $\alpha$ we have $\hB'\sa\in \Dt^{\B}_A(U\sa,\J)$.

By construction, we have equivalences 
\[\Phi\sa:=(e^{\sigma\sa},id_{\hL'\sa}):\hB\sa\to \hB'\sa.\]

  If we define $\Psi'_{\alpha\beta}=\Phi\sa \Psi_{\alpha\beta}\Phi^{-1}\sb$, then by construction $\hB'=(\{\hB'\sa\},\{\Psi'\sab\})\in \Dt^{\B}_A(\U,\J)$, and $\Phi=(\{\Phi\sa\})$ defines an isomorphism in $\Dt^{\B}_A(\U,\J)$ from $\hB\to \hB'$.  Furthermore, by construction we have 
\begin{align*}\Psi'_{\alpha\beta} & =(e^{\sigma\sa},id_{e^{\sigma\sa\cdot} \hL\sa})(e^{\tau\sab},\psi\sab)(e^{-\sigma\sb},id_{e^{-\sigma\sb}\cdot \hL\sb})\\
& = (e^{\sigma\sa}e^{\tau\sab}e^{-\sigma\sb},\psi'\sab)\\
& = (1,\psi'\sab), \end{align*} where  
\[\psi'\sab=(e^{\sigma\sa}\cdot\psi\sab):\hL'\sa\to\hL'\sb.\]  In particular, on the overlaps $Z\cap U\sab$ we have $\hrho'\sa=\hrho'\sb$, so there exists a unique $\hrho:\Omega\ub\da(X)\to \Omega\ub\da(Z)$ such that each $\hrho'\sa$ is given by restricting $\hrho$. 
 
For each $\alpha$ write $\hB'_{\alpha}=(\hrho\sa,\hL\sa)$.  Using Proposition \ref{pi zero bundle}, we may assume without loss of generality that each $\hL\sa$ is of the form $(\{(c\sa)_{IJ}\},\{(\ha\sa)\si\})$, i.e. has undeformed transition functions $\{(\hc\sa)\sij=c_{IJ}|_{U\sa}\}$.  Therefore, if we write $\Psi'\sab=(1,\{(g\sab)\si\})$, it follows from Definition \ref{definition par def} and Definition \ref{Herm def} that on each $(U\sab\cap W_I)\cap (U\sab\cap W_J) $ we have $(g\sab)_I=(g\sab)_J$, so that there is a well-defined function $g\sab:Z\cap U\sab\to \R$ whose restriction to each $U\sab\cap W\si$ is equal to $(g\sab)_I$.  Furthermore, we see that on $U_{\alpha\beta\gamma}\cap Z$ we must have 
\[g_{\alpha\beta}+g_{\beta\gamma}=g_{\alpha\gamma},\] so we may choose $h_{\alpha}\in\Cinf\dm(Z\cap U\sa)$ such that $g\sab=h_{\beta}-h_{\alpha}$.  If we define $\{(\ha''\sa)_I=(\ha'\sa)_I-d(h\sa)|_{W\si}\}$, $\hL''\sa = (\{c_{IJ}\},\{(\ha''\sa)\si\}),$ and 
\[\hB''\sa=(\hrho\sa,\hL''\sa),\] then $\hB''=(\{\hB''\sa\},\{\Psi''\sab=id\})$ defines an element of $\Dt^{\B}\da(\U,\J)$, which by construction  is isomorphic to $\hB'$, and hence also to $\hB$.  Also by construction, there exists a unique $\hB_0\in \Dt^{\B}_A(X)$ such that $\tR(\hB_0)=\hB''$.  This completes the proof that $\tR$ is essentially surjective.
\end{proof}

We saw in Proposition \ref{right action par} that there is a strict right action of the formal group $e^{\T(X)}$ on $\Dt\uB(X,\J)$.  Similarly, for each open set $U\subset X$, we have a strict right action of $e^{\T(X)}$ on $\Dt\uB(U,\J)$ constructed using the restriction homomorphism $e^{\T(X)}\to e^{\T(U)}$ together with the action of $e^{\T(U)}$ on $\Dt\uB(U,\J)$.    We then have the following easy result, the proof of which is omitted.  
\begin{proposition} There is a strict right action of $e^{\T(X)}$ on $\Dt\uB(\U,\J)$ defined as follows: for each $A\in\Art$, given $g\in e^{\T\da(X)}$ and $\hB=(\{\hB\sa\},\{\Psi\sab\})\in \Dt\uB\da(\U,\J)$, we define 
\[\hB\cdot g=(\{\hB\sa\cdot g|_{U\sa}\},\{\Psi\sab\cdot g|_{U\sab}\}).\]  Similarly, given an isomorphism $\{\Phi\sa\}:\hB\to\hB'$ in $\Dt\uB\da(\U,\J)$, we define 
\[\{\Phi\sa\}\cdot g=\{\Phi\sa\cdot g\}.\]
\end{proposition}

\begin{definition} Let $\De^{\B}(\U,\J)$ be the formal groupoid 
\[\Dt\uB(\U,\J)//e^{\H(X)}.\]  Explicitly, for each $A\in\Art$ $\Dt\uB\da(\U,\J)$ has the same objects as $\Dt\uB\da(\U,\J)$.  A morphism from $\hB\to \hB'$ in $\De\uB\da(\U,\J)$ is a pair $(\Phi,z)$, where $z\in e^{\H\da(X)}$, and $\Phi$ is a morphism in $\Dt\uB\da(\U,\J)$ from $\hB$ to $\hB'$.  Composition is given by $(\Phi',z')(\Phi,z)=(\Phi'\Phi,z'z)$.  
\end{definition}

Clearly, the restriction functor 
\[\tilde{R}\uB:\Dt\uB(X,\J)\to \Dt\uB(\U,\J)\] is compatible with the actions of $e^{\H(X)}$ on both formal groupoids.  Using Proposition \ref{induced equivalence}, we may then extend $\tilde{R}\uB$ to a functor 
\[R\uB:\De\uB(X,\J)\to \De\uB(\U,\J).\]  Furthermore, combining Proposition \ref{induced equivalence} and Proposition \ref{descent property}, we arrive at the following result.
\begin{theorem}\label{restriction equivalence} The restriction functor $R\uB:\De\uB(X,\J)\to \De\uB(\U,\J)$ is an equivalence of formal groupoids.
\end{theorem}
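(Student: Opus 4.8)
The plan is to obtain Theorem \ref{restriction equivalence} as a formal consequence of the already-established Proposition \ref{descent property}, together with the general mechanism of Proposition \ref{induced equivalence}. First I would observe that the restriction functor $\tilde R\uB:\Dt\uB(X,\J)\to\Dt\uB(\U,\J)$ is $e^{\H(X)}$-equivariant: for each $A\in\Art$, each $g\in e^{\H\da(X)}$, and each object $\hB\in\Dt\uB\da(X,\J)$ we have $\tilde R\uB\da(\hB\cdot g)=\tilde R\uB\da(\hB)\cdot g$, since both sides are the descent datum whose $\alpha$-component is $(\hB|_{U\sa})\cdot g|_{U\sa}=(\hB\cdot g)|_{U\sa}$ and whose transition isomorphisms are the appropriate restrictions of identities; the same identity holds on morphisms because $\tilde R\uB\da$ restricts morphisms and the $e^{\H}$-action on both groupoids is defined component-wise by restriction. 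This verifies the hypotheses of Proposition \ref{induced equivalence} with $G=G'=e^{\H(X)}$, $\varphi=\mathrm{id}$, $\CC=\Dt\uB(X,\J)$, $\CC'=\Dt\uB(\U,\J)$, $\Phi=\tilde R\uB$.

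Next I would invoke Proposition \ref{induced equivalence} directly: it produces the functor $R\uB=\tilde R\uB//e^{\H(X)}:\De\uB(X,\J)\to\De\uB(\U,\J)$ (this is exactly the functor described in the paragraph preceding the theorem, since $\De\uB(X,\J)=\Dt\uB(X,\J)//e^{\H(X)}$ and $\De\uB(\U,\J)=\Dt\uB(\U,\J)//e^{\H(X)}$ by definition), and it asserts that $R\uB$ is an equivalence if and only if $\tilde R\uB$ is. Since Proposition \ref{descent property} already states that $\tilde R\uB$ is an equivalence of formal groupoids (for each $A\in\Art$ the functor $\tilde R\uB\da$ is fully faithful and essentially surjective, with the compatibilities over morphisms $A\to A'$ being automatic from the presheaf structure of Remark \ref{brane def sheaf}), we conclude that $R\uB$ is an equivalence, which is the claim.

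The only point requiring genuine care — and what I expect to be the main (minor) obstacle — is checking the equivariance identity $\tilde R\uB\da(\hB\cdot g)=\tilde R\uB\da(\hB)\cdot g$ at the level of the transition data of the descent objects, rather than just on the underlying deformations: one must confirm that restricting the trivial transition isomorphisms of $\tilde R\uB\da(\hB)$ and then applying the $e^{\H}$-action yields precisely the trivial transition isomorphisms of $\tilde R\uB\da(\hB\cdot g)$, which follows because $g$ acts trivially on morphisms in the relevant normalization (cf. Proposition \ref{right action par}, part (2)) and the $e^{\H}$-action on $\Dt\uB(\U,\J)$ is defined overlap-by-overlap. Everything else is bookkeeping, and no new geometric input beyond Propositions \ref{induced equivalence} and \ref{descent property} is needed. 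I would therefore keep the proof to two or three sentences: state the equivariance of $\tilde R\uB$, cite Proposition \ref{induced equivalence} to get $R\uB$ and the "equivalence iff" statement, and cite Proposition \ref{descent property} to close.
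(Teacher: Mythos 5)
Your proposal is correct and matches the paper's argument exactly: the paper likewise notes the $e^{\H(X)}$-equivariance of $\tilde R\uB$, extends it to $R\uB$ via Proposition \ref{induced equivalence}, and concludes from Proposition \ref{descent property} that $R\uB$ is an equivalence. The extra care you take in checking equivariance on the transition data is a reasonable elaboration of what the paper dispatches with the word ``clearly.''
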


\section{Semicosimplicial groupoids and descent}\label{cosimplicial groupoids} The remainder of the paper will be devoted to constructing a DGLA governing the deformation theory of branes with locally trivializable deformations.  To do so, we will adapt techniques developed in \cite{I}, as well as \cite{FMM}\cite{BM}.  As a first step, in this section we review the framework of semicosimplicial groupoids and the descent groupoid construction.  We will use this to reformulate the definitions introduced in the previous section more systematically. This will enable us to formulate and prove some results which would be much more cumbersome otherwise. 

%bnote{Say something like ``the remainder of this paper is devoted to studying deformations using cosimplicial objects and DGLAs..."  Could also give motivation by constructing a DGLA governing $\Dee$  Also, say one of the points of this section is to strengthen theorem about induced deformations and LWL branes}
 Most of the following discussion--including the notation--is taken from \cite{BM}.  Let $\Delta_{mon}$ be the category whose objects are the finite ordinal sets $[n]=\{0,1,\cdots,n\}$ for $n=0,1,\cdots$, and whose morphisms are order-preserving injective maps among them.   Recall that a \emph{semicosimplicial object} in a category $\CC$ is a functor $A\ut:\Delta_{mon}\to \CC$.  This may be pictured as a diagram  
 \begin{equation}\label{simplicial diagram}A\ut=\xymatrix @C=.3in{A_0 \ar@<+.4ex>[r]\ar@<-.4ex>[r] & A_1 \ar@<+.6ex>[r]\ar@<-.6ex>[r] \ar[r]& A_2\ar@<+1ex>[r]\ar@<-1ex>[r]\ar@<+.3ex>[r]\ar@<-.3ex>[r] & \dots}, \end{equation} where each $A_n:=A\ut([n])$ is an object of $\CC$. The \emph{coface maps} $\partial^i_n:A_n\to A_{n+1}\}_{i=0}^{n+1}$ (for $i=0,\cdots, n+1$) satisfy a number of relations determined by the combinatorial structure of $\Delta_{mon}$.  
 
 More generally, given a (strict) 2-category $\CC$, we may similarly define a (strict) semicosimplicial object in $\CC$: this consists of a diagram of the form (\ref{simplicial diagram}), where the entries are objects of $\CC$ and the arrows are 1-morphisms, which we require to satisfy cosimplicial relations on the nose.  The example we will need is when $\CC$ is the 2-category of formal groupoids (over $\Art$). 

 %\begin{definition} A \emph{semi-cosimplicial object} in a category $\CC$ is a (covariant) functor $A\db:\Delta_{mon}\to \CC$.
 %\end{definition}
%\bnote{Spell this out a little bit.}

%\begin{definition} A (strict) \emph{semi-cosimplicial groupoid} is a (strict, covariant) functor from $\Delta_{mon}$ to the 2-category of groupoids.
%\end{definition}
%\bnote{Maybe make remark about groupoids over $\Art$.}
%\bnote{Maybe include reference for notion of a strict functor.}  Thus, a semi-cosimplicial groupoid $\G\db$ consists of a collection of groupoids $\G_0, \G_1,\cdots$ together with functors
%\[\partial_0,\partial_1,\cdots,\partial_{k+1}:\G_k\to \G_{k+1}\] which strictly satisfy the usual \gnote{Citation:May} co-simplicial relations on the nose.

\begin{ex}\label{example nerve groupoid} Let $\F$ be a sheaf on a topological space $X$, valued in a category (or strict 2-category) $\CC$.  %In other words, for every open set $U\subset X$ we have a groupoid $\CC(U)$, for each $V\subset U$ we have a functor $\rho_{V,U}:\CC(U)\to \CC(V)$, such that for each $W\subset V\subset U$ we have $\rho_{W,V}\rho_{V,U}=\rho_{W,U}$, and such that $\rho_{U,U}$ is the identity functor.  
Given an open cover $\U=\{U\sa\}$ of $X$, we may define a semicosimplicial object $A\ut$ in $\CC$, called the \emph{nerve} of $\F$.  For each $n=0,1,\cdots$ we take 
\[A\ut_{n}=\Pi_{\alpha_{0},\cdots\alpha_{n}}\F(U_{\alpha_0,\cdots\alpha_n}).\]  The coface maps $\partial^i_n:A\ut_n\to A\ut_{n+1}$ are constructed using the restriction maps of the sheaf $\F$: for each $n=0,1,\cdots$ and each $i=0,\cdots, n+1$, we have 
\[\partial_n^i=\Pi_{\alpha_0,\cdots,\alpha_{n+1}}\F(U_{\alpha_0\cdots\alpha_{n+1}},U_{\alpha_0\cdots \hat{\alpha}_i\cdots\alpha_{k+1}}),\] where for each inclusion of open subset $U\subset V$ the notation $\F(U,V):\F(V)\to\F(U)$ denotes the restriction map. % For example, given $x=\{x_{\alpha_0\cdots\alpha_k}\}$, we have 
%\[(\partial_ix)_{\alpha_0\cdots\alpha_{k+1}}=x_{\alpha_0\cdots\hat{\alpha_i}\cdots\alpha_{k+1}}.\]
\end{ex}
 
\begin{definition}\label{definition descent groupoid} \cite[Thm. 2.6]{BM} Given a semicosimplicial groupoid $\G_{\bullet}$, the \emph{groupoid of descent data} $\Desc(\G\db)$ is defined as follows:
\begin{enumerate} \item An object of $\Desc(\G\db)$ is a pair $(l,m)$ with $l$ an object of $\G_0$, and $m$ a morphism in $\G_1$ from $\partial_0l\to\partial_1l$, such that the equation 
\[(\partial_0m)(\partial_1m)^{-1}(\partial_2m)=1\] holds in $\G_2$.
\item A morphism from $(l,m)$ to $(l',m')$ is a morphism $a:l\to l'$ in $\G_0$, such that the following diagram commutes:
\[\xymatrix{ \partial_0l \ar[r]^{m} \ar[d]_{\partial_0a} & \partial_1l \ar[d]^{\partial_1a} \\ \partial_0l' \ar[r]^{m'} & \partial_1l'.}\]
\end{enumerate}
\end{definition}

\begin{ex}\label{nerve groupoid 2}  Continuing Example \ref{example nerve groupoid}, let us describe explicitly the groupoid $\Desc(\G\db)$ in the case that $\G\db$ is the nerve of a strict presheaf of groupoids on $X$.  An object is a pair $(\{x\sa\},\{\varphi\sab\}\}$, where $x\sa\in \CC(U\sa)$, and the $\varphi\sab$ are morphisms
\[\varphi\sab:x_{\beta}\to x_{\alpha}\] (where both objects have implicitly been restricted to $U_{\alpha\beta}$).  The morphisms must satisfy 
\[\varphi_{\gamma\beta}\varphi_{\beta\alpha}=\varphi_{\gamma\alpha}\] on the triple intersections $U_{\alpha\beta\gamma}$. A morphism from $x=(\{x\sa\},\{\varphi\sab\})$ to $x'=(\{x'\sa\},\{\varphi'\sab\})$ is a collection $\{\psi\sa\}$ of morphisms $\psi\sa:x\sa\to x'\sa$ that satisfy \[\psi\sa\varphi_{\alpha\beta}=\varphi'_{\alpha\beta}\psi\sb\] on overlaps $U\sab$.
\end{ex}

\begin{rem} Definition \ref{definition descent groupoid} and Remark \ref{nerve groupoid 2} extend immediately to the case of formal groupoids.  For example, in the case of the sheaf of formal groupoids $\De\uB(X,\J)$ associated to a GC brane $\B$,  the formal groupoid in Definition \ref{glued deformations} with respect to an open cover $\U$ is precisely the descent groupoid for the nerve of $\De\uB(X,\J)$. 
\end{rem}

Given a GC brane $\B\in\Br(X,\J)$, recall from \S\ref{induced deformations} the formal groupoid $\Dt\uB\da(X,\J)^{tr}$ described in Definition \ref{definition tilde extended}.  This groupoid may also be refined to a sheaf of formal groupoids over $X$, 
which assigns to each open subset $U\subset X$ the formal groupoid $\Dt\uB(U,\J)^{tr}:=\Dt^{\B|_U}\da(U,\J|_U)^{tr}$.  %\bnote{I may need to say something about the case when $U$ doesn't intersect $Z$.  In this case, $\Dt\uB\da(U,\J)^{tr}$ should be equivalent to the trivial groupoid.  Probably what I want is to define $\mathcal{K}\uB\da(U)$ to be equal to $\T\uB\da(U)$ in this case.}
\begin{definition} Let $\Dt\uB(\U,\J)^{tr}$ be the groupoid formal of descent data associated to the nerve of the sheaf of groupoids $U\mapsto \Dt(U,\J)^{tr}$ with respect to an open cover $\U$.  Explicitly, for each $A\in\Art$  an object of $\Dt\uB\da(\U,\J)^{tr}$ is a pair $(\{x\sa\},\{y\sab\})$ where 
\begin{enumerate} \item $x\sa$ is an element of $e^{\T\da(U\sa)}$, 
\item $y\sab$ is an element of $e^{\KKK\da(U\sab)}$,
\item on each overlap $U\sab$ we have $x\sa=\chi(y\sab)x\sb$, 
\item on each triple overlap $U_{\alpha\beta\gamma}$ we have $y_{\beta\gamma}y_{\alpha\gamma}^{-1}y_{\alpha\beta}=1.$
\end{enumerate} 
A morphism from $(\{x\sa\},\{y\sab\})$ to $(\{x'\sa\},\{y'\sab\})$ in $\Dt\uB\da(\U,\J)^{tr}$ is a collection $\{w\sa\}$, where 
\begin{enumerate} \item $w\sa\in e^{\KKK\da(U\sa)}$, 

\item on each $U\sa$ we have $x'\sa =\chi(w\sa) x\sa$, 
\item on each double overlap $U\sab$ we have $y'\sab w\sb=w\sa y\sab$.
\end{enumerate}
The composition of morphisms is given by group multiplication, i.e. 
\[\{w\sa'\}\circ\{w\sa\}=\{w'\sa w\sa\}.\]  The identity morphisms is given by the identity group elements.
\end{definition}

We then have the following easy result, whose proof is omitted.  
\begin{proposition}  For each $A\in\Art$, each object $(\{x\sa\},\{y\sab\})\in \Dt\uB\da(\U,\J)^{tr}$, and each $g\in e^{\H\da(X)}$, let 
\[(\{x\sa\},\{y\sab\})\cdot g=(\{x\sa\cdot g|_{U\sa}\},\{y\sab\}.\]  For each morphism 
\[\{w\sa\}: (\{x\sa\},\{y\sab\})\to (\{x'\sa\},\{y'\sab\}),\] let $\{w\sa\}\cdot g=\{w\sa\}$, regarded as a morphism from $(\{x\sa\},\{y\sab\})\cdot g$ to $(\{x'\sa\},\{y'\sab\})\cdot g$.  Then this defines a strict right action of $e^{\H(X)}$ on $\Dt\uB(\U,\J)^{tr}$.
\end{proposition}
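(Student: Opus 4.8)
The plan is to verify the group action axioms directly from Definition \ref{group action on category}, which amounts to checking three things: that $(\{x\sa\},\{y\sab\})\cdot g$ is again a well-defined object of $\Dt\uB\da(\U,\J)^{tr}$; that $\{w\sa\}\cdot g$ is a well-defined morphism between the translated objects; and that the functoriality and unit conditions of a strict right action hold. Since $g$ acts trivially on morphisms, the functoriality $(\{w'\sa\}\circ\{w\sa\})\cdot g = (\{w'\sa\}\cdot g)\circ(\{w\sa\}\cdot g)$ and the unit condition are immediate, and the composition $(\hB\cdot g)\cdot g' = \hB\cdot(gg')$ reduces to the associativity of the right action of $e^{\H\da(X)}$ on each $e^{\T\da(U\sa)}$ by group multiplication (which in turn follows from the fact that $e^{\H(X)}$ is a subgroup of $e^{\T(X)}$, already used in \S\ref{induced deformations}). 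So the content is entirely in the two well-definedness checks.

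First I would check that $(\{x\sa\cdot g|_{U\sa}\},\{y\sab\})$ satisfies conditions (1)--(4) in the definition of an object of $\Dt\uB\da(\U,\J)^{tr}$. Condition (1) is clear since $e^{\H(X)}\subset e^{\T(X)}$ restricts to $e^{\H(U\sa)}\subset e^{\T(U\sa)}$, so $x\sa\cdot g|_{U\sa}\in e^{\T\da(U\sa)}$; conditions (2) and (4) are unchanged because the $y\sab$ are untouched. The one point requiring the explicit structure of $\KKK\da$ is condition (3): we need $x\sa\cdot g|_{U\sa} = \chi(y\sab)\,(x\sb\cdot g|_{U\sb})$ on $U\sab$. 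Using that $x\sa = \chi(y\sab)x\sb$ and that multiplication by $g$ is on the right, this becomes $\chi(y\sab)x\sb\, g|_{U\sab} = \chi(y\sab)x\sb\, g|_{U\sab}$, an identity, since $g|_{U\sa}$ and $g|_{U\sb}$ both restrict to $g|_{U\sab}$; here one uses that the Hamiltonian symmetries of $(X,\J)$ restrict compatibly, exactly as in the paragraph preceding the definition of $\Dt\uB(\U,\J)^{tr}$.

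Next I would verify that $\{w\sa\}$, now regarded as a collection of morphisms on the translated objects, still satisfies conditions (1)--(3) for a morphism $(\{x\sa\},\{y\sab\})\cdot g \to (\{x'\sa\},\{y'\sab\})\cdot g$. Condition (1) is unchanged. For condition (2) we need $x'\sa\cdot g|_{U\sa} = \chi(w\sa)(x\sa\cdot g|_{U\sa})$; since $x'\sa = \chi(w\sa)x\sa$ and the translation is by right multiplication, this reads $\chi(w\sa)x\sa\,g|_{U\sa} = \chi(w\sa)x\sa\,g|_{U\sa}$, again an identity. Condition (3), $y'\sab w\sb = w\sa y\sab$, involves only the $y$'s and $w$'s, which are untouched by $g$, so it is automatic. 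Assembling these verifications completes the proof that we have a strict right action.

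The main obstacle—such as it is—is purely bookkeeping: one must keep straight that $g$ acts by \emph{right} multiplication on the $e^{\T\da(U\sa)}$ components and trivially on the connecting elements $y\sab$ and on all morphism data, so that every compatibility equation that involves a $y$ or a $w$ survives for trivial reasons, and every equation involving only the $x$'s survives because right multiplication is a group operation. There is no genuine geometric input beyond the already-established fact (used to define the action on $\Dt\uB(\U,\J)$) that the formal generalized Hamiltonian symmetries form a presheaf of subgroups compatible with restriction; in fact this proposition is the evident cover-wise analogue of the (unproven, stated) proposition just before Definition \ref{definition descent groupoid} in \S\ref{induced deformations}, and could be deduced from it together with the nerve construction, but a direct check as above is shortest.
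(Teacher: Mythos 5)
Your proof is correct. The paper states this as an "easy result, whose proof is omitted," and your direct verification — checking that right multiplication by $g|_{U\sa}$ preserves the cocycle conditions because it commutes with left multiplication by $\chi(y\sab)$ and $\chi(w\sa)$, while all data involving the $y$'s and $w$'s is untouched — is exactly the routine check the paper intends.
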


\begin{definition}\label{definition of extended glued} Let 
\[\De\uB(\U,\J)^{tr}:=\Dt\uB(\U,\J)^{tr}//e^{\H(X)}\] be the formal groupoid associated to the right action of $e^{\H(X)}$ on $\Dt\uB(\U,\J)^{tr}$, as described in Definition \ref{definition action groupoid}.
\end{definition}

The map taking a formal semicosimplicial groupoid to its formal groupoid of descent data is functorial in a natural sense \cite{BM}.  Namely, there is a (strict) 2-category of  semicosimplicial formal groupoids, and the descent construction defines a 2-functor from this category to the two-category of formal groupoids.  For example,  a map from a semicosimplicial formalgroupoid $\G\db$ to another one  $\H\db$ by definition consists a collection of functors $\Psi_k:\G_k\to \H_k$ that are compatible with the boundary maps.  Such a map induces a functor $\textrm{Desc}(\Psi\db):\textrm{Desc}(\G\db)\to \textrm{Desc}(\H\db)$. 

\begin{proposition}\label{equivalence of descent groupoids}\cite{BM} Let $\Psi\db:\G\db\to\H\db$ be a map of semicosimplicial formal groupoids, such that for each $k$ the functor $\Psi_k:\G_k\to \H_k$ is an equivalence.  Then $\textrm{Desc}(\Psi\db):\textrm{Desc}(\G\db)\to \textrm{Desc}(\H\db)$ is also an equivalence.
\end{proposition}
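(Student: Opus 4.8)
The plan is to reduce the statement to the fully faithful and essentially surjective properties of the descent functor, checking each by hand using the explicit description of $\textrm{Desc}(\G\db)$ from Definition \ref{definition descent groupoid}. Since $\Psi\db$ is a map of semicosimplicial formal groupoids, it induces a functor $\textrm{Desc}(\Psi\db)$ as indicated just before the statement; our job is only to verify this functor is an equivalence when each $\Psi_k$ is.

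First I would treat \textbf{full faithfulness}. Fix $A\in\Art$ and objects $(l,m)$, $(l',m')$ of $\textrm{Desc}(\G\db)_A$. A morphism $(l,m)\to(l',m')$ is a morphism $a\colon l\to l'$ in $(\G_0)_A$ satisfying the compatibility square with $m,m'$, and similarly on the target side with $\Psi_0(a)$, $\textrm{Desc}(\Psi\db)(m)=\Psi_1(m)$, etc. Since $\Psi_0$ is fully faithful, the map $a\mapsto \Psi_0(a)$ is a bijection between $\textrm{Hom}_{(\G_0)_A}(l,l')$ and $\textrm{Hom}_{(\H_0)_A}(\Psi_0 l,\Psi_0 l')$; one then checks that $a$ satisfies the descent compatibility square for $(l,m),(l',m')$ if and only if $\Psi_0(a)$ satisfies the corresponding square for their images. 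One direction is immediate by applying $\Psi_1$ to the square and using functoriality; the reverse direction uses that $\Psi_1$ is faithful, so that equality of the two composites $\partial_1 a'\circ m$ and $m'\circ \partial_0 a$ (which live in $(\G_1)_A$) can be detected after applying $\Psi_1$. This gives a bijection on Hom-sets, i.e. full faithfulness of $\textrm{Desc}(\Psi\db)$.

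Next, \textbf{essential surjectivity}. Given an object $(n,p)$ of $\textrm{Desc}(\H\db)_A$ with $n\in(\H_0)_A$ and $p\colon \partial_0 n\to \partial_1 n$ in $(\H_1)_A$ satisfying the cocycle identity in $(\H_2)_A$, use that $\Psi_0$ is essentially surjective to pick $l\in(\G_0)_A$ and an isomorphism $b\colon \Psi_0(l)\toco n$ in $(\H_0)_A$. Transport $p$ along $b$: set $\tilde p = (\partial_1 b)^{-1}\circ p\circ \partial_0 b\colon \partial_0\Psi_0(l)\to\partial_1\Psi_0(l)$, which by the compatibility of $\Psi\db$ with coface maps is a morphism $\Psi_1(\partial_0 l)\to\Psi_1(\partial_1 l)$ in $(\H_1)_A$. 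Since $\Psi_1$ is full (and faithful), there is a unique morphism $m\colon \partial_0 l\to\partial_1 l$ in $(\G_1)_A$ with $\Psi_1(m)=\tilde p$; one then checks the cocycle identity $(\partial_0 m)(\partial_1 m)^{-1}(\partial_2 m)=1$ holds in $(\G_2)_A$ by applying the faithful functor $\Psi_2$, under which it becomes the (transported, hence valid) cocycle identity for $\tilde p$. Thus $(l,m)$ is an object of $\textrm{Desc}(\G\db)_A$, and $b$ furnishes an isomorphism $\textrm{Desc}(\Psi\db)(l,m)\toco(n,p)$ in $\textrm{Desc}(\H\db)_A$. Finally, all of these constructions are natural in $A$, i.e. compatible with the functors induced by morphisms $A\to A'$ in $\Art$, since each $\Psi_k$ is a morphism of formal groupoids; this upgrades the levelwise equivalences of ordinary groupoids to an equivalence of formal groupoids.

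The \textbf{main obstacle} is bookkeeping rather than anything deep: one must keep careful track of which coface map ($\partial_0,\partial_1$ at level $0\to 1$, and $\partial_0,\partial_1,\partial_2$ at level $1\to 2$) is being applied and that $\Psi\db$ strictly commutes with all of them, so that transported morphisms land in the image of $\Psi_1$ on the nose and the cocycle identities match up exactly. Since the paper works with strict (not pseudo) semicosimplicial formal groupoids and strict compatibility, there are no coherence subtleties, and the argument is the same at each $A\in\Art$, so the only genuine content is the elementary fact that a levelwise fully faithful, essentially surjective map of (truncated) cosimplicial diagrams induces the same on the equalizer-type category of descent data. This is exactly the content cited from \cite{BM}, so in the write-up I would give the explicit verification above and remark that it is the argument of \cite[Thm. 2.6 and the following discussion]{BM}.
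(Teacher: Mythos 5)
Your proof is correct: the paper itself gives no argument for this proposition (it is quoted from \cite{BM} without proof), and your direct verification of full faithfulness and essential surjectivity of $\textrm{Desc}(\Psi\db)$, using strict commutation with the coface maps and the transport of the cocycle identity along $b$, is exactly the standard argument one would expect from \cite[Thm.\ 2.6]{BM}. The only step you assert rather than compute is that $(\partial_0\tilde p)(\partial_1\tilde p)^{-1}(\partial_2\tilde p)=1$; this does follow from the cosimplicial identities $\partial_1\partial_0=\partial_0\partial_0$, $\partial_2\partial_1=\partial_1\partial_1$, $\partial_2\partial_0=\partial_0\partial_1$ applied to the conjugating terms $\partial_i\partial_j b$, so the claim is fine as stated.
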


\begin{ex} Let $\CC$ and $\DD$ be (strict) sheaves of formal groupoids on $X$, and let $\Sigma:\CC\to \DD$ be a strict map of such sheaves, i.e. a collection of functors $\Sigma(U):\CC(U)\to\DD(U)$ that are strictly compatible with the restriction functors.  Then $\Sigma$ induces a map of semicosimplicial formal groupoids from the nerve of $\CC$ to the nerve of $\DD$, which in turn induces a functor $\Sigma(\U)$ of the associated descent groupoids. Furthermore, if for each open subset $U\subset X$ the functor $\Sigma(U):\CC(U)\to\DD(U)$ is an equivalence of groupoids, then then induced functor $\Sigma(\U)$ is also an equivalence.
\end{ex}
\begin{definition} Let $\widetilde{\Sigma}\uB(\U):\Dt\uB(\U,\J)^{tr}\to \Dt\uB\da(\U,\J)$ be the functor induced by the map of sheaves described in Definition \ref{definition sigma tilde}.
\end{definition}

By construction, the functor $\widetilde{\Sigma}\uB(\U):\Dt\uB(\U,\J)^{tr}\to \Dt\uB(\U,\J)$ is compatible with the actions of $e^{\H(X)}$ on both formal groupoids.  Therefore, using Proposition \ref{induced equivalence}, we may extend $\widetilde{\Sigma}\uB$ to a functor from $\Def^{\B}(\U,\J)^{tr}$ to $\Def^{\B}(\U,\J)$.
\begin{definition}\label{definition extended nerve} $\Sigma\uB(\U): \De\uB(\U,\J)^{tr}\to \De\uB(\U,\J)$ is the functor $\widetilde{\Sigma}\uB(\U)//e^{\H(X)}$ induced from $\widetilde{\Sigma}\uB(\U)$ using Proposition \ref{induced equivalence}. \end{definition}

\begin{theorem}\label{theorem extended and LWL}  Let $\B$ be a leaf-wise Lagrangian GC brane on $(X,\J)$ (or more generally a brane with locally trivializable deformations).  Then there exists an open cover $\U$ of $X$ such that 
\[\Sigma\uB(\U): \De\uB(\U,\J)^{tr}\to \De\uB(\U,\J)\] is an equivalence of formal groupoids.  Furthermore, by Theorem \ref{restriction equivalence}, the formal groupoids $\De\uB(\U,\J)^{tr}$ and $\De\uB(X,\J)$ are also equivalent. %In particular, there is an equivalence of deformation functors
%\[\De_{\B}\cong \pi_0(\Def_{\U,\B}^{tr}).\] \bnote{Make sure I say the right-hand side is a deformation functor.}
\end{theorem}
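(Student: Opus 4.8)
\textbf{Proof plan for Theorem \ref{theorem extended and LWL}.}

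The plan is to deduce this result by combining three ingredients already in hand: (i) the local equivalence statement of Corollary \ref{equivalence of induced}, which says that every point of $Z$ has a neighborhood on which $\widetilde{\Sigma}\uB$ is an equivalence of formal groupoids; (ii) the functoriality of the descent construction under open covers (Proposition \ref{equivalence of descent groupoids}), which promotes a levelwise equivalence of semicosimplicial formal groupoids to an equivalence of their descent groupoids; and (iii) the equivalence $R\uB$ of Theorem \ref{restriction equivalence} identifying $\De\uB(X,\J)$ with $\De\uB(\U,\J)$. The one subtlety to address is that Corollary \ref{equivalence of induced} only produces good neighborhoods of points of the submanifold $Z$, not of all of $X$; this must be handled when choosing the cover $\U$.

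First I would choose the open cover. For each $z\in Z$, Corollary \ref{equivalence of induced} gives an open neighborhood $U_z\subset X$ of $z$ such that $\widetilde{\Sigma}\uB(U_z):\Dt\uB(U_z,\J)^{tr}\to \Dt\uB(U_z,\J)$ is an equivalence; for each point $x\in X\setminus Z$ choose any open neighborhood $U_x$ disjoint from $Z$. Let $\U=\{U\sa\}$ be the resulting open cover of $X$. The key observation is that for any open subset $V$ \emph{disjoint from $Z$}, the brane $\B|_V$ is the ``empty brane'' (there is no submanifold to deform), so both $\Dt^{\B|_V}(V,\J)^{tr}$ and $\Dt^{\B|_V}(V,\J)$ are trivial formal groupoids (a single object, a single morphism, at each $A\in\Art$), hence the functor $\widetilde{\Sigma}\uB(V)$ between them is trivially an equivalence. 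Consequently, for \emph{every} finite intersection $U_{\alpha_0\cdots\alpha_n}$ of members of $\U$, the functor $\widetilde{\Sigma}\uB(U_{\alpha_0\cdots\alpha_n})$ is an equivalence: either the intersection is contained in one of the chosen $U_z$ (and one restricts the equivalence, noting that $\widetilde{\Sigma}\uB$ restricts well since it is a map of sheaves of formal groupoids), or the intersection is disjoint from $Z$ and the previous remark applies.

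Next I would assemble this into a statement about nerves. The assignment $U\mapsto \widetilde{\Sigma}\uB(U)$ is, by construction (Definition \ref{definition sigma tilde}), a strict map of sheaves of formal groupoids $\Dt\uB(-,\J)^{tr}\to\Dt\uB(-,\J)$. Applying the nerve construction of Example \ref{example nerve groupoid} with respect to $\U$ yields a map of semicosimplicial formal groupoids $\Psi\db:\G\db\to\H\db$ whose level-$n$ component is the product over $(n{+}1)$-fold intersections of the functors $\widetilde{\Sigma}\uB(U_{\alpha_0\cdots\alpha_n})$; by the previous paragraph each of these is an equivalence, so each $\Psi_n$ is an equivalence. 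Proposition \ref{equivalence of descent groupoids} then gives that $\mathrm{Desc}(\Psi\db):\mathrm{Desc}(\G\db)\to\mathrm{Desc}(\H\db)$, which is precisely $\widetilde{\Sigma}\uB(\U):\Dt\uB(\U,\J)^{tr}\to\Dt\uB(\U,\J)$, is an equivalence of formal groupoids. Since $\widetilde{\Sigma}\uB(\U)$ is compatible with the right actions of $e^{\H(X)}$ on both sides, Proposition \ref{induced equivalence} upgrades this to the statement that $\Sigma\uB(\U)=\widetilde{\Sigma}\uB(\U)//e^{\H(X)}:\De\uB(\U,\J)^{tr}\to\De\uB(\U,\J)$ is an equivalence. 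The final sentence of the theorem is then immediate from Theorem \ref{restriction equivalence}, which provides the equivalence $R\uB:\De\uB(X,\J)\toco\De\uB(\U,\J)$.

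The main obstacle — really the only nontrivial point — is the bookkeeping around intersections that meet $Z$ only partially or not at all; the argument hinges on recognizing that $\widetilde{\Sigma}\uB$ is an equivalence trivially over sets disjoint from $Z$ and restricts correctly (as a map of sheaves) to sets contained in one of the distinguished neighborhoods $U_z$. Everything else is a formal consequence of the functoriality of descent (Proposition \ref{equivalence of descent groupoids}) and of the action-groupoid construction (Proposition \ref{induced equivalence}), both already established.
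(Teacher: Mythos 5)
Your overall route is the paper's: choose a cover on which $\widetilde{\Sigma}\uB$ is an equivalence (Corollary \ref{equivalence of induced}), pass to nerves and apply Proposition \ref{equivalence of descent groupoids}, then quotient by $e^{\H(X)}$ via Proposition \ref{induced equivalence}; the final sentence is Theorem \ref{restriction equivalence}. Your attention to points of $X\setminus Z$ is a genuine improvement on the paper's one-line proof, and your conclusion for open sets $V$ disjoint from $Z$ is correct, though the justification is slightly off: $\Dt\uB(V,\J)^{tr}$ is not literally trivial there (it has one object for each element of $e^{\T\da(V)}$, with a unique morphism between any two, since $\KKK\da(V)=\T\da(V)$ and $\chi=\mathrm{id}$); it is merely equivalent to the trivial groupoid, which is all you need.

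The step that does not hold as you justify it is the claim that $\widetilde{\Sigma}\uB(U_{\alpha_0\cdots\alpha_n})$ is an equivalence whenever the intersection is contained in one of the distinguished neighborhoods $U_z$, ``by restricting the equivalence.'' That $\widetilde{\Sigma}\uB$ is a map of sheaves only means it commutes with restriction functors; it does not transfer essential surjectivity from $U_z$ to an arbitrary open subset $V\subset U_z$. Indeed, the essential surjectivity established in Theorem \ref{essentially surjective} relies on the projection $(x,y)\mapsto x$ onto $Z$, which is available because the normal-form neighborhoods of Theorem \ref{LWL normal form} are of product type; a general open subset of such a chart admits no such projection, so nothing in the paper yields essential surjectivity on arbitrary multiple intersections. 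The repair is to observe that you do not need it: unwinding Definition \ref{definition descent groupoid}, the objects of the descent groupoid involve only objects at level $0$ and morphisms at level $1$, and the cocycle condition is an equation at level $2$; hence $\Desc(\Psi\db)$ is an equivalence as soon as $\Psi_0$ is an equivalence, $\Psi_1$ is fully faithful, and $\Psi_2$ is faithful. Full faithfulness of $\widetilde{\Sigma}\uB(V)$ holds for \emph{every} open $V$ by Proposition \ref{fully faithful}, so only the level-$0$ condition constrains the choice of cover, and that is exactly what Corollary \ref{equivalence of induced} (plus your sets disjoint from $Z$) provides. This refinement is also how the paper's own terse appeal to Proposition \ref{equivalence of descent groupoids} should be read.
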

\begin{proof}  By Corollary \ref{equivalence of induced}, we there exists an open cover $\U=\{U\sa\}$ of $X$ such that, for each $\alpha$, the functor 
\[\widetilde{\Sigma}\uB(U\sa):\Dt\uB(U\sa,\J)^{tr}\to \Dt\uB(U\sa,\J)\] is an equivalence.   It then easily follows from Proposition \ref{equivalence of descent groupoids} that 
\[\widetilde{\Sigma}\uB(\U):\Dt\uB(\U,\J)^{tr}\to \Dt\uB(\U,\J)\] is an equivalence as well.  The proof then follows by applying Proposition \ref{induced equivalence}.
\end{proof}

\section{Construction of the DGLA for branes with locally trivializable deformations}\label{DGLA theory}
%In this final section construct, for each LWL brane $\B$ a DGLA, which we prove governs the deformation functor associated to $\B$.  We will then use this to prove one of our main results, Theorem \ref{}, which gives a criterior for the deformations of $\B$ to be unobstructed.  We first recall the definition of a DGLA, as well as the construction of the associated Deligne groupoid and deformation functor.  Following \gnote{cite} we then introduce semicosimplicial DGLAs and bisemicosimplicial DGLAs, as well as the Thom-Whitney totalization and its important properties.   With the necessary definitions and results in hand, we then give our explicit construction of the DGLA.   Finally, we prove Theorem \ref{}, which involves an explicit examination of the cohomology of the DGLA. 

\subsection{DGLA's and the Deligne functor}\label{DGLA obstructions}

%One powerful approach to studying geometric deformation problems--such as that encoded in the functor in Definition \ref{brane deformation functor}--is through the use of differential graded Lie algebras (DGLA's).  \gnote{citation}  The strategy may be explained roughly as follows.  Any DGLA $L$ over a field $k$ canonically determines a functor
%\begin{equation}\label{DGLA deformation functor}\textrm{Def}_L:\textrm{Art}_k\to\textrm{Set},\end{equation} sometimes called the \emph{deformation functor} associated to $L$.   There exist many powerful tools for studying functors of this form; in particular, many important properties of $\textrm{Def}_L$ can be translated into straightforward calculations involving the DGLA $L$.  Given a functor $F:\textrm{Art}_k\to \textrm{Set}$ encoding some geometric (or algebraic) problem of interest, one then tries to find a DGLA $L$ together with a natural isomorphism $\Phi:\textrm{Def}_L\toco F$.  Ideally, one should also be able to describe $L$ (and $\Phi$) in fairly concrete terms.

%Given a nilpotent DGLA $\g$, let $\Del(\g)$ denote the corresponding Deligne groupoid of Maurer-Cartan elements.
%\begin{enumerate}
%\item \bnote{Motivate by discussing strategy of using DGLAs to study geometric deformation theory problems}
%\item \bnote{briefly give definition of DGLA, Mauer-Cartan elements, Deligne groupoid and associated deformation functor}.

%\item \bnote{Explain why first order deformations correspond to elements of 1st cohomology, and how obstructions relate to second cohomology}

%\end{enumerate}

For simplicity, in the following we work over the field $\R$.  A differential graded Lie algebra (DGLA)  consists of a cochain complex $\g\ub$, together with a cochain map (the ``bracket")
\[[\cdot,\cdot]:\g\ub\otimes \g\ub\to \g\ub\] of degree zero, which is skew-symmetric (in the graded sense), and satisfies a graded version of the Jacobi identity \cite{I}.
\begin{ex}\label{Lie algebra as DGLA} Any Lie algebra may be regarded as a DGLA concentrated in degree zero.  Conversely, given an arbitrary DGLA $\g\ub$, the restriction of the bracket to $\g^0$ gives it the structure of a Lie algebra.
\end{ex}
For any DGLA $\g\ub$, the set of \emph{Maurer-Cartan elements} of $\g\ub$ are defined as 
\[\textrm{MC}(\g\ub)=\{x\in\g^1:dx+\frac{1}{2}[x,x]=0\}.\]  If $\g\ub$ is nilpotent, the group $e^{\g^0}$ is well-defined (sometimes called the \emph{gauge group}).  This group has a left action on the set of Maurer-Cartan elements of $\g\ub$ given by the formula
\begin{equation}\label{gauge action}e^y\cdot x=x+\sum_{n=0}^{\infty}\frac{[y,-]^n}{(n+1)!}([y,x]-dy).\end{equation}  Given an arbitrary DGLA $\g\ub$ (not necessarily nilpotent), one may define a formal groupoid $\Del_{\g\ub}$ over $\Art$, known as the \emph{Deligne groupoid}.  This is defined for each $A\in\Art$ (with unique maximal ideal $\m\subset A$) as the action groupoid 
\begin{equation}\label{deligne groupoid} \Del_{\g\ub}(A):=\textrm{MC}(\g\ub\otimes\m)//e^{\g^0\otimes\m}.\end{equation}  Passing to equivalence classes, we obtain a functor $\df_{\g\ub}:\Art\to\textrm{Set}$, which assigns to every $A\in\Art$ the set of Maurer-Cartan elements of $\g\otimes\m$ modulo the action of $e^{\g^0\otimes\m}$.

\begin{ex}  As in Example \ref{Lie algebra as DGLA}, let $\g$ be a Lie algebra, regarded as a DGLA concentrated in degree $0$.  Then for each $A\in\Art$, the Deligne groupoid $\Del_{\g}(A)$ may be identified with the groupoid
\begin{equation}\label{Deligne for Lie algebra} *//e^{\g\otimes\m}.\end{equation}  By definition, the groupoid (\ref{Deligne for Lie algebra}) has a unique object $*$, and the set of morphisms from $*$ to itself are the elements of the group $e^{\g\otimes\m}$, with composition given by group multiplication. 
\end{ex} 

Consider a functor $F:\Art\to \Set$.  Let $\mu:A'\to A$ be a surjective map in $\Art$, and $x\in F(A)$.  One says that $x$ can be \emph{extended} to an element of $F(A')$ if there exists $x'\in F(A')$ such that $F(\mu)(x')=x$.  In general there will be an obstruction to the existence of such an extension.  The functor $F$ is called \emph{unobstructed}, however, if for every surjective map $A'\to A$ the induced map $F(A')\to F(A)$ is also surjective.  The fact that every surjective map in $\Art$ can be factored through a sequence of small extensions (Proposition \ref{induct on small extensions}) implies that $F$ is unobstructed if and only if $F(A')\to F(A)$ is surjective for every small extension $A'\to A$.

%Suppose $F$ is of the form $\De_{\g}$ for some DGLA $\g$.  Given a small extension $\mu:A'\to A$ and a Maurer-Cartan element $x$ of $\g\otimes\m$ let us try to solve the problem of finding a Maurer-Cartan element $x'$ of $\g\otimes\m'$ such that $\mu(x'):=id_{\g}\otimes\mu(x')=x$.  Let $\sigma:A\to A'$ be a linear splitting of $\mu$, and choose a generator $\epsilon\in A'$ for the kernel $I$ of $\mu$; since $I$ is annihilated by the maximal ideal $\m'\subset A'$, not that we have we have $I=\epsilon A'=\epsilon\R$.  Any $x'$ as above is necessarily of the form
%\[x'=\sigma(x)+\epsilon y\] for a unique $y\in \g^1$.  We then see that 
%\begin{align*} dx'+\frac{1}{2}[x',x'] & = \sigma(dx)+\epsilon dy +\frac{1}{2}[\sigma(x),\sigma(x)] \\ & = -\frac{1}{2}\sigma([x,x])+\epsilon dy+ \frac{1}{2}[\sigma(x),\sigma(x)].\end{align*}  Writing 
%\begin{equation}\label{z formula}\frac{1}{2}\sigma([x,x])-\frac{1}{2}[\sigma(x),\sigma(x)]=\epsilon z\end{equation} for a (uniquely determined) $z\in\g^2$, we see that $x'$ satisfies the Maurer-Cartan equation if and only if 
%\[dy=z.\]  On the other hand, if we define $z$ by equation (\ref{z formula}), a straightforward calculation shows that $dz=0$ and therefore determines a cohomology class $[z]\in H^2(\g)$ (which turns out to depend only on $[x]\in\De_{\g}(A)$.); we can find an $x'$ as above satisfying the Maurer-Cartan equation precisely when this cohomology class vanishes.  In particular, we arrive at the following result. \

\begin{theorem}\label{H2 and obstructions}\cite[\S3]{M2}Let $\g$ be a DGLA with $H^2(\g)=0.$  Then the functor $\df_{\g}:\Art\to\Set$ is unobstructed.
\end{theorem}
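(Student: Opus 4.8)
\textbf{Proof proposal for Theorem \ref{H2 and obstructions}.}

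The plan is to prove unobstructedness directly from the definition of the Maurer--Cartan equation, using the reduction (via Proposition \ref{induct on small extensions}) to the case of a small extension $\mu:A'\to A$ with kernel $I\subset A'$ satisfying $\m_{A'}I=0$. Fix such a small extension and set $V=I$, a one-dimensional real vector space on which $\m_{A'}$ acts trivially. Given a class in $\df_{\g}(A)$, choose a representative Maurer--Cartan element $x\in \g^1\otimes\m_A$; the goal is to produce $x'\in\g^1\otimes\m_{A'}$ with $dx'+\tfrac12[x',x']=0$ and $\mu(x')=x$. First I would pick \emph{any} lift $\tilde x\in \g^1\otimes\m_{A'}$ of $x$ (for instance using an $\R$-linear splitting $\sigma:A\to A'$ of $\mu$, so $\tilde x=(\mathrm{id}_{\g}\otimes\sigma)(x)$), and consider the ``defect''
\[
h := d\tilde x+\tfrac12[\tilde x,\tilde x]\ \in\ \g^2\otimes\m_{A'}.
\]
Since $\mu(h)=dx+\tfrac12[x,x]=0$, the element $h$ lies in $\g^2\otimes I=\g^2\otimes V$.

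The key step is to show that $h$ is a \emph{cocycle}, i.e. $dh=0$ in $\g^3\otimes V$. This is the standard computation: $dh = d^2\tilde x+\tfrac12\big([d\tilde x,\tilde x]-[\tilde x,d\tilde x]\big)=[d\tilde x,\tilde x]$ using the graded Leibniz rule and $d^2=0$, and then
\[
[d\tilde x,\tilde x]=\big[h-\tfrac12[\tilde x,\tilde x],\tilde x\big]=[h,\tilde x]-\tfrac12[[\tilde x,\tilde x],\tilde x].
\]
The term $[[\tilde x,\tilde x],\tilde x]$ vanishes by the graded Jacobi identity (it is the usual fact that the cubic term of the Maurer--Cartan operator is zero), and $[h,\tilde x]=0$ because $h\in\g\otimes I$, $\tilde x\in\g\otimes\m_{A'}$, and $I\cdot\m_{A'}=0$ kills the product of the algebra components. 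Hence $dh=0$, so $h$ defines a class $[h]\in H^2(\g)\otimes V$. By hypothesis $H^2(\g)=0$, so $[h]=0$, which means there exists $k\in\g^1\otimes V$ with $dk=h$. (Here one uses that $H^2(\g\otimes V)\cong H^2(\g)\otimes V$ since $V$ is just a finite-dimensional real vector space and tensoring with $V$ is exact.)

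Finally I would set $x':=\tilde x-k$. Then $\mu(x')=\mu(\tilde x)-\mu(k)=x$, since $\mu(k)=0$ as $k\in\g\otimes I$. Moreover
\[
dx'+\tfrac12[x',x']=d\tilde x-dk+\tfrac12[\tilde x,\tilde x]-[\tilde x,k]+\tfrac12[k,k]=h-dk-[\tilde x,k]+\tfrac12[k,k],
\]
and $dk=h$ while both $[\tilde x,k]$ and $[k,k]$ vanish because $k$ has algebra component in $I$ and $I\cdot\m_{A'}=0$ (note $I\cdot I\subset I\cdot\m_{A'}=0$ as well). Hence $dx'+\tfrac12[x',x']=0$, so $x'\in\mathrm{MC}(\g\otimes\m_{A'})$ and its class in $\df_{\g}(A')$ maps to the class of $x$. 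This shows $\df_{\g}(A')\to\df_{\g}(A)$ is surjective for every small extension, and by Proposition \ref{induct on small extensions} $\df_{\g}$ is unobstructed.

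The only genuinely delicate point is bookkeeping the vanishing of the various brackets: one must be careful that every term involving $k$ or $h$ really does have \emph{both} factors' algebra components multiplying to zero (using $I\cdot\m_{A'}=0$, hence $I\cdot I=0$), and that the cubic term $[[\tilde x,\tilde x],\tilde x]$ vanishes purely by the graded Jacobi identity with the correct signs. I expect this sign-and-degree check to be the main thing to get right; everything else is formal. I would also remark that this argument only uses the Lie algebra $\g^0$ and the complex $(\g^{\le 2},d,[\cdot,\cdot])$, and is insensitive to whether $\g$ arises geometrically, so it applies verbatim to $L_{\B}$ once $H^2(L_{\B})\cong H^2(\B)$ is established.
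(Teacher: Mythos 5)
Your proof is correct, and it is the standard argument: lift the Maurer–Cartan element arbitrarily, observe that the defect $h$ lands in $\g^2\otimes I$, check it is closed using the graded Jacobi identity and $I\cdot\m_{A'}=0$, kill it by a coboundary using $H^2(\g)=0$, and correct the lift. The paper gives no proof of this theorem — it is quoted from \cite[\S3]{M2} — and your argument is exactly the one found there, so there is nothing to reconcile; the sign bookkeeping in $dh=[d\tilde x,\tilde x]=[h,\tilde x]-\tfrac12[[\tilde x,\tilde x],\tilde x]$ and the vanishing of all $I$-weighted brackets are handled correctly.
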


\subsection{Bisemicosimplicial DGLAs, totalization, and descent}\label{bisemi}

The notation and terminology in this section (mostly) follows \cite{I}.
Let 
\[V\ut=\xymatrix @C=.3in{V_0 \ar@<+.4ex>[r]\ar@<-.4ex>[r] & V_1 \ar@<+.6ex>[r]\ar@<-.6ex>[r] \ar[r]& V_2\ar@<+1ex>[r]\ar@<-1ex>[r]\ar@<+.3ex>[r]\ar@<-.3ex>[r] & \dots} \]
 be a semicosimplicial cochain complex, i.e. a semicosimplicial object in the category of cochain complexes (differential graded vector spaces).  As in \S\ref{cosimplicial groupoids}, for each $n=0,1\cdots$, we denote by $d_n:V_n\to V_n$ the differential on the cochain complex $V_n$, and we denote by $\partial_n^i:V_n\to V_{n+1}$ for $i=0,\cdots n+1$ the coface maps.    From $V\ut$ we may construct a cochain complex $\Tot(V\ut)$, called the \emph{total complex} or \emph{totalization} of $V\ut$.  As a graded vector space, $\Tot(V\ut)$ is equal to the direct sum $\bigoplus_{n\geq 0} V_n[-n]$, where for any graded vector space $W=\oplus_i W^i$ and any integer $k$, the shifted space $W[k]$ is defined by $(W[k])^i=W^{i+k}$.  For each $n$, let
 \[\partial_n=\sum_{i=0}^{n+1}(-1)^i\partial_n^i:V_n\to V_{n+1}.\]  Define 
 \[d=\sum_{n=0}^{\infty}(-1)^nd_n:\bigoplus_{n\geq0} V_n[-n]\to \bigoplus_{n>0} V_n[-n]\] and 
 \[\partial=\sum_{n=0}^{\infty}\partial_n: \bigoplus_{n\geq0} V_n[-n]\to \bigoplus_{n\geq0} V_n[-n].\]  The differential on $\Tot(V\ut)$ is then defined to be the sum
 \[D=d+\delta.\]  
 
For any category $\CC$, the collection of semicosimplicial objects in $\CC$ themselves form the objects of a category:  a morphism $A\ub\to B\ub$ of semicosimplicial objects in $\CC$ is by definition a collection of morphisms $\{\psi_n:A_n\to B_n\}$ in $\CC$ that commute with the coface maps.  The totalization construction sending a semicosimplicial cochain complex $V\ut$ to its total complex $\Tot(V\ut)$ extends in an natural way to a functor (from the category of semicosimplicial cochain complexes to the category of cochain complexes).

Next, let $\g\ut$ be a semicosimplicial DGLA.  If we apply the above construction to the semicosimplicial cochain complex underlying $\g\ut$, there is no natural way to give the resulting cochain complex $\Tot(\g\ut)$ a DGLA structure.  There is an alternate (functorial) construction, however, that takes as input a semicosimplicial DGLA $\g\ut$, and gives as output a DGLA $\Tot_{TW}(\g\ut)$, known as the \emph{Thom-Whitney totalization of $\g\ut$}.  We will not need the explicit form of this construction, only its existence and the properties summarized in Propositions \ref{cochain iso1}, \ref{descent for Deligne}, \ref{bi chain iso}, and \ref{double descent} below.   
\begin{proposition}\label{cochain iso1}\cite[Lemma 2.5]{I}  Let $\g\ut$ be a semicosimplicial DGLA.  Then the cochain complexes 
$\Tot_{TW}(\g\ut)$ and $\Tot(\g\ut)$ are quasi-isomorphic.
\end{proposition}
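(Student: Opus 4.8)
\textbf{Proof proposal for Proposition \ref{cochain iso1}.}

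The plan is to identify both $\Tot(\g\ut)$ and $\Tot_{TW}(\g\ut)$ as totalizations of the same first-quadrant double complex and then invoke a standard spectral-sequence / acyclic-assembly comparison. First I would recall the defining double complexes. For a semicosimplicial DGLA $\g\ut$, the underlying semicosimplicial cochain complex gives a double complex $C^{p,q} = \g_p^q$ with horizontal differential the alternating-sum coface map $\partial = \sum_i (-1)^i \partial_p^i : \g_p^q \to \g_{p+1}^q$ and vertical differential (a sign twist of) the internal DGLA differential $d_p$; its totalization is exactly $\Tot(\g\ut)$ as defined above. The Thom--Whitney construction, on the other hand, replaces the $p$-th column by $\Omega_p \otimes \g_p$, where $\Omega_p = \Omega^{\bullet}_{poly}(\Delta^p)$ is the DGCA of polynomial differential forms on the standard $p$-simplex, and assembles these into a DGLA via the simplicial identities; its underlying cochain complex is again the totalization of a double complex, now with $(p,q)$-term $\bigoplus_{a+b=q}\Omega_p^a\otimes \g_p^b$.

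The key step is the classical fact (Whitney's theorem, or the Dupont homotopy) that for each fixed $p$ the integration map $\int_{\Delta^p}: \Omega_p \to \R$ (more precisely, the normalized cochains functor applied simplicially, $\Omega_{\bullet}\to \R$) is a quasi-isomorphism of cochain complexes that is compatible with the cosimplicial structure; dually, the inclusion of the constants $\R \hookrightarrow \Omega_p$ is a quasi-isomorphism. Tensoring with $\g_p$ (over $\R$, so exactness is preserved) shows that the natural map of double complexes sending $C^{p,q}=\g_p^q$ into the Thom--Whitney double complex via $\R\hookrightarrow\Omega_p$ is a column-wise quasi-isomorphism. Since both double complexes are concentrated in the first quadrant (bounded below in both gradings), a column-wise quasi-isomorphism of first-quadrant double complexes induces a quasi-isomorphism on totalizations — this is the acyclic assembly lemma / a comparison of the two spectral sequences filtering by the $p$-degree. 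That yields the desired quasi-isomorphism $\Tot(\g\ut)\simeq \Tot_{TW}(\g\ut)$.

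The main obstacle is not conceptual but bookkeeping: one must be careful that the map $\R\hookrightarrow\Omega_p$ really does assemble into a morphism of \emph{double complexes} (equivalently, a morphism of bicomplexes respecting both the internal $d$ and the horizontal $\partial$), which requires checking compatibility of the Whitney embedding with the coface maps $\partial_p^i$ and keeping the Koszul signs consistent between the two sign conventions in play. Once that compatibility is in hand, the convergence of the spectral sequences is automatic from the first-quadrant hypothesis, and no further work is needed. I would present this as: (i) exhibit both sides as $\Tot$ of first-quadrant double complexes; (ii) construct the comparison morphism from Whitney's $\R\hookrightarrow\Omega_{\bullet}$; (iii) note it is a column-wise (i.e. $E_1$-level, after filtering by simplicial degree) quasi-isomorphism; (iv) conclude by the comparison theorem for spectral sequences. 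In fact, since this is Lemma 2.5 of \cite{I}, the cleanest route in the paper itself is simply to cite that reference, which is what the statement already does; the above is the argument underlying it.
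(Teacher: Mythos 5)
You are supplying an argument for a statement the paper itself does not prove --- the paper simply cites \cite[Lemma 2.5]{I} --- so there is no internal proof to compare against; but your proposal has a genuine gap at step (i). The Thom--Whitney totalization is \emph{not} the total complex of a double complex with $(p,q)$-entry $\bigoplus_{a+b=q}\Omega_p^a\otimes\g_p^b$, a horizontal alternating-sum coface differential, and a shift by $p$. It is the \emph{end}
\[
\Tot_{TW}(\g\ut)=\Bigl\{(x_p)_{p\ge 0}\in\prod_{p\ge 0}\Omega_p\otimes\g_p \ :\ (\delta_i^*\otimes\mathrm{id})\,x_p=(\mathrm{id}\otimes\partial_{p-1}^i)\,x_{p-1}\ \text{for all }i,p\Bigr\},
\]
with no shift by $p$ and no coface differential: the face-matching conditions replace both, and they are exactly what makes the componentwise bracket close up into a strict DGLA, which is the point of the construction. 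Because of this, the two moves your argument rests on are unavailable. There is no first-quadrant double complex to filter by simplicial degree, so the columnwise comparison theorem does not apply in the form you invoke; and the inclusion of constants $\R\hookrightarrow\Omega_p$ does not define a map $\Tot(\g\ut)\to\Tot_{TW}(\g\ut)$, since a tuple $(1\otimes y_p)_p$ satisfies the face-matching conditions only when $\partial^i y_{p-1}=y_p$ for every $i$, which fails for a general element of $\bigoplus_p\g_p[-p]$ (and the degrees do not match in any case, because the shift is absent on the Thom--Whitney side).

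The argument actually underlying \cite[Lemma 2.5]{I} runs in the opposite direction. Integration over simplices,
\[
I:\Tot_{TW}(\g\ut)\to\Tot(\g\ut),\qquad (x_p)_p\mapsto\sum_{p}\int_{\Delta^p}x_p,
\]
is a chain map (Stokes' theorem converts the de Rham differential on $\Omega_p$ into the alternating sum of face restrictions, which the face-matching conditions identify with the coface differential of $\g\ut$), and it is a quasi-isomorphism by Whitney--Dupont: the Whitney elementary forms give a simplicially coherent right inverse $E$ with $I\circ E=\mathrm{id}$, and Dupont's explicit simplicial homotopy contracts $E\circ I$ onto the identity; since all three maps commute with the (co)simplicial structure they descend to the ends. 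Your instinct that everything reduces to the acyclicity of each $\Omega_p$ is sound, but that acyclicity has to be packaged as a simplicially compatible contraction rather than as a columnwise quasi-isomorphism of double complexes; as written, steps (i)--(iv) do not go through.
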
 

Since $\Ttot(\g\ut)$ is a DGLA, we may apply the Deligne construction described above to construct the formal groupoid $\Del_{\Ttot(\g\ut)}$.  On the other hand, applying the Deligne construction term by term to $\g\ut$ produces a semicosimplicial formal groupoid
\[\G\ut(A)=\xymatrix @C=.3in{\Del_{\g_0} \ar@<+.4ex>[r]\ar@<-.4ex>[r] & \Del_{\g_1} \ar@<+.6ex>[r]\ar@<-.6ex>[r] \ar[r]& \Del_{\g_2}\ar@<+1ex>[r]\ar@<-1ex>[r]\ar@<+.3ex>[r]\ar@<-.3ex>[r] & \dots} \]  We may then form its descent groupoid $\Desc(\G\ut)$. 
\begin{proposition} \label{descent for Deligne}\cite[Thm. 2.6]{BM} There is a natural equivalence of formal groupoids 
\[\Del_{\Ttot(\g\ut)}\toco \Desc(\G\ut).\]
\end{proposition}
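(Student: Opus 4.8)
The statement to prove is Proposition \ref{descent for Deligne}: for a semicosimplicial DGLA $\g\ut$, there is a natural equivalence of formal groupoids $\Del_{\Ttot(\g\ut)}\toco \Desc(\G\ut)$, where $\G\ut$ is the termwise Deligne groupoid. The plan is to fix $A\in\Art$ with maximal ideal $\m$ and construct a functor $\Del_{\Ttot(\g\ut)}(A)\to \Desc(\G\ut)(A)$ explicitly, then verify it is an equivalence. Since everything is $A$-linear and $\m$ is nilpotent, I will work with the nilpotent DGLAs $\g_n\otimes\m$ throughout, and the Thom--Whitney totalization $\Ttot(\g\ut)\otimes\m$ is again nilpotent, so all gauge groups and Maurer--Cartan sets below are honest (finite-dimensional) objects.

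First I would recall the explicit model of $\Ttot(\g\ut)$ in terms of polynomial differential forms on simplices: an element in degree $k$ is a compatible family of forms $\omega_n\in (\Omega_n\otimes\g_n)^k$ indexed by $n$, where $\Omega_n$ is the polynomial de Rham complex of the $n$-simplex, subject to the cosimplicial-compatibility conditions with the coface maps. Given a Maurer--Cartan element $\omega\in \mathrm{MC}(\Ttot(\g\ut)\otimes\m)$, its component $\omega_0$ is a family of $\g_0\otimes\m$-valued $1$-forms on each $0$-simplex; evaluating gives an element $l\in\mathrm{MC}(\g_0\otimes\m)$, i.e.\ an object of $\Del_{\g_0}(A)$. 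The component $\omega_1$, a $\g_1\otimes\m$-valued form on the $1$-simplex satisfying MC, integrates (via the gauge-transformation/parallel-transport interpretation of flat $1$-forms on an interval) to a gauge transformation $m\in e^{\g_1^0\otimes\m}$ carrying $\partial_0 l$ to $\partial_1 l$; the component $\omega_2$ on the $2$-simplex enforces exactly the cocycle condition $(\partial_0 m)(\partial_1 m)^{-1}(\partial_2 m)=1$ in $\Del_{\g_2}(A)$. This produces an object $(l,m)$ of $\Desc(\G\ut)(A)$. On morphisms: a gauge transformation in $e^{(\Ttot(\g\ut)\otimes\m)^0}$ is again a compatible family, whose $0$-component gives $a:l\to l'$ in $\Del_{\g_0}$, and whose $1$-component witnesses the commuting square in Definition \ref{definition descent groupoid}. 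One then checks functoriality (composition of gauge transformations corresponds to composition in $\Desc$) and naturality in $A$, which is routine since the whole construction is built from $\otimes\m$ and integration of polynomial forms.

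To show the functor is an equivalence I would establish essential surjectivity and full faithfulness separately. For essential surjectivity: given $(l,m)\in\Desc(\G\ut)(A)$, one builds a global form $\omega$ on the simplices by using $l$ to set $\omega_0$, using $m$ (i.e.\ a choice of path of gauge transformations realizing $m$, available because $e^{\g_1^0\otimes\m}$ is connected by nilpotence --- any group element $e^x$ is joined to $1$ by $t\mapsto e^{tx}$) to set $\omega_1$, and extending to higher simplices inductively using the contractibility of standard simplices and the cocycle relations; the obstruction to each extension vanishes because the relevant combinatorial complex computing $\Ttot$ is acyclic in the needed degrees. For full faithfulness: this reduces, after the identification on objects, to comparing $\mathrm{Hom}$-sets, and here I would either repeat the same integration/extension argument one degree down, or --- more cleanly --- invoke Proposition \ref{cochain iso1} together with the fact that $\Del_{(-)}$ sends quasi-isomorphisms of nilpotent DGLAs to equivalences of Deligne groupoids, to reduce the whole comparison to the homotopy-invariance statement. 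Indeed, the slickest route is: both $\Del_{\Ttot(\g\ut)}$ and $\Desc(\G\ut)$ are models of the homotopy limit of the diagram $\Del_{\g\ut}$, so the equivalence is an instance of the general principle that Thom--Whitney totalization computes the homotopy limit at the level of Deligne groupoids.

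\textbf{Main obstacle.} The delicate point is making precise the passage between a flat $\g_1\otimes\m$-valued $1$-form on the interval and the gauge transformation $m$ it determines, and checking that the $2$-simplex component translates \emph{exactly} into the stated cocycle identity $(\partial_0 m)(\partial_1 m)^{-1}(\partial_2 m)=1$ with the correct signs and orientations --- i.e.\ matching the simplicial sign conventions in the definition of $D=d+\delta$ on $\Tot$ against the conventions in Definition \ref{definition descent groupoid}. Getting essential surjectivity (lifting $(l,m)$ back to a compatible family of forms on all simplices) is where the real work lies, since one must solve a tower of extension problems; the key input is the acyclicity of the polynomial de Rham complex of each simplex together with Proposition \ref{cochain iso1}. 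Given that both \cite{I} and \cite{BM} are cited for exactly this statement (Proposition \ref{descent for Deligne} is attributed to \cite[Thm.~2.6]{BM}), the cleanest presentation is to set up the explicit functor as above and then cite the quasi-isomorphism of Proposition \ref{cochain iso1} plus homotopy-invariance of the Deligne construction to conclude, rather than reprove the acyclicity estimates from scratch.
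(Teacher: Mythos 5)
The paper does not prove this proposition: it is quoted verbatim from the cited reference (Bandiera--Manetti, Thm.~2.6) and used as a black box, so there is no in-paper argument to compare yours against. Your sketch is, however, the standard proof of that theorem: model $\Ttot(\g\ut)$ by compatible families of polynomial forms on simplices, read off the object $l$ from the $0$-simplex component of a Maurer--Cartan element, integrate the flat $1$-simplex component to the gauge transformation $m$, extract the cocycle identity from the $2$-simplex, and then prove essential surjectivity by solving extension problems using the acyclicity of the polynomial de Rham complex of the simplex (Dupont's contraction). That matches the cited proof in structure, and your identification of the sign/orientation bookkeeping and the lifting problem as the genuine work is accurate.

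One caution about your proposed shortcut at the end: Proposition~\ref{cochain iso1} plus homotopy invariance of the Deligne construction does not by itself yield the statement, because $\Desc(\G\ut)$ is not presented as the Deligne groupoid of any DGLA, and $\Tot(\g\ut)$ carries no bracket; a quasi-isomorphism of cochain complexes gives you control of cohomology (hence, via Goldman--Millson, lets you check that a \emph{given} DGLA morphism induces an equivalence of Deligne groupoids), but it cannot replace the explicit comparison functor between $\Del_{\Ttot(\g\ut)}$ and the descent groupoid. So the integration/extension construction in your first two paragraphs is not optional scaffolding --- it is the proof --- and the ``both are homotopy limits'' remark should be treated as motivation rather than as an argument, unless you are prepared to set up the requisite $\infty$-categorical machinery, which neither this paper nor the cited references do.
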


Next, let $V\utt$ be a bisemicosimplicial cochain complex, which is a diagram
\begin{equation}\label{square diagram}\xymatrix @C=.3in {  \vdots &  \vdots & \vdots \\ V_{0,1} \ar@<+.6ex>[u]\ar@<-.6ex>[u] \ar[u] \ar@<+.4ex>[r]\ar@<-.4ex>[r] &V_{1,1}  \ar@<+.6ex>[u]\ar@<-.6ex>[u] \ar[u] \ar@<+.6ex>[r]\ar@<-.6ex>[r] \ar[r]& V_{2,1}  \ar@<+.6ex>[u]\ar@<-.6ex>[u] \ar[u]\ar@<+1ex>[r]\ar@<-1ex>[r]\ar@<+.3ex>[r]\ar@<-.3ex>[r]  & \cdots \\ V_{0,0} \ar@<+.4ex>[u]\ar@<-.4ex>[u] \ar@<+.4ex>[r]\ar@<-.4ex>[r] &V_{1,0} \ar@<+.4ex>[u]\ar@<-.4ex>[u]\ar@<+.6ex>[r]\ar@<-.6ex>[r] \ar[r]&  V_{2,0} \ar@<+.4ex>[u]\ar@<-.4ex>[u]\ar@<+1ex>[r]\ar@<-1ex>[r]\ar@<+.3ex>[r]\ar@<-.3ex>[r]  & \cdots}\end{equation} We may succinctly define $V\utt$ as a functor from the category $\Delta_{mon}\times\Delta_{mon}$ to the category of cochain complexes.  Each individual row $V_{\bullet,n}$ in the above diagram itself forms a semicosimplicial cochain complex, and we alternatively view $V\utt$ as a semicomisimplical object in the category of semicosimplicial cochain complexes: 
\[V\utt=\xymatrix @C=.3in{V_{\bullet,0} \ar@<+.4ex>[r]\ar@<-.4ex>[r] & V_{\bullet,1} \ar@<+.6ex>[r]\ar@<-.6ex>[r] \ar[r]& V_{\bullet,2} \ar@<+1ex>[r]\ar@<-1ex>[r]\ar@<+.3ex>[r]\ar@<-.3ex>[r] & \dots} \]  From this point of view, it is clear how to generalize the totalization functor to the bisemicosimplicial case: first, form the total complex of each row, yielding a semicosimplicial cochain complex
\[\Tot^{\triangle}(V\utt):=\xymatrix @C=.3in{\Tot(V_{\bullet,0}) \ar@<+.4ex>[r]\ar@<-.4ex>[r] & \Tot(V_{\bullet},1) \ar@<+.6ex>[r]\ar@<-.6ex>[r] \ar[r]& \Tot(V_{\bullet},2) \ar@<+1ex>[r]\ar@<-1ex>[r]\ar@<+.3ex>[r]\ar@<-.3ex>[r] & \dots} \]  We may then applying the totalization procedure again to form the cochain complex $\Tot(\Tot^{\triangle}(V\utt))$, which we will simply denote by $\Tot(V\utt)$.  Similarly, if $\g\utt$ is a bisemicosimplicial DGLA, we may iterate the Thom-Whitney totalization procedure to construct a DGLA $\Tot_{TW}(\g\utt)$

\begin{proposition}\label{bi chain iso} For any bisemicosimplicial DGLA $\g\utt$, the cochain complexes $\Tot(\g\utt)$ and $\Tot_{TW}(\g\utt))$ are quasi-isomorphic.
\end{proposition}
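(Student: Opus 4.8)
The plan is to reduce the bisemicosimplicial statement to two applications of the already-established semicosimplicial result, Proposition \ref{cochain iso1}, together with the functoriality of the (Thom--Whitney and ordinary) totalization constructions. Recall that a bisemicosimplicial DGLA $\g\utt$ can be viewed as a semicosimplicial object in the category of semicosimplicial DGLAs, namely the diagram $\g_{\bullet,0}\to\g_{\bullet,1}\to\g_{\bullet,2}\to\cdots$, and that $\Tot_{TW}(\g\utt)$ and $\Tot(\g\utt)$ were defined precisely by iterating the respective totalization procedures: first row-by-row, then on the resulting (semicosimplicial) object.

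First I would record that totalization is functorial and sends (levelwise, i.e. termwise) quasi-isomorphisms of semicosimplicial cochain complexes to quasi-isomorphisms of cochain complexes. This is a standard spectral-sequence argument: a morphism of semicosimplicial cochain complexes $\phi_\bullet\colon V_\bullet\to W_\bullet$ that is a quasi-isomorphism in each simplicial degree induces an isomorphism on the $E_1$-pages of the two filtration spectral sequences of the respective total complexes (the filtration by simplicial degree), hence an isomorphism on $E_\infty$, hence a quasi-isomorphism $\Tot(V_\bullet)\to\Tot(W_\bullet)$; one must check convergence, but in the applications of interest the semicosimplicial degree is bounded (the nerve of a finite cover), so there is no convergence subtlety. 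I would state this as a lemma.

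Next I would assemble the two steps. Applying Proposition \ref{cochain iso1} to each row $\g_{\bullet,n}$ of $\g\utt$ gives a quasi-isomorphism of cochain complexes $\Tot_{TW}(\g_{\bullet,n})\xrightarrow{\ \sim\ }\Tot(\g_{\bullet,n})$ for every $n$; by the naturality of the quasi-isomorphism in Proposition \ref{cochain iso1} with respect to morphisms of semicosimplicial DGLAs, these assemble into a morphism of semicosimplicial cochain complexes $\Tot_{TW}^{\triangle}(\g\utt)\to\Tot^{\triangle}(\g\utt)$ which is a levelwise quasi-isomorphism. (Here $\Tot_{TW}^{\triangle}$ denotes applying $\Tot_{TW}$ row-by-row, as in the definition of $\Tot^{\triangle}$.) By the lemma of the previous paragraph, applying $\Tot$ to this levelwise quasi-isomorphism yields a quasi-isomorphism $\Tot\bigl(\Tot_{TW}^{\triangle}(\g\utt)\bigr)\xrightarrow{\ \sim\ }\Tot\bigl(\Tot^{\triangle}(\g\utt)\bigr)=\Tot(\g\utt)$. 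Separately, applying Proposition \ref{cochain iso1} once more to the semicosimplicial DGLA $\Tot_{TW}^{\triangle}(\g\utt)$ gives a quasi-isomorphism $\Tot_{TW}(\g\utt)=\Tot_{TW}\bigl(\Tot_{TW}^{\triangle}(\g\utt)\bigr)\xrightarrow{\ \sim\ }\Tot\bigl(\Tot_{TW}^{\triangle}(\g\utt)\bigr)$. Composing (or zig-zagging through the common middle term) produces the desired quasi-isomorphism between $\Tot(\g\utt)$ and $\Tot_{TW}(\g\utt)$.

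The main obstacle is the bookkeeping in the first step: one has to be careful that the quasi-isomorphism provided by Proposition \ref{cochain iso1} can be chosen functorially in the semicosimplicial DGLA, so that the row-wise comparisons genuinely glue into a morphism of semicosimplicial cochain complexes rather than merely existing row-by-row. If the cited construction of $\Tot_{TW}$ (via Dupont's simplicial differential forms, as in \cite{I}) comes with a natural transformation $\Tot_{TW}\Rightarrow\Tot$ — which it does, the ``integration over the simplex'' map — then this is immediate and the whole argument is essentially formal; I would make that natural transformation explicit, note that it is the comparison map already implicitly used in Proposition \ref{cochain iso1}, and cite \cite{I} for the fact that it is a quasi-isomorphism in each semicosimplicial degree. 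The rest is the spectral-sequence lemma plus two invocations of Proposition \ref{cochain iso1}.
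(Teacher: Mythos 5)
Your argument is correct, and in fact the paper states Proposition \ref{bi chain iso} without proof (unlike Proposition \ref{cochain iso1}, which is cited to \cite{I}), so there is nothing to compare it against beyond the intended folklore argument --- which is exactly what you have written: the integration map $\Tot_{TW}\Rightarrow\Tot$ is a natural transformation, so the row-wise quasi-isomorphisms from Proposition \ref{cochain iso1} assemble into a levelwise quasi-isomorphism of semicosimplicial cochain complexes, and one concludes by the filtration spectral sequence plus one more application of Proposition \ref{cochain iso1} to $\Tot_{TW}^{\triangle}(\g\utt)$. The only inaccuracy is your justification of convergence: the open cover $\U$ in this paper is not assumed finite, so the semicosimplicial degree is not bounded. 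What actually saves you is that every DGLA appearing here ($\T$, $\KKK$, $\H$) is concentrated in degree $0$, so each $\g_{m,n}$ is non-negatively graded; hence in any fixed total degree $k$ only the columns $n\leq k$ contribute to the direct-sum total complex, the column filtration restricted to that degree is finite, and the comparison theorem for the $E_1$-isomorphism applies without further hypotheses. With that substitution your proof is complete, and it is the argument the paper implicitly relies on.
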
  

One may similarly define a strict  bisemicosimplicial formal groupoid $\G\utt$ to be a diagram of the form \ref{square diagram}, where the entries are groupoids and coface maps are required to satsify the same compatibility conditions (on the nose).  To define the descent groupoid of $\G\utt$, we first form the descent groupoid $\Desc(\G_{\bullet,n})$ of each row, resulting in a semicosimplicial groupoid $\Desc\ut(\G\utt)$.  We may then form its formal groupoid of descent data $\Desc(\Desc\ut(\G\utt))$, which we denote simply by $\Desc(\G\utt)$.
\begin{proposition}\label{double descent}  Let $\g\utt$ be a bisemicosimplicial DGLA, and let $\Del_{\g\utt}$ be the bisemicosimplicial formal groupoid formed by applying the Deligne construction term by term to $\g\utt$.  Then there is a natural equivalence of formal groupoids between $\Del_{\Tot(\g\utt)}$ and $\Desc(\Del_{\g\utt})$.
\end{proposition}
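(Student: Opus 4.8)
\textbf{Proof proposal for Proposition \ref{double descent}.}

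The plan is to reduce the bisemicosimplicial statement to two applications of the already-established semicosimplicial version, Proposition \ref{descent for Deligne}. The key observation is that the Thom--Whitney totalization and the descent construction were both defined on bisemicosimplicial objects by first applying the row-wise construction and then the outer construction, so the proof is a matter of carefully tracking how the Deligne functor interacts with each step. First I would recall that, by definition, $\Tot_{TW}(\g\utt)=\Tot_{TW}(\Tot_{TW}^{\triangle}(\g\utt))$, where $\Tot_{TW}^{\triangle}(\g\utt)$ is the semicosimplicial DGLA obtained by applying the Thom--Whitney totalization to each row $\g_{\bullet,n}$. Likewise, $\Desc(\Del_{\g\utt})=\Desc(\Desc\ut(\Del_{\g\utt}))$, where $\Desc\ut(\Del_{\g\utt})$ is the semicosimplicial formal groupoid whose $n$-th term is $\Desc(\Del_{\g_{\bullet,n}})$. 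Applying Proposition \ref{descent for Deligne} to each row gives a natural equivalence of formal groupoids
\[
\Del_{\Tot_{TW}(\g_{\bullet,n})}\toco \Desc(\Del_{\g_{\bullet,n}})
\]
for every $n$. The naturality of this equivalence in $n$ (with respect to the coface maps) upgrades it to an equivalence of \emph{semicosimplicial} formal groupoids
\[
\Del_{\Tot_{TW}^{\triangle}(\g\utt)}\toco \Desc\ut(\Del_{\g\utt}),
\]
where the left-hand side denotes the semicosimplicial formal groupoid obtained by applying the Deligne construction term-by-term to the semicosimplicial DGLA $\Tot_{TW}^{\triangle}(\g\utt)$.

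Next I would apply Proposition \ref{equivalence of descent groupoids}: since the map of semicosimplicial formal groupoids just constructed is a term-wise equivalence, passing to descent groupoids yields an equivalence
\[
\Desc\big(\Del_{\Tot_{TW}^{\triangle}(\g\utt)}\big)\toco \Desc\big(\Desc\ut(\Del_{\g\utt})\big)=\Desc(\Del_{\g\utt}).
\]
Finally, I would apply Proposition \ref{descent for Deligne} once more, this time to the semicosimplicial DGLA $\Tot_{TW}^{\triangle}(\g\utt)$, to obtain a natural equivalence
\[
\Del_{\Tot_{TW}(\Tot_{TW}^{\triangle}(\g\utt))}\toco \Desc\big(\Del_{\Tot_{TW}^{\triangle}(\g\utt)}\big).
\]
Since $\Tot_{TW}(\Tot_{TW}^{\triangle}(\g\utt))=\Tot_{TW}(\g\utt)$ by the definition of iterated Thom--Whitney totalization, composing the three equivalences gives the desired natural equivalence $\Del_{\Tot_{TW}(\g\utt)}\toco \Desc(\Del_{\g\utt})$. (Strictly speaking I should replace $\Del_{\Tot(\g\utt)}$ by $\Del_{\Tot_{TW}(\g\utt)}$ throughout, or invoke Proposition \ref{bi chain iso} together with the fact that quasi-isomorphic DGLAs have equivalent Deligne groupoids, to phrase the statement in terms of $\Tot$; I would note this at the outset.)

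The main obstacle I anticipate is not any single calculation but the bookkeeping required to make the word ``natural'' precise at the intermediate stage: I must check that the family of equivalences $\Del_{\Tot_{TW}(\g_{\bullet,n})}\toco \Desc(\Del_{\g_{\bullet,n}})$ provided by Proposition \ref{descent for Deligne} is compatible with the coface functors, i.e. that it assembles into a genuine map of semicosimplicial formal groupoids rather than merely a level-wise collection of equivalences. This amounts to unwinding the construction of the equivalence in \cite{BM} and observing that it is built functorially from the Deligne and Thom--Whitney functors, both of which are themselves functorial; hence it commutes with the maps induced by morphisms of (semicosimplicial) DGLAs, and in particular with coface maps. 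Once that compatibility is in hand, Proposition \ref{equivalence of descent groupoids} does the rest, and the remaining identifications are purely formal consequences of the definitions of $\Tot_{TW}$ and $\Desc$ on bisemicosimplicial objects.
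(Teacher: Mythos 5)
The paper states this proposition without proof, and your argument is precisely the intended one: apply Proposition \ref{descent for Deligne} row-wise, upgrade the level-wise equivalences to a map of semicosimplicial formal groupoids and pass to descent data via Proposition \ref{equivalence of descent groupoids}, then apply Proposition \ref{descent for Deligne} once more to the row-totalized semicosimplicial DGLA and compose. Your parenthetical is also the right reading of the statement: since $\Tot(\g\utt)$ is only a cochain complex and carries no DGLA structure to feed into the Deligne construction, the proposition must be understood with $\Tot_{TW}$ in place of $\Tot$ (as the paper's subsequent use of it, via $L_{\B,\U}=\Tot_{TW}(V_{\B,\U}\utt)$, confirms), rather than salvaged through Proposition \ref{bi chain iso}.
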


\subsection{Construction of the DGLA}\label{construction of the DGLA}
We now apply the theory described in the previous section to the deformation theory of generalized complex branes.  Specifically, for every brane $\B\in\Br(X,\J)$ and open cover $\U$ of $X$, we will construct a DGLA $L_{\B,\U}$.  Using results from the previous section, we then prove that the formal groupoid $\Del_{L_{\B,\U}}$ is equivalent to the formal $\De\uB(\U,\J)$ introduced in Definition \ref{definition extended nerve}.  For $\B$ a LWL brane, it follows from Theorem \ref{theorem extended and LWL} that $\Del_{L_{\B,\U}}$ is also equivalent to $\De\uB(X,\J)$.  In particular, there is a natural isomorphism of functors 
\[\df_{L_{\B,\U}}\toco \df_{\B}.\]

As a concrete application of this construction we prove Theorem \ref{brane obstructions}, stated in the introduction.  This generalizes the well-known result in complex geometry that the obstructions to deforming a complex submanifold $Z$ of a complex manifold $X$ are contained in the sheaf cohomology group $H^2(Z;\mathcal{O}_{NZ})$.

\subsection{The construction}
Let $\B$ be a GC brane on a GC manifold $(X,\J)$, and let $\U=\{U\sa\}$ be an open cover of $X$.  Consider the following diagram of  Lie algebras 
\begin{equation}\label{V diagram} \xymatrix @C=.15in {\prod \T(U\sa) \ar@<+.4ex>[r]\ar@<-.4ex>[r] & \prod\T(U_{\alpha\beta}) \ar@<+.6ex>[r]\ar@<-.6ex>[r] \ar[r]& \prod\T(U_{\alpha\beta\gamma}) \ar@<+1ex>[r]\ar@<-1ex>[r]\ar@<+.3ex>[r]\ar@<-.3ex>[r]  & \cdots \\  \H(X)\oplus\prod\KKK(U\sa) \ar@<+.4ex>[u]\ar@<-.4ex>[u] \ar@<+.4ex>[r]\ar@<-.4ex>[r] &  \H(X)\oplus\prod\KKK(U\sab) \ar@<+.4ex>[u]\ar@<-.4ex>[u]\ar@<+.6ex>[r]\ar@<-.6ex>[r] \ar[r]&  \H(X)\oplus\prod\KKK(U\sabc) \ar@<+.4ex>[u]\ar@<-.4ex>[u]\ar@<+1ex>[r]\ar@<-1ex>[r]\ar@<+.3ex>[r]\ar@<-.3ex>[r]  & \cdots}\end{equation} The maps in the diagram are defined as follows: The top row is the semicosimplicial DGLA which is the nerve (with respect to the open cover $\U$) of the sheaf $\T$ of DGLAs on $X$, as described in Example \ref{example nerve groupoid}.  The bottom row is the direct sum of the nerve (with respect to $\U$) of the sheaf $\KKK$, and the semicosimplicial DGLA 
\[\xymatrix @C=.3in{\H(X) \ar@<+.4ex>[r]\ar@<-.4ex>[r] & \H(X) \ar@<+.6ex>[r]\ar@<-.6ex>[r] \ar[r]& \H(X)\ar@<+1ex>[r]\ar@<-1ex>[r]\ar@<+.3ex>[r]\ar@<-.3ex>[r] & \dots}\] with all maps equal to the identity. Next we describe the vertical maps, which we denote by $\partial_V^i$ for $i=0,1$.  Given $v=(\xx_f,\{y_{\alpha_0\alpha_1\cdots\alpha_k}\})\in \H(X)\oplus\prod\KKK(U_{\alpha_0\alpha_1\cdots\alpha_k})$, we have 
\[\partial_V^0(v)=\{(\xx_f)|_{U_{\alpha_0\alpha_1\cdots\alpha_k}}\}\] and 
\[\partial_V^1(v)=\{\chi(y_{\alpha_0\alpha_1\cdots\alpha_k})\}.\]  It is easy to see that the diagram (\ref{V diagram}), extended upwards by zero, defines a bisemicosimplicial DGLA, which we denote by $V\utt_{\B,\U}$.
\begin{proposition}\label{concrete description} The formal groupoid $\Desc(\Del_{V_{\B,\U}\utt})$ is equivalent to  $\De\uB(\U,\J)^{tr}.$
\end{proposition}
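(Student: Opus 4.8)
The strategy is to identify the two bisemicosimplicial constructions degreewise and then invoke the already-established descent machinery. Concretely, $V\utt_{\B,\U}$ is a bisemicosimplicial DGLA concentrated in degrees $0$ and $1$ (row $1$ being the $\T$-nerve, row $0$ being the sum of the $\KKK$-nerve and the constant semicosimplicial DGLA $\H(X)$, everything above vanishing). Applying the Deligne construction $\Del_{(-)}$ termwise sends this to a bisemicosimplicial formal groupoid $\Del_{V\utt_{\B,\U}}$, and by the definition of the Deligne groupoid for a two-step DGLA $\g^0\xrightarrow{\partial_V}\g^1$ (recall $\mathrm{MC}(\g)$ in degree $1$ lives in $\g^1$, and $\mathrm{MC}$ of a DGLA concentrated in degrees $0,1$ with zero differential on $\g^1$ and the $\partial_V$ map is just $\g^1\otimes\m$, with the gauge action of $e^{\g^0\otimes\m}$ given by the exponentiated $\partial_V$-twisted action), each $\Del$-term of $V\utt_{\B,\U}$ in the $(k,\cdot)$ column is precisely the action groupoid whose objects are $e^{\T\da(U_{\alpha_0\cdots\alpha_k})}$ and whose morphisms come from $e^{\H\da(X)}\times\prod e^{\KKK\da(U_{\alpha_0\cdots\alpha_k})}$ acting via $\chi$ and the $\H(X)$-action. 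First I would make this identification explicit: the claim is that the bisemicosimplicial formal groupoid $\Del_{V\utt_{\B,\U}}$ is isomorphic (not merely equivalent) to the bisemicosimplicial formal groupoid whose descent is $\De\uB(\U,\J)^{tr}$ by construction.

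Second, I would recall from \S\ref{cosimplicial groupoids} how $\De\uB(\U,\J)^{tr}$ is assembled: by Definition \ref{definition of extended glued} it is $\Dt\uB(\U,\J)^{tr}//e^{\H(X)}$, and $\Dt\uB(\U,\J)^{tr}$ is the descent groupoid of the nerve of the sheaf $U\mapsto \Dt\uB(U,\J)^{tr}$, whose sections are described degreewise exactly by the groups $e^{\T\da(U)}$ with morphisms in $e^{\KKK\da(U)}$. Taking the action groupoid by $e^{\H(X)}$ and taking descent data commute in the appropriate sense (this is the content of the interplay between Definition \ref{definition action groupoid} and the descent construction, which for us reduces to the observation that the constant semicosimplicial DGLA $\H(X)$ summed into each row of $V\utt_{\B,\U}$ contributes, after $\Del$ and descent, exactly the $//e^{\H(X)}$ quotient). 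Thus $\Desc(\Del_{V\utt_{\B,\U}})$ is, termwise and on morphisms, literally the groupoid $\De\uB(\U,\J)^{tr}$ of Definition \ref{definition of extended glued}. I would write out the matching of objects (a descent datum for $\Del_{V\utt_{\B,\U}}$ in the bottom-left corner unwinds to a pair $(\{x_\alpha\},\{y_{\alpha\beta}\})$ with $x_\alpha\in e^{\T\da(U_\alpha)}$, $y_{\alpha\beta}\in e^{\KKK\da(U_{\alpha\beta})}$ satisfying the cocycle and compatibility relations $x_\alpha=\chi(y_{\alpha\beta})x_\beta$ and $y_{\beta\gamma}y_{\alpha\gamma}^{-1}y_{\alpha\beta}=1$), and the matching of morphisms (a collection $\{w_\alpha\}$ together with a $z\in e^{\H\da(X)}$, exactly as in Definition \ref{definition of extended glued}).

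The main obstacle, and the step requiring genuine care rather than bookkeeping, is verifying that the Deligne groupoid of the two-step DGLA $\bigl(\H(X)\oplus\prod\KKK(U_{\alpha_0\cdots\alpha_k})\bigr)\xrightarrow{\partial_V}\prod\T(U_{\alpha_0\cdots\alpha_k})$ reproduces exactly the action-groupoid structure used in Definition \ref{definition tilde extended} — in particular that the gauge action formula (\ref{gauge action}) on Maurer–Cartan elements, once one takes into account $\partial_V^0,\partial_V^1$ and the exponential map, agrees with the left translation $x\mapsto \chi(y)x$ and the $\H(X)$-action, and that the composition law $(\ref{compo def})$ in the action groupoid matches the Baker–Campbell–Hausdorff product in the Deligne groupoid. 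Here I would lean on Proposition \ref{exponential} and Proposition \ref{exp prop} (part 2) to pass between the exponentials of semidirect products and the group operations, and on the fact that $\chi:\KKK(X)\to\T(X)$ and $\rho_1,\rho_2$ are Lie algebra homomorphisms so that the termwise $\Del$ is a genuine bisemicosimplicial object. Once this degreewise identification is in hand, the equivalence $\Desc(\Del_{V\utt_{\B,\U}})\simeq \De\uB(\U,\J)^{tr}$ follows formally, and in fact is an isomorphism of formal groupoids; a fortiori it is an equivalence, which is all that is claimed. I would also remark that one could alternatively deduce the statement abstractly from Proposition \ref{double descent} applied to $\g\utt=V\utt_{\B,\U}$ together with Proposition \ref{descent for Deligne}, but the direct degreewise comparison is cleaner here since the DGLA is concentrated in two degrees.
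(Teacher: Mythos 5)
Your final matching of objects and morphisms with Definition \ref{definition of extended glued} is the right target, and the overall ``unwind both sides and compare'' strategy is the paper's. But the route you take through the middle contains a genuine misstep. The formal groupoid $\Desc(\Del_{V_{\B,\U}\utt})$ is built by applying the Deligne construction \emph{termwise} to the bisemicosimplicial DGLA $V\utt_{\B,\U}$, and every entry of the diagram (\ref{V diagram}) is an ordinary Lie algebra concentrated in degree $0$; so each term of $\Del_{V_{\B,\U}\utt}$ is simply a one-object groupoid $*//e^{\g_{ij}\otimes\m}$, as in the example following (\ref{Deligne for Lie algebra}). It is \emph{not} the Deligne groupoid of a ``two-step DGLA $\g^0\xrightarrow{\partial_V}\g^1$'': the vertical direction of (\ref{V diagram}) is a semicosimplicial direction carrying \emph{two} coface maps $\partial_V^0,\partial_V^1$, not a single cochain differential, and there is no given bracket pairing $\H(X)\oplus\prod\KKK(U_{\alpha_0\cdots\alpha_k})$ against $\prod\T(U_{\alpha_0\cdots\alpha_k})$ that would make your two-term complex a DGLA in the first place (manufacturing such a DGLA is precisely what the Thom--Whitney totalization is for, and it is not needed here). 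Consequently the step you single out as ``the main obstacle'' --- reconciling the gauge-action formula (\ref{gauge action}) with left translation by $\chi(y)$ --- is an artifact of this wrong setup; it never arises in the correct computation, and since you only say you ``would lean on'' Propositions \ref{exponential} and \ref{exp prop} to handle it, the proposal as written neither carries out that spurious verification nor replaces it with the right one.

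What is actually required is pure bookkeeping with Definition \ref{definition descent groupoid}: compute $\Desc$ of each row (an object of the bottom row's descent groupoid is a cocycle $\{y_{\alpha\beta}\}$ whose $e^{\H\da(X)}$-component is forced to be trivial by the cocycle condition; a morphism is a pair $(z,\{w_\alpha\})$), then compute $\Desc$ in the vertical direction, where an object over $\{y_{\alpha\beta}\}$ is a morphism in the top row's descent groupoid from $\partial_V^0(\{y_{\alpha\beta}\})=\{1\}$ to $\partial_V^1(\{y_{\alpha\beta}\})=\{\chi(y_{\alpha\beta})\}$, i.e.\ a collection $\{x_\alpha\}$ with $x_\alpha=\chi(y_{\alpha\beta})x_\beta$. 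All compositions are group multiplications in one-object groupoids, so no Maurer--Cartan or BCH comparison is needed anywhere. (Note also that the paper's convention takes descent along rows first and then vertically, whereas you totalize columns first; this is harmless up to canonical identification but should be said.) Your intermediate claim about the column-wise groupoids is in fact true, and once justified by this direct unwinding your concluding identification with Definition \ref{definition of extended glued} goes through verbatim.
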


\begin{proof}
For each $A\in\Art$, the bisemicosimplicial groupoid $\Del_{V_{\B,\U}\utt}(A)$ may be identified with 
\[\xymatrix @C=.1in {{*}//\prod e^{\T\da(U\sa)} \ar@<+.4ex>[r]\ar@<-.4ex>[r] & {*}//\prod e^{\T\da(U_{\alpha\beta})} \ar@<+.6ex>[r]\ar@<-.6ex>[r] \ar[r]& {*}//\prod e^{\T\da(U_{\alpha\beta\gamma})} \ar@<+1ex>[r]\ar@<-1ex>[r]\ar@<+.3ex>[r]\ar@<-.3ex>[r]  & \cdots \\  {*}//e^{\H\da(X)}\times\prod e^{\KKK\da(U\sa)} \ar@<+.4ex>[u]\ar@<-.4ex>[u] \ar@<+.4ex>[r]\ar@<-.4ex>[r] &  {*}//e^{\H\da(X)}\times\prod e^{\KKK\da(U\sab)} \ar@<+.4ex>[u]\ar@<-.4ex>[u]\ar@<+.6ex>[r]\ar@<-.6ex>[r] \ar[r]&  {*}//e^{\H\da(X)}\times\prod e^{\KKK\da(U\sabc)} \ar@<+.4ex>[u]\ar@<-.4ex>[u]\ar@<+1ex>[r]\ar@<-1ex>[r]\ar@<+.3ex>[r]\ar@<-.3ex>[r]  & \cdots}\]

Let us label these groupoids as $\GG_{ij}$, where $i=1,2$ is the row index, and $j=0,1,\cdots$ is the column index.  Consider first the descent groupoid $\CC_0:=\Desc(\GG_{0,\bullet})$ of the bottom row.  Recall from Definition \ref{definition descent groupoid}, that an object of $\CC$ is a pair $(l,m)$ where $l$ is an object of $\GG_{00}$ and $m$ is a morphism in $\GG_{01}$ from $\partial^0l$ to $\partial^1l$, which satisfies the associativity condition 
\begin{equation}\label{assoc1} \partial^0(\partial^1)^{-1}\partial^2m=id.\end{equation} We must have $l=*$ (since this is the only object in $\GG_{00}$), and $m$ must be an element $(z,\{y_{\alpha\beta}\}))\in e^{\H\da(X)}\times e^{\K\da(X)}$; the associativity condition (\ref{assoc1}) is equivalent to 
\[1=(z,y_{\beta\gamma}y_{\alpha\gamma}^{-1}y_{\alpha\gamma}),\] so we see that $z=1$ must be trivial, and the collection $\{y_{\alpha\beta}\}$ must satisfy the nonabelian cocycle condition $y_{\beta\gamma}y_{\alpha\gamma}^{-1}y_{\alpha\gamma}=1$. To simplify the notation, we will denote the object $(*,(1,\{y_{\alpha\beta}\}))$ of $\CC_0$ by $\{y_{\alpha\beta}\}$.  Spelling out part (2) of Definition \ref{definition descent groupoid}, we see that a morphism in $\CC^0$ from $\{y_{\alpha\beta}\}$ to $\{y'_{\alpha\beta}\}$ consists of a pair $(z,\{w_{\alpha}\})$, where $z$ is an arbitrary element of $e^{\H\da(X)}$, and $\{w_{\alpha}\in e^{\KKK\da(U_{\alpha})}\}$ are subject to the condition $y'_{\alpha\beta}w_{\beta}=w_{\alpha}y_{\alpha\beta}$.   

Similarly, an object of $\CC_1=\Desc(\GG_{1,\bullet})$ consists of $\{x_{\alpha\beta}\in e^{\T\da(U_{\alpha\beta})}\}$ satisfying $x_{\beta\gamma}x_{\alpha\gamma}^{-1}x_{\alpha\beta} =1$.  A morphism in $\CC^1$ from $\{x_{\alpha\beta}\}$ to $\{x'_{\alpha\beta}\}$ consists of $\{v_{\alpha}\in e^{\T\da(U_{\alpha})}\}$ satisfying $x'_{\alpha\beta}v_{\beta}=v_{\alpha}x_{\alpha\beta}$.

To complete the construction of $\Desc(\Del_{V_{\B,\U}\utt}(A))$, we then then apply the descent groupoid construction in the vertical direction:
\[\Desc(\Del_{V_{\B,\U}\utt}(A))=\Desc(\xymatrix @C=.3in{\CC_0 \ar@<+.4ex>[r]\ar@<-.4ex>[r] & \CC_1}).\]  Here, $\partial_V^0:\CC_0\to\CC_1$ is the functor taking and object $\{y_{\alpha\beta}\}$ in $\CC_0$ to the trivial object $\{x_{\alpha\beta}=1\}$, and a morphism $(z,\{w_{\alpha}\}):\{y_{\alpha\beta}\} \to \{y'_{\alpha\beta}\}$ to the morphism $\{z|_{U_{\alpha}}\}$ from the trivial object to itself.   We also see that $\delta^1_V:\CC_0\to\CC_1$ takes $\{y_{\alpha\beta}\}$ to $\{\chi(y_{\alpha\beta})\}$, and morphism $\{(z,w_{\alpha}\}:\{y_{\alpha\beta}\}\to\{y'_{\alpha\beta}\}$ to the morphism $\{\chi(w)_{\alpha}\}:\{\chi(y_{\alpha\beta})\}\to \{\chi(y'_{\alpha\beta})\}$.  Therefore, an object of $\Desc(\Del_{V_{\B,\U}\utt}(A))$ is a pair $(\{y_{\alpha\beta}\},\{x_{\alpha}\})$ with $y_{\alpha\beta}\in e^{\KKK\da(U_{\alpha\beta})}$ satisfying $y_{\beta\gamma}y^{-1}_{\alpha\gamma}y_{\alpha\beta}=1$, and $x_{\alpha}\in e^{\T\da(U_{\alpha})}$ satisfying $\chi(y_{\alpha\beta})x_{\beta}=x_{\alpha}$.  A morphism in $\Desc(\Del_{V_{\B,\U}\utt}(A))$ from $(\{y_{\alpha\beta}\},\{x_{\alpha}\})$ to $(\{y'_{\alpha\beta}\},\{x'_{\alpha}\})$ is a pair $(z,\{w_{\alpha}\})$ with $z\in e^{\T\da(X)}$, $w_{\alpha}\in e^{\KKK\da(U_{\alpha})}$ such that $y'_{\alpha\beta}w_{\beta}=w_{\alpha}y_{\alpha\beta}$ and $\chi(w_{\alpha})x_{\alpha}=x'_{\alpha}z$.    Comparing this with the Definition \ref{definition of extended glued} of $\Def^{\B}_A(\U,\J)^{tr}$, we arrive at the desired result.
\end{proof}
Introduce the notation $L_{\B,\U}:=\Tot_{TW}(V_{\B,\U}\utt)$.
Combining Proposition \ref{concrete description} and Proposition \ref{double descent}, we have the following result.
\begin{corollary}\label{DGLA for extended} There is an equivalence of formal groupoids 
\[\De\uB(\U,\J)^{tr}\cong \Del_{L_{\B,\U}}.\]  \end{corollary}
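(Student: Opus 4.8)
The plan is to deduce Corollary~\ref{DGLA for extended} by stringing together two results already in hand: Proposition~\ref{concrete description}, which identifies the descent groupoid $\Desc(\Del_{V_{\B,\U}\utt})$ with $\De\uB(\U,\J)^{tr}$, and Proposition~\ref{double descent}, which gives a natural equivalence between $\Del_{\Tot(\g\utt)}$ and $\Desc(\Del_{\g\utt})$ for any bisemicosimplicial DGLA $\g\utt$. Since we have \emph{defined} $L_{\B,\U}:=\Tot_{TW}(V_{\B,\U}\utt)$, the only genuine work is to check that Proposition~\ref{double descent} applies with $\g\utt=V_{\B,\U}\utt$ and that the two named equivalences can be composed.

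First I would verify that $V_{\B,\U}\utt$ is indeed a bisemicosimplicial DGLA in the sense of \S\ref{bisemi}: the remark preceding Proposition~\ref{concrete description} already asserts that diagram~(\ref{V diagram}), extended upwards by zero, is such an object, so this step is essentially bookkeeping --- one checks that the horizontal coface maps are the nerve coface maps (hence satisfy the cosimplicial identities on the nose, being built from restriction maps of sheaves of Lie algebras), that the vertical maps $\partial_V^0,\partial_V^1$ are DGLA morphisms (this uses that $\chi:\KKK(U)\to\T(U)$ and the inclusion $\H(X)\hookrightarrow\T(X)$ from Definition~\ref{definition KKK} and Proposition~\ref{hamiltonian} are Lie algebra homomorphisms, and that restriction commutes with everything), and that horizontal and vertical coface maps commute. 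Each entry is a Lie algebra, hence a DGLA concentrated in degree $0$ (Example~\ref{Lie algebra as DGLA}), so there is nothing to check on the differential side.

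Next I would apply Proposition~\ref{double descent} verbatim: it yields a natural equivalence of formal groupoids
\[
\Del_{\Tot(V_{\B,\U}\utt)}\;\simeq\;\Desc(\Del_{V_{\B,\U}\utt}).
\]
To replace $\Tot$ by $\Tot_{TW}$ on the left-hand side, I would invoke Proposition~\ref{bi chain iso}, which gives a quasi-isomorphism of cochain complexes $\Tot(V_{\B,\U}\utt)\simeq\Tot_{TW}(V_{\B,\U}\utt)=L_{\B,\U}$; since a quasi-isomorphism of DGLAs induces an equivalence of the associated Deligne groupoids (the standard invariance of the Deligne functor under quasi-isomorphism, implicit in the framework of \S\ref{DGLA obstructions}), we get $\Del_{L_{\B,\U}}\simeq\Del_{\Tot(V_{\B,\U}\utt)}$. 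Composing with Proposition~\ref{concrete description}, $\Desc(\Del_{V_{\B,\U}\utt})\simeq\De\uB(\U,\J)^{tr}$, produces the desired chain
\[
\De\uB(\U,\J)^{tr}\;\simeq\;\Desc(\Del_{V_{\B,\U}\utt})\;\simeq\;\Del_{\Tot(V_{\B,\U}\utt)}\;\simeq\;\Del_{L_{\B,\U}},
\]
which is the assertion of the corollary.

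The main obstacle --- really the only subtle point --- is the passage from $\Tot$ to $\Tot_{TW}$: Proposition~\ref{double descent} is stated for $\Tot$ (the naive totalization), whereas $L_{\B,\U}$ is built from the Thom--Whitney totalization, because only the latter carries a natural DGLA structure. Bridging this requires (i) Proposition~\ref{bi chain iso} to know the two totalizations are quasi-isomorphic as complexes, and (ii) the fact that the Deligne functor $\g\mapsto\Del_{\g}$ sends quasi-isomorphisms of DGLAs to equivalences of formal groupoids. Point (ii) is a standard fact in deformation theory but is not stated explicitly in the excerpt; if one wants to stay strictly within cited results, one can instead observe that Proposition~\ref{descent for Deligne} and its iteration are naturally phrased in terms of $\Tot_{TW}$ (indeed Proposition~\ref{cochain iso1} and Proposition~\ref{descent for Deligne} already concern $\Tot_{TW}$), so the cleaner route is to apply the $\Tot_{TW}$-versions of descent twice --- once along each simplicial direction --- and never leave the Thom--Whitney world at all. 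I would write the proof that way, using Proposition~\ref{descent for Deligne} iteratively together with Proposition~\ref{concrete description}, so that no appeal to naive totalization is needed.
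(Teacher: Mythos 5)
Your proposal is correct and follows the paper's own (one-line) proof: combine Proposition \ref{concrete description} with Proposition \ref{double descent} applied to $V_{\B,\U}\utt$, using that $L_{\B,\U}$ is by definition the Thom--Whitney totalization. Your instinct to stay entirely in the Thom--Whitney world is the right reading of the paper --- the ``$\Tot$'' in the statement of Proposition \ref{double descent} must mean $\Tot_{TW}$, since the naive totalization of a (bi)semicosimplicial DGLA carries no DGLA structure and hence has no Deligne groupoid, which also means your first route (via Proposition \ref{bi chain iso} and quasi-isomorphism invariance of $\Del$) cannot even be formulated and is rightly discarded.
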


Finally, Corollary \ref{DGLA for extended} with Theorem \ref{theorem extended and LWL} we arrive at the following result.
 \begin{theorem}\label{DGLA for LWL}  Let $\B$ be a leaf-wise Lagrangian GC brane on $(X,\J)$ (or more generally a brane with locally trivializable deformations).  Then there exists an open cover $\U$ of $X$ such that $\De\uB(X,\J)$ is equivalent to $\Del_{L_{\B,\U}}$.  In particular,  the deformation functor $\df_{L_{\B,\U}}$ is isomorphic to the functor $\df_{\B}$ associated to $\B$ as described in Definition \ref{brane deformation functor}.
\end{theorem}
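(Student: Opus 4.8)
\textbf{Proof plan for Theorem \ref{DGLA for LWL}.}

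The plan is to simply assemble the equivalences already established in the preceding sections into a single chain. First I would invoke Theorem \ref{theorem extended and LWL}: since $\B$ is leaf-wise Lagrangian (or, more generally, has locally trivializable deformations), there is an open cover $\U$ of $X$ for which the functor $\Sigma\uB(\U):\De\uB(\U,\J)^{tr}\to\De\uB(\U,\J)$ is an equivalence of formal groupoids, and for which (by the second assertion of that theorem, which in turn rests on Theorem \ref{restriction equivalence}) the formal groupoid $\De\uB(\U,\J)$ is equivalent to $\De\uB(X,\J)$. Next I would apply Corollary \ref{DGLA for extended}, which gives an equivalence $\De\uB(\U,\J)^{tr}\cong\Del_{L_{\B,\U}}$ for the DGLA $L_{\B,\U}=\Tot_{TW}(V_{\B,\U}\utt)$ constructed in \S\ref{construction of the DGLA}. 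Composing these three equivalences yields
\[
\Del_{L_{\B,\U}}\;\cong\;\De\uB(\U,\J)^{tr}\;\cong\;\De\uB(\U,\J)\;\cong\;\De\uB(X,\J),
\]
which is the first claim of the theorem. For this step the only thing to check is the compatibility of the open covers: Theorem \ref{theorem extended and LWL} furnishes a \emph{specific} cover $\U$ adapted to the local trivializability of $\B$ (coming from Corollary \ref{equivalence of induced}, hence ultimately from Theorem \ref{essentially surjective} and the normal form Theorem \ref{LWL normal form}), and it is for precisely this $\U$ that Corollary \ref{DGLA for extended} is applied; so no refinement argument is needed, one just fixes that $\U$ throughout.

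Once the equivalence of formal groupoids is in hand, the statement about deformation functors follows formally. By Definition \ref{brane deformation functor}, $\df_{\B}=\pi_0\circ\De\uB(X,\J)$, and by the discussion preceding Theorem \ref{H2 and obstructions}, $\df_{L_{\B,\U}}=\pi_0\circ\Del_{L_{\B,\U}}$ (this is exactly the passage to equivalence classes after equation (\ref{deligne groupoid})). An equivalence of formal groupoids induces, for each Artin algebra $A\in\Art$, a bijection on isomorphism classes of objects, and these bijections are natural in $A$ because the equivalence is a morphism of formal groupoids over $\Art$; hence $\pi_0$ carries the equivalence $\Del_{L_{\B,\U}}\cong\De\uB(X,\J)$ to a natural isomorphism $\df_{L_{\B,\U}}\toco\df_{\B}$.

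I do not expect a genuine obstacle here: the theorem is a corollary in the literal sense, and all the substantive work has been done earlier — the hard inputs are Theorem \ref{LWL normal form} (the local normal form for LWL branes), Theorem \ref{essentially surjective} (local trivializability), the descent results Proposition \ref{descent property} and Proposition \ref{double descent}, and the Thom--Whitney comparison Proposition \ref{bi chain iso}/Proposition \ref{cochain iso1}, all of which may be assumed. The one point requiring a moment's care, and the closest thing to a subtlety, is bookkeeping the direction and composability of the various functors (e.g. that $\Sigma\uB(\U)$ goes from the ``$tr$'' groupoid to the plain one, while Corollary \ref{DGLA for extended} identifies the ``$tr$'' groupoid with $\Del_{L_{\B,\U}}$) so that the chain of equivalences genuinely composes; since each arrow in the chain is an equivalence, orientation is immaterial and the composite is an equivalence regardless. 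I would therefore keep the proof to a few lines: fix the cover $\U$ from Theorem \ref{theorem extended and LWL}, cite Corollary \ref{DGLA for extended}, compose, and then apply $\pi_0$.
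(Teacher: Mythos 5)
Your proposal is correct and matches the paper's own argument, which likewise obtains the result by combining Corollary \ref{DGLA for extended} with Theorem \ref{theorem extended and LWL} for the cover $\U$ furnished there, and then passes to $\pi_0$. The extra care you take over fixing a single cover $\U$ throughout and over the direction of the functors is sound but not a departure from the paper's route.
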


\subsection{Lie algebroid cohomology and obstructions}\label{cohomology}  Let $\B\in\Br(X,\J)$ be a LWL brane (or more generally, a brane with locally trivializable deformations), and let fix an open cover $\U$ of $X$, chosen as in Theorem \ref{DGLA for LWL}.  Let $V\utt:=V\utt_{\B,\U}$ be the bisemicosimplicial DGLA constructed in \S\ref{construction of the DGLA}, and  $L:=L_{\B,\U}$ the DGLA $\Ttot(V_{\B,\U}\utt)$.  We may also form the cochain complex $C:=\Tot(V\utt)$, which by Proposition \ref{bi chain iso} is quasi-isomorphic to (the underlying cochain complex of) $L$; in particular the cohomology groups of $C$ and $L$ are isomorphic.  As state in \S\ref{DGLA obstructions}, the deformation functor $\df_L$ is unobstructed if the cohomology group $H^2(L)\cong H^2(C)$ vanishes.  By Theorem \ref{DGLA for LWL}, the condition $H^2(C)=0$ also implies that the functor $\df_{\B}$ is unobstructed.  The following theorem makes use of this fact by relating the group $H^2(C)$ to the more familiar Lie algebroid cohomology group $H^2(\B)$ associated to $\B$, which were described in  \S\ref{Lie alg cohomology}.
\begin{theorem} There is an injective linear map
\[\Phi:H^2(C)\hookrightarrow H^2(\B).\] 
\end{theorem}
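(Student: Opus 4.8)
The plan is to obtain $\Phi$ by restricting the deformation complex $C$ to the brane $Z$ and to deduce injectivity from the local triviality of the deformations.

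\emph{Reduction and the restriction map.} By Proposition \ref{bi chain iso} the complex $C=\Tot(V\utt_{\B,\U})$ is quasi-isomorphic to $L_{\B,\U}=\Ttot(V\utt_{\B,\U})$, so it is enough to construct an injection out of $H^2(C)$. The three families of data in the bicomplex (\ref{V diagram}) each restrict canonically to $Z$: a generalized holomorphic vector field $\xx\in\T(U)$ is sent to $q\,r(\xx)$, which by Proposition \ref{induced holomorphic} is a generalized holomorphic section of $\N(\B|_U)$, i.e.\ a $\delta_l$-closed section of $l\uv$ over $Z\cap U$; an element of $\KKK(U)$ is sent to its image under the homomorphism $\rho_2$ of Definition \ref{definition KKK}; and $\xx_f\in\H(X)$ is sent to $\rho(f)$. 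What makes these maps compatible with the differentials of (\ref{V diagram}) — and hence assemble into a morphism from $V\utt_{\B,\U}$ to the analogous bisemicosimplicial object built intrinsically from the Lie algebroid $l$ on $Z$, whose total complex $N$ is a \v{C}ech resolution of $(\Cinf(\Lambda^{\bullet}l\uv),\delta_l)$ and so satisfies $H\ub(N)\cong H\ub(\B)$ (see \S\ref{Lie alg cohomology}) — is precisely the identity $\mu\,q\,r(\xx_f)=-i\,\delta_l(\rho f)$ of Lemma \ref{restrict function} together with the parallel computation $\mu\,q\,r(\chi(y))=-\delta_l(g)$ for $y\in\KKK$ with $g$ the function-part of $\rho_2(y)$; the $[-1]$ shift by which the $\T$-column enters $\Tot(V\utt_{\B,\U})$ matches the degree shift in passing from normal-bundle data to the positive-degree part of the Lie algebroid complex. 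The induced chain map $r_\ast\colon C\to N$ then furnishes the $\R$-linear map $\Phi\colon H^2(C)\to H^2(\B)$; its well-definedness is automatic, and in degree $1$ it recovers the isomorphism $H^1(C)\cong H^1(L_{\B,\U})\cong\df_\B(\DN)\cong H^1(\B)$ coming from Theorems \ref{DGLA for LWL} and \ref{brane tangent} and Proposition \ref{first order induced}, a useful consistency check.

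\emph{Injectivity.} This is the crux and where the hypothesis enters: the cover $\U$ is taken as in Theorem \ref{DGLA for LWL}, and after refinement every $U_\alpha$ and every finite intersection sits, via the LWL normal form of Theorem \ref{LWL normal form}, on a ball in the standard model $X^{m,n}_0$. On such a ball the restriction map is surjective on sections — this is the $A=\DN$ content of the local triviality of Theorem \ref{essentially surjective}, read off through Proposition \ref{first order induced} and Lemma \ref{coboundary isomorphism} — so $r_\ast$ fits into a short exact sequence of complexes $0\to K\to C\xrightarrow{r_\ast}N\to0$, whose long exact sequence identifies $\ker\big(r_\ast\colon H^2(C)\to H^2(N)\big)$ with the image of $H^2(K)$. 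Everything thus reduces to the vanishing $H^2(K)=0$. The sheaves occurring in $K$ consist of generalized holomorphic vector fields, elements of $\KKK$, and generalized Hamiltonians, each required to vanish along $Z$ in the appropriate sense; on every $U_\alpha$ in normal form these sheaves are acyclic in degrees $\le1$, which one verifies by an explicit $\bar\partial$- and de Rham Poincar\'e-lemma computation on $X^{m,n}_0$, entirely parallel to the computations in the proofs of Theorems \ref{LWL normal form} and \ref{essentially surjective}. Running the \v{C}ech spectral sequence of $K$ — whose only summand that does not localize, the global Hamiltonian column $\H(X)$, contributes only at the origin — then forces $H^2(K)=0$, so $r_\ast$, and hence $\Phi$, is injective.

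\emph{Main obstacle.} The substantive work is concentrated in the statements on the standard model $X^{m,n}_0$: pinning down exactly which sheaves of ``generalized holomorphic or Hamiltonian data vanishing on $Z^k_0$'' arise in $K$ and proving the vanishing of their higher cohomology on a ball; establishing the companion fact that the target complex $N$ computes $H\ub(\B)$ in degrees $\le2$; and tracking, in both spectral sequences, the global Hamiltonian column $\H(X)$, which is the one piece that resists localization. Once these are in hand the homological bookkeeping above is routine, and the local-triviality hypothesis is precisely what is needed to make the whole scheme go through.
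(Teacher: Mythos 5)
Your construction of $\Phi$ as a restriction-induced map is in the right spirit --- the paper's $\Phi$ is exactly the \v{C}ech-to-algebroid transgression your $r_*$ would induce if the comparison quasi-isomorphism existed --- but the injectivity argument rests on two claims that are not established, and the second reduces the problem to a statement stronger than what is true. First, you never construct $N$ or prove $H^\bullet(N)\cong H^\bullet(\B)$. The entries of $C$ are degree-zero Lie algebras of ``holomorphic'' objects ($\T(U)$, $\KKK(U)$, $\H(X)$), so their restrictions to $Z$ form a \v{C}ech complex of kernel sheaves of $\delta_l$, not a resolution of the algebroid complex $(\Cinf(\Lambda^\bullet l\uv),\delta_l)$; identifying the cohomology of such a \v{C}ech complex with $H^\bullet(\B)$ requires a Poincar\'e lemma for $\delta_l$ (local exactness of the algebroid complex in positive degrees), which is nowhere proved in the paper and is itself a nontrivial consequence of the normal form. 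Much of the content of the theorem is hidden in this unproved comparison.

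Second, and more seriously, reducing injectivity to $H^2(K)=0$ asks for too much. The kernel sheaves in $K$ (generalized holomorphic vector fields whose normal component vanishes along $Z$, elements of $\KKK$ killed by $\rho_2$, and so on) are kernels of differential operators; they are not fine, and $H^2(K)$ contains the global degree-one and degree-two \v{C}ech cohomology of these sheaves over $X$. Your Poincar\'e-lemma computations on the standard model would at best give local acyclicity, i.e., control of the local-to-global spectral sequence at the level of stalks; they say nothing about the global \v{C}ech $H^2$, which has no reason to vanish (compare the sheaf of holomorphic vector fields vanishing along a divisor in a compact complex manifold). The paper's proof avoids this entirely: after normalizing a $2$-cocycle so that its $\H(X)$-component and the function part of its $\KKK$-component vanish (two partition-of-unity steps), the whole class is carried by the single \v{C}ech $1$-cocycle $\{qr(x_{\alpha\beta})\}$ valued in generalized holomorphic sections of $\N\B$, and injectivity then needs only the degree-zero surjectivity of $qr$ from $\T(U_\alpha)$ onto generalized holomorphic sections of $\N\B$ over $Z\cap U_\alpha$ --- the first-order instance of local trivializability --- to write down an explicit primitive $b$ with $D(b)=c$. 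No long exact sequence and no vanishing of $H^2(K)$ is required, which is consistent with the fact that the theorem asserts only injectivity, not that $\Phi$ is an isomorphism.
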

In particular, we obtain Theorem \ref{brane obstructions} as a corollary.

\begin{proof}
Using the explicit construction of the total complex $C=\Tot(V\utt)$  given in \S\ref{bisemi}, we see that \[C^1=\prod_{\alpha}\T(U_{\alpha})\oplus \prod_{\alpha\beta}\KKK(U_{\alpha\beta})\oplus \H(X),\] 
\[C^2=\prod_{\alpha\beta}\T(U_{\alpha\beta})\oplus \prod_{\alpha\beta\gamma}\KKK(U_{\alpha\beta\gamma})\oplus \H(X),\] and 
\[C^3=\prod_{\alpha\beta\gamma}\T(U_{\alpha\beta\gamma})\oplus \prod_{\alpha\beta\gamma\delta}\KKK(U_{\alpha\beta\gamma\delta})\oplus \H(X).\]  The differential $D$ from $C^1$ to $ C^2$ is given by
\begin{equation}\label{D1}(\{x\sa\},\{y\sab\},\{z\})\mapsto (\{x_{\alpha}-x\sb-\chi(y\sab)\}+z|_{U_{\alpha\beta\gamma}},\{y_{\beta\gamma}-y_{\alpha\gamma}+y_{\alpha\beta}\},z),\end{equation}  and from $C^2$ to $C^3$ by 
\begin{equation}\label{D2} (\{x_{\alpha\beta}\},\{y_{\alpha\beta\gamma}\},z)\mapsto (\{-x_{\beta\gamma}+x_{\alpha\gamma}-x_{\alpha\beta}-\chi(y_{\alpha\beta\gamma})+z|_{U_{\alpha\beta\gamma}},\{\delta(y)_{\alpha\beta\gamma\delta}\},0),\end{equation} with 
\[(\delta y)_{\alpha\beta\gamma\delta}=y_{\beta\gamma\delta}-y_{\alpha\gamma\delta}+y_{\alpha\beta\delta}-y_{\alpha\beta\gamma}.\]

We wish to construct a map $\Phi:H^2(C\ub)\to H^2(\B)$.  Let $c=(\{x_{\alpha\beta}\},\{y_{\alpha\beta\gamma}\},z)\in C^2$ be a cocycle representing a class $[c]\in H^2(C\ub)$.   From (\ref{D1}), we see that every element of $C^2$ is cohomologous to one with $z=0$, so without loss of generality we may assume $c$ is of this form.  Write $y_{\alpha\beta\gamma}=(\tilde{y}_{\alpha\beta\gamma},f_{\alpha\beta\gamma})$, with $\tilde{y}_{\alpha\beta\gamma}\in \T\da(U_{\alpha\beta\gamma})$ and $f_{\alpha\beta\gamma}\in \Cinf(Z\cap U_{\alpha\beta\gamma})$.  
We claim that $c\in C^2$ is cohomologous to a cocycle of the form $(\{x'\sab\},\{(\tilde{y}'_{\alpha\beta\gamma},0)\},0)$.  To see this, note using (\ref{D1}) that that the condition $D(c)=0$ implies that 
\[f_{\beta\gamma\delta}-f_{\alpha\gamma\delta}+f_{\alpha\beta\delta}-f_{\alpha\beta\gamma}=0.\] Therefore, using a partition of unity we may find functions $g_{\alpha\beta}\in \Cinf(Z\cap U_{\alpha\beta})$ such that $g_{\beta}-g_{\alpha}=f_{\alpha\beta}$.  Let $\{\tilde{g}\sa\in\Cinf(U\sa)\}$ be a choice of functions extending $\{g_{\alpha}\}$.  Note that $(0,d\tilde{g}\sa)$ is the generalized Hamiltonian vector field associated to the complex-function $i\tilde{g}\sa$, so in particular $(0,d\tilde{g}\sa)\in \T\da(U\sa)$.  Furthermore, by construction $((0,d\tilde{g}\sa),-g\sa)$ is an element of $\KKK(U\sa)$.  Writing 
\[c+D(\{0\},\{((0,d\tilde{g}\sa),-g\sa)\},0)=(\{x'\sab\},\{(\tilde{y}'_{\alpha\beta\gamma},f'_{\alpha\beta\gamma})\},0),\] using (\ref{D1}) we see that $f'_{\alpha\beta\gamma}=0$.  Therefore, without loss of generality we may assume that $c$ is of the form
\[c=(\{x\sab\},\{(\tilde{y}_{\alpha\beta\gamma},0\},0).\]  By Definition \ref{definition KKK}, we see that $\tilde{y}_{\alpha\beta\gamma}\in \KK\uB(U\sab)$, so that 
\[qr(\tilde{y}_{\alpha\beta\gamma})=0.\]  Therefore, if we define $\eta_{\alpha\beta}=qr(x\sab)\in \Cinf(\N|_{Z\cap U\sab})$, then using (\ref{D2}) we see that 
\[\eta_{\beta\gamma}-\eta_{\alpha\gamma}+\eta_{\alpha\beta}=0.\]  Choose $\{\sigma_{\alpha}\in \Cinf(\N|_{Z\cap U\sa})\}$ such that $\sigma_{\beta}-\sigma_{\alpha}=\eta_{\alpha\beta}$.   By Proposition \ref{induced holomorphic}, the fact $x\sab\in \T(\sab)$ implies that $\delta_l\mu(\eta\sab)=0$.  It follows there exists a unique section $\zeta\in\Cinf(\lambda^2\l\ub)$ such that on each open set $Z\cap U\alpha$ we have $\zeta|_{Z\cap U\sa}=\delta_l\mu(\sigma\sa)$. By construction we have $\delta_l\zeta=0$, and we define $\Phi([c])=[\zeta]\in H^2(l)$.  

To show that $\Phi$ is well-defined, we must check that the class $[\zeta]$ does not depend on the choice of $\{\sigma\sa\}$ or the representative $c$ for the class $[c]$.  First, suppose that we have another collection $\{\sigma'\sa\in \Cinf(\N|_{Z\cap U\sa})\}$ that also satisfy $\sigma'_{\beta}-\sigma'_{\alpha}=\eta_{\alpha\beta}$, and that $\zeta'\in\Cinf(\Lambda^2l\uv)$ such that the restriction of $\zeta'$ to each $Z\cap U\sa$ is equal to $\delta_l\mu(\sigma'\sa)$.  Then there is a unique section $\tau\in\Cinf(\N \B)$ such that $\sigma'_{\alpha}-\sigma\sa=\tau|_{Z\cap U\sa}$.  We then have $\zeta'-\zeta=\delta_l\tau$, which implies that $[\zeta']=[\zeta]\in H^2(\B)$.  This verifies that $[\zeta]$ does not depend on the choice of $\{\sigma\sa\}$.  To see that $[\zeta]$ is also independent of the choice of $c$,  suppose that  \[c'=(\{x'\sab\},\{(\tilde{y'}_{\alpha\beta\gamma},0\},0)\] is different cocycle such that $[c']=[c]$ in $H^2(C\ub)$.  Given $b\in C^1$ such that $c'-c=Db$, by a nearly identical argument to that used above, we may assume that $b$ is of the form $b=(\{v\sa\},\{(w_{\alpha\beta},0)\},0$.  We have 
\[x'\sab-x\sab=v_{\alpha}-v_{\beta}-w_{\alpha\beta}.\] Since $w_{\alpha\beta}\in\K(U_{\alpha\beta})$, it follows that 
\begin{equation}\label{diff1}qr(x'\sab)-qr(x\sab)=qr(v_{\alpha})-qr(v_{\beta}).\end{equation}  Given $\{\sigma\sa\in \Cinf(\N\B|_{Z\cap U\sa})\}$ satisfying $\sigma_\beta-\sigma_{\alpha}=x_{\alpha\beta}$, define $\sigma'\sa=\sigma\sa+qr(v\sa)$.  It follows from (\ref{diff1}) that $\sigma'\sb-\sigma'\sa=qr(x'\sab)$.  Furthermore, since $v\sa$ is a generalized holomorphic vector field, it follows that $\delta_l\mu qr(v\sa)=0$, so we see that $\delta_l\mu(\sigma'\sa)=\delta_l\mu(\sigma\sa)$.  This finishes the demonstration that $\Phi$ is well-defined.

To show that $\Phi$ is injective, suppose that $\Phi([c])=[\zeta]=0$ in $H^2(\B)$, say $\zeta=\delta_l\mu(\gamma)$ for some $\gamma\in \Cinf(\N\B)$, where $c=(\{x\sab\},\{(\tilde{y}_{\alpha\beta\gamma},0\},0)$ and $\{\sigma\sa\}$ are as above.  Then on each $U\sa\cap Z$ we have 
\[\delta_l\mu(\sigma\sa-\gamma|_{Z\cap U\sa})=0.\]  It is not hard to show (using, for example, the arguments in \S\ref{LWL branes example} in the special case $A=\R[\epsilon]/(\epsilon^2$)) that there exists $\lambda\sa\in \T(U\sa)$ with $qr(\lambda\sa)=\sigma\sa-\gamma|_{Z\cap U\sa}$.  Therefore we also have $qr(\lambda_{\beta})-qr(\lambda_{\alpha})=qr(x_{\alpha\beta})$, which implies that $x_{\alpha\beta}-(\lambda\sb-\lambda\sa)\in K^{\B}(U\sab)$.  Setting 
\[b=(\{-\lambda\sa\},\{(\lambda_{\beta}-\lambda\sa-x_{\alpha\beta},0)\},0)\in C^1,\] using the formula (\ref{D1}) we see that $D(b)=c$.  We therefore conclude that $[c]=0$ in $H^2(C\ub)$.
\end{proof}

\section{Appendix}
\emph{Proof of Proposition \ref{action}}:

Consider the following general situation: $\g$ is a nilpotent Lie algebra acting on vector spaces $V,W,T$, we have a bilinear map $V\times W\to T$ sending 
\[x,y\mapsto x*y,\] and for each $\xi\in \g$ we have 
\[\xi(x*y)=\xi(x)*y+x*\xi(y).\]  
\begin{lemma}\label{little lemma} With these assumptions, the exponentiated action satisfies
\[e^{\xi}(x*y)=e^{\xi}(x)*e^{\xi}(y).\]
\end{lemma}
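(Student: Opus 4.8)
The plan is to prove the Leibniz identity term by term using the standard expansion of the exponentiated action. Recall from Proposition \ref{exp prop} that, since $\g$ is nilpotent, for each $\xi\in\g$ and each element $v$ of a $\g$-module the action of $e^{\xi}$ is given by the finite sum $e^{\xi}v=\sum_{n\ge 0}\frac{1}{n!}\xi^n\cdot v$. So the first step is to write out both sides of the desired identity as finite sums: the left side is $\sum_{n\ge 0}\frac{1}{n!}\xi^n(x*y)$, and the right side is $\left(\sum_{p\ge 0}\frac{1}{p!}\xi^p(x)\right)*\left(\sum_{q\ge 0}\frac{1}{q!}\xi^q(y)\right)$, which by bilinearity of $*$ equals $\sum_{p,q\ge 0}\frac{1}{p!\,q!}\,\xi^p(x)*\xi^q(y)$.

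The key step is then an induction on $n$ to establish the ``Leibniz rule for powers'':
\[\xi^n(x*y)=\sum_{k=0}^{n}\binom{n}{k}\,\xi^k(x)*\xi^{n-k}(y).\]
The base case $n=0$ is trivial, and $n=1$ is exactly the hypothesis $\xi(x*y)=\xi(x)*y+x*\xi(y)$. For the inductive step, I would apply $\xi$ to both sides of the identity for $n$, use the $n=1$ hypothesis on each term $\xi^k(x)*\xi^{n-k}(y)$ (valid because $\xi^k(x)$ and $\xi^{n-k}(y)$ are again elements of the relevant modules), and then reindex and collect using the Pascal relation $\binom{n}{k}+\binom{n}{k-1}=\binom{n+1}{k}$. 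This is the standard proof that a derivation satisfies a binomial Leibniz rule, just carried out in this abstract module setting.

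Finally, I would substitute this into the left-hand sum: $\sum_{n\ge 0}\frac{1}{n!}\sum_{k=0}^{n}\binom{n}{k}\xi^k(x)*\xi^{n-k}(y)=\sum_{n\ge 0}\sum_{k=0}^{n}\frac{1}{k!\,(n-k)!}\xi^k(x)*\xi^{n-k}(y)$, and reindexing with $p=k$, $q=n-k$ gives exactly $\sum_{p,q\ge 0}\frac{1}{p!\,q!}\xi^p(x)*\xi^q(y)$, matching the right-hand side. All sums are finite by nilpotence, so there are no convergence issues and the rearrangements are legitimate. I do not expect any real obstacle here; the only point requiring a little care is bookkeeping in the induction and making sure that the hypothesis is applied to elements that genuinely lie in $V$, $W$, and $T$ (which they do, since these are $\g$-modules closed under the action), so that the binomial identity propagates cleanly.
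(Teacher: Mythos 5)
Your proposal is correct and follows essentially the same route as the paper's proof: establish the binomial Leibniz rule $\xi^k(x*y)=\sum_{j=0}^k\binom{k}{j}\xi^j(x)*\xi^{k-j}(y)$ (which the paper states after checking $k=2$, and which you justify by the standard induction), then substitute into the exponential series and reindex as a Cauchy product, with nilpotence guaranteeing all sums are finite.
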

\begin{proof}
Note that  
\[\xi^2(x*y)=\xi^2(x)*y+2\xi(x)*\xi(y)+x*\xi^2(y),\] and more generally
\[\xi^k(x*y)=\sum_{j=0}^k{k \choose j}\xi^j(x)*\xi^{k-j}(y).\]  Therefore
\begin{align} e^{\xi}([x*y])&= \sum_{k=0}^{\infty}\frac{1}{k!}\sum_{j=0}^k\frac{k!}{j!(k-j)!}\xi^j(x)*\xi^{k-j}(y) \nonumber \\
&= \sum_{j,l=0}^{\infty} \frac{1}{j!l!}\xi^j(x)*\xi^l(y) \nonumber \\
&= \sum_{j=0}^{\infty}\frac{1}{j!}\xi^j(x)*\sum_{l=0}^{\infty}\frac{1}{l!}\xi^l(y) \nonumber \\
&=e^{\xi}(x)*e^{\xi}(y).
\end{align}
\end{proof}  We note that these assumptions hold in each of the following situations (with $\g=\g\da(X)$):
\begin{enumerate} \item In the case that $V=W=\Cinf\da(X)$, the action of $\g\da(X)$ is by the Lie bracket, and $*$ the Lie bracket.
\item In the case that $V=W=\Omega^{\bullet}\da(X)$, the action of $\g\da(X)$ is by the Lie derivative, and $*$ is the wedge-product.
\item In the case where $V=\Cinf\da(X)$ with $\g\da(X)$ acting by the Lie bracket, $W=\Omega^{\bullet}\da(X)$ with $\g\da(X)$ acting by the Lie derivative, and $*$ is the contraction operation of vector fields with differential forms.
\end{enumerate}

To see that $e^{\xi}$ is compatible with the exterior derivative, note that (using the Cartan formula for the Lie derivative), we have
\[\pounds(\xi)(da)=(\iota(\xi)d+d\iota(\xi))da=d\iota(\xi)da=d\pounds(\xi)a.\]  Iterating, we see that for each $k\geq 0$ we have $\pounds(\xi)^k(da)=d\pounds(\xi)^k(a)$, and therefore $e^{\xi}(da)=de^{\xi}(a).$

\end{document}